\documentclass[a4paper,12pt]{article}
\usepackage{amsmath}
\usepackage{amsfonts}
\usepackage{amssymb}
\usepackage{latexsym}
\usepackage{epsfig}
\usepackage{graphicx}
\usepackage{oldgerm}
\usepackage{amsthm}

\setlength{\evensidemargin}{-3mm}
\setlength{\oddsidemargin}{-3mm}
\setlength{\topmargin}{-5mm}
\setlength{\textheight}{220mm}
\setlength{\textwidth}{165mm}


\makeatletter
\renewenvironment{proof}[1][\proofname]{\par
  \normalfont
  \topsep6\p@\@plus6\p@ 
\trivlist
  \item[\hskip\labelsep{#1}]\ignorespaces
}{%
\hbox{\rule[-2pt]{3pt}{6pt}}
\endtrivlist
}
\renewcommand{\proofname}{\textit{Proof.}}
\makeatother
 \newtheoremstyle{katori-thm}
 {}
 {}
 {\it}
 {}
 {\bf}
 {}
 {5pt}
 {\thmname{#1} \thmnumber{#2}\thmnote{\hspace{2pt}(#3)}}
\newtheoremstyle{katori-remark}
{}
{}
{\normalfont}
{}
{\bfseries}
{}
{7pt}
{\thmname{#1} \thmnumber{#2}\thmnote{\hspace{2pt}(#3)}}



\theoremstyle{katori-thm}
\newtheorem{thm}{Theorem}[section]
\newtheorem{lem}[thm]{Lemma}
\newtheorem{cor}[thm]{Corollary}
\newtheorem{prop}[thm]{Proposition}

\theoremstyle{katori-remark}
\newtheorem{rem}{Remark}

\newcommand{\SSC}[1]{\section{#1}\setcounter{equation}{0}}

\makeatletter
\def\filename{\texttt{\jobname.tex}} 
\makeatother

\makeatletter
\def\katoriname{
Makoto Katori
\footnote{
Department of Physics,
Faculty of Science and Engineering,
Chuo University, 
Kasuga, Bunkyo-ku, Tokyo 112-8551, Japan;
e-mail: katori@phys.chuo-u.ac.jp
}
}
\def\shirainame{ 
Tomoyuki Shirai 
\footnote{
Institute of Mathematics for Industry, 
Kyushu University, 
744 Motooka, Nishi-ku,
Fukuoka 819-0395, Japan; 
e-mail: shirai@imi.kyushu-u.ac.jp
}
}
\makeatother

\def\per{\mathop{\mathrm{per}}}
\def\dis={\stackrel{\rm d}{=}}
\def\Det{\mathop{\mathrm{Det}}}
\def\bE{{\bf E}}
\def\mS{\mathfrak{S}}
\def\sgn{{\rm sgn}}

\def\bra{\langle}
\def\ket{\rangle}

\def\alphabar{\overline{\alpha}}

\def\zbar{\overline{z}}
\def\wbar{\overline{w}}


\def\A{\mathbb{A}}

\def\C{\mathbb{C}}
\def\D{\mathbb{D}}

\def\HH{\mathbb{H}}

\def\LL{\mathbb{L}}

\def\N{\mathbb{N}}

\def\R{\mathbb{R}}

\def\X{\mathbb{X}}

\def\Z{\mathbb{Z}}


\def\cC{\mathcal{C}}
\def\cD{\mathcal{D}}

\def\cH{\mathcal{H}}
\def\cI{\mathcal{I}}

\def\cK{\mathcal{K}}

\def\cZ{\mathcal{Z}}



\def\bE{\mathbf{E}}


\def\ssG{\mathsf{G}}



\usepackage{amsmath}	
\begin{document}
\def\rB{\textrm{B}}
\def\Conf{\textrm{Conf}}
\def\arg{\textrm{arg }}
\def\Re{\textrm{Re}}
\def\Im{\textrm{Im}}
\def\DetI{\mathop{ \mathrm{Det}_{\mathcal{I}} }}
\def\DetJ{\mathop{ \mathrm{Det}_{\{2,3\}} }}
\def\perdet{\mathop{ \mathrm{perdet} }}
\def\zhat{\widehat{z}}
\def\alphahat{\widehat{\alpha}}
\def\green#1{{\color{green} #1}}
\title{
Zeros of the i.i.d.~Gaussian Laurent series \\
on an annulus:
weighted Szeg\H{o} kernels and 
permanental-determinantal point processes
}
\author{
\katoriname \, \shirainame
}
\date{16 February 2022}
\pagestyle{plain}
\maketitle

\begin{abstract}
On an annulus 
${\mathbb{A}}_q :=\{z \in {\mathbb{C}}: q < |z| < 1\}$
with a fixed $q \in (0, 1)$,
we study a Gaussian analytic function (GAF) and 
its zero set which defines a point process on ${\mathbb{A}}_q$
called the zero point process of the GAF.
The GAF is defined by the 
i.i.d.~Gaussian Laurent series such that
the covariance kernel parameterized by $r >0$ is identified with
the weighted Szeg\H{o} kernel of ${\mathbb{A}}_q$ 
with the weight parameter $r$
studied by Mccullough and Shen.
The GAF and the zero point process 
are rotationally invariant and have a symmetry 
associated with the $q$-inversion of 
coordinate $z \leftrightarrow q/z$ and
the parameter change $r \leftrightarrow q^2/r$.
When $r=q$ they are invariant 
under conformal transformations which preserve ${\mathbb{A}}_q$.
Conditioning the GAF by adding zeros, 
new GAFs are induced such that
the covariance kernels are also given by
the weighted Szeg\H{o} kernel of Mccullough and Shen 
but the weight parameter $r$
is changed depending on the added zeros. 

We also prove that the 
zero point process of the GAF provides
a permanental-determinantal point process (PDPP)
in which each correlation function is expressed by
a permanent multiplied by a determinant.
Dependence on $r$ of the unfolded 2-correlation function
of the PDPP is studied. 
If we take the limit $q \to 0$, 
a simpler but still non-trivial 
PDPP is obtained on the unit disk $\D$. 
We observe that the limit PDPP
indexed by $r \in (0, \infty)$ can be 
regarded as an interpolation between 
the determinantal point process (DPP)
on ${\mathbb{D}}$ studied by
Peres and Vir\'ag ($r \to 0$) and 
that DPP of Peres and Vir\'ag with
a deterministic zero added at the origin ($r \to \infty$). 

\vskip 0.2cm

\noindent{\it Keywords}: \,
Gaussian analytic functions; 
Weighted Szeg\H{o} kernels; 
Annulus; 
Conformal invariance; 
Permanental-determinantal point processes

\vskip 0.2cm
\noindent{\it 2020 Mathematics Subject Classification}:
60G55, 30B20; 46E22; 32A25

\end{abstract}

\SSC{Introduction and Main Results}
\label{sec:introduction}
\subsection{Weighted Szeg\H{o} kernel and GAF on 
an annulus}
\label{sec:GAF}

For a domain $D \subset \C$, let $X$ be a random variable
on a probability space which takes values 
in the space of analytic functions on $D$. 
If $(X(z_1), \dots, X(z_n))$ follows a mean zero
complex Gaussian distribution for every $n \in \N$
and every $z_1, \dots, z_n \in D$,
$X$ is said to be a \textit{Gaussian analytic function} (GAF)
\cite{HKPV09}.
In the present paper the zero set of $X$ is regarded as 
a point process on $D$ 
denoted by a nonnegative-integer-valued 
Radon measure
$\cZ_{X}=\sum_{z \in D: X(z)=0} \delta_z$, 
and it is simply called the
\textit{zero point process} of the GAF.
Zero-point processes of GAFs have been extensively studied 
in quantum and statistical physics as solvable models of 
quantum chaotic systems and 
interacting particle systems
\cite{BBL92,BBL96,Han96,Leb99,Leb00,For10,CHSDS06}.
Many important characterizations of their probability laws 
have been reported in probability theory
\cite{EK95,BSZ00,ST04,PV05,HKPV09,Shi12,MS13}.

A typical example of GAF 
is provided by the i.i.d.~Gaussian power series
defined on the unit disk $\D :=\{z \in \C: |z| < 1\}$: 
Let $\N_0 :=\{0,1,2,\dots\}$ and $\{\zeta_n\}_{n \in \N_0}$ 
be i.i.d.~standard complex Gaussian random variables 
with density $e^{-|z|^2}/\pi$
and consider a random power series, 
\begin{equation}
X_{\D}(z)=\sum_{n=0}^{\infty} \zeta_n z^n,
\label{eqn:GAF_D}
\end{equation}
which defines an analytic function on $\D$ a.s.
This gives a GAF on $\D$ with a covariance kernel 
\begin{equation}
\bE[X_{\D}(z) \overline{X_{\D}(w)}]
= \frac{1}{1-z \wbar}=:S_{\D}(z, w), \quad z, w \in \D.
\label{eqn:SD}
\end{equation}
This kernel is identified with 
the reproducing kernel of the Hardy space $H^2(\D)$
called the \textit{Szeg\H{o} kernel} of $\D$
\cite{Neh52,Ber70,AM02,Bel16}.
Peres and Vir\'ag \cite{PV05} proved that 
$\cZ_{X_{\D}}$ is a \textit{determinantal point process} (DPP) 
such that the correlation kernel 
is given by 
$S_{\D}(z,w)^2=(1-z \wbar)^{-2}$, 
$z,w \in \D$ 
with respect to the reference measure 
$\lambda=m/\pi$.
Here $m$ represents the Lebesgue measure on $\C$; 
$m(dz) :=dx dy$, $z=x+\sqrt{-1} y \in \C$.
(See Theorem \ref{thm:Peres_Virag} 
in Section \ref{sec:Peres_Virag} below). 
This correlation kernel is identified with
the reproducing kernel of the Bergman space on $\D$,
which is called the \textit{Bergman kernel} of $\D$ and 
denoted here by 
$K_{\D}(z, w), z, w \in \D$
\cite{Neh52,Ber70,HKZ00,AM02,Bel16}.
Thus the study of Peres and Vir\'ag on
$X_{\D}$ and $\cZ_{X_{\D}}$
is associated with the following relationship 
between kernels on $\D$ \cite{PV05},
\begin{equation}
\bE[X_{\D}(z) \overline{X_{\D}(w)}]^2
=S_{\D}(z, w)^2
=K_{\D}(z, w), 
\quad z, w \in \D.
\label{eqn:SKC_D}
\end{equation}
(A brief review of reproducing kernels will be given 
in Section \ref{sec:RK}.) 

Let $q \in (0, 1)$ be a fixed number and 
we consider the annulus
$\A_q:=\{z \in \C: q < |z| < 1 \}$. 
In the present paper we will report 
the fact that, if we consider a GAF 
given by the i.i.d.~Gaussian Laurent series $X_{\A_q}$ on $\A_q$, 
we will observe interesting new phenomena
related with $X_{\A_q}$ 
and its zero point process $\cZ_{X_{\A_q}}$. 
The present results are reduced to those by 
Peres and Vir\'ag \cite{PV05} in the limit $q \to 0$.
Conversely, the point processes
associated with $X_{\D}$ are extended to
those associated with $X_{\A_q}$ in this paper.
The obtained new point processes can be regarded
as \textit{elliptic extensions}
of the previous ones, 
since expressions for the former given by
polynomials and rational functions
of arguments are replaced by those of
the theta functions with the arguments and 
the nome $p=q^2$ for the latter
\cite{TV97,Spi02,War02,KN03,GR04,Kra05,RS06,Rai10}. 
Moreover, we will introduce another parameter $r >0$
in addition to $q$, 
and one-parameter families of
GAFs, $\{X_{\A_q}^r : r >0 \}$ and zero point processes, 
$\{\cZ_{X_{\A_q}^r} : r > 0\}$ 
will be constructed on $\A_q$. 
Here put $X_{\A_q} := X_{\A_q}^q$ and $\cZ_{X_{\A_q}} := \cZ_{X_{\A_q}^q}$.
Construction of a model on an annulus
will serve as a solid starting point for arguing
general theory on multiply connected domains.
Even if the models are different, studies in this direction
provide useful hints for us to proceed the generalization
\cite{Pee93,Car02,Zha04,Car06,For06,BF08,HL08,
HBB10,Izy17,Rem18,Kat19b,BKT18}.

Consider the Hilbert space of analytic functions on $\A_q$ 
equipped with the inner product
\[
\bra f, g \ket_{H^2_r(\A_q)} = 
\frac{1}{2\pi} \int_{\gamma_1 \cup \gamma_q}
f(z) \overline{g(z)} \sigma_r(dz),
\quad f, g \in H^2_r(\A_q)
\]
with
\[
\sigma_r(d z)
=\begin{cases}
d \phi, & \mbox{if $z \in \gamma_1
:=\{e^{\sqrt{-1} \phi}: \phi \in [0, 2 \pi) \}$},
\cr
r d \phi, & \mbox{if $z \in \gamma_q 
:=\{q e^{\sqrt{-1} \phi}: \phi \in [0, 2 \pi) \}$},
\end{cases}
\]
which we write as $H_r^2(\A_q)$.
A complete orthonormal system (CONS) of $H_r^2(\A_q)$ 
is given by $\{e^{(q, r)}_n\}_{n \in \Z}$ with
\[
e^{(q, r)}_n(z) = \frac{z^n}{\sqrt{1+r q^{2n}}},
\quad z \in \A_q, \quad n \in \Z,
\]
and the reproducing kernel is given by \cite{MS94}
\begin{equation}
S_{\A_q}(z, w; r)
= \sum_{n \in \Z} e^{(q,r)}_n(z) \overline{e^{(q,r)}_n(w)}
= \sum_{n=-\infty}^{\infty} 
\frac{(z \wbar)^n}{1+r q^{2n}}. 
\label{eqn:SAqr1}
\end{equation}
This infinite series converges absolutely for $z,w \in \A_q$.
When $r=q$, this Hilbert function space is known as
the Hardy space on $\A_q$
denoted by $H^2(\A_q)$ and the
reproducing kernel 
$S_{\A_q}(\cdot, \cdot) :=S_{\A_q}(\cdot, \cdot; q)$ is called the
Szeg\H{o} kernel of $\A_q$ \cite{Neh52,Sar65}.
The kernel (\ref{eqn:SAqr1}) 
with a parameter $r >0$ is considered as a 
\textit{weighted Szeg\H{o} kernel} of $\A_q$ \cite{Neh52b}
and $H_r^2(\A_q)$ is the
\textit{reproducing kernel Hilbert space}
(RKHS) \cite{Aro50} with respect to 
$S_{\A_q}(\cdot, \cdot; r)$ \cite{MS94,MS12}.
We call $r$ the \textit{weight parameter} 
in this paper. 
We note that (\ref{eqn:SAqr1}) implies that
$S_{\A_q}(z, z; r)$ is a monotonically decreasing
function of the weight parameter
$r \in (0, \infty)$ for each fixed
$z \in \A_q$.

Associated with $H_r^2(\A_q)$, 
we consider the Gaussian Laurent series 
\begin{equation}
X_{\A_q}^r(z):= \sum_{n \in \Z} \zeta_n e^{(q, r)}_n(z)
=\sum_{n=-\infty}^{\infty}
\zeta_n \frac{z^n}{\sqrt{1+r q^{2n}}},
\label{eqn:GAF_Aqr}
\end{equation}
where $\{\zeta_n\}_{n \in \Z}$ are i.i.d.~standard complex
Gaussian random variables with density $e^{-|z|^2}/\pi$. 
Since $\lim_{n \to \infty} |\zeta_n|^{1/n} = 1$ a.s., 
we apply the Cauchy--Hadamard
criterion to the positive 
and negative powers of $X_{\A_q}^r(z)$ separately
to conclude that this random Laurent series converges 
a.s. whenever $z \in \A_q$. 
Moreover, since the distribution $\zeta_n$ is symmetric, 
both of $\gamma_1$ and $\gamma_q$ are natural 
boundaries \cite[p.40]{K85}.
Hence $X_{\A_q}^r$ provides a GAF on $\A_q$ whose 
covariance kernel is given by 
the weighted Szeg\H{o} kernel of $\A_q$, 
\[
\bE[X_{\A_q}^r(z) \overline{X_{\A_q}^r(w)} ]
=S_{\A_q}(z, w; r), \quad z, w \in \A_q,
\]
and the zero point process is denoted by
$\cZ_{X_{\A_q}^r} :=\sum_{z \in \A_q: X^r_{\A_q}(z)=0} \delta_{z}$.
In particular, we write
$X_{\A_q}(z) :=X_{\A_q}^q(z), z \in \A_q$ and
$\cZ_{X_{\A_q}} := \cZ_{X_{\A_q}^q}$
as mentioned above. 

We recall \textit{Schottky's theorem}
(see, for instance, \cite{AIMO08}):
The group of conformal 
(i.e., angle-preserving one-to-one) transformations
from $\A_q$ to itself 
is generated by the rotations 
and the $q$-inversions $T_q(z) := q/z$. 
The invariance of the present 
GAF and its zero point process under rotation is obvious. 
Using the properties of $S_{\A_q}$, 
we can prove the following.
\begin{prop}
\label{thm:conformal_Aq}
\begin{description}
\item{\rm (i)} 
The GAF $X_{\A_q}^r$ given by (\ref{eqn:GAF_Aqr}) 
has the $(q, r)$-inversion symmetry in the sense that
\[
\Big\{ (T_q'(z))^{1/2} X_{\A_q}^r (T_q(z)) \Big\}
\dis=
\Big\{ \sqrt{\frac{q}{r}} X_{\A_q}^{q^2/r}(z) \Big\},
\quad z \in \A_q,
\]
where $T_q'(z) := \frac{d T_q}{dz}(z)=-q/z^2$.
\item{\rm (ii)} 
For $\cZ_{X_{\A_q}^r} = \sum_i \delta_{Z_i}$,
let 
$T_q^{*}\cZ_{X_{\A_q}^r} := \sum_i \delta_{T_q^{-1}(Z_i)}$.
Then
$T_q^{*}\cZ_{X_{\A_q}^r} \dis= \cZ_{X_{\A_q}^{q^2/r}}$.
\item{\rm (iii)} 
In particular, when $r=q$, 
the GAF $X_{\A_q}$ is invariant under
conformal transformations which preserve $\A_q$,
and so is its zero point process $\cZ_{X_{\A_q}}$.
\end{description}
\end{prop}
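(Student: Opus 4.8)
The plan is to reduce everything to the covariance kernel. Each process of the form (\ref{eqn:GAF_Aqr}) is a mean-zero complex Gaussian field built from i.i.d.\ standard complex Gaussians, for which $\bE[\zeta_m \zeta_n]=0$; hence $\bE[X_{\A_q}^r(z)X_{\A_q}^r(w)]\equiv 0$, and the law of such a GAF is determined by its covariance kernel $\bE[X_{\A_q}^r(z)\overline{X_{\A_q}^r(w)}]=S_{\A_q}(z,w;r)$ alone. Accordingly, for (i) it suffices to show that the two sides have equal covariance kernels. The computational heart is the behaviour of the weighted Szeg\H{o} kernel under $q$-inversion. Starting from (\ref{eqn:SAqr1}) and using $T_q(z)\overline{T_q(w)}=q^2/(z\wbar)$, I would reverse the summation index $n\mapsto -n$ and then shift it by one unit; the absolute convergence of the series for $z,w\in\A_q$ legitimizes these term-by-term rearrangements. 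This should yield the key identity
\[
S_{\A_q}(T_q(z),T_q(w);r)=\frac{z\wbar}{r}\,S_{\A_q}(z,w;q^2/r).
\]

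Next I would observe that $(T_q'(z))^{1/2}=(-q)^{1/2}/z$ is single-valued and nowhere vanishing on $\A_q$, because $1/z$ carries no monodromy around the annulus and $(-q)^{1/2}$ is merely a constant; thus $(T_q'(z))^{1/2}X_{\A_q}^r(T_q(z))$ is a genuine (single-valued) GAF on $\A_q$. Its covariance kernel is $(T_q'(z))^{1/2}\overline{(T_q'(w))^{1/2}}S_{\A_q}(T_q(z),T_q(w);r)$, and since the prefactor contributes $|(-q)^{1/2}|^2/(z\wbar)=q/(z\wbar)$, the factor $z\wbar$ from the key identity cancels and leaves $\frac{q}{r}S_{\A_q}(z,w;q^2/r)$. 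This is exactly the covariance kernel of $\sqrt{q/r}\,X_{\A_q}^{q^2/r}$, so the two processes agree in law, proving (i).

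For (ii) I would use that $(T_q'(z))^{1/2}$ never vanishes, so the zeros of $(T_q'(z))^{1/2}X_{\A_q}^r(T_q(z))$ are precisely the solutions of $T_q(z)=Z_i$; since $T_q$ is an involution ($T_q^{-1}=T_q$), these are the atoms of $T_q^{*}\cZ_{X_{\A_q}^r}$. As multiplication by the nonzero constant $\sqrt{q/r}$ leaves the zero set unchanged, the zero set of the right-hand process in (i) is $\cZ_{X_{\A_q}^{q^2/r}}$, and the distributional identity of (i) transfers to the zero sets to give $T_q^{*}\cZ_{X_{\A_q}^r}\dis=\cZ_{X_{\A_q}^{q^2/r}}$.

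Finally, for (iii) I would set $r=q$, so that $q^2/r=q$ and the scalar $\sqrt{q/r}$ becomes $1$: parts (i) and (ii) then assert invariance of $X_{\A_q}$ and $\cZ_{X_{\A_q}}$ under $T_q$. Rotational invariance is immediate from $(e^{\sqrt{-1}n\theta}\zeta_n)_{n\in\Z}\dis=(\zeta_n)_{n\in\Z}$, and by Schottky's theorem the rotations together with $T_q$ generate every conformal automorphism of $\A_q$; since the conformal factor $(\psi')^{1/2}$ is multiplicative under composition by the chain rule, the invariance of the covariance kernel $S_{\A_q}(\cdot,\cdot;q)$ — hence of the GAF and of its zero point process — extends from the generators to the whole group. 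I expect the main subtlety to be the careful handling of the square-root prefactor: confirming its single-valuedness on $\A_q$ and checking that its modulus exactly cancels the $z\wbar$ produced by the kernel identity; once this is settled, the argument reduces to the index manipulation above and the principle that the covariance kernel determines the GAF.
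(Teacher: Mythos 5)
Your proof is correct, and its overall architecture coincides with the paper's: both reduce the statement to the single kernel identity $(T_q'(z))^{1/2}\overline{(T_q'(w))^{1/2}}\,S_{\A_q}(T_q(z),T_q(w);r)=(q/r)\,S_{\A_q}(z,w;q^2/r)$, use that a mean-zero complex Gaussian field with vanishing relation kernel is determined in law by its covariance, pass to zero sets via the non-vanishing of $(T_q'(z))^{1/2}$, and invoke Schottky's theorem at $r=q$. The one genuine difference is how the kernel identity is obtained: the paper rewrites $S_{\A_q}(z,w;r)=f^{\rm JK}(z\wbar,-r)$ (Proposition \ref{thm:S_qt_JK}) and applies the Jordan--Kronecker functional equations (\ref{eqn:JK2b}) and (\ref{eqn:JK2c}), whereas you work directly with the Laurent expansion (\ref{eqn:SAqr1}), reversing the summation index $n\mapsto -n$ and shifting by one unit, which the absolute convergence on $\A_q$ legitimizes. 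Your route is more elementary and self-contained, requiring no theta-function or ${}_1\psi_1$ machinery; the paper's choice pays off later, since the same $f^{\rm JK}$ identities are reused verbatim in the direct proof of the $(q,r)$-inversion symmetry of the correlation functions (Corollary \ref{thm:rho_inversion}). You are also somewhat more complete than the printed proof: the paper records only the kernel computation and the $r=q$ conclusion, leaving the covariance-determines-law step, item (ii), and the multiplicativity of the cocycle $(\psi')^{1/2}$ under composition implicit, all of which you spell out; your observations that $(T_q'(z))^{1/2}=\sqrt{-1}\,q^{1/2}/z$ is single-valued and non-vanishing on $\A_q$ and that the prefactor contributes exactly $q/(z\wbar)$ agree with Remark \ref{rem:parameter_L} and correctly cancel the factor $z\wbar/r$ produced by your series manipulation.
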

\noindent
This result should be compared with
the conformal invariance of the 
DPP of Peres and Vir\'ag on $\D$
stated as Proposition \ref{thm:Peres_Virag2}
in Section \ref{sec:Peres_Virag} below. 
The proof of Proposition \ref{thm:conformal_Aq}
is given in Section \ref{sec:proof_conformal_Aq}.

\begin{rem}
\label{rem:parameter_L}
Note that $(T_q'(z))^{1/2}=\sqrt{-1} q^{1/2}/z$ is single valued
and non-vanishing in $\A_q$, and so is $(T_q'(z))^{L/2}$
if $L \in \N$.
By the calculation given in Section \ref{sec:proof_conformal_Aq},
we have the equality,
\[
(T_q'(z))^{L/2} \overline{(T_q'(w))^{L/2}}
S_{\A_q}(T_q(z), T_q(w); r)^L
=\left( \frac{q}{r} \right)^L
S_{\A_q}(z, w; q^2/r)^L.
\]
We define $X_{\A_q}^{r, (L)}$ as
the centered GAF with the covariance kernel
$S_{\A_q}(z,w; r)^L$ on $\A_q$, $L \in \N$.
Then it is rotationally invariant and having the 
$(q, r)$-inversion symmetry in the sense 
\[
\Big\{ (T_q'(z))^{L/2} X_{\A_q}^{r, (L)} (T_q(z)) \Big\}
\dis=
\Big\{ \left(\frac{q}{r}\right)^{L/2} X_{\A_q}^{q^2/r, (L)}(z) \Big\},
\quad z \in \A_q.
\]
This implies that the zero point process of $X_{\A_q}^{r, (L)}$
is also rotationally invariant and symmetric under
the $(q, r)$-inversion.
In particular, the GAF $X_{\A_q}^{(L)}:=X_{\A_q}^{q, (L)}$
and its zero point process are invariant under 
conformal transformations which preserve $\A_q$.
By definition $X_{\A_q}^{(1)}=X_{\A_q}$
given by (\ref{eqn:GAF_Aqr}) with $r=q$.
The formula (\ref{eqn:K_Aq}) 
and Proposition \ref{thm:relation}
in Appendix \ref{sec:KAq_SAq}
imply that $X_{\A_q}^{(2)}$ is realized by
$X_{\A_q}^{(2)}(z)= \sum_{n \in \Z} \zeta_n c^{(2)}_n z^n$,
$z \in \A_q$, 
where 
$c^{(2)}_{-1}=\sqrt{a-1/(2 \log q)}$
with $a=a(q)$ given by (\ref{eqn:aq}),
$c^{(2)}_n=\sqrt{(n+1)/(1-q^{2(n+1)})}$, $n \in \Z \setminus \{-1\}$, 
and $\{\zeta_n\}_{n \in \Z}$ are i.i.d.~standard complex 
Gaussian random variables with density $e^{-|z|^2}/\pi$.
We do not know explicit expressions for
the Gaussian Laurent series of $X_{\A_q}^{(L)}$ for
$L=3, 4, \dots$, but it is expected that
$\lim_{q \to 0} X_{\A_q}^{(L)}(z) \dis= X_{\D}^{(L)}(z)
:=\sum_{n \in \N_0} \zeta_n 
\frac{\sqrt{L(L+1) \cdots (L+n-1)}}{\sqrt{n!}} z^n$, $z \in \D$, 
and $X_{\A_q}^{(1)}$ and $X_{\A_q}^{(2)}$ given above
indeed satisfy such limit transitions.
Here $\{X_{\D}^{(L)} : L >0 \}$ is the family of GAFs on $\D$
studied in \cite[Sections 2.3 and 5.4]{HKPV09}
which are invariant under conformal transformations
mapping $\D$ to itself.
\end{rem}

Let $\theta(\cdot) :=\theta(\cdot; q^2)$ 
be the theta function, 
whose definition and basic properties
are given in Section \ref{sec:S_qt_theta}.
Following the standard way \cite{GR04,RS06}, 
we put $\theta(z_1, \dots, z_n):=\prod_{i=1}^n \theta(z_{i})$.
Then (\ref{eqn:SAqr1}) is expressed as \cite{MS94}
\begin{equation}
S_{\A_q}(z, w; r)
= \frac{q_0^2 \theta(-r z\wbar)}{\theta(-r, z \wbar)},
\quad z, w \in \A_q
\label{eqn:SAqt2}
\end{equation}
with $q_0:=\prod_{n=1}^{\infty} (1-q^{2n})$,
as proved in Section \ref{sec:S_qt_JK}.

\begin{rem}
\label{rem:norm_r}
Consider an operator $(U_q f)(z) :=f(q^2 z)$
acting on holomorphic functions $f$ on 
$\C^{\times}$. 
For $n \in \N$, 
Rosengren and Schlosser \cite{RS06} called 
$f$ an \textit{$A_{n-1}$ theta function of norm} 
$a \in \C^{\times}$ if
\[
(U_q f)(z)= \frac{(-1)^n}{a z^n} f(z).
\]
It is shown that $f$ is an $A_{n-1}$ theta function of norm $a$
if and only if there exist $C$, $b_1, \dots, b_n$ such that
$\prod_{\ell=1}^n b_{\ell}=a$ and
$f(z)=C \theta(b_1 z, \dots, b_n z)$ 
\cite[Lemma 3.2]{RS06}.
In the following, 
given $n$ points $z_1, \dots, z_n \in \A_q$, 
we will evaluate the 
weighted Szeg\H{o} kernel 
at these points.
In this case, the weight parameter
$r$ for $H_r^2(\A_q)$
can be related to a norm 
for $A_{n-1}$ theta functions as explained below.
Put $a=-r \prod_{\ell=1}^n \overline{z_{\ell}}$
and let $\Theta^{(n, a)}_j(z) :=C \theta(-r z \overline{z_j})
\prod_{1 \leq \ell \leq n, \ell \not=j} \theta(z \overline{z_{\ell}})$,
$z \in \A_q$, 
$j =1, \dots, n$. Then 
$\{\Theta^{(n, a)}_j(z) \}_{j=1}^n$ form a basis of
the $n$-dimensional space of the $A_{n-1}$ theta functions of
norm $a$.
If we choose 
$C=q_0^2/\theta(-r)$, then 
evaluations of the weighted Szeg\H{o} kernel 
at the $n$ points are
expressed as
$S_{\A_q}(z_i, z_j; r)=\Theta^{(n, a)}_j(z_i)/\prod_{\ell=1}^n
\theta(z_i \overline{z_{\ell}})$, $i = 1, \dots, n$. 
Multivariate extensions of such elliptic function spaces
were studied in \cite{TV97}.
\end{rem}

\subsection{Mccullough-Shen formula for the
conditional Szeg\H{o} kernel}
\label{sec:MS_formula}

For any non-empty set $D$, given a positive definite kernel 
$k(z,w)$ on $D \times
D$, we can define a centered Gaussian process on $D$, $X_D$, 
such that the covariance kernel is given by
$\bE[X_D(z) \overline{X_D(w)}]=k(z, w)$,
$z, w \in D$. 
The kernel $k$ induces RKHS 
$\cH_k$ realized as a
function space having $k$  
as the reproducing kernel \cite{Aro50}. 
Now we define a \textit{conditional kernel} 
\begin{equation}
 k^{\alpha}(z,w) = k(z,w) 
 - \frac{k(z,\alpha) k(\alpha, w)}{k(\alpha, \alpha)}, 
 \quad z, w \in D, 
\label{eqn:conditionS}
\end{equation}
for $\alpha \in D$ such that 
$k(\alpha, \alpha) > 0$. 
Then, $k^{\alpha}$ is a reproducing kernel for the
Hilbert subspace 
$\cH_{k}^{\alpha} := \{f \in \cH_{k} :f(\alpha)=0\}$.
The corresponding centered Gaussian process on $D$
whose covariance kernel is given by 
$k^{\alpha}$ is equal in law to 
$X_D$ given that $X_D(\alpha)=0$. 

We can verify that if $D \subsetneq \C$ 
is a simply connected domain 
with $\cC^{\infty}$ smooth boundary
and the Szeg\H{o} kernel $S_D$ can be defined on it,
Riemann's mapping theorem implies 
the equality \cite{Ahl79,Bel95} 
\begin{equation}
S_D^{\alpha}(z, w)=S_D(z,w) h_{\alpha}(z) \overline{h_{\alpha}(w)},
\quad z, w, \alpha \in D,
\label{eqn:SaD}
\end{equation}
where $h_{\alpha}$ is the Riemann mapping function;
the unique conformal map from $D$ to $\D$
satisfying $h_{\alpha}(\alpha)=0$ and $h_{\alpha}'(\alpha) >0$.
Actually (\ref{eqn:SaD}) is equivalent with
(\ref{eqn:SD_formula}) derived 
from Riemann's mapping theorem in Section \ref{sec:RK} below.
In particular, when $D=\D$, $h_{\alpha}$ is 
the M\"{o}bius transformation
$\D \to \D$ sending $\alpha$ to the origin,
\begin{equation}
h_{\alpha}(z)=\frac{z-\alpha}{1-\overline{\alpha} z}
=z \frac{1-\alpha/z}{1-\alphabar z},
\quad z, \alpha \in \D.
\label{eqn:Mob1}
\end{equation}

\begin{figure}[ht]
\begin{center}
\includegraphics[scale=0.3]{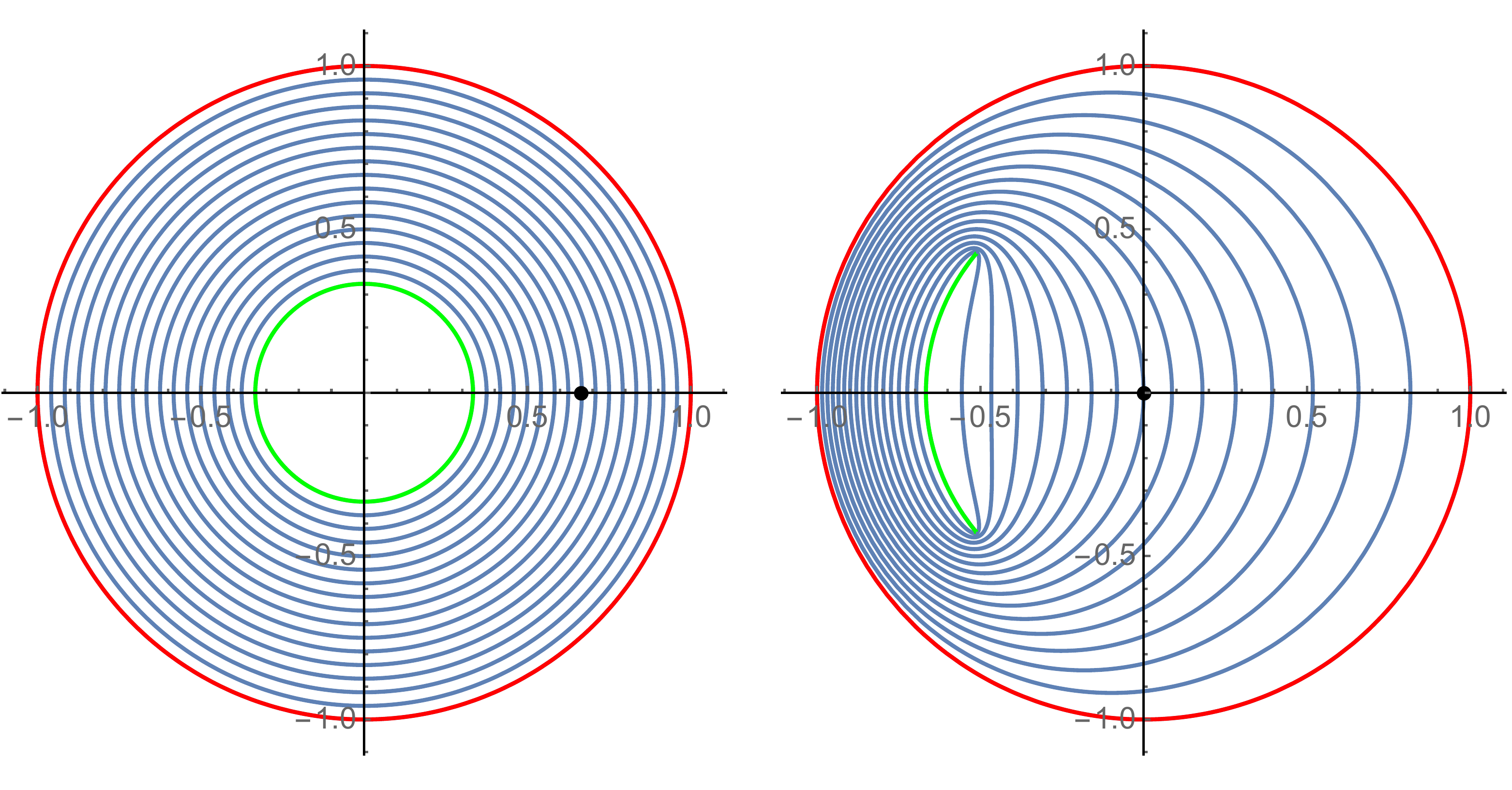}
\end{center}
\caption{Conformal map $h^{q}_{\alpha} : 
\A_{q} \to \D \setminus \{\mbox{a circular slit}\}$
is illustrated for $q=1/3$ and $\alpha=2/3$. 
The point $\alpha=2/3$ in $\A_{1/3}$ 
is mapped to the origin.
The outer boundary $\gamma_1$ of $\A_{1/3}$ 
(denoted by a red circle) is mapped to a unit circle
(a red circle) making the boundary of $\D$. 
The inner boundary $\gamma_{1/3}$ of $\A_{1/3}$
(a green circle) is mapped to a circular slit (denoted by a green arc)
which is a part of the circle with radius $\alpha=2/3$,
where the map is two-to-one
except the two points on $\gamma_{1/3}$
mapped to the two edges of the circular slit.}
\label{fig:h_alpha}
\end{figure}

Since the theta function $\theta(z)$ can be regarded
as an elliptic extension of $1-z$ as suggested by
the formula $\lim_{q \to 0} \theta(z; q^2)=1-z$
given by (\ref{eqn:theta_p0}) below, 
we can think of the following function
as an elliptic extension of (\ref{eqn:Mob1});
\begin{equation}
h_{\alpha}^q(z):=
z \frac{\theta(\alpha/z)}
{\theta(\alphabar z)}
=- \alpha \frac{\theta(z/\alpha)}{\theta(z \alphabar)}, 
\quad z, \alpha \in \A_q.
\label{eqn:haq}
\end{equation}
We can prove that
$h_{\alpha}^q$ is identified with a conformal map
from $\A_{q}$ to the unit disk with a circular slit in it,
in which $\alpha \in \A_q$ is sent to the origin \cite{MS94}.
See Figure \ref{fig:h_alpha} 
and Lemma \ref{thm:Blaschke}
in Section \ref{sec:conditional_S}.
Mccullough and Shen proved the equality 
\begin{equation}
S_{\A_q}^{\alpha}(z, w; r)
=S_{\A_q}(z, w; r |\alpha|^2)
h_{\alpha}^q(z) \overline{h_{\alpha}^q(w)},
\quad z, w, \alpha \in \A_q, 
\label{eqn:SaAq}
\end{equation}
as an extension of (\ref{eqn:SaD}) \cite{MS94}.
See Section \ref{sec:conditional_S} below
for a direct proof of this equality by
Weierstrass' addition formula 
of the theta function (\ref{eqn:Weierstrass_add1}).
Up to the factor $h_{\alpha}^q(z) \overline{h_{\alpha}^q(w)}$
the conditional kernel $S_{\A_q}^{\alpha}(z, w; r)$ 
remains the weighted Szeg\H{o} kernel, but 
the weight parameter should be changed from $r$ to
$r |\alpha|^2$.

Following (\ref{eqn:conditionS}), conditional kernels
$k^{\alpha_1, \dots, \alpha_n}$ are inductively defined as
\begin{equation}
k^{\alpha_1, \dots, \alpha_n}(z, w)
=(k^{\alpha_1, \dots, \alpha_{n-1}})^{\alpha_n}(z,w),
\quad
z, w, 
\alpha_1, \dots, \alpha_n \in D,
\quad n=2,3,\dots.
\label{eqn:conditionS2}
\end{equation}
The kernels $k^{\alpha_1, \dots, \alpha_n}$,
$n=2, 3, \dots$, 
will construct Hilbert subspaces
$\cH_{k}^{\alpha_1, \dots, \alpha_n} 
:= \{f \in \cH_{k} :f(\alpha_1)=\cdots=f(\alpha_n)=0\}$.

For $n \in \N$, $\alpha_1, \dots, \alpha_n \in \A_q$, define
\begin{equation}
\gamma^q_{\{\alpha_{\ell}\}_{\ell=1}^n}(z) 
:= \prod_{\ell=1}^n h_{\alpha_{\ell}}^q(z),
\quad z \in \A_q.
\label{eqn:gamma}
\end{equation}
Then the Mccullough and Shen formula (\ref{eqn:SaAq}) 
\cite{MS94} is generalized as 
\begin{equation}
S_{\A_q}^{\alpha_1, \dots, \alpha_n}(z, w; r)
=S_{\A_q} \Big(z, w; r \prod_{\ell=1}^n |\alpha_{\ell}|^2 \Big)
\gamma^q_{\{\alpha_{\ell}\}_{\ell=1}^n}(z) 
\overline{
\gamma^q_{\{\alpha_{\ell}\}_{\ell=1}^n}(w)
}, \quad z, w \in \A_q,
\label{eqn:MS_general}
\end{equation}
for $n \in \N$, $\alpha_1, \dots, \alpha_n \in \A_q$. 
We can give probabilistic interpretations
of the above facts as follows.
\begin{prop}
\label{thm:equivalence2}
For any $\alpha_1, \dots, \alpha_n \in \A_q$, $n \in \N$, 
the following hold. 
\begin{description}
\item{\rm (i)} \, 
The following equality is established,
\begin{align*}
&\mbox{$\{X_{\A_q}^r(z) : z \in \A_q \}$ 
given $\{X_{\A_q}^r(\alpha_1)= \cdots =X_{\A_q}^r(\alpha_n)=0\}$}
\nonumber\\
& \hskip 4cm
\dis= 
\left\{ \gamma^q_{\{ \alpha_{\ell} \}_{\ell=1}^n}(z) 
X_{\A_q}^{r \prod_{\ell=1}^n |\alpha_{\ell}|^2}(z)
 : z \in \A_q \right\}.
\end{align*}
\item{\rm (ii)} \, 
Let $\cZ_{X_{\A_q}^r}^{\alpha_1, \dots, \alpha_n}$ 
denote the zero point process
of the GAF $X_{\A_q}^r(z)$
given $\{X_{\A_q}^r(\alpha_1)=\cdots = X_{\A_q}^r(\alpha_n)=0\}$. 
Then, 
$\cZ_{X_{\A_q}^r}^{\alpha_1, \dots, \alpha_n} 
\dis= \cZ_{X_{\A_q}^{r \prod_{\ell=1}^n |\alpha_{\ell}|^2}}
+ \sum_{i=1}^n \delta_{\alpha_i}$.
\end{description}
\end{prop}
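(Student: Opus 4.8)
The plan is to prove (i) by matching covariance kernels and then to read off (ii) from the zeros of the deterministic prefactor. First I would record that $X_{\A_q}^r$ is a centered GAF with covariance kernel $S_{\A_q}(\cdot,\cdot;r)$. By the single-point conditioning statement recalled in Section~\ref{sec:MS_formula}, conditioning $X_{\A_q}^r$ on $\{X_{\A_q}^r(\alpha_1)=0\}$ produces a centered GAF whose covariance kernel is the conditional kernel $S_{\A_q}^{\alpha_1}(\cdot,\cdot;r)$ of (\ref{eqn:conditionS}); iterating this $n$ times along the inductive definition (\ref{eqn:conditionS2}), the process $X_{\A_q}^r$ given $\{X_{\A_q}^r(\alpha_1)=\cdots=X_{\A_q}^r(\alpha_n)=0\}$ is a centered GAF with covariance kernel $S_{\A_q}^{\alpha_1,\dots,\alpha_n}(\cdot,\cdot;r)$.

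Next I would invoke the generalized Mccullough--Shen formula (\ref{eqn:MS_general}); writing $\gamma:=\gamma^q_{\{\alpha_\ell\}_{\ell=1}^n}$ and $s:=r\prod_{\ell=1}^n|\alpha_\ell|^2$ for brevity, it reads
\[
S_{\A_q}^{\alpha_1,\dots,\alpha_n}(z,w;r)
= S_{\A_q}(z,w;s)\,\gamma(z)\,\overline{\gamma(w)}.
\]
This formula itself follows by induction on $n$ from the single-point identity (\ref{eqn:SaAq}), by factoring $\gamma^q_{\{\alpha_\ell\}_{\ell=1}^{n-1}}$ out of the $\alpha_n$-conditioning (\ref{eqn:conditionS}) and applying (\ref{eqn:SaAq}) to the residual weighted Szeg\H{o} kernel of parameter $r\prod_{\ell=1}^{n-1}|\alpha_\ell|^2$. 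Meanwhile $\gamma(z)\,X_{\A_q}^{s}(z)$ is again a centered GAF, since multiplying a GAF by a fixed analytic function preserves the GAF property, and a one-line covariance computation pulls out $\gamma(z)\overline{\gamma(w)}$ and reproduces exactly $S_{\A_q}(z,w;s)\gamma(z)\overline{\gamma(w)}$. Because two centered GAFs with the same covariance kernel share the same law \cite{HKPV09}, part (i) follows.

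For part (ii) I would use that the zero point process is a measurable functional of the GAF, so the distributional identity of (i) transfers to the zero sets, giving $\cZ_{X_{\A_q}^r}^{\alpha_1,\dots,\alpha_n}\dis=\cZ_{\gamma\,X_{\A_q}^{s}}$. The zero divisor of the product $\gamma\cdot X_{\A_q}^{s}$ is the sum of the divisors of the two factors, so it remains to locate the zeros of $\gamma=\prod_{\ell=1}^n h_{\alpha_\ell}^q$ from (\ref{eqn:gamma}). By Lemma~\ref{thm:Blaschke} each $h_{\alpha_\ell}^q$ is a conformal (hence injective, with non-vanishing derivative) map of $\A_q$ onto a circularly slit disk sending $\alpha_\ell$ to the origin, so it has a single zero in $\A_q$, which is simple and located at $\alpha_\ell$; therefore the divisor of $\gamma$ in $\A_q$ is $\sum_{i=1}^n\delta_{\alpha_i}$. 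Adding the zero set of $X_{\A_q}^{s}$ then yields $\cZ_{X_{\A_q}^{r\prod_\ell|\alpha_\ell|^2}}+\sum_{i=1}^n\delta_{\alpha_i}$, the asserted identity.

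The steps above are largely bookkeeping once the Mccullough--Shen factorization is in hand; I expect the two points needing care to be the following. First, confirming that conditioning on the vanishing at finitely many points returns a bona fide centered (circularly symmetric) GAF, so that the covariance kernel alone determines its law — this is where the symmetry of the $\zeta_n$ enters. Second, the degenerate case in which the $\alpha_i$ are not pairwise distinct, where the single-point conditioning (\ref{eqn:conditionS}) breaks down because $S_{\A_q}^{\alpha_1,\dots,\alpha_{k-1}}(\alpha_k,\alpha_k;r)$ vanishes; the cleanest route is to prove the statement for distinct $\alpha_1,\dots,\alpha_n$ and recover the coincident case by continuity, the multiplicities in $\sum_i\delta_{\alpha_i}$ then matching the higher-order zeros that $\gamma$ develops there.
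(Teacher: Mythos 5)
Your proof is correct and takes essentially the same route as the paper, which presents Proposition \ref{thm:equivalence2} without a separate proof, as an immediate probabilistic consequence of the conditional-kernel facts recalled in Section \ref{sec:MS_formula}, the Mccullough--Shen factorization (\ref{eqn:MS_general}) (whose one-point case (\ref{eqn:SaAq}) is proved via Weierstrass' addition formula in Section \ref{sec:conditional_S}), and the location and simplicity of the zero of $h^q_{\alpha}$ established in Lemma \ref{thm:Blaschke}. Your two flagged caveats --- that conditioning must return a circularly symmetric centered GAF so the covariance determines the law, and that coincident $\alpha_i$ make the inductive definition (\ref{eqn:conditionS2}) degenerate, so one should argue for distinct points and pass to the limit --- are sound and slightly more careful than what the paper records, but do not alter the argument.
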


\begin{rem}
\label{rem:Remark_condition1}
For the GAF on $\D$
studied by Peres and Vir\'ag \cite{PV05},
$\{X_{\D}(z) : z \in \D \}$ 
given $\{X_{\D}(\alpha)=0\}$
is equal in law to $\{h_{\alpha}(z) X_{\D}(z) : z \in \D \}$, 
$\forall \alpha \in \D$, 
where $h_{\alpha}$ is given by (\ref{eqn:Mob1}),
and then, in the notation 
used in Proposition \ref{thm:equivalence2}, 
$\cZ_{X_{\D}}^{\alpha} \dis= \cZ_{X_{\D}}+\delta_{\alpha}$, 
$\forall \alpha \in \D$.
Hence, no new GAF nor new zero point process 
appear by conditioning of zeros.
For the present GAF on $\A_q$, however, 
conditioning of zeros induces new GAFs and 
new zero point processes
as shown by Proposition \ref{thm:equivalence2}.
Actually, by (\ref{eqn:SAqr1}) 
the covariance of the induced GAF
$X_{\A_q}^{r \prod_{\ell=1}^n |\alpha_{\ell}|^2}$ 
is expressed by
$S_{\A_q}(z, w; r \prod_{\ell=1}^n |\alpha_{\ell}|^2)
=\sum_{n=-\infty}^{\infty} (z \wbar)^n/
(1+r \prod_{\ell=1}^n |\alpha_{\ell}|^2 q^{2n})$.
Since $q < |\alpha_{\ell}| <1$,
as increasing the number of conditioning zeros, 
the variance of induced GAF monotonically
increases, in which the increment is a decreasing function of
$|\alpha_{\ell}| \in (q, 1)$. 
\end{rem}

\subsection{Correlation functions of the zero point process}
\label{sec:correlations}
We introduce the following 
notation. For an $n \times n$ matrix
$M=(m_{ij})_{1 \leq i, j \leq n}$,
\begin{equation}
\perdet M = \perdet_{1 \leq i, j \leq n} [m_{ij}]
:=\per M \det M,
\label{eqn:perdet}
\end{equation}
that is, $\perdet M$ denotes
$\per M$ multiplied by $\det M$.
Note that
$\perdet$ is a special case
of \textit{hyperdeterminants} introduced
by Gegenbauer following Cayley
(see \cite{Mat08,EG09,LV10} and references therein). 
If $M$ is a positive semidefinite hermitian matrix, then
$\per M \geq \det M \geq 0$ 
\cite[Section II.4]{MM92}
\cite[Theorem 4.2]{Min78},
and hence $\perdet M \geq 0$ by the definition (\ref{eqn:perdet}).

The following will be proved in Section \ref{sec:proof_mainA1}.

\begin{thm}
\label{thm:mainA1}
Consider the zero point process $\cZ_{X_{\A_q}^r}$ 
on $\A_q$. 
Then, it is a permanental-determinantal point process 
(PDPP) in the sense that 
it has correlation functions
$\{\rho^{n}_{\A_q}\}_{n \in \N}$ 
given by
\begin{equation}
\rho^{n}_{\A_q}(z_1,\dots,z_n; r) 
=
\frac{\theta(-r)}{\theta( -r \prod_{k=1}^n |z_k|^4)}
\perdet_{1 \leq i, j \leq n}
\Big[
S_{\A_q} \Big(z_{i}, z_{j}; r \prod_{\ell=1}^n |z_{\ell}|^2 \Big) 
\Big]
\label{eqn:rho_mainA1}
\end{equation}
for every $n \in \N$ and
$z_1, \dots, z_n \in \A_q$
with respect to $m/\pi$.
\end{thm}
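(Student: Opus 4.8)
The plan is to compute the correlation functions of the zero set $\cZ_{X_{\A_q}^r}$ directly from the general theory of zeros of GAFs, for which the key tool is the Kac--Rice / Hammersley-type formula expressing the $n$-point intensity in terms of the joint law of the GAF and its derivatives at the $n$ points. Since $X_{\A_q}^r$ is a mean-zero complex Gaussian analytic function with covariance $S_{\A_q}(z,w;r)$, the fundamental principle (as in \cite{HKPV09}, and used by Peres and Vir\'ag \cite{PV05}) is that the $n$-point correlation function $\rho^n_{\A_q}$ factors into two pieces: a Gaussian-conditioning factor coming from requiring $X_{\A_q}^r(z_1)=\cdots=X_{\A_q}^r(z_n)=0$, and the conditional expectation of $\prod_i |X_{\A_q}^{r\,\prime}(z_i)|^2$ given those zeros. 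The first piece is governed by the joint density of $(X_{\A_q}^r(z_1),\dots,X_{\A_q}^r(z_n))$ at the origin, which is $1/(\pi^n \det[S_{\A_q}(z_i,z_j;r)])$.

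First I would set up this Kac--Rice representation carefully and identify what the conditional process looks like. This is exactly where Proposition~\ref{thm:equivalence2} does the heavy lifting: it tells us that the law of $X_{\A_q}^r$ conditioned on vanishing at $\alpha_1,\dots,\alpha_n$ is that of $\gamma^q_{\{\alpha_\ell\}}(z)\,X_{\A_q}^{r\prod_\ell|\alpha_\ell|^2}(z)$. Differentiating this product relation at the conditioning points $z_i$ (where $\gamma^q$ vanishes simply), only the term in which the derivative hits the factor $h^q_{z_i}$ survives, so the derivative $X_{\A_q}^{r\,\prime}(z_i)$ conditioned on the zeros becomes $\big(\prod_{\ell\neq i} h^q_{z_\ell}(z_i)\big)\,(h^q_{z_i})'(z_i)\,X_{\A_q}^{r\prod|z_\ell|^2}(z_i)$. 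Thus the conditional expectation of $\prod_i|X^{r\,\prime}_{\A_q}(z_i)|^2$ reduces to a permanent of the conditional covariance matrix of the \emph{unconditioned} GAF $X_{\A_q}^{r\prod_\ell|z_\ell|^2}$ evaluated at the $z_i$, multiplied by explicit scalar factors built from the conformal maps $h^q_{z_\ell}$ and their derivatives.

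Next I would assemble the two pieces. The determinant factor $\det[S_{\A_q}(z_i,z_j;r)]$ from the vanishing density and the Jacobian factors $|(h^q)'|^2$ should, after invoking the Mccullough--Shen transformation law (\ref{eqn:SaAq})--(\ref{eqn:MS_general}), recombine into $\det[S_{\A_q}(z_i,z_j;r\prod_\ell|z_\ell|^2)]$ together with the scalar prefactor $\theta(-r)/\theta(-r\prod_k|z_k|^4)$; and the permanent of the conditional covariances becomes $\per[S_{\A_q}(z_i,z_j;r\prod_\ell|z_\ell|^2)]$. Multiplying these yields precisely the $\perdet$ structure in (\ref{eqn:rho_mainA1}). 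The bookkeeping of the $h^q_{z_\ell}$ and $(h^q_{z_\ell})'$ factors is where the theta-function identities (Weierstrass addition, the norm/quasi-periodicity relations of Remark~\ref{rem:norm_r}) must be used to see that all the stray prefactors collapse into the single clean ratio of theta functions; I expect this cancellation is what forces the weight parameter to shift to $r\prod_\ell|z_\ell|^2$ and the argument of the outer theta to be $\prod_k|z_k|^4$ (each $|z_k|^2$ appearing once from the weight shift and once from the Jacobian).

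The main obstacle will be the precise tracking of the derivative/Jacobian factors through the conditioning and their exact recombination into the stated prefactor. The conceptual content is standard once the Kac--Rice formula and Proposition~\ref{thm:equivalence2} are in hand, but verifying that the product of $|(h^q_{z_i})'(z_i)|$, the off-diagonal $h^q_{z_\ell}(z_i)$ evaluations, and the $\det/\per$ transformation factors all conspire to leave exactly $\theta(-r)/\theta(-r\prod_k|z_k|^4)$ and no residual $z$-dependence is the delicate elliptic-function computation. A useful consistency check at each stage is the limit $q\to 0$, where the theta functions degenerate via $\theta(z;q^2)\to 1-z$, the permanent-determinant should collapse to the squared Bergman kernel of \cite{PV05} when $n=1$, and the prefactor should tend to $1$, recovering the Peres--Vir\'ag DPP; matching this limit will both guide and verify the sign and exponent bookkeeping.
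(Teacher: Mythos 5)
Your proposal is correct and follows essentially the same route as the paper: your Kac--Rice setup is exactly Proposition \ref{thm:correlation_function} (the permanent-over-determinant formula from \cite{Shi12,PV05}), your conditioning step is the Mccullough--Shen formula (\ref{eqn:MS_general}) (equivalently Proposition \ref{thm:equivalence2}), and your observation that only the term where the derivative hits $h^q_{z_i}$ survives is precisely the paper's computation of ${\gamma^q_n}'(z_i)$ via Lemma \ref{thm:Blaschke} (i) and (iv). The one ingredient worth naming: the ``delicate elliptic-function computation'' you defer is carried out in the paper not by the MS transformation law (which you cite for this step, but which only governs conditional kernels) nor by Weierstrass addition alone, but by Frobenius's elliptic extension of the Cauchy determinant, (\ref{eqn:Frobenius_formula}), applied twice --- with weights $s=r$ and $s=r\prod_{\ell=1}^n|z_\ell|^2$ --- which at one stroke expresses $\prod_{i=1}^n|{\gamma^q_n}'(z_i)|^2$ as $\frac{\theta(-r)}{\theta(-r\prod_{\ell=1}^n|z_\ell|^4)}\det[S_{\A_q}(z_i,z_j;r)]\det[S_{\A_q}(z_i,z_j;r\prod_{\ell=1}^n|z_\ell|^2)]$, cancelling the denominator determinant and yielding (\ref{eqn:rho_mainA1}) exactly as you predicted.
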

\noindent
In Appendix \ref{sec:hDPP} we rewrite this theorem
using the notion of hyperdeterminants 
(Theorem \ref{thm:main_hDPP}).

\begin{rem}\label{rem:PDPP_property}
(i) The PDPP with correlation functions \eqref{eqn:rho_mainA1} 
turns out to be a \textit{simple point process}, 
i.e., there is no multiple point a.s., 
due to the existence of two-point correlation 
function with respect to
the Lebesgue measure $m/\pi$ \cite[Lemma 2.7]{Kal17}. 
(ii) Using the explicit expression (\ref{eqn:rho_mainA1}) together 
with the Frobenius determinantal formula
\eqref{eqn:Frobenius_formula}, 
we can verify that for every $n \in \N$, 
the $n$-point correlation $\rho^n_{\A_q}(z_1, \dots, z_n) >0$
if all coordinates $z_1, \dots, z_n \in \A_q$ are 
different from each other, 
and that $\rho_{\A_q}^n(z_1, \dots, z_n) =0$ if
some of  $z_1, \dots, z_n$ coincide; 
e.g., $z_i = z_j, i \not=j$,  
by the determinantal factor in $\perdet$ (\ref{eqn:perdet}).
\end{rem}

\begin{rem}
\label{rem:pdet_correlation}
The determinantal point processes (DPPs) 
and the permanental point processes (PPPs)
have the $n$-correlation functions of the forms  
\begin{align*}
\rho_{\mathrm{DPP}}^n(z_1,\dots,z_n) 
= \det_{1 \le i,j \le n}[K(z_i,z_j)], \quad 
\rho_{\mathrm{PPP}}^n(z_1,\dots,z_n) 
= \per_{1 \le i,j \le n}[K(z_i,z_j)], 
\end{align*}
respectively (cf. \cite{ST03a, MM06, HKPV09}). 
Due to Hadamard's inequality for the determinant \cite[Section II.4]{MM92}
and Lieb's inequality for the permanent \cite{Lie66}, we have 
\begin{align*}
 \rho_{\mathrm{DPP}}^2(z_1,z_2) \le \rho_{\mathrm{DPP}}^1(z_1) 
 \rho_{\mathrm{DPP}}^1(z_2), \quad 
 \rho_{\mathrm{PPP}}^2(z_1,z_2) \ge \rho_{\mathrm{PPP}}^1(z_1) 
 \rho_{\mathrm{PPP}}^1(z_2), 
\end{align*}
in other words, the unfolded $2$-correlation functions 
are $\le 1$ or $\ge 1$, respectively (see Section~\ref{sec:two_point}).
These correlation inequalities suggest a repulsive nature
(negative correlation) for DPPs and 
an attractive nature (positive correlation) for PPPs. 
Some related topics are discussed in \cite{Shi07}. 
Since $\perdet$ is considered to have intermediate nature 
between determinant and permanent, 
PDPPs are expected to exhibit both repulsive and attractive characters, 
depending on the position of two points $z_1$ and $z_2$. 
For example, Remark~\ref{rem:PDPP_property} (ii) shows 
the repulsive nature inherited from the DPP side. 
The two-sidedness of the present PDPP will be 
clearly described in Theorem~\ref{thm:critical_line} given below. 
\end{rem}
\begin{rem}
\label{rem:Remark_condition2}
Since correlation functions are transformed
as in Lemma \ref{thm:transformation_rho}
given in Section \ref{sec:Peres_Virag}, 
Proposition \ref{thm:conformal_Aq} (ii) is 
rephrased using correlation functions as
\begin{equation}
\rho^{n}_{\A_q}(T_q(z_1),\dots, T_q(z_n); r) 
\prod_{\ell=1}^n |T_q'(z_{\ell})|^2
= \rho^{n}_{\A_q}(z_1, \dots, z_n; q^2/r)
\label{eqn:rho_inversion0}
\end{equation}
for any $n \in \N$ and $z_1, \dots, z_n \in \A_q$,
where $T_q(z)=q/z$ and $|T_q'(z)|^2=q^2/|z|^4$.
In the correlation functions 
$\{\rho_{\A_q}^n\}_{n \in \N}$ given by
Theorem \ref{thm:mainA1}, we see 
an \textit{inductive structure} such that
the functional form of the 
permanental-determinantal correlation kernel
$S_{\A_q}(\cdot, \cdot; r \prod_{\ell=1}^n |z_{\ell}|^2)$ 
is depending on the
points $\{z_1, \dots, z_n\}$, which we intend to measure
by $\rho_{\A_q}^n$, via the weight parameter
$r \prod_{\ell=1}^n |z_{\ell}|^2$.
This is due to the inductive structure 
of the induced GAFs generated in conditioning of zeros as
mentioned in Remark \ref{rem:Remark_condition1}.
In addition, the reference measure $m/\pi$ is
also weighted by 
$\theta(-r)/\theta(-r \prod_{k=1}^n |z_{k}|^4)$.
As demonstrated by a direct proof of 
(\ref{eqn:rho_inversion0}) given in 
Section \ref{sec:proof_inversion_symmetry},
such a \textit{hierarchical structure}
of correlation functions and reference measures 
is necessary to realize
the $(q,r)$-inversion symmetry (\ref{eqn:rho_inversion0})
(and the invariance under conformal transformations
preserving $\A_q$ when $r=q$). 
\end{rem}
\begin{rem}
\label{rem:pair_zero}
The nonexistence of zero in $\D$ 
of $S_{\D}(\cdot, \alpha), \alpha \in \D$ and the
uniqueness of zero in $\A_q$ of $S_{\A_q}(\cdot, \alpha)$, 
$\alpha \in \A_q$ are concluded from a general consideration
(see, for instance, \cite[Section 27]{Bel16}).
Define
\begin{equation}
\alphahat := - \frac{q}{\alphabar}, \quad \alpha \in \A_q.
\label{eqn:alphabar}
\end{equation}
The fact
$S_{\A_q}(\alphahat, \alpha)=0$, $\alpha \in \A_q$ was proved 
as Theorem 1 in \cite{TT99} by direct calculation, 
for which a simpler proof will be given 
below (Lemma \ref{thm:zero_S})
using theta functions (\ref{eqn:SAqt2}). 
For the GAF $X_{\D}$ studied by Peres and Vir\'ag \cite{PV05},
all points in $\D$ are correlated, while 
the GAF $X_{\A_q}$ 
has a pair structure of independent points
$\{ \{\alpha, \alphahat \} : \alpha \in \A_q\}$
(Proposition \ref{thm:pair_structure}). 
As a special case of (\ref{eqn:MS_general}), we have
\[
S_{\A_q}^{\alpha, \alphahat}(z, w)
=S_{\A_q}(z, w) f^q_{\alpha}(z) \overline{f^q_{\alpha}(w)},
\quad z, w \in \A_q
\]
with
\begin{align*}
f^q_{\alpha}(z) &:= \frac{1}{z} h^q_{\alpha}(z) h^q_{\alphahat}(z)
\nonumber\\
&=z 
\frac{\theta(-q z \alphabar, \alpha/z)}
{\theta(-q z/\alpha, \alphabar z)}
=-\alpha \frac{\theta(-q z \alphabar, z/\alpha)}
{\theta(-qz/\alpha, z \alphabar)}.
\end{align*}
We notice that $f^q_{\alpha}$ is identified with the
\textit{Ahlfors map} from $\A_q$ to $\D$, 
that is, it is holomorphic and
gives the two-to-one map 
from $\A_q$ to $\D$ satisfying
$f^q_{\alpha}(\alpha)=f^q_{\alpha}(\alphahat)=0$.
The Ahlfors map has been extensively studied 
(see, for instance, \cite[Section 13]{Bel16}),
and the above explicit expression using theta functions
will be useful.
We can verify that if we especially consider 
the $2n$-correlation of $n$-pairs 
$\{\{z_i, \widehat{z_i}\} \}_{i=1}^n$
of points in the zero point process $\cZ_{X_{\A_q}}$,
the hierarchical structure mentioned above vanishes
and the formula (\ref{eqn:rho_mainA1}) of Theorem \ref{thm:mainA1}
is simplified as
\[
\rho^{2n}_{\A_q}(z_1, \widehat{z_1}, \dots, z_n, \widehat{z_n}; q) 
\nonumber\\
=\perdet_{1 \leq i, j \leq n}
\begin{bmatrix}
S_{\A_q}(z_i, z_j) 
& S_{\A_q}(z_i, \widehat{z_j}) \\
S_{\A_q}(\widehat{z_i}, z_j) 
& S_{\A_q}(\widehat{z_i}, \widehat{z_j})
\end{bmatrix}
\]
for any $n \in \N$.
In other words,  we need the hierarchical structure
of correlation functions and reference measure
in order to describe the probability distributions of 
general configurations of the 
zero point process $\cZ_{X_{\A_q}^r}$.
\end{rem}

The density of zeros on $\A_q$ 
with respect to $m/\pi$ is given by 
\begin{equation}
\rho^1_{\A_q}(z; r) = 
\frac{\theta(-r)}{\theta(-r |z|^4)}
S_{\A_q}(z,z; r|z|^2)^2
=
\frac{q_0^4 \theta(-r, -r |z|^4)}
{\theta(-r |z|^2, |z|^2)^2},
\quad z \in \A_q,
\label{eqn:density_Aqt}
\end{equation}
which is always positive. 
Since $\rho^1_{\A_q}(z; r)$ depends only on 
the modulus of the coordinate $|z| \in (q, 1)$, 
the PDPP is rotationally invariant. 
As shown by 
(\ref{eqn:theta_signs})--(\ref{eqn:max_theta}) in Section \ref{sec:S_qt_theta}, 
in the interval $x \in (-\infty, 0)$, $\theta(x)$ is positive and
strictly convex with $\lim_{x \downarrow -\infty} \theta(x)
=\lim_{x \uparrow 0} \theta(x)=+\infty$, while
in the interval $x \in (q^2,1)$, $\theta(x)$ is positive
and strictly concave 
with
$\theta(x) \sim
q_0^2(x-q^2)/q^2$ as $x \downarrow q^2$
and $\theta(x) \sim q_0^2(1-x)$ as $x \uparrow 1$.
Therefore, the density shows divergence both at the inner
and outer boundaries as
\begin{equation}
\rho^1_{\A_q}(z; r) \sim
\begin{cases}
\displaystyle{
\frac{q^2}{ (|z|^2-q^2)^2}
},
& |z| \downarrow q,
\cr
\displaystyle{
\frac{1}{(1-|z|^2)^2}
},
& |z| \uparrow 1,
\end{cases}
\label{eqn:density_edges}
\end{equation}
which is independent of $r$ and implies $\bE[\cZ_{X^r_{\A_q}}(\A_q)]=\infty$. 
If $M$ is a $2 \times 2$ matrix, we see that
$\perdet M=\det (M \circ M)$,
where $M \circ M$ denotes the Hadamard product of $M$, 
i.e., entrywise multiplication, $(M \circ M)_{ij}=M_{ij} M_{ij}$. 
Then the two-point correlation is expressed by a single 
determinant as
\begin{equation}
\rho^{2}_{\A_q}(z_1, z_2; r)
= \frac{\theta(-r)}{\theta( -r |z_1|^4|z_2|^4)}
\det_{1 \leq i, j \leq 2} 
\left[S_{\A_q}(z_i, z_j ; r |z_1|^2 |z_2|^2)^2
\right], 
\quad z_1, z_2 \in \A_q.
\label{eqn:rho2_Aqt}
\end{equation}

The above GAF and the PDPP induce the following limiting cases.
With fixed $r >0$ we take the limit $q \to 0$.
By the reason explained in Remark \ref{rem:q0limit} below, 
in this limiting procedure, we should consider the 
point processes $\{\cZ_{X^r_{\A_q}} : q>0\}$ to be defined on 
the punctured unit disk 
$\D^{\times} :=\{z \in \C : 0 < |z| < 1\}$
instead of $\D$.
Although the limit point process is given on $\D^{\times}$ 
by definition, 
it can be naturally viewed as a point process
defined on $\D$, which we will introduce below.  
Let $H_r^2(\D)$ be the Hardy space on $\D$ with the
weight parameter $r>0$,
whose inner product is given by
\[
\bra f, g \ket_{H^2_r(\D)} 
=\frac{1}{2\pi} \int_0^{2 \pi} f(e^{\sqrt{-1} \phi})
\overline{g(e^{\sqrt{-1} \phi})} d \phi
+ r f(0) g(0), 
\quad f, g \in H^2_r(\D). 
\]
The reproducing kernel of $H^2_r(\D)$ is given by
\begin{align}
S_{\D}(z, w; r)
&= \sum_{n=0}^{\infty} e^{(0,r)}_n(z) \overline{e^{(0,r)}_n(w)}
= \frac{1}{1+r} + \sum_{n=1}^{\infty} (z \wbar)^n 
\nonumber\\
&=
\frac{1+r z \wbar}{(1+r)(1-z \wbar)}, 
\quad z, w \in \D.
\label{eqn:SA0t}
\end{align}
The GAF associated with $H_r^2(\D)$ is then
defined by
\begin{equation}
X_{\D}^r(z)
= \frac{\zeta_0}{\sqrt{1+r}} 
+  \sum_{n=1}^{\infty} \zeta_n z^n,
\quad z \in \D
\label{eqn:GAF_A0t}
\end{equation}
so that the covariance kernel is given by 
$\bE[X_{\D}^r(z) 
\overline{ X_{\D}^r(w)} ]
=S_{\D}(z,w;r)$, 
$z, w \in \D$.
For the conditional GAF given a zero at $\alpha \in \D$, 
the covariance kernel is given by
\[
S_{\D}^{\alpha}(z, w; r)
=S_{\D}(z, w; r |\alpha|^2)
h_{\alpha}(z) \overline{h_{\alpha}(w)},
\quad z, w, \alpha \in \D,
\]
where the replacement of the weight parameter
$r$ by $r |\alpha|^2$ should be done, even though
the factor $h_{\alpha}(z)$ is simply given by
the M\"{o}bius transformation (\ref{eqn:Mob1}).

For the zero point process
Theorem \ref{thm:mainA1} is reduced to
the following by the formula $\lim_{q \to 0} \theta(z; q^2)=1-z$.
\begin{cor}
\label{thm:mainA2}
Assume that $r >0$.
Then $\cZ_{X_{\D}^r}$ is 
a PDPP on $\D$ with the correlation functions
\begin{equation}
\rho^{n}_{\D}(z_1,\dots,z_n; r) 
=
\frac{1+r}{1+r \prod_{k=1}^n |z_k|^4}
\perdet_{1 \leq i, j \leq n}
\Big[S_{\D}\Big(z_{i}, z_{j}; r \prod_{\ell=1}^n |z_{\ell}|^2 \Big) \Big]
\label{eqn:rho_mainA2}
\end{equation} 
for every $n \in \N$ and
$z_1, \dots, z_n \in \D$
with respect to $m/\pi$.
In particular, the density of zeros on $\D$
is given by
\begin{equation}
\rho_{\D}^1(z;r)
= \frac{(1+r)(1+r|z|^4)}{(1+r|z|^2)^2 (1-|z|^2)^2},
\quad z \in \D.
\label{eqn:density_A0t}
\end{equation}
\end{cor}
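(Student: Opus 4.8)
The plan is to obtain Corollary \ref{thm:mainA2} as the $q\downarrow 0$ limit of Theorem \ref{thm:mainA1}, identifying the limit of the explicit correlation functions $\rho^n_{\A_q}(\,\cdot\,;r)$ of \eqref{eqn:rho_mainA1} with the correlation functions of the zero set of the limiting GAF $X^r_{\D}$ defined in \eqref{eqn:GAF_A0t}. The two analytic ingredients are the elementary relation $\lim_{q\to 0}\theta(z;q^2)=1-z$ already recorded in the text and the companion limit of the weighted Szeg\H{o} kernel, which I would establish first. Writing $S_{\A_q}(z,w;s)=\sum_{n\in\Z}(z\wbar)^n/(1+s q^{2n})$ as in \eqref{eqn:SAqr1}, I would pass to the limit term by term: for $n\geq 1$ the factor $1/(1+s q^{2n})\to 1$, for $n=0$ it equals $1/(1+s)$, and for $n\leq -1$ one has $q^{2n}\to\infty$ so the factor tends to $0$. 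By dominated convergence of the series this gives $S_{\A_q}(z,w;s)\to 1/(1+s)+\sum_{n\geq 1}(z\wbar)^n=S_{\D}(z,w;s)$ of \eqref{eqn:SA0t}, locally uniformly on compact subsets of $\D^{\times}\times\D^{\times}$.

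Applying this with $s=r\prod_{\ell=1}^n|z_{\ell}|^2$ inside the $\perdet$, together with $\theta(-r)\to 1+r$ and $\theta\big(-r\prod_k|z_k|^4\big)\to 1+r\prod_k|z_k|^4$ in the prefactor, and using that $\perdet$ is a continuous (polynomial) function of its entries, the right-hand side of \eqref{eqn:rho_mainA1} converges to the right-hand side of \eqref{eqn:rho_mainA2}. This is the routine part of the argument.

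The substantive step is to justify that this pointwise limit really is the correlation function of $\cZ_{X^r_{\D}}$, and here I would work at the level of the GAF. The coefficients $1/\sqrt{1+s q^{2n}}$ of \eqref{eqn:GAF_Aqr} converge coefficientwise to those of \eqref{eqn:GAF_A0t}, and since the covariances converge locally uniformly on $\D^{\times}$ the fields $X^r_{\A_q}$ converge in law to $X^r_{\D}$ in the topology of locally uniform convergence on $\D^{\times}$; by Hurwitz's theorem the zero sets then converge vaguely, so the correlation functions pass to the limit and must agree with the explicit limits just computed. The hard part will be controlling this convergence near the origin: the density \eqref{eqn:density_edges} blows up as $|z|\downarrow q$, so as $q\to 0$ mass is pushed toward the puncture, which is precisely why the processes must be regarded on $\D^{\times}$ rather than $\D$ (a uniform-integrability bound on compacts of $\D^{\times}$, away from the origin, is what secures passage of the factorial moments to the limit). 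Since $X^r_{\D}(0)=\zeta_0/\sqrt{1+r}\neq 0$ a.s., the limit process a.s. charges no point at the origin and therefore extends to a point process on all of $\D$. Finally, setting $n=1$ in \eqref{eqn:rho_mainA2} and inserting $S_{\D}(z,z;r|z|^2)=(1+r|z|^4)/\{(1+r|z|^2)(1-|z|^2)\}$ from \eqref{eqn:SA0t} yields \eqref{eqn:density_A0t} after a one-line simplification.
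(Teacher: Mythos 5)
Your proposal is correct and takes essentially the same route as the paper: Corollary \ref{thm:mainA2} is obtained there precisely as the $q\to 0$ limit of Theorem \ref{thm:mainA1} via $\lim_{q\to 0}\theta(z;q^2)=1-z$ and the kernel limit $S_{\A_q}(z,w;s)\to S_{\D}(z,w;s)$ recorded in Section \ref{sec:q_0}, with the processes viewed on $\D^{\times}$ exactly as in Remark \ref{rem:q0limit}. The only difference is that you make explicit the limit-interchange justification (locally uniform convergence of the GAFs on $\D^{\times}$, Hurwitz's theorem for the zero sets, and uniform integrability of the factorial moments on compacts away from the origin), which the paper treats as routine.
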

\noindent
As $r$ increases the first term in (\ref{eqn:GAF_A0t}), 
which gives the value of the GAF at the origin, 
decreases and hence the variance at the origin,
$\bE[|X_{\D}^r(0)|^2]=S_{\D}(0,0;r)
=(1+r)^{-1}$ decreases monotonically.
As a result the density of zeros 
in the vicinity of the origin 
increases as $r$ increases.
Actually we see that $\rho_{\D}^1(0; r)=1+r$.
\begin{rem}
\label{rem:q0limit}
The asymptotics \eqref{eqn:density_edges} show that 
the density of zeros of 
$\cZ_{X^r_{\A_q}}$ diverges at the inner boundary
$\gamma_q =\{z : |z|=q\}$ for each $q>0$ 
while the density of $\cZ_{X^r_{\D}}$ is finite at the origin 
as in \eqref{eqn:density_A0t}. Therefore infinitely many zeros near 
the inner boundary $\gamma_q$
seem to vanish in the limit as $q \to 0$. 
This is the reason why we regard the base space of
$\{\cZ_{X^r_{\A_q}} : q>0\}$ and the limit point process
$\cZ_{X^r_{\D}}$ as $\D^{\times}$ instead of $\D$ 
as mentioned before.
(See Section \ref{sec:Peres_Virag} for the general formulation of
 point processes.)
Indeed, in the vague topology, 
with which we equip a configuration space, 
we cannot see configurations outside each compact set, hence 
infinitely many zeros are not observed 
on each compact set in $\D^{\times}$ (not $\D$)
for any sufficiently small $q>0$ 
depending on the compact set that we take. 
\end{rem}

We note that if we take the further limit $r \to 0$ in 
(\ref{eqn:SA0t}), we obtain the Szeg\H{o} kernel of $\D$
given by (\ref{eqn:SD}).
Since the matrix
$
( S_{\D}(z_i, z_j)^{-1} )_{1 \leq i, j \leq n}
=(1-z_i \overline{z_j})_{1 \leq i, j \leq n}
$
has rank 2, the following equality called 
\textit{Borchardt's identity} holds
(see Theorem 3.2 in \cite{Min78}, Theorem 5.1.5 in \cite{HKPV09}),
\begin{equation}
\perdet_{1 \leq i, j \leq n} 
\Big[ (1-z_i \overline{z_j})^{-1} \Big]
= \det_{1 \leq i, j \leq n}
\Big[ (1-z_i \overline{z_j})^{-2} \Big].
\label{eqn:Borchardt}
\end{equation}
By the relation (\ref{eqn:SKC_D}), the $r \to 0$ limit
of $\cZ_{X_{\D}^r}$ is identified with
the DPP on $\D$, $\cZ_{X_{\D}}$,
studied by Peres and Vir\'ag \cite{PV05},
whose correlation functions are given by
\[
\rho_{\D, \mathrm{PV}}^n(z_1, \dots, z_n)
=\det_{1 \leq i, j \leq n} [K_{\D}(z_i, z_j)],
\quad n \in \N, \quad z_1, \dots, z_n \in \D,
\]
with respect to $m/\pi$ 
(see Section \ref{sec:Peres_Virag} below). 

\begin{rem}
\label{rem:Remark_r_infinity}
We see from (\ref{eqn:SA0t}) that
$\lim_{r \to \infty} S_{\D}(z,w; r)
= (1-z \wbar)^{-1} -1$, $z, w \in \D$,
which can be identified with 
the conditional kernel 
given a zero at the origin; 
$S_{\D}^0(z, w)
= S_{\D}(z, w)- S_{\D}(z,0) S_{\D}(0, w)/S_{\D}(0,0)$
for $S_{\D}(z,0) \equiv 1$. 
In this limit we can use Borchardt's identity again, since
the rank of the matrix
$(S_{\D}(z_i, z_j; \infty)^{-1})_{1 \leq i, j \leq n}
=(z_i^{-1} z_j^{-1}-1)_{1 \leq i, j \leq n}$ is two. 
Then, thanks to the proper limit of the prefactor 
of $\perdet$ in (\ref{eqn:rho_mainA2})
when $z_k \in \D^{\times}$ for all $k=1,2,\dots, n$; 
$\lim_{r \to \infty}(1+r)/(1+r \prod_{k=1}^n |z_k|^4)
=\prod_{k=1}^n |z_k|^{-4}$, 
we can verify that $\lim_{r \to \infty} \rho_{\D}^n(z_1, \dots, z_n; r)
= \rho_{\D, \mathrm{PV}}^n(z_1, \dots, z_n)$
for every $n \in \N$, and every 
$z_1, \dots, z_n \in \D^{\times}$. 
On the other hand, taking \eqref{eqn:GAF_A0t} into account, we have 
$X^{\infty}_{\D}(z)
= z \sum_{n=1}^{\infty} \zeta_n z^{n-1}
\dis= z X_{\D}(z)$, from which, 
we can see that as $r \to \infty$, $\cZ_{X_{\D}^r}$
converges to 
$\cZ_{X_{\D}^{\infty}} \dis= \cZ_{X_{\D}}+\delta_0$; that is,
the DPP of Peres and Vir\'ag with
a deterministic zero added at the origin.
This is consistent with the fact that $\rho_{\D}^1(0;r) =
1+r$ diverges as $r \to \infty$.
Since 
$\cZ_{X_{\D}^0} :=\lim_{r \to 0} \cZ_{X_{\D}^r} \dis= \cZ_{X_{\D}}$
as mentioned above, 
the one-parameter family of 
PDPPs $\{\cZ_{X_{\D}^r} : r \in (0, \infty)\}$ 
can be regarded as an interpolation between 
the DPP of Peres and Vir\'ag and 
that DPP with a deterministic zero added at the origin. 
\end{rem}

\subsection{Unfolded 2-correlation functions}
\label{sec:two_point}

By the determinantal factor in 
$\perdet$ (\ref{eqn:perdet})
the PDPP shall be negatively correlated 
when distances of points
are short in the domain $\A_q$.
The effect of the permanental part \cite{ST03a,MM06} 
in $\perdet$ will appear in long distances.
Contrary to such a general consideration for the PDPP,
if we take the double limit, $q \to 0$ and then $r \to 0$,
Borchardt's identity (\ref{eqn:Borchardt}) becomes applicable
and the zero point process is reduced to the DPP
studied by Peres and Vir\'ag \cite{PV05}.
In addition to this fact, the two-point correlation 
of the PDPP can be generally expressed using a single determinant
as explained in the sentence above (\ref{eqn:rho2_Aqt}).
We have to notice the point, however, that
the weight parameter $r |z|^2$ of $S_{\A_q}$
for the density (\ref{eqn:density_Aqt})
is replaced by $r|z_1|^2 |z_2|^2$ for the
two-point correlation (\ref{eqn:rho2_Aqt}),
and the prefactor $\theta(-r)/\theta(-r |z|^4)$
of $S_{\A_q}$
for $\rho_{\A_q}^1$ is changed to
$\theta(-r)/\theta(-r|z_1|^4 |z_2|^4)$ for $\rho_{\A_q}^2$.
Here we show that due to such alterations
our PDPP can not be reduced to any DPP
in general and it has indeed both of negative and positive correlations
depending on the distance of points and the values of parameters.
In order to clarify this fact, 
we study the two-point correlation
function normalized by the product of
one-point functions,
\begin{align}
g_{\A_q}(z, w; r)
:= \frac{\rho_{\A_q}^2(z,w; r)}
{\rho_{\A_q}^1(z;r) \rho_{\A_q}^1(w;r)},
\quad 
(z, w) \in \A_q^2, 
\label{eqn:unfolded}
\end{align}
where $\rho^1_{\A_q}$ and $\rho^2_{\A_q}$
are explicitly given by (\ref{eqn:density_Aqt})
and (\ref{eqn:rho2_Aqt}), respectively.
This function is simply called an intensity ratio in \cite{PV05},
but here we call it an \textit{unfolded 2-correlation function}
following a terminology used in 
random matrix theory \cite{For10}.
We will prove the following:
(i) When $0 < q < 1$, in the short distance,
the correlation is generally repulsive in common with DPPs
(Proposition \ref{thm:short_distance}).
(ii) There exists a critical value $r_0 =r_0(q) \in (q, 1)$
for each $q \in (0, 1)$ 
such that if $r \in (r_0, 1)$ positive correlation emerges
between zeros when the distance between them 
is large enough within $\A_q$ 
(Theorem \ref{thm:critical_line}). 
(iii) The limits 
$g_{\D}(z, w; r) :=\lim_{q \to 0} g_{\A_q}(z, w; r)$,
$z, w \in \D^{\times}$ and
$r_{\rm c}:=\lim_{q \to 0} r_0(q)$
are well-defined, and $r_{\rm c}$ is positive.
When $r \in [0, r_{\rm c})$
all positive correlations vanish in $g_{\D}(z, w; r)$
(Proposition \ref{thm:negative_corr_q0}),
while when $r \in (r_{\rm c}, \infty)$
positive correlations can survive
(Remark \ref{rem:positive_corr} 
given at the end of Section \ref{sec:proofs}).
In addition to these rigorous results, we will report
the numerical results for $q \in (0, 1)$: 
In intermediate distances between zeros, 
positive correlations are observed at any value of $r \in (q, 1)$,
but the distance-dependence of correlations 
shows two distinct patterns depending on
the value of $r$, whether $r \in (q, r_0)$ or $r \in (r_0, 1)$
(Figure \ref{fig:G_vee_plots}). 

It should be noted that 
the $(q,r)$-inversion symmetry (\ref{eqn:rho_inversion0}) 
implies the equality
(see the second assertion of Lemma 
\ref{thm:transformation_rho} given below),
\begin{equation}
g_{\A_q}
(q/z, q/w; q^2/r)
=g_{\A_q}(z, w; r),
\quad (z, w) \in \A_q^2.
\label{eqn:rho_symmetry}
\end{equation}
Provided that the moduli
of coordinates $|z|, |w|$ are fixed, 
we can verify that
the unfolded 2-correlation function
takes a minimum (resp. maximum) when
$\arg w =  \arg z$ (resp. $\arg w = - \arg z$) 
(Lemma \ref{thm:bounds} in
Section \ref{sec:upper_lower}).
We consider these two extreme cases. 
By putting
$w=x, z=q/x \in (\sqrt{q}, 1)$ we define the function
\begin{equation}
G_{\A_q}^{\wedge}(x; r) =
g_{\A_q}(q/x, x; r),
\quad x \in (\sqrt{q}, 1)
\label{eqn:g1_short}
\end{equation}
in order to characterize the short distance behavior 
of correlation,
and by putting 
$w=-z =x \in (q, 1)$ we define the function
\begin{equation}
G_{\A_q}^{\vee}(x; r) =
g_{\A_q}(-x, x; r),
\quad x \in (q, 1). 
\label{eqn:g1_long}
\end{equation}
in order to characterize the long distance behavior 
of correlation.

Since the PDPP is rotationally symmetric, 
$G_{\A_q}^{\wedge}(x; r)$ shows
correlation between two points 
on \textit{any} line passing through the origin
located in the \textit{same} side
with respect to
the inner circle $\gamma_q$ of $\A_q$.
The Euclidean distance between these two points
is $(x^2-q)/x$ and it becomes zero as $x \to \sqrt{q}$.
We can see the power law with index $\beta=2$ 
in the short distance correlation as follows,
which is the common feature with DPPs \cite{For10}.

\begin{prop}
\label{thm:short_distance}
As $x \to \sqrt{q}$, 
$G_{\A_q}^{\wedge}(x; r) \sim c(r) (x -\sqrt{q})^{\beta}$
with $\beta=2$,
where
$c(r) =
(8 q_0^4 r^3 \theta(-qr)^6)
/(q^2 \theta(q)^2 \theta(-r)^6)>0$.
\end{prop}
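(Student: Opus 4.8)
The plan is to substitute the special configuration $z=q/x$, $w=x$ into the unfolded $2$-correlation \eqref{eqn:unfolded} defining $G^{\wedge}_{\A_q}$ in \eqref{eqn:g1_short}, and to exploit the strong constancy that this curve enjoys. First I would record three facts valid along $\{(q/x,x):x\in(\sqrt{q},1)\}$: since $z$ and $w$ are real and positive, $|z|^2|w|^2=q^2$, $|z|^4|w|^4=q^4$, and $z\overline{w}=w\overline{z}=q$. Consequently, in the two-point correlation \eqref{eqn:rho2_Aqt} the weight parameter is frozen at $rq^2$ and the prefactor is the constant $\theta(-r)/\theta(-rq^4)$, which by the quasi-periodicity $\theta(q^2x)=-x^{-1}\theta(x)$ of $\theta(\cdot)=\theta(\cdot;q^2)$ equals $r^2q^2$. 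Writing $F(u):=q_0^2\theta(-rq^2u)/[\theta(-rq^2)\theta(u)]$ for the weighted Szeg\H{o} kernel of weight $rq^2$ as a function of the single variable $u=z\overline{w}$ via \eqref{eqn:SAqt2}, and setting $u=x^2$, the $2\times 2$ determinant in \eqref{eqn:rho2_Aqt} becomes $F(q^2/u)^2F(u)^2-F(q)^4=\Phi(u)^2-\Phi(q)^2$, where $\Phi(u):=F(q^2/u)F(u)$; here the off-diagonal entries contribute the constant $F(q)$ precisely because $z\overline{w}=q$. Thus the numerator of $G^{\wedge}_{\A_q}$ is $r^2q^2[\Phi(u)^2-\Phi(q)^2]$.

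Next I would extract the power law. The map $u\mapsto q^2/u$ is an involution fixing $u=q$, and $\Phi$ is invariant under it, $\Phi(q^2/u)=\Phi(u)$; differentiating this relation at the fixed point gives $\Phi'(q)=0$. Hence $\Phi(u)-\Phi(q)=\tfrac12\Phi''(q)(u-q)^2+O((u-q)^3)$, so $\Phi(u)^2-\Phi(q)^2\sim\Phi(q)\Phi''(q)(u-q)^2$. Since $u-q=x^2-q\sim 2\sqrt{q}\,(x-\sqrt{q})$ as $x\downarrow\sqrt{q}$, the numerator behaves like $4q\,r^2q^2\,\Phi(q)\Phi''(q)(x-\sqrt{q})^2$, which already yields the exponent $\beta=2$. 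Because the denominator $\rho^1_{\A_q}(z;r)\rho^1_{\A_q}(w;r)$ of \eqref{eqn:unfolded} is smooth and tends to the nonzero constant $\rho^1_{\A_q}(\sqrt{q};r)^2$ as $x\downarrow\sqrt{q}$, the leading coefficient is $c(r)=4q\,r^2q^2\,\Phi(q)\Phi''(q)/\rho^1_{\A_q}(\sqrt{q};r)^2$.

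It then remains to evaluate the three ingredients $\Phi(q)$, $\rho^1_{\A_q}(\sqrt{q};r)$, and $\Phi''(q)$ explicitly. The first two are immediate from \eqref{eqn:SAqt2} and \eqref{eqn:density_Aqt} together with $\theta(-rq^2)=r^{-1}\theta(-r)$ and $\theta(-rq^3)=(rq)^{-1}\theta(-rq)$: one finds $\Phi(q)=q_0^4\theta(-rq)^2/[q^2\theta(-r)^2\theta(q)^2]$ and $\rho^1_{\A_q}(\sqrt{q};r)=q_0^4\theta(-r)^2/[r\theta(-rq)^2\theta(q)^2]$. For $\Phi''(q)$ I would pass to logarithmic derivatives: since $\Phi'(q)=0$, one has $\Phi''(q)=2\Phi(q)\big[(\log F)''(q)+q^{-1}(\log F)'(q)\big]$, and writing $\psi:=\theta'/\theta$ and reducing the argument $-rq^3$ to $-rq$ by the quasi-periodicity of $\psi$, the bracket becomes $r^2\psi'(-rq)-\psi'(q)-\tfrac{r}{q}\psi(-rq)-\tfrac1q\psi(q)$.

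The main obstacle is the final identity showing that this combination of first and second logarithmic theta-derivatives collapses to the simple closed form $q_0^4\theta(-r)^2/[qr\theta(q)^2\theta(-rq)^2]$, equivalently $\Phi''(q)=2q_0^8/[q^3r\theta(q)^4]$; note that all dependence on $\theta(-r)$ and $\theta(-rq)$ must cancel, which signals that a genuine theta identity rather than a routine expansion is required. I would establish it from Weierstrass' addition formula \eqref{eqn:Weierstrass_add1}, suitably differentiated or applied to the relevant four-term theta product. Substituting this value back gives exactly $c(r)=8q_0^4r^3\theta(-qr)^6/[q^2\theta(q)^2\theta(-r)^6]$; positivity is then clear because $q_0>0$, $\theta(q)>0$, and $\theta$ is strictly positive on the negative real axis, as recalled before the statement.
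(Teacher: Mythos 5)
Your proposal is correct, and it takes a genuinely different route from the paper's. The paper substitutes $z=q/x$, $w=x$ into the closed form (\ref{eqn:upper_lower1}) of Lemma \ref{thm:bounds} --- whose bracket has already been recombined via Weierstrass' addition formula --- so the quadratic vanishing sits there as the explicit squared factor $\theta(qx^2)^2$, which is then expanded at the simple zero of $\theta$ using (\ref{eqn:theta_qp}) and (\ref{eqn:theta_prime}), all remaining factors being evaluated at $x=\sqrt{q}$ by elementary theta identities. You instead work with the raw determinant (\ref{eqn:rho2_Aqt}), note that along the curve the weight freezes at $rq^2$ and the off-diagonal entry at $F(q)$ (your identifications $|z|^2|w|^2=q^2$, $z\wbar=q$, the prefactor $r^2q^2$, $\Phi(q)=F(q)^2$, and the values of $\Phi(q)$ and $\rho^1_{\A_q}(\sqrt{q};r)$ all check out), and you obtain $\beta=2$ conceptually from the invariance $\Phi(q^2/u)=\Phi(u)$ under the $q$-inversion involution fixing $u=q$, which forces $\Phi'(q)=0$; this explains the exponent as a symmetry effect, where the paper's argument makes it a by-product of an explicit formula. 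The one step you defer --- the ``main obstacle'' --- is in fact no obstacle, because the needed identity is already in the paper and requires no freshly differentiated addition formula. Writing $a_2(z)=\cD_z^2\log\theta(z)=z\psi(z)+z^2\psi'(z)$ with $\psi=\theta'/\theta$, your bracket satisfies
\[
q^2\Big[ r^2\psi'(-rq)-\psi'(q)-\tfrac{r}{q}\psi(-rq)-\tfrac{1}{q}\psi(q)\Big]
= a_2(-rq)-a_2(q)
= \wp(\phi_{-rq})-\wp(\phi_{q}),
\]
by Lemma \ref{thm:an} (and the periodicity $\wp(\phi_{q^2 z})=\wp(\phi_z)$ if one starts from the argument $-rq^3$), and then (\ref{eqn:addition_f}) of Lemma \ref{thm:fundamental_eqs} gives
$\wp(\phi_{-rq})-\wp(\phi_{q})
= f^{\rm JK}(q,-rq)\, f^{\rm JK}\big(q,-(rq)^{-1}\big)$.
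Evaluating the right-hand side by (\ref{eqn:f2}), with $\theta(-rq^2)=r^{-1}\theta(-r)$, $\theta(-1/r)=r^{-1}\theta(-r)$ and $\theta(-1/(rq))=(rq)^{-1}\theta(-rq)$, yields exactly
$q\, q_0^4\theta(-r)^2/\big[r\,\theta(q)^2\theta(-rq)^2\big]$,
i.e.\ your claimed closed form for the bracket, hence $\Phi''(q)=2q_0^8/\big[q^3 r\,\theta(q)^4\big]$; I verified that your final bookkeeping then reproduces the stated $c(r)=8 q_0^4 r^3\theta(-qr)^6/\big[q^2\theta(q)^2\theta(-r)^6\big]$. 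In short: the paper's route is shorter because the Weierstrass recombination was done once in Lemma \ref{thm:bounds}; yours buys a symmetry explanation of $\beta=2$ at the cost of one second-derivative theta identity, which the paper's own Lemmas \ref{thm:fundamental_eqs} and \ref{thm:an} close.
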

\noindent
Proof is given in 
Section \ref{sec:short_distance}.

The function $G_{\A_q}^{\vee}(x; r)$ shows
correlation between two points 
on a line passing through the origin, which are
located in the \textit{opposite} sides with respect to $\gamma_q$
and have the Euclidean distance $2x$. 
Long-distance behavior of the PDPP
will be characterized by this function 
in the limit $x \to 1$ in $\A_q$ 
(see Remark \ref{rem:slit_domain} given below).
In this limit the correlation decays as
$G_{\A_q}^{\vee}(x; r) \to 1$.
We find that the decay obeys 
the power law with a fixed index $\eta=4$, but 
the sign of the coefficient changes 
at a special value of $r$ for each $q \in (0, 1)$.
Given $(q, r)$, define $\tau_q$ and $\phi_{-r}$ by
\[
q=e^{\sqrt{-1} \pi \tau_q}, \quad
-r=e^{\sqrt{-1} \phi_{-r}},
\]
and consider the Weierstrass $\wp$-function
$\wp(\phi_{-r})=\wp(\phi_{-r}; \tau_q)$ 
given by (\ref{eqn:wp_expansion4}) in
Section \ref{sec:Weierstrass_elliptic} below.
The functions of $q$,
$e_i =e_i(q), i=1,2,3$ and $g_2=g_2(q)$ are defined by
(\ref{eqn:e123}) and (\ref{eqn:g2g3}).

\begin{thm}
\label{thm:critical_line}
\begin{description}
\item{\rm (i)} \, 
For $r > 0$, 
\begin{equation}
G_{\A_q}^{\vee}(x; r) \sim
1+\kappa(r) (1-x^2)^{\eta}
\quad \mbox{as $x \uparrow 1$},
\label{eqn:Gvee_up}
\end{equation}
and
\begin{equation}
G_{\A_q}^{\vee}(x; r) \sim
1+\frac{\kappa(r)}{q^8} (x^2-q^2)^{\eta}
\quad \mbox{as $x \downarrow q$},
\label{eqn:Gvee_down}
\end{equation}
with $\eta=4$, 
where
\begin{equation}
\kappa(r)=\kappa(r; q) :=
5 \wp(\phi_{-r})^2+2 e_1 \wp(\phi_{-r})
- (e_1^2 + g_2/2).
\label{eqn:kappa}
\end{equation}
The coefficient 
$\kappa$ has the reciprocity property, 
periodicity property, 
and their combination, 
\begin{equation}
\kappa(1/r)=\kappa(r), \quad
\kappa(q^2 r)=\kappa(r), \quad
\kappa(q^2/r)=\kappa(r).
\label{eqn:kappa_symmetry}
\end{equation}
Hence for the parameter space
$\{(q, r) : q \in [0, 1],  r>0\}$,
a fundamental cell is given by
$\Omega:=\{(q, r): q \in (0, 1), q \leq r \leq 1\}$.

\item{\rm (ii)} \,
It is enough to describe $\kappa(r)$ 
in $\Omega$. Let 
\begin{equation}
\wp_+=\wp_+(q) :=
-\frac{e_1}{5} + \frac{1}{10} \sqrt{24 e_1^2+10 g_2}.
\label{eqn:wp+}
\end{equation}
Then 
$e_1 > \wp_+ > e_2 > e_3$,
and
\begin{equation}
r_0=r_0(q) :=\exp \Big[
- \frac{1}{2} 
\int_{\wp_+}^{e_1}
\frac{ds}{\sqrt{(e_1-s)(s-e_2)(s-e_3)}} \Big]
\label{eqn:tcq}
\end{equation}
satisfies the inequalities,
\begin{equation}
q< r_0(q) < 1, \quad q \in (0, 1).
\label{eqn:ineq_a}
\end{equation}
The coefficient $\kappa(r)$ in (\ref{eqn:Gvee_up})
and (\ref{eqn:Gvee_down}) changes its sign at
$r =r_0$ as follows;
$\kappa(r) <0$ if 
$r \in (q, r_0)$,
and $\kappa(r) > 0$ if $r \in (r_0, 1)$.

\item{\rm (iii)} \,
The curve $\{r=r_0(q): q \in (0,1) \} \subset \Omega$ 
satisfies the following; 
\begin{align*}
&{\rm (a)} \quad  
r_{\rm c} := \lim_{q \to 0} r_0(q)
=\frac{1-\frac{\sqrt{4-\sqrt{6}}}{\sqrt{5}}}
{1+\frac{\sqrt{4-\sqrt{6}}}{\sqrt{5}}}
=2 \sqrt{6}-3-2 \sqrt{8-3 \sqrt{6}}
=0.2846303639 \cdots,
\nonumber\\
&
{\rm (b)} \quad 
r_0(q) \sim r_{\rm c}+c q^2
\quad \mbox{as $q \to 0$}
\nonumber\\
& 
\quad \mbox{with} \quad
c = \frac{8}{3}
\Big[ -72 + 22 \sqrt{6} 
+ 3(4 \sqrt{6}-1) \sqrt{8-3 \sqrt{6}} \Big]
=8.515307593 \cdots,
\nonumber\\
&{\rm (c)} \quad  
r_0(q) \sim 1-\frac{1}{2}(1-q)
\quad \mbox{as $q \to 1$}.
\end{align*}
\end{description}
\end{thm}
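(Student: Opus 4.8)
The plan is to reduce everything to the explicit theta-function form of $G_{\A_q}^{\vee}$ and then translate into Weierstrass data. First I would insert the density (\ref{eqn:density_Aqt}) and the two-point correlation (\ref{eqn:rho2_Aqt}) into the definition (\ref{eqn:g1_long}), using (\ref{eqn:SAqt2}) to write each weighted Szeg\H{o} kernel as a ratio of theta functions. Writing $A=S_{\A_q}(x,x;rx^4)$, $B=S_{\A_q}(-x,x;rx^4)$, $C=S_{\A_q}(x,x;rx^2)$, the identity $\per M\det M=\det(M\circ M)$ noted above (\ref{eqn:rho2_Aqt}) gives $\rho^2_{\A_q}(-x,x;r)\propto A^4-B^4$, and after cancelling the $q_0$-powers one obtains a closed form $G_{\A_q}^{\vee}=P(x)-Q(x)$, where $P=\theta(-rx^2)^4\theta(-rx^6)^4/\{\theta(-r)\theta(-rx^8)\theta(-rx^4)^6\}$ and $Q$ carries the extra factor $\theta(x^2)^4/\theta(-x^2)^4$. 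Both $P$ and $Q$ are degree-balanced in the theta factors, so $P(1)=1$ and $Q(1)=0$, confirming $G_{\A_q}^{\vee}(1;r)=1$.

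The key structural point for (i) is that the approach to $1$ is quartic. Setting $x^2=e^{-s}$ and $\Psi(s):=\log\theta(-re^{-s})$, I find $\log P=4\Psi(s)+4\Psi(3s)-\Psi(0)-\Psi(4s)-6\Psi(2s)$, whose Taylor coefficient of $s^{j}$ is proportional to $c_j:=4\cdot1^{j}+4\cdot3^{j}-4^{j}-6\cdot2^{j}$; a direct check gives $c_1=c_2=c_3=0$ and $c_4=-24$, so $P=1-\Psi^{(4)}(0)(1-x^2)^4+O((1-x^2)^5)$. Simultaneously the known edge behaviour $\theta(x)\sim q_0^2(1-x)$ as $x\uparrow1$ forces $Q\sim\{q_0^8\theta(r)^4/\theta(-r)^4\theta(-1)^4\}(1-x^2)^4$, whence $\kappa(r)=-\Psi^{(4)}(0)-q_0^8\theta(r)^4/\{\theta(-r)^4\theta(-1)^4\}$. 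Putting $v=\phi_{-r}+is$ so that $e^{iv}=-re^{-s}$, one has $\Psi^{(4)}(0)=g''''(\phi_{-r})$ with $g(v)=\log\theta(e^{iv})$; the standard identity $g''(v)=-\wp(v;\tau_q)+\mathrm{const}$ together with $\wp''=6\wp^2-g_2/2$ gives $-\Psi^{(4)}(0)=6\wp^2-g_2/2$, while the classical theta representation of $\wp-e_1$ identifies the second term with $(\wp(\phi_{-r})-e_1)^2$. Collecting terms reproduces (\ref{eqn:kappa}). The expansion (\ref{eqn:Gvee_down}) as $x\downarrow q$ then follows with no new work from the $(q,r)$-inversion identity (\ref{eqn:rho_symmetry}): with $y=q/x\uparrow1$ one has $G_{\A_q}^{\vee}(x;r)=G_{\A_q}^{\vee}(y;q^2/r)$ and $1-y^2\sim(x^2-q^2)/q^2$, so the coefficient is $\kappa(q^2/r)/q^8=\kappa(r)/q^8$ by (\ref{eqn:kappa_symmetry}). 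The symmetries (\ref{eqn:kappa_symmetry}) are immediate from $-r=e^{i\phi_{-r}}$: reciprocity $r\mapsto1/r$ sends $\phi_{-r}\mapsto-\phi_{-r}$ and $\kappa$ is even in $\phi_{-r}$ since it is a polynomial in $\wp$, while $r\mapsto q^2r$ shifts $\phi_{-r}$ by the full period $2\pi\tau_q$ of $\wp$.

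For (ii) I would read (\ref{eqn:kappa}) as a quadratic in $\wp$ with positive leading coefficient; its roots are $\wp_\pm=-e_1/5\pm\frac1{10}\sqrt{24e_1^2+10g_2}$, giving $\wp_+$ as in (\ref{eqn:wp+}), and since the product of roots is $-(e_1^2+g_2/2)/5<0$ we have $\wp_+>0>\wp_-$. The ordering $e_1>\wp_+>e_2>e_3$ becomes an inequality between elliptic invariants: $\wp_+<e_1$ reduces to $g_2<12e_1^2$ and $\wp_+>e_2$ to a companion inequality, both verified from $e_1+e_2+e_3=0$ and $g_2=2(e_1^2+e_2^2+e_3^2)$. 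Since the pole lattice of $\wp$ is $2\pi\Z+2\pi\tau_q\Z$, the half-period $\phi_{-1}=\pi=\omega_1$ gives $\wp=e_1$, while $\phi_{-q}=\pi+\pi\tau_q=\omega_3$ gives $\wp=e_2$; as $r$ runs over $(q,1)$ the point $\phi_{-r}=\pi-i\log r$ traverses this edge of the period rectangle, along which $\wp(\phi_{-r})$ is real and strictly monotone. Because $\wp_+\in(e_2,e_1)$ and $\wp_-<e_2$, the equation $\wp(\phi_{-r})=\wp_+$ has a unique root $r_0\in(q,1)$, with $\kappa<0$ for $r\in(q,r_0)$ and $\kappa>0$ for $r\in(r_0,1)$. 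Inverting $\wp$ along this edge via $(\wp')^2=4\prod_i(\wp-e_i)$, the purely imaginary displacement $\phi_{-r_0}-\omega_1=-i\log r_0$ yields exactly (\ref{eqn:tcq}), the factor $\tfrac12$ coming from $\sqrt4=2$.

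Finally, for (iii) I would analyse (\ref{eqn:tcq}) under the two degenerations. As $q\to0$ the nome $q^2\to0$, the half-periods merge so that $e_2,e_3\to e_\infty$ with $e_1=-2e_\infty$ and $g_2\to12e_\infty^2$; then $\wp_+$ is explicit and the cubic under the root degenerates to $(e_1-s)(s-e_\infty)^2$, so the resulting elementary integral $\int ds/\{(s-e_\infty)\sqrt{e_1-s}\}$ integrates to an inverse hyperbolic tangent, and solving for $r_{\rm c}$ produces the closed form in (a) with its nested radical. The correction (b) comes from the $O(q^2)$ terms in the nome expansions of $e_1,e_2,e_3,g_2$, propagated through $\wp_+$ and the integral by first-order perturbation, yielding the stated $c$. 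For (c), where $\tau_q\to0$, I would apply the Jacobi imaginary (modular) transformation to recast the problem in the dual small nome $e^{-\pi/\mathrm{Im}\,\tau_q}$ and extract the leading slope, giving $r_0(q)\sim1-\tfrac12(1-q)$. I expect the genuine obstacles to be the ordering-plus-monotonicity package in (ii)---establishing $e_1>\wp_+>e_2>e_3$ together with the strict monotonicity of $r\mapsto\wp(\phi_{-r})$ cleanly---and the exact evaluation of the degenerate integral in (iii)(a) with the modular analysis in (iii)(c); by contrast the translation to Weierstrass form in (i) and the symmetries (\ref{eqn:kappa_symmetry}) are comparatively mechanical.
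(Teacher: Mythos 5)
Your proposal is correct, and in part (i) it takes a genuinely different---and cleaner---route than the paper. The paper first rewrites $\widetilde{G}(c)=G_{\A_q}^{\vee}(\sqrt{c};r)$ as $I(c)+J(c)$ with $J=\beta_r^2(c-1)^2 I K$ ((\ref{eqn:gc1}), (\ref{eqn:I_beta_K})) and then establishes $\widetilde{G}'(1)=\widetilde{G}''(1)=\widetilde{G}'''(1)=0$ and the value of $\widetilde{G}^{(4)}(1)$ by lengthy Euler-operator computations (Proposition \ref{thm:differentials} via Lemmas \ref{thm:euler}--\ref{thm:a4minust}); indeed the paper itself asks for a simpler proof in concluding remark (3), and your argument supplies one. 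Your split $G_{\A_q}^{\vee}=P-Q$ taken directly from (\ref{eqn:unfolded2}) makes the quartic order visible at once: $Q=O((1-x^2)^4)$ because of its $\theta(x^2)^4$ factor (strictly, the extra factor multiplying $P$ is $\theta(x^2)^4\theta(rx^6)^4/\{\theta(-x^2)^4\theta(-rx^6)^4\}$, not just $\theta(x^2)^4/\theta(-x^2)^4$, but your limiting constant $q_0^8\theta(r)^4/\{\theta(-r)^4\theta(-1)^4\}$ shows you used the full factor), while for $P$ the substitution $x^2=e^{-s}$, $\Psi(s)=\log\theta(-re^{-s})$ gives $\log P=4\Psi(s)+4\Psi(3s)-\Psi(0)-\Psi(4s)-6\Psi(2s)$, and the dilation weights satisfy $4\cdot 1^{j}+4\cdot 3^{j}-4^{j}-6\cdot 2^{j}=0$ for $j=1,2,3$ and $=-24$ for $j=4$; I checked this and the endgame: $\Psi^{(4)}(0)=-\wp''(\phi_{-r})$ follows from Lemma \ref{thm:an}, your $Q$-constant equals $\beta_r^4=(\wp(\phi_{-r})-e_1)^2$ by (\ref{eqn:fundamental4}) (cf.\ (\ref{eqn:beta_t_2_wp})), and $6\wp(\phi_{-r})^2-g_2/2-(\wp(\phi_{-r})-e_1)^2$ agrees with (\ref{eqn:kappa}) by (\ref{eqn:wp_diff}). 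So the vanishing of the first three derivatives becomes structural rather than computational, which is what your decomposition buys; what the paper's route buys is a reusable toolkit ($a_n$, $A_n$, $\beta_r$) employed elsewhere in Section \ref{sec:proofs}. Your treatment of the symmetries (\ref{eqn:kappa_symmetry}) and the transfer to the inner edge via (\ref{eqn:rho_symmetry}) coincides with the paper's. Parts (ii) and (iii) follow the paper's own route: the factorization $\kappa=5(\wp-\wp_+)(\wp-\wp_-)$, the orderings from $e_1+e_2+e_3=0$ and $g_2=2(e_1^2+e_2^2+e_3^2)$ (your squaring argument for $\wp_+>e_2$ also yields the needed $\wp_-<e_2$, which you assert), monotonicity of $\wp(\phi_{-r})$ along $[\omega_1,\omega_1+\omega_3]$ (the paper's Lemma \ref{thm:wp_increasing}, likewise stated there without detailed proof), and inversion via (\ref{eqn:wp_inverse}); in (iii), the degenerate cubic and elementary integral at $q=0$, the $q$-expansion of $\wp(\phi_{-r_0})=\wp_+$ (only even powers of $q$ enter, since the $O(q)$ terms of $e_2,e_3$ cancel in $e_1$, $g_2$ and $e_2e_3$, which is exactly why the linear correction vanishes), and the imaginary transformation as $q\to1$. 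You leave the explicit constants $r_{\rm c}$, $c$, and $r_0(q)\sim q^{1/2}$ at sketch level, but these are finite computations carried out by the same method as the paper, so no gap remains.
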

\noindent
The proof is
given in Sections \ref{sec:i}--\ref{sec:iii}.

\begin{rem}
\label{rem:slit_domain}
For $s >0$, 
define a horizontal slit 
$[-s + \sqrt{-1}, s+ \sqrt{-1}]$ in 
the upper half plane 
$\HH=\{z \in \C: \Im z > 0 \}$
and consider a doubly connected domain 
$D(s) := \HH \setminus [-s+\sqrt{-1}, s+ \sqrt{-1}]$.
Such a domain is called the {\it chordal standard domain}
with connectivity $n=2$ 
in \cite{BF08} (see also Chapter VII in \cite{Neh52}).
As briefly explained in
Appendix \ref{sec:D_Ds},
the conformal map from 
$\A_q$ to $D(s)$ is given by
\begin{equation}
H_q(z)
=- 2 \Big\{
\zeta(-\sqrt{-1} \log z)
+\sqrt{-1} (\eta_1/\pi) \log z \Big\},
\quad z \in \A_q,
\label{eqn:Hqz}
\end{equation}
where the Weierstrass $\zeta$-function and
its special value $\eta_1$ are defined in
Section \ref{sec:Weierstrass_elliptic} below.
This conformal map is chosen so that
the boundary points are mapped as
\[
H_q(-1)=0, \quad H_q(1)=\infty,
\quad
H_q(\pm \sqrt{-1} q) = \mp s+\sqrt{-1},
\quad H_q(\pm q)=\sqrt{-1}.
\]
The $x \to 1$ limit for a pair of points
$-x$ and $x$ on $\A_q \cap \R$ is hence
regarded as the pull-back of an
infinite-distance limit of two points on
$\HH \cap \sqrt{-1} \R$.
In the $q \to 0$ limit, $H_q$ is reduced to
the Cayley transformation
from $\D$ to $\HH$, 
$H_0(z)=-\sqrt{-1}(z+1)/(z-1)$, 
such that $H_0(-1)=0$,
$H_0(1)=\infty$ and $H_0(0)=\sqrt{-1}$.
\end{rem}

Theorem \ref{thm:critical_line} implies that
if $r \in (r_0, 1)$, $G_{\A_q}^{\vee}(x; r)>1$
when $x$ is closed to $q$ or $1$.
Appearance of such positive correlations
proves that the present PDPP $\cZ_{X_{\A_q}^r}$
is indeed unable to be identified with any DPP.

Let
$g_{\D}(z, w; r)
=\rho_{\D}^2(z,w;r)/
\rho_{\D}^1(z;r) \rho_{\D}^1(w;r)$,
$(z, w) \in {\D}^2$.
The asymptotic
(\ref{eqn:Gvee_up}) holds for
$G_{\D}^{\vee}(x; r):=g_{\D}(-x, x; r)$ with 
$\kappa_0(r):=\lim_{q \to 0} \kappa(r; q)$,
which has the reciprocity property
$\kappa_0(1/r)=\kappa_0(r)$ 
(see (\ref{eqn:kappa_q0}) in Section \ref{sec:iii}).
When $r \in (r_{\rm c}, 1/r_{\rm c})$, $\kappa_0(r)>0$
and hence $G_{\D}^{\vee}(x; r) > 1$ 
for $x \lesssim 1$, which
indicates appearance of attractive interaction
at large intervals in $\D^{\times}$.
When $r \in [0, r_{\rm c})$ or $r \in (1/r_{\rm c}, \infty)$, 
negative $\kappa_0(r)$ implies $G_{\D}^{\vee}(x; r) < 1$
even for $x \lesssim 1$. 
Moreover, we can prove the following.  
\begin{prop}
\label{thm:negative_corr_q0}
If $r \in [0, r_{\rm c})$, then
$g_{\D}(z, w; r)< 1$, $\forall (z, w) \in {\D}^2$.
\end{prop}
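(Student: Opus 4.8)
The plan is to reduce the global inequality to the large-distance (boundary) regime that is already controlled by Theorem~\ref{thm:critical_line} and the discussion preceding the statement. First I would use rotational invariance to write $g_{\D}(z,w;r)$ explicitly as a function of the two moduli $a=|z|^2$, $b=|w|^2\in[0,1)$ and the relative phase $\psi=\arg z-\arg w$, by substituting the density (\ref{eqn:density_A0t}) and the two-point function from Corollary~\ref{thm:mainA2}. Writing $s=rab$ for the weight appearing in $\rho_{\D}^2$, the $\perdet$ collapses to the single determinant $\rho_{\D}^2=\frac{1+r}{1+ra^2b^2}\big[S_{\D}(z,z;s)^2S_{\D}(w,w;s)^2-|S_{\D}(z,w;s)|^4\big]$ (cf.\ the disk form of (\ref{eqn:rho2_Aqt})), so that after inserting (\ref{eqn:SA0t}) everything is a rational function of $a$, $b$, $\Re(z\overline{w})$ and $|1-z\overline{w}|^2$.

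Next I would invoke the disk analogue of Lemma~\ref{thm:bounds}, obtained as the $q\to0$ limit of the annulus statement (the extremal property in $\psi$ survives the limit, the convergence $g_{\D}=\lim_{q\to0}g_{\A_q}$ being uniform in $\psi$ on compacts of $\D^2$): at fixed moduli the unfolded $2$-correlation is maximal in the antipodal configuration $\psi=\pi$, i.e.\ when $z\overline{w}=-\sqrt{ab}$ is real and negative. Hence it suffices to prove $g_{\D}<1$ for $z=-x$, $w=y$ with $x,y\in[0,1)$, where $z\overline{w}=-xy$ and all kernels are real. Clearing the manifestly positive denominators, the claim $g_{\D}<1$ becomes a polynomial positivity statement $\Phi(x,y;r)>0$ on $[0,1)^2$.

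The decisive point is that the supremum of the antipodal $g_{\D}$ is attained only in the large-distance boundary limit $x,y\uparrow1$, where $g_{\D}\to1$, and that the sign of the approach is governed by $\kappa_0(r)=\lim_{q\to0}\kappa(r;q)$ of (\ref{eqn:Gvee_up}). I would establish this by showing the antipodal $g_{\D}$ is increasing in each modulus (equivalently that $\Phi$ has no interior zero), so that the binding configuration is the boundary; there the rate and sign of the decay to $1$ are exactly those of $G_{\D}^{\vee}$, and by Theorem~\ref{thm:critical_line}(ii)--(iii) together with (\ref{eqn:kappa_q0}) one has $\kappa_0(r)<0$ precisely for $r\in[0,r_{\rm c})$. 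On this range $g_{\D}$ therefore approaches its supremum $1$ from below and stays strictly $<1$ in the interior, which is the assertion.

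The main obstacle is the domination step: ruling out an interior antipodal pair that beats the boundary value, i.e.\ the global monotonicity of the antipodal $g_{\D}$ in the moduli (equivalently positivity of $\Phi$ away from the corner $x=y=1$). In the Peres--Vir\'ag case $r=0$ this is immediate, since $g_{\D}=1-(1-|z|^2)^2(1-|w|^2)^2/|1-z\overline{w}|^4<1$ factorizes transparently; for $r>0$, however, the mismatch between the weight $s=rab$ in $\rho_{\D}^2$ and the weights $ra$, $rb$ in the one-point densities $\rho_{\D}^1$ destroys this factorization, so $\Phi$ must be controlled as a genuine two-variable rational function rather than through the boundary asymptotics alone. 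I expect this to require either an explicit sum-of-squares/factorization certificate for $\Phi$, or a reduction of the two moduli to a single variable after exploiting the reciprocity $r\leftrightarrow1/r$ inherited from $\kappa_0$, and it is here that the sharp threshold $r_{\rm c}$ must be made to enter.
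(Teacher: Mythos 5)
Your outline reproduces the paper's strategy faithfully: the reduction to the antipodal configuration at fixed moduli is exactly the $q\to 0$ limit of Lemma \ref{thm:bounds} (the paper's Lemma \ref{thm:upper_bound_q0}), and the paper likewise proves the proposition by showing that $g_{\D}(-a,b;r)$ is monotone in the modulus $a$ for $r<r_{\rm c}$, so that its supremum is the boundary value $g_{\D}(-1,b;r)=1$, the monotonicity being encoded in the sign of the factor $D(a,b;r)$ appearing in $\partial g_{\D}(-a,b;r)/\partial a$. But your proposal stops exactly where the proof begins: you explicitly defer the positivity of $\Phi$ (the paper's $D$) as ``the main obstacle,'' offering only the expectation that a sum-of-squares certificate or a one-variable reduction would settle it. That positivity statement \emph{is} the entire technical content of the paper's argument --- everything preceding it (the antipodal reduction, the boundary identity, the rational formula) is routine --- so as written your attempt has a genuine gap and is an outline of the correct route rather than a proof.

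For comparison, the paper closes the gap in two steps you did not carry out. First (Lemma \ref{thm:bound_v}) it expands $D(a,b;r)$ as an explicit signed combination of $p(x)=x+x^{-1}$ evaluated at monomials in $a,b,r^{1/2}$, and uses the monotonicity of $p$ on $(0,1]$ to dominate $D$ from below by the three-term minorant $\widetilde{D}(a,b;s)=p(a^7b^4s^5)+13\,p(a^3b^2s^3)-46\,p(a^4b^2s)$ with $s=r^{1/2}$. Second (Lemma \ref{thm:ms}) it computes $\inf\widetilde{D}$ by sliding along the curves $a^2b=x$ and then $a^4u=y$, reducing to the single-variable function $\widetilde{D}(a,1;s)$, which is shown to be decreasing in $a$ for $s\le 11/20$; since $s_{\rm c}=r_{\rm c}^{1/2}=0.533\cdots\le 11/20$, the infimum is the corner value $\widetilde{D}(1,1;s)=\frac{1+s^2}{s^5}(s^8+12s^6-58s^4+12s^2+1)$, whose sign flips precisely at $s_{\rm c}$ by (\ref{eqn:kappa_q0}). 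Two points against your framing are worth noting. The boundary identity $g_{\D}(-a,1;r)=1$ holds exactly along the whole edge, not merely asymptotically, so once monotonicity is in hand no appeal to the decay rate or sign of $\kappa_0$ via $G_{\D}^{\vee}$ is needed; your asymptotic argument is logically redundant given monotonicity, and insufficient without it. And the threshold $r_{\rm c}$ enters the monotonicity proof not through boundary asymptotics but algebraically, as the sign change of $\widetilde{D}(1,1;s)$ --- which is indeed proportional to the numerator of $-\kappa_0(r)$, confirming your intuition about where $r_{\rm c}$ ``must enter,'' but by a computation your proposal does not supply.
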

\noindent
The proof is given in Section \ref{sec:proof_negative}.
We should note that this statement does not hold for
$r \in (1/r_{\rm c}, \infty)$, since 
we can verify that 
$g_{\D}(z, w; r)$ can exceed 1 when $r > 1$
(see Remark \ref{rem:positive_corr} 
in Section \ref{sec:proof_negative}).
Therefore, we say that 
there are two phases
for the PDPP $\cZ_{X_{\D}^r}$
in the following sense: 
\begin{description}
\item{(i) \quad 
Repulsive phase:} when 
$r \in [0, r_{\rm c})$, 
all pairs of zeros are negatively correlated. 

\item{(ii) \quad
Partially attractive phase:} when 
$r \in (r_{\rm c}, \infty)$, 
positive correlations emerge between zeros.
\end{description}

\begin{figure}[htbp]
\begin{center}
\includegraphics[width=0.7\hsize]{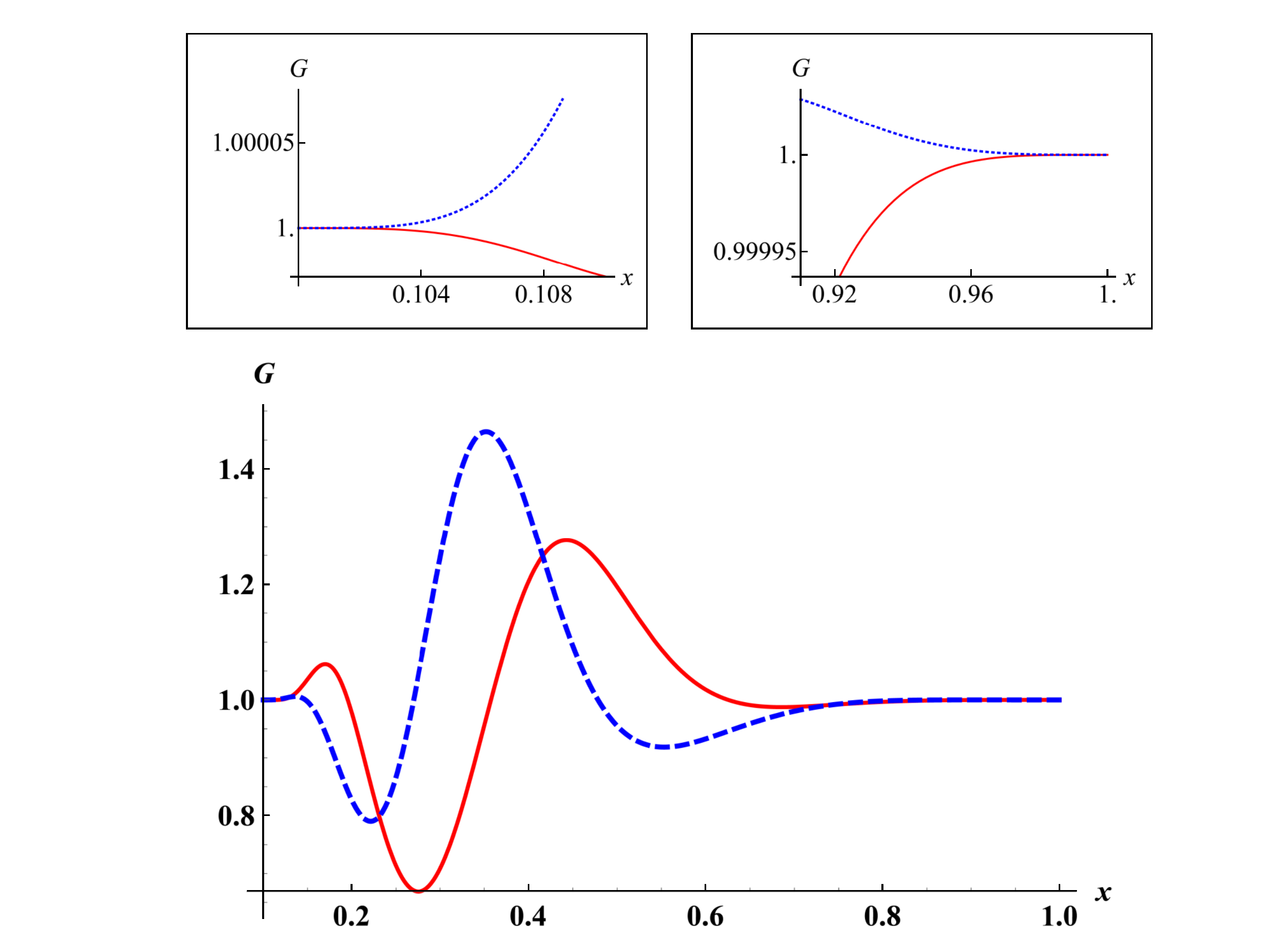}
\end{center}
\caption{Numerical plots of $G_{\A_q}^{\vee}(x; r)$
with $q=0.1$ 
are given in the interval $(q, 1)$ 
for $r=0.2$ (red solid curve) and $r=0.6$ (blue dashed curve).
Note that $0.2 < r_0(0.1) = 0.348 \cdots < 0.6$.
Then following Theorem 1.6 (i) and (ii), 
the red solid curve
(resp.~blue dashed curve) approaches to the unity
from below (resp. from above) 
as $x \to q=0.1$ (see the upper left inset)
and
as $x \to 1$ (see the upper right inset).
In the intermediate values of $x$, 
the red solid curve shows two local maxima
greater than unity and a unique 
minimum $<1$ 
at the point near $\sqrt{q}=0.316 \cdots$,
while 
the blue dashed curve has two local minima $<1$
and a unique maximum $>1$
at the point near $\sqrt{q}=0.316 \cdots$.
The global pattern of correlations
is converted when the sign of $\kappa(r)$
is changed. 
} 
\label{fig:G_vee_plots}
\end{figure}

When $q \in (0, 1)$, however, the repulsive phase
seems to disappear
and positive correlations can be observed at any value of
$r >0$.
Figure \ref{fig:G_vee_plots} shows 
numerical plots of $G_{\A_q}^{\vee}(x; r)$
for $q=0.1$ with $r_0(0.1)=0.348 \cdots$.
The red solid (resp.~blue dashed) curve shows the pair correlation 
for $r=0.2$
(resp.~$r=0.6)$.
Since $r=0.2 < r_0(0.1)$ the red solid curve tends to be
less than unity in the vicinity of edges
$x=q$ and $x=1$ 
as shown in the insets 
(following Theorem \ref{thm:critical_line} (i) and (ii)), 
but shows two local maxima
greater than unity and then has a unique 
minimum $<1$ 
at the point near $\sqrt{q}=0.316 \cdots$.
On the other hand, the blue dashed curve with 
$r=0.6 > r_0(0.1)$ tends to be 
greater than unity in the vicinity of edges
as shown in the insets
(following Theorem \ref{thm:critical_line} (i) and (ii)),
but shows two local minima $<1$
and then has a unique maximum $>1$
at the point near $\sqrt{q}=0.316 \cdots$.
As demonstrated by these plots, 
the change of sign of $\kappa(r)$
at $r=r_0 \in (q, 1)$ seems to convert
a global pattern of correlations.
Figure \ref{fig:phase_diagram} shows 
a numerical plot of the curve
$r=r_0(q)$, $q \in (0,1)$ in the fundamental 
cell $\Omega$ of the parameter space.
Detailed characterization of correlations
(not only pair correlations but also 
$\rho^n_{\A_q}, n \geq 3$) in PDPPs will be
a future problem.

\begin{figure}[htbp]
\begin{center}
\includegraphics[width=0.4\hsize]{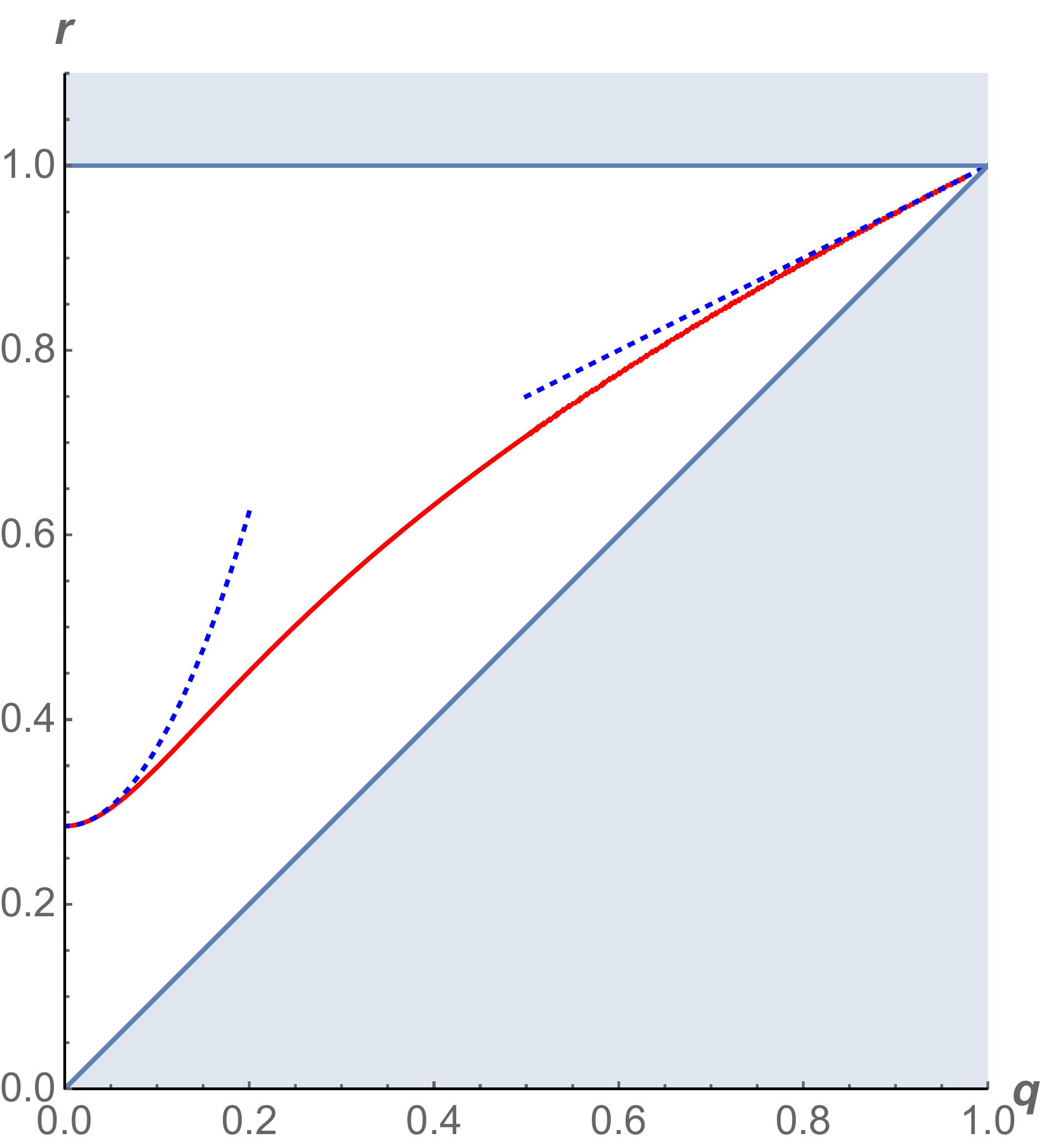}
\end{center}
\caption{
The curve $r=r_0(q)$ given by
(\ref{eqn:tcq}) in Theorem \ref{thm:critical_line} (ii)
is numerically plotted 
(in red) in the fundamental cell
$\Omega$ in the parameter space,
which is located between the diagonal line
$r=q$ (shown by a blue line) 
and the horizontal line $r=1$
satisfying (\ref{eqn:ineq_a}).
The parabolic curve $r_{\rm c}+ c q^2$
given by (iii) (b)
and the line $1-(1-q)/2$ 
by (iii) (c) are also dotted,
which approximate $r=r_0(q)$ well for $q \gtrsim 0$ and
$q \lesssim1$, respectively.}
\label{fig:phase_diagram}
\end{figure}

The paper is organized as follows.
In Section \ref{sec:preliminaries} we give
preliminaries, which include 
a brief review of reproducing kernels,
conditional Szeg\H{o} kernels, and
a general treatment of point processes
including DPPs.
There we also give 
definitions and basic properties of
special functions used to represent
and analyze GAFs and their zero point processes on an annulus.
Section \ref{sec:proofs} is devoted to
proofs of theorems.
Concluding remarks are given in Section \ref{sec:concluding}.
Appendices will provide additional information
related to the present study.

\SSC{Preliminaries}
\label{sec:preliminaries}
\subsection{Reproducing kernels}
\label{sec:RK}
A \textit{Hilbert function space}
is a Hilbert space $\cH$ of functions
on a domain $D$ in $\C^d$ 
equipped with inner product $\bra \cdot, \cdot \ket_{\cH}$
such that evaluation at each point 
of $D$ is a continuous functional on $\cH$.
Therefore, for each point $w \in D$, 
there is an element of $\cH$,
which is called the reproducing kernel at $w$
and denote by $k_{w}$, with the property
$\bra f, k_w \ket_{\cH}=f(w), \forall f \in \cH$.
Because $k_{w} \in \cH$, it is itself a function on $D$,
$k_w(z)=\bra k_w, k_z \ket_{\cH}$.
We write
\[
k_{\cH}(z, w):=k_w(z) = \bra k_w, k_z \ket_{\cH}
\]
and call it the \textit{reproducing kernel} for $\cH$.
By definition, it is hermitian; 
$\overline{k_{\cH}(z,w)}=k_{\cH}(w, z)$, $z, w \in D$.
If $\cH$ is a holomorphic Hilbert function space, then
$k_{\cH}$ is holomorphic in the first variable and
anti-holomorphic in the second.
We see that $k_{\cH}(z,w)$ is a positive semi-definite kernel: 
for any $n \in \N:=\{1,2,\dots\}$, 
for any points $z_i \in D$ and $\xi_i \in \C$, $i=1,2, \dots, n$,
\begin{equation}
\sum_{i=1}^n \sum_{j=1}^n k_{\cH}(z_i, z_j) \xi_i \overline{\xi_j} 
=  \Big\|\sum_{i=1}^n \xi_i k_{\cH}(z_i, \cdot) 
\Big\|_{\cH}^2 \ge 0. 
\label{eqn:positive_kernel}
\end{equation}
Let $\{e_n : n \in \cI \}$ be any CONS for $\cH$,
where $\cI$ is an index set. Then one can prove that
the reproducing kernel for $\cH$ is written in the form
\begin{equation}
k_{\cH}(z, w) = \sum_{n \in \cI} e_n(z) \overline{e_n(w)}.
\label{eqn:reproducing}
\end{equation}
We note that the positive definiteness of the kernel 
(\ref{eqn:positive_kernel})
is equivalent with the situation such that, 
for any points $z_i \in D$, $i \in \N$, the matrix 
$(k_{\cH}(z_i, z_j))_{1 \leq i,j \leq n}$ has a nonnegative determinant, 
$\det_{1\leq i, j \leq n}[k_{\cH}(z_i, z_j)] \geq 0$, 
for any $n \in \N$.

Here we show two examples
of holomorphic Hilbert function spaces,
the \textit{Bergman space} and the \textit{Hardy space}, 
for a unit disk $\D$ and the domains which are conformally 
transformed from $\D$ 
\cite{Neh52,Ber70,HKZ00,AM02,Bel16}. 

The Bergman space on $\D$, denoted by $L^2_{\rB}(\D)$, is
the Hilbert space of holomorphic functions on $\D$
which are square-integrable with respect to
the Lebesgue measure on $\C$ \cite{Ber70}.
The inner product for $L^2_{\rB}(\D)$ is given by
\[
\bra f, g \ket_{L^2_{\rB}(\D)}
:= \frac{1}{\pi} \int_{\D} f(z) \overline{g(z)} m(dz)
= \sum_{n=0}^{\infty} \frac{\widehat{f}(n) 
\overline{\widehat{g}(n)}}{n+1},
\]
where the $n$th Taylor coefficient of $f$ at 0 is denoted by
$\widehat{f}(n)$;
$f(z) = \sum_{n=0}^{\infty} \widehat{f}(n) z^n$.
Let $\widetilde{e_n}(z)
:=\sqrt{n+1} z^{n}, n \in \N_0$.
Then $\{\widetilde{e_n}(z) \}_{n \in \N_0}$ form a
CONS for $L^2_{\rB}(\D)$
and the reproducing kernel (\ref{eqn:reproducing}) is given by
\begin{align}
K_{\D}(z, w) 
&:=k_{L^2_{\rB}(\D)}(z, w)
\nonumber\\
&= \sum_{n \in \N_0}
(n+1) (z \overline{w})^n
=\frac{1}{(1-z \overline{w})^2},
\quad z, w \in \D.
\label{eqn:K_D}
\end{align}
This kernel is called the \textit{Bergman kernel} of $\D$.

The Hardy space on $\D$, $H^2(\D)$, consists of 
holomorphic functions on $\D$ such that the Taylor coefficients
form a square-summable series;
\[
\| f \|_{H^2(\D)}^2
:= \sum_{n \in \N_0} |\widehat{f}(n)|^2 < \infty,
\quad f \in H^2(\D).
\]
For every $f \in H^2(\D)$, 
the non-tangential limit
$\lim_{r \uparrow 1} f(r e^{\sqrt{-1} \phi})$ exists
a.e.~by Fatou's theorem and we write it as
$f(e^{\sqrt{-1} \phi})$.
It is known that $f(e^{\sqrt{-1} \phi}) \in L^2(\partial \D)$
\cite{AM02}. 
Then one can prove that the inner product of $H^2(\D)$ 
is given by the following
three different ways \cite{AM02},
\begin{equation}
\bra f, g \ket_{H^2(\D)}
= \begin{cases}
\displaystyle{
\sum_{n \in \N_0} \widehat{f}(n) \overline{\widehat{g}(n)}},
\cr
\displaystyle{
\lim_{r \uparrow 1} \frac{1}{2\pi}
\int_{0}^{2 \pi} f(r e^{\sqrt{-1} \phi})
\overline{g(r e^{\sqrt{-1} \phi})} d \phi},
\quad f, g \in H^2(\D), 
\cr
\displaystyle{
\frac{1}{2\pi}
\int_{0}^{2 \pi} f(e^{\sqrt{-1} \phi})
\overline{g(e^{\sqrt{-1} \phi})} d \phi},
\end{cases}
\label{eqn:IP_H2D}
\end{equation}
with $\|f\|_{H^2(\D)}^2=\bra f, f \ket_{H^2(\D)}$.
Let $\sigma$ be the measure on the boundary
of $\D$ which is the usual arc length measure.
Then the last expression of the inner product (\ref{eqn:IP_H2D})
is written as
$\bra f, g \ket_{H^2(\D)}
=(1/2 \pi) \int_{\gamma_1} f(z) \overline{g(z)} \sigma(dz)$,
where $\gamma_1$ is a unit circle 
$\{ e^{\sqrt{-1} \phi} : \phi \in [0, 2\pi) \}$ giving the boundary 
of $\D$.
If we set $e_n(z) :=e^{(0,0)}_n(z)=z^n, n \in \N_0$,
then $\{e_n(z)\}_{n \in \N_0}$ form
CONS for $H^2(\D)$. The reproducing kernel (\ref{eqn:reproducing})
is given by
\begin{align}
S_{\D}(z, w) 
&:= k_{H^2(\D)}(z, w)
\nonumber\\
&=
\sum_{n \in \N_0} (z \overline{w})^n
=\frac{1}{1-z \overline{w}},
\quad z, w \in \D, 
\label{eqn:S_D}
\end{align}
which is called the {\it Szeg\H{o} kernel} of $\D$.

Let $f: D \to \widetilde{D}$ be a conformal transformation
between two bounded domains $D, \widetilde{D} \subsetneq \C$
with $\cC^{\infty}$ smooth boundary.
We find an argument in Chapter 12 of \cite{Bel16} 
concluding that 
the derivative of the transformation $f$ denoted by
$f'$ has a single valued square root on $D$.
We let $\sqrt{f'(z)}$ denote one of the 
square roots of $f'$.
The Szeg\H{o} kernel and the Bergman kernel 
are then transformed by $f$ as 
\begin{align}
S_D(z, w) &= 
\sqrt{f'(z)} 
\overline{\sqrt{f'(w)}}
S_{\widetilde{D}}(f(z), f(w)),
\nonumber\\
K_D(z, w) &= |f'(z)|
|f'(w)| K_{\widetilde{D}}(f(z), f(w)), 
\quad z, w \in D.
\label{eqn:SD_KD_conformal}
\end{align}
See Chapters 12 and 16 of \cite{Bel16}.
Consider the special case in which
$D \subsetneq \C$ is a simply connected domain
with $\cC^{\infty}$ smooth boundary 
and $\widetilde{D}=\D$.
For each $\alpha \in D$, 
Riemann's mapping theorem  
gives a unique conformal transformation \cite{Ahl79};
\[
\mbox{
$h_{\alpha} : D \to \D$ \quad conformal 
such that \, 
$h_{\alpha}(\alpha)=0, \, \, h_{\alpha}'(\alpha) >0$.
}
\]
Such $h_{\alpha}$ is called the
\textit{Riemann mapping function}. 
By (\ref{eqn:S_D}), 
the first equation in 
(\ref{eqn:SD_KD_conformal}) gives the following formula
\cite{Bel95}, 
\begin{equation}
S_D(z, w)= \frac{S_D(z, \alpha) \overline{S_D(w, \alpha)}}
{S_D(\alpha, \alpha)} 
\frac{1}{1-h_{\alpha}(z) \overline{h_{\alpha}(w)}},
\quad z, w, \alpha \in D.
\label{eqn:SD_formula}
\end{equation}
Similarly, we have
\begin{equation}
K_D(z, w)= 
\frac{S_D(z, \alpha)^2 \overline{S_D(w, \alpha)^2}}
{S_D(\alpha, \alpha)^2}
\frac{1}{(1-h_{\alpha}(z) \overline{h_{\alpha}(w)})^2},
\quad z, w, \alpha \in D.
\label{eqn:KD_formula}
\end{equation}
Hence the following relationship is established, 
\begin{align}
S_{D}(z, w)^2 &= K_{D}(z, w),
\quad z, w \in D.
\label{eqn:S_K_D}
\end{align}
Although the Szeg\H{o} kernel could be eliminated from
he right-hand sides
of (\ref{eqn:SD_formula}) and (\ref{eqn:KD_formula})
by noting that
$h_{\alpha}'(z)=S_D(z,\alpha)^2/S_D(\alpha,\alpha)$,
the formula (\ref{eqn:SD_formula}) 
and the relation (\ref{eqn:S_K_D}) played
important roles in the study by Peres and Vir\'ag \cite{PV05}.
As a matter of fact, (\ref{eqn:SD_formula}) is equivalent with
(\ref{eqn:SaD}) and the combination of (\ref{eqn:S_K_D})
and (\ref{eqn:Borchardt}) gives (\ref{eqn:SKC_D}).

\subsection{Theta function $\theta$} 
\label{sec:S_qt_theta}

Assume that $p \in \C$ is a fixed number such that 
$0 < |p| < 1$.
We use the following standard notation \cite{GR04,Kra05,RS06},
\begin{align}
&(a; p)_{n} := \prod_{i=0}^{n-1} (1-a p^i), \qquad
(a; p)_{\infty} := \prod_{i=0}^{\infty} (1-a p^i),
\nonumber\\
&(a_1, \dots, a_k; p)_{\infty}
: =(a_1; p)_{\infty} \cdots (a_k; p)_{\infty}.
\label{eqn:Pochhammer}
\end{align}
The \textit{theta function} with argument $z$ and
nome $p$ is defined by
\begin{equation}
\theta(z; p)  :=(z, p/z; p)_{\infty}.
\label{eqn:theta}
\end{equation}
We often use the shorthand notation
$\theta(z_1, \dots, z_n; p)
: = \prod_{i=1}^n \theta(z_i; p)$.

As a function of $z$, the theta function $\theta(z; p)$ is
holomorphic in $\C^{\times}$ and has single zeros
precisely at $p^{i}$, $i \in \Z$, that is,
\begin{equation}
\{ z \in \C^{\times} : \theta(z; p)=0 \}
=\{p^i : i \in \Z \}.
\label{eqn:theta_zero}
\end{equation}
We will use the inversion formula 
\begin{equation}
\theta(1/z; p) = - \frac{1}{z} \theta(z; p)
\label{eqn:theta_inversion}
\end{equation}
and the quasi-periodicity property
\begin{equation}
\theta(pz; p) = -\frac{1}{z} \theta(z; p)
\label{eqn:theta_qp}
\end{equation}
of the theta function.
By comparing (\ref{eqn:theta_inversion}) and 
(\ref{eqn:theta_qp}) and performing 
the transformation $z \mapsto 1/z$, 
we immediately see the periodicity property, 
\begin{align}
\theta(p/z; p) 
= \theta(z; p).
\label{eqn:theta_period}
\end{align}
By Jacobi's triple product identity
(see, for instance, \cite[Section 1.6]{GR04}), 
we have 
the Laurent expansion
\[
\theta(z; p)=\frac{1}{(p; p)_{\infty}}
\sum_{n \in \Z} (-1)^n p^{\binom{n}{2}} z^n.
\]
One can show that \cite[Chapter 20]{NIST10}
\begin{align}
&\lim_{p \to 0} \theta(z; p)=1-z,
\label{eqn:theta_p0}
\\
&\theta'(1; p) 
:= \frac{\partial \theta(z; p)}{\partial z} \Big|_{z=1}
=- (p; p)_{\infty}^2.
\label{eqn:theta_prime}
\end{align}
The theta function satisfies the following 
\textit{Weierstrass' addition formula} \cite{Koo14},
\begin{equation}
\theta(xy, x/y, uv, u/v; p)
- \theta(xv, x/v, uy, u/y; p)
=\frac{u}{y} \theta(yv, y/v, xu, x/u; p).
\label{eqn:Weierstrass_add1}
\end{equation}

When $p$ is real and $p \in (0, 1)$, we see that
\begin{equation}
\overline{\theta(z; p)}= \theta( \zbar; p).
\label{eqn:theta_real}
\end{equation}
In this case the definition (\ref{eqn:theta})
with (\ref{eqn:Pochhammer}) implies that
\begin{align}
&\left.
\begin{array}{ll}
\theta(x; p) > 0, \, & x \in (p^{2i+1}, p^{2i})
\cr
\theta(x; p) =0, \, & x = p^{i}
\cr
\theta(x; p) < 0, \, & x \in (p^{2i}, p^{2i-1})
\end{array}
\right\}
\quad i \in \Z,
\nonumber\\
&\quad \theta(x; p) >0, \quad x \in (-\infty, 0).
\label{eqn:theta_signs}
\end{align}
Moreover, we can prove the following: 
In the interval $x \in (-\infty, 0)$,
$\theta(x) :=\theta(x; p)$ is strictly convex with
\begin{equation}
\min_{x \in (-\infty, 0)} \theta(x)
=\theta(-\sqrt{p}) =
\prod_{n=1}^{\infty}(1+p^{n-1/2})^2 > 0,
\label{eqn:min_theta}
\end{equation}
and 
$\lim_{x \downarrow -\infty} \theta(x)
=\lim_{x \uparrow 0} \theta(x)=+\infty$,
and in 
the interval $x \in (p, 1)$,
$\theta(x)$ is strictly concave with
\begin{equation}
\max_{x \in (p, 1)} \theta(x)
=\theta(\sqrt{p}) =
\prod_{n=1}^{\infty}(1-p^{n-1/2})^2,
\label{eqn:max_theta}
\end{equation}
$\theta(x) \sim (p; p)_{\infty}^2 (x-p)/p$
as $x \downarrow p$, and
$\theta(x) \sim (p; p)_{\infty}^2 (1-x)$
as $x \uparrow 1$,
where (\ref{eqn:theta_qp}) and 
(\ref{eqn:theta_prime}) were used.

\subsection{Ramanujan $\rho_1$-function, 
Jordan--Kronecker function and \\
weighted Szeg\H{o} kernel of $\A_q$}
\label{sec:S_qt_JK}
Assume that $q \in (0, 1)$. 
Consider the so-called 
\textit{Ramanujan $\rho_1$-function} 
\cite{Coo00,Ven12}
defined by
\begin{equation}
\rho_1(z)= \rho_1(z; q) = 
\frac{1}{2}+\sum_{n \in \Z \setminus \{0\}}
\frac{z^n}{1-q^{2n}}
\label{eqn:rho1}
\end{equation}
with $q^2 <|z| <1$.
As a generalization of $\rho_1$ 
the following function has been studied 
in \cite{MS94,Coo00,Ven12}, 
\begin{equation}
f^{\rm JK}(z, a) 
=f^{\rm JK}(z, a; q) 
:= \sum_{n \in \Z} \frac{z^n}{1-aq^{2n}}, 
\label{eqn:JK1}
\end{equation}
with $q^2 < |z| <1$, 
$a \notin \{ q^{2 i} : i \in \Z\}$,
which is called
the \textit{Jordan--Kronecker function}
(see \cite[p.59]{Ven12} and \cite[pp.70-71]{Wei76}).
\begin{prop}
\label{thm:S_qt_JK} 
Assume that $r >0$. 
Then the weighted Szeg\H{o} kernel of $\A_q$
(\ref{eqn:SAqr1}) 
is expressed by the Jordan--Kronecker function 
(\ref{eqn:JK1}) as
\begin{equation}
S_{\A_q}(z, w; r)=f^{\rm JK}(z \wbar, -r),
\quad z, w \in \A_q.
\label{eqn:S_qt_JK}
\end{equation}
In particular, the Szeg\H{o} kernel of $\A_q$ is
given by $S_{\A_q}(z, w)=f^{\rm JK}(z \wbar, -q)$,
$z, w \in \A_q$.
\end{prop}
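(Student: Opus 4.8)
The plan is to observe that the asserted identity is an immediate consequence of matching the two defining Laurent series, so that the only substantive work lies in checking that the specialization $a=-r$, $z\mapsto z\wbar$ lands inside the region in which the Jordan--Kronecker function $f^{\rm JK}$ was introduced. First I would recall the series representation (\ref{eqn:SAqr1}), namely
\[
S_{\A_q}(z,w;r)=\sum_{n\in\Z}\frac{(z\wbar)^n}{1+rq^{2n}},
\]
and set it alongside the definition (\ref{eqn:JK1}) of $f^{\rm JK}(\cdot,a)$. Putting $a=-r$ replaces each denominator $1-aq^{2n}$ by $1+rq^{2n}$, so that $f^{\rm JK}(\xi,-r)=\sum_{n\in\Z}\xi^n/(1+rq^{2n})$; substituting $\xi=z\wbar$ then reproduces (\ref{eqn:SAqr1}) term by term. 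Since both series are absolutely convergent on their common domain (as already asserted below (\ref{eqn:SAqr1})), this term-by-term identification is legitimate, and it constitutes the entire computation.

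Next I would verify that the specialization respects the constraints under which (\ref{eqn:JK1}) was defined. The function $f^{\rm JK}(\xi,a)$ requires $q^2<|\xi|<1$ and $a\notin\{q^{2i}:i\in\Z\}$. For $z,w\in\A_q$ we have $q<|z|,|w|<1$, hence $q^2<|z\wbar|<1$, so the argument $\xi=z\wbar$ lies strictly inside the admissible annulus and avoids both boundary circles $|\xi|=q^2$ and $|\xi|=1$. Moreover, since $r>0$ we have $a=-r<0$, whereas $q^{2i}>0$ for every $i\in\Z$ because $q\in(0,1)$; thus $-r$ never coincides with a pole of the summand. This establishes (\ref{eqn:S_qt_JK}).

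Finally, the special case is read off by taking $r=q$: by definition $S_{\A_q}(z,w)=S_{\A_q}(z,w;q)$, so the general identity yields $S_{\A_q}(z,w)=f^{\rm JK}(z\wbar,-q)$ for $z,w\in\A_q$. The only point requiring any care at all is the domain check of the second step---confirming that $z\wbar$ stays off the two boundary circles and that $-r$ stays off the discrete pole set $\{q^{2i}\}$---so I would flag that as the (mild) main obstacle, with everything else reducing to a direct comparison of the two absolutely convergent Laurent expansions.
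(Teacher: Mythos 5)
Your proposal is correct and coincides with the paper's (implicit) argument: the paper states Proposition \ref{thm:S_qt_JK} without a separate proof precisely because substituting $a=-r$ and $\xi=z\wbar$ into the defining series (\ref{eqn:JK1}) reproduces (\ref{eqn:SAqr1}) term by term, exactly as you observe. Your additional domain checks --- that $q^2<|z\wbar|<1$ for $z,w\in\A_q$ and that $-r<0$ avoids the pole set $\{q^{2i}:i\in\Z\}\subset(0,\infty)$ --- are the right (and only) points needing verification, and they are handled correctly.
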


The \textit{bilateral basic hypergeometric series}
in base $p$ with one numerator parameter $a$
and one denominator parameter $b$ is defined by 
\cite{GR04}
\[
{_1}\psi_1(a; b; p, z) 
= {_1}\psi_1 \Big[ \begin{array}{l} a \cr b \end{array}; p, z \Big]
:= \sum_{n \in \Z} \frac{(a;p)_{n}}{(b;p)_{n}} z^n,
\quad |b/a| < |z| < 1.
\]
The Jordan--Kronecker function (\ref{eqn:JK1}) 
is a special case
of the ${_1}\psi_1$ function \cite{Coo00,Ven12};
\[
f^{\rm JK}(z, a; q) = \frac{1}{1-a} {_1}\psi_1 (a; aq^2; q^2, z).
\]
The following equality is known as 
\textit{Ramanujan's ${_1}\psi_1$ summation formula} 
\cite{Coo00,GR04,Ven12}, 
\[
\sum_{n \in \Z} \frac{(a;p)_{n}}{(b;p)_{n}} z^n
=\frac{(az, p/(az), p, b/a;p)_{\infty}}
{(z, b/(az), b, p/a; p)_{\infty}},
\quad |b/a| < |z| < 1.
\]
Combining the above two equalities
with an appropriate change of variables, 
we obtain \cite{Coo00,Ven12}
\begin{equation}
f^{\rm JK}(z, a) = f^{\rm JK}(z, a; q) = 
\frac{(az, q^2/(az), q^2, q^2; q^2)_{\infty}}
{(z, q^2/z, a, q^2/a; q^2)_{\infty}}
= \frac{q_0^2 \theta(za; q^2)}
{\theta(z, a; q^2)},
\label{eqn:f2}
\end{equation}
where $q_0:=\prod_{n \in \N}(1-q^{2n})=(q^2; q^2)_{\infty}$.
Note that $\theta(z; q^2)$ is a holomorphic function
of $z$ in $\C^{\times}$.
Hence relying on (\ref{eqn:f2}), 
for every fixed $a$ in $\C^{\times} \setminus \{q^{2i} : i \in \Z\}$,
$f^{\rm JK}(\cdot, a)$ 
can be analytically continued to 
$\C^{\times} \setminus \{q^{2i} : i \in \Z\}$.
The poles are located exactly at the zeros of
$\theta(z; q^2)$ appearing in the denominator;
$\{ q^{2i} : i \in \Z\}$.
The following symmetries 
of $f^{\rm JK}$ are readily verified by (\ref{eqn:f2})
using (\ref{eqn:theta_inversion}) and (\ref{eqn:theta_qp})
\cite{Coo00,Ven12}.
\begin{align}
f^{\rm JK}(z, a) &= f^{\rm JK}(a, z),
\label{eqn:JK2a}
\\
f^{\rm JK}(z, a) &=-f^{\rm JK}(z^{-1}, a^{-1}),
\label{eqn:JK2b}
\\
f^{\rm JK}(z, a) &= z f^{\rm JK}(z, aq^2) = a f^{\rm JK}(zq^2, a).
\label{eqn:JK2c}
\end{align}
As shown in Chapter 3 in \cite{Ven12}, 
(\ref{eqn:JK1}) is rewritten as
\begin{align*}
f^{\rm JK}(z, a) =
\frac{1-z a}{(1-z)(1-a)}
&+ \sum_{n=1}^{\infty} q^{2 n^2} z^n a^n
\Big( 
1+ \frac{z q^{2n}}{1-z q^{2n}} 
+ \frac{a q^{2n}}{1-a q^{2n}} \Big)
\nonumber\\
&- \sum_{n=1}^{\infty} q^{2 n^2} z^{-n} a^{-n}
\Big( 
1+ \frac{z^{-1} q^{2n}}{1-z^{-1} q^{2n}} 
+ \frac{a^{-1} q^{2n}}{1-a^{-1} q^{2n}} \Big),
\end{align*}
which is completely symmetric in $z$ and $a$
and valid for 
$z, a \notin \{q^{2i} : i \in \Z \}$.
The equalities (\ref{eqn:JK2a})--(\ref{eqn:JK2c})
are proved also using this expression \cite{Ven12}.

From now on, we assume that $p=q^2$
and hence $\theta(\cdot)$ means
$\theta(\cdot; q^2)$ in the following.
We replace $z$ by $z \wbar$ and $a$ by $-r$ in (\ref{eqn:f2}).
Then Proposition \ref{thm:S_qt_JK} implies the following.
\begin{prop}[Mccullough and Shen \cite{MS94}]
\label{thm:S_Aq_theta}
For $r > 0$
\begin{equation}
S_{\A_q}(z, w; r)
= \frac{q_0^2 \theta(-r z \wbar)}
{\theta(-r, z \wbar)},
\quad z, w \in \A_q.
\label{eqn:S_qt_theta}
\end{equation}
In particular, 
\begin{equation}
S_{\A_q}(z, w) =S_{\A_q}(z, w; q)
= \frac{q_0^2 \theta(-q z \wbar)}
{\theta(-q, z \wbar)},
\quad z, w \in \A_q.
\label{eqn:S_q_theta}
\end{equation}
\end{prop}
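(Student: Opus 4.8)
The plan is to obtain the theta-function expression by a direct substitution into the closed form already established for the Jordan--Kronecker function, rather than by resumming the defining series (\ref{eqn:SAqr1}) from scratch. First I would invoke Proposition~\ref{thm:S_qt_JK}, which identifies the weighted Szeg\H{o} kernel with the Jordan--Kronecker function evaluated at the single complex variable $z\wbar$ and the parameter $-r$; that is, $S_{\A_q}(z, w; r) = f^{\rm JK}(z\wbar, -r; q)$ for $z, w \in \A_q$. This reduces the two-variable kernel identity to a one-variable statement about $f^{\rm JK}$, whose analytic structure is already fully understood.

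Next I would apply the product formula (\ref{eqn:f2}), which expresses $f^{\rm JK}(z, a; q)$ as $q_0^2 \theta(za; q^2)/\theta(z, a; q^2)$. Setting $p = q^2$ so that $\theta(\cdot)$ abbreviates $\theta(\cdot; q^2)$, and substituting $z \mapsto z\wbar$, $a \mapsto -r$, the numerator becomes $q_0^2 \theta(-r z\wbar)$ and the denominator becomes $\theta(z\wbar)\theta(-r)$, which by the shorthand $\theta(z_1, z_2) = \theta(z_1)\theta(z_2)$ is exactly $\theta(-r, z\wbar)$. This yields (\ref{eqn:S_qt_theta}) at once, and the special case (\ref{eqn:S_q_theta}) follows by putting $r = q$.

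There is essentially no hard analytic step here; the only points requiring care are the two admissibility conditions that legitimize the substitution into (\ref{eqn:f2}). First, one must check that the parameter $-r$ avoids the forbidden set $\{q^{2i} : i \in \Z\}$ on which $f^{\rm JK}$ has poles; since $r > 0$ gives $-r < 0$ while every $q^{2i}$ is positive, this is automatic. Second, one must ensure the argument $z\wbar$ lies in the convergence annulus $q^2 < |z\wbar| < 1$ of the defining series (\ref{eqn:JK1}); this follows because $z, w \in \A_q$ forces $q < |z|, |w| < 1$, hence $q^2 < |z\wbar| < 1$. Once these are noted, the proposition is immediate from Proposition~\ref{thm:S_qt_JK} combined with (\ref{eqn:f2}).
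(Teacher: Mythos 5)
Your proposal is correct and is essentially identical to the paper's own proof: the paper likewise derives (\ref{eqn:S_qt_theta}) by combining Proposition \ref{thm:S_qt_JK} with the Ramanujan-summation identity (\ref{eqn:f2}) and substituting $z \mapsto z\wbar$, $a \mapsto -r$, with $r=q$ giving (\ref{eqn:S_q_theta}). Your explicit verification of the admissibility conditions ($-r \notin \{q^{2i}: i \in \Z\}$ and $q^2 < |z\wbar| < 1$) is a small point of care that the paper leaves implicit, but it does not change the argument.
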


Since $\theta(\cdot)$ is holomorphic in
the punctured complex plane 
$\C^{\times} :=\{z \in \C : |z| > 0\}$, 
by the expression (\ref{eqn:S_qt_theta}), 
$S_{\A_q}(z, w; r)$ can be analytically continued to $\C^{\times}$
as an analytic function of $z, r$ and 
an anti-analytic function of $w$. 
Actually the inversion formula (\ref{eqn:theta_inversion})
and the quasi-periodicity property (\ref{eqn:theta_qp})
of the theta function given in Section \ref{sec:S_qt_theta} 
imply the following functional equations, 
\begin{align}
&{\rm (i)} \quad S_{\A_q}(q^2 z, w; r) = - \frac{1}{r} S_{\A_q}(z, w; r),
\nonumber\\
&{\rm (ii)} \quad S_{\A_q}(1/z, w; r) = - S_{\A_q}(z, 1/w; 1/r),
\nonumber\\
&{\rm (iii)} \quad S_{\A_q}(z, w; q^2 r) 
= \frac{1}{z \wbar} S_{\A_q}(z, w; r). 
\label{eqn:symmetries1}
\end{align}
Then the following is easily verified.

\begin{lem}
\label{thm:zero_S}
Assume that $\alpha \in \A_q$.
Then $S_{\A_q}(z, \alpha; r)$ has zeros at
$z=-q^{2i}/(\alphabar r)$, $i \in \Z$ in $\C^{\times}$. 
In particular, 
$S_{\A_q}(z, \alpha)$ has a unique zero in $\A_q$ at
$z=\alphahat$ given by (\ref{eqn:alphabar}).
\end{lem}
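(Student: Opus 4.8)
The plan is to read the zeros off directly from the theta-function representation of the weighted Szeg\H{o} kernel. Setting $w=\alpha$ in (\ref{eqn:S_qt_theta}) and using $\theta(-r,z\alphabar)=\theta(-r)\theta(z\alphabar)$, I would write
\[
S_{\A_q}(z,\alpha;r)=\frac{q_0^2\,\theta(-rz\alphabar)}{\theta(-r)\,\theta(z\alphabar)},
\]
which by Proposition \ref{thm:S_Aq_theta} continues analytically to $z\in\C^{\times}$. Since the constant $q_0^2/\theta(-r)$ is finite and nonzero, the zeros of $S_{\A_q}(\cdot,\alpha;r)$ are exactly the zeros of the numerator $\theta(-rz\alphabar)$ that survive against the poles arising from the denominator $\theta(z\alphabar)$. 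By the zero set (\ref{eqn:theta_zero}) with $p=q^2$, the numerator vanishes precisely when $-rz\alphabar=q^{2i}$ for some $i\in\Z$, i.e.\ at $z=-q^{2i}/(\alphabar r)$, which is the claimed family.

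The one step requiring genuine care, rather than being purely formal, is ruling out cancellation with the poles. The denominator $\theta(z\alphabar)$ vanishes at $z=q^{2i}/\alphabar$, and such a pole could coincide with a numerator zero only if $-q^{2i}/r=q^{2j}$ for some $i,j\in\Z$, i.e.\ $-1/r=q^{2(j-i)}$. This is impossible because $r>0$ makes the left-hand side negative while $q^{2(j-i)}>0$; thus the zero lattice and the pole lattice are disjoint. Hence every $z=-q^{2i}/(\alphabar r)$ is a bona fide zero, establishing the first assertion.

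For the special case $r=q$ the zeros become $z=-q^{2i-1}/\alphabar$, $i\in\Z$, with moduli $|z|=q^{2i-1}/|\alpha|$. Taking $i=1$ gives $z=-q/\alphabar=\alphahat$ by (\ref{eqn:alphabar}), and $|\alphahat|=q/|\alpha|\in(q,1)$ precisely because $\alpha\in\A_q$ means $q<|\alpha|<1$; so $\alphahat\in\A_q$. For uniqueness I would bound the moduli of the remaining zeros: for $i\ge 2$ one has $|z|=q^{2i-1}/|\alpha|<q^{2i-2}\le q^2<q$, and for $i\le 0$ one has $|z|=q^{2i-1}/|\alpha|>q^{2i-1}\ge q^{-1}>1$, so no such zero lies in the annulus. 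Only $i=1$ survives, which pins the unique zero in $\A_q$ to $\alphahat$ and completes the proof. I expect no substantive obstacle beyond the non-cancellation check and this elementary modulus count.
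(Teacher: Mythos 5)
Your proof is correct and follows essentially the same route as the paper: both read the zeros off the theta-function expression (\ref{eqn:S_qt_theta}) via the zero set (\ref{eqn:theta_zero}) and then locate the unique zero in $\A_q$ by the modulus count $q^{2i-1}/|\alpha|\in(q,1)\iff i=1$. Your explicit non-cancellation check against the pole lattice is a welcome refinement of a step the paper leaves implicit, but it does not change the argument.
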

\begin{proof}
Since $\theta$ is holomorphic in $\C^{\times}$, 
the expression (\ref{eqn:S_qt_theta}) implies 
that $S_{\A_q}(z, \alpha; r)$ is 
meromorphic in $\C^{\times}$. 
By (\ref{eqn:theta_zero}), $S_{\A_q}(z, \alpha; r)$ vanishes 
in $\C^{\times}$ only if $-z \alphabar r=q^{2i}$, $i \in \Z$.
By assumption $|\alpha| \in (q, 1)$.
Hence, when $r=q$, $|-q^{2i}/(\alphabar r)|=
q^{2i-1}/|\alpha| \in (q, 1)$,
if and only if $i=1$. 
\end{proof}

The second assertion of Lemma \ref{thm:zero_S} gives
the following probabilistic statement.

\begin{prop}
\label{thm:pair_structure}
For each $\alpha \in \A_q$, 
$X_{\A_q}(\alpha)$ and $X_{\A_q}(\alphahat)$
are mutually independent.
\end{prop}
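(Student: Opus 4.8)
The plan is to exploit the fact that, for a GAF built from proper (circularly symmetric) complex Gaussians, two mean-zero jointly Gaussian evaluations are independent precisely when they are uncorrelated, and then to read off the vanishing of the relevant covariance directly from Lemma~\ref{thm:zero_S}. First I would record that the pair $(X_{\A_q}(\alpha), X_{\A_q}(\alphahat))$ is a mean-zero complex Gaussian vector, by the very definition of a GAF. Because $X_{\A_q}(z) = \sum_{n \in \Z} \zeta_n e^{(q,q)}_n(z)$ with $\{\zeta_n\}_{n \in \Z}$ i.i.d.\ standard complex Gaussians satisfying $\bE[\zeta_n \zeta_m]=0$ for all $n,m$ (each $\zeta_n$ is proper, so $\bE[\zeta_n^2]=0$, while $\bE[\zeta_n \zeta_m]=0$ for $n \neq m$ by independence), the pseudo-covariance vanishes identically: $\bE[X_{\A_q}(\alpha) X_{\A_q}(\alphahat)] = \sum_{n,m \in \Z} e^{(q,q)}_n(\alpha)\, e^{(q,q)}_m(\alphahat)\, \bE[\zeta_n \zeta_m] = 0$. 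Consequently the joint law of the pair is determined by the Hermitian covariance alone, and independence of the two components is equivalent to the single condition $\bE[X_{\A_q}(\alpha)\overline{X_{\A_q}(\alphahat)}]=0$.

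Next I would evaluate this covariance through the covariance-kernel identity $\bE[X_{\A_q}(z)\overline{X_{\A_q}(w)}]=S_{\A_q}(z,w)$, which gives $\bE[X_{\A_q}(\alpha)\overline{X_{\A_q}(\alphahat)}] = S_{\A_q}(\alpha,\alphahat)$. Lemma~\ref{thm:zero_S} states that $S_{\A_q}(\cdot,\alpha)$ has its unique zero in $\A_q$ at $z=\alphahat$, so $S_{\A_q}(\alphahat,\alpha)=0$; applying the Hermitian symmetry $\overline{S_{\A_q}(z,w)}=S_{\A_q}(w,z)$ then yields $S_{\A_q}(\alpha,\alphahat)=\overline{S_{\A_q}(\alphahat,\alpha)}=0$. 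Thus both the covariance and the pseudo-covariance of $(X_{\A_q}(\alpha), X_{\A_q}(\alphahat))$ vanish, so the two components are independent, as claimed.

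Since almost nothing here is genuinely hard, the only point that deserves care is the passage from uncorrelatedness to independence: one must invoke the properness of the underlying Gaussians to ensure the pseudo-covariance vanishes, so that a diagonal Hermitian covariance really does factorize the joint density. I would state this step explicitly rather than silently identifying ``uncorrelated'' with ``independent,'' since for general (non-proper) complex Gaussians that implication can fail. An equivalent, entirely computational variant of the covariance step is also available, should one prefer to bypass Lemma~\ref{thm:zero_S}: using $\overline{\alphahat}=-q/\alpha$ one finds $\alpha\,\overline{\alphahat}=-q$, whence $S_{\A_q}(\alpha,\alphahat)=f^{\rm JK}(-q,-q)=q_0^2\,\theta(q^2)/\theta(-q,-q)$, and this vanishes because $\theta(q^2;q^2)=0$ while $\theta(-q;q^2)\neq0$ by \eqref{eqn:theta_zero}.
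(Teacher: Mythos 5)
Your proof is correct and follows essentially the same route as the paper, which presents Proposition \ref{thm:pair_structure} as an immediate consequence of the second assertion of Lemma \ref{thm:zero_S} (namely $S_{\A_q}(\alphahat,\alpha)=0$, hence vanishing covariance) together with the joint complex Gaussianity of the pair. Your explicit check that the pseudo-covariance $\bE[X_{\A_q}(\alpha)X_{\A_q}(\alphahat)]$ vanishes, so that uncorrelatedness genuinely upgrades to independence, is a point the paper leaves implicit in its standard complex-Gaussian conventions, and your alternative direct computation $\alpha\,\overline{\alphahat}=-q$ giving $S_{\A_q}(\alpha,\alphahat)=q_0^2\,\theta(q^2)/\theta(-q)^2=0$ is likewise sound.
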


\subsection{Weierstrass elliptic functions
and other functions}
\label{sec:Weierstrass_elliptic}

Here we show useful relations between
the theta function, 
Ramanujan $\rho_1$-function, 
Jordan--Kronecker function, 
and \textit{Weierstrass elliptic functions}.

Assume that $\omega_1$ and $\omega_3$ are 
complex numbers such that 
if we set $\tau=\omega_3/\omega_1$, then
$\Im \tau > 0$.
The lattice $\LL(\omega_1, \omega_3)$ on $\C$
with lattice generators $2 \omega_1$ and $2 \omega_3$ 
is given by
\[
\LL=\LL(\omega_1, \omega_3)
:= \{ 2m \omega_1 + 2n \omega_3 : (m, n) \in \Z^2 \}.
\]
The \textit{Weierstrass $\wp$-function} 
and \textit{$\zeta$-function} are defined by
\begin{align}
\wp(\phi) &= \wp(\phi | 2 \omega_1, 2 \omega_3)
:=\frac{1}{\phi^2} + \sum_{v \in \LL(\omega_1, \omega_3) 
\setminus \{0\}}
\left[ \frac{1}{(\phi-v)^2} - \frac{1}{v^2} \right], 
\nonumber\\
\zeta(\phi) &= \zeta(\phi | 2 \omega_1, 2 \omega_3)
:=\frac{1}{\phi} + \sum_{v \in \LL(\omega_1, \omega_3) 
\setminus \{0\}}
\left[ \frac{1}{\phi-v} +\frac{1}{v}+\frac{\phi}{v^2} \right].
\label{eqn:Weierstrass1}
\end{align}
(See, for instance, Section 23 in \cite{NIST10}.)
We put $\omega_2=-(\omega_1+\omega_3)$.
By the definition (\ref{eqn:Weierstrass1}) we see that
$\wp(\phi)$ is even and $\zeta(\phi)$ is odd 
with respect to $\phi$, and
$\wp(\phi)$ is an elliptic function
(i.e., a doubly periodic meromorphic function in $\C$);
$\wp(\phi+2 \omega_{\nu})=\wp(\phi)$, $\nu=1,2,3$.
We note that $\wp'(\omega_{\nu})=0$, $\nu=1,2,3$, 
$\wp(\phi)=-\zeta'(\phi)$, 
and 
$\zeta(\phi+2 \omega_{\nu})=\zeta(\phi)+ 2 \eta_{\nu}$
where $\eta_{\nu} :=\zeta(\omega_{\nu}), \nu=1,2,3$.
In the present paper we consider the following setting; 
\begin{align}
\omega_1=\pi, \quad &\frac{\omega_3}{\omega_1} = \tau_q, 
\quad \mbox{and} 
\nonumber\\
q=e^{\sqrt{-1} \pi \tau_q} \in (0, 1) 
\iff& \,
\tau_q =- \sqrt{-1} \frac{\log q}{\pi}
\in \sqrt{-1} \R_{> 0}. 
\label{eqn:omega1}
\end{align}
In the terminology of \cite[page 304]{GR04},
when we regard $p:=q^2$ as the nome of the theta function, 
$\tau_q$ shall be called the \textit{nome modular parameter},
and when we regard $q =p^{1/2} =: e^{2 \sqrt{-1} \pi \sigma_q}$
as the \textit{base} of $q$-special functions, 
$\tau_q$ will be the twice of the 
\textit{base modular parameter} $\sigma_q$.
In this setting, the $\wp$-function is considered as a function
of an argument $\phi$ and the modular parameter
$\tau_q$ though $q$. 
Then we have the following expansions, 
\begin{align}
\wp(\phi)
=\wp(\phi; \tau_q)
&= - \frac{1}{12} 
+ 2 \sum_{n=1}^{\infty} \frac{q^{2n}}{(1-q^{2n})^2}
+ \frac{1}{4} \frac{1}{\sin^2(\phi/2)}
-2 \sum_{n=1}^{\infty} \frac{n q^{2n}}{1-q^{2n}}
\cos (n \phi)
\nonumber\\
&= - \frac{1}{12} + 2 \sum_{n=1}^{\infty}
\frac{q^{2n}}{(1-q^{2n})^2} 
- \sum_{n=-\infty}^{\infty} \frac{e^{\sqrt{-1} \phi} q^{2n}}
{(1-e^{\sqrt{-1} \phi} q^{2n})^2}.
\label{eqn:wp_expansion2}
\end{align}
We use the notation
\begin{equation}
z=e^{\sqrt{-1} \phi_z} \iff
\phi_z= - \sqrt{-1} \log z.
\label{eqn:x_phi}
\end{equation}
Then $\phi_{zw}=\phi_z+\phi_w$, 
$\phi_{z^{-1}}=-\phi_z$, 
and $\phi_{q^2}=2 \omega_3$ modulo $2\pi \Z$.
Hence the evenness and the periodicity property 
of $\wp$ are written as
\begin{equation}
\wp(-\phi_z)=
\wp(\phi_{z^{-1}}) =\wp(\phi_z),
\quad
\wp(\phi_{q^2 z})
=\wp(\phi_z).
\label{eqn:wp_symmetry}
\end{equation}
The expansion (\ref{eqn:wp_expansion2}) is written as
\begin{align}
&\wp(\phi_{z})=\wp(\phi_{z}; \tau_q)
= - \frac{1}{12} - \frac{z}{(1-z)^2}
+ 2 \sum_{n=1}^{\infty}
\frac{q^{2n}}{(1-q^{2n})^2} 
-\sum_{n=1}^{\infty} 
\frac{n q^{2n}}{1-q^{2n}}
\left(z^n+\frac{1}{z^n} \right)
\nonumber\\
&\quad
= - \frac{1}{12} - \frac{z}{(1-z)^2}
+ 2 \sum_{n=1}^{\infty}
\frac{q^{2n}}{(1-q^{2n})^2} 
-\sum_{n=1}^{\infty} 
\frac{z q^{2n}}{(1-z q^{2n})^2}
-\sum_{n=1}^{\infty} 
\frac{z^{-1} q^{2n}}{(1-z^{-1} q^{2n})^2}.
\label{eqn:wp_expansion4}
\end{align}
The special values of $\wp$ are denoted by 
\begin{align}
e_1 = e_1(q) &:= \wp(\pi)
=\wp(\phi_{-1}; \tau_q)
\nonumber\\
&= \frac{1}{6} + 2 \sum_{n=1}^{\infty} \frac{q^{2n}}{(1-q^{2n})^2}
+ 2 \sum_{n=1}^{\infty} \frac{q^{2n}}{(1+q^{2n})^2},
\nonumber\\
e_2 = e_2(q) &:= \wp(\pi + \pi \tau_q)
=\wp(\phi_{-q}; \tau_q)
\nonumber\\
&= -\frac{1}{12} 
+ 2 \sum_{n=1}^{\infty} \frac{q^{2n}}{(1-q^{2n})^2}
+ 2 \sum_{n=1}^{\infty} \frac{q^{2n-1}}{(1+q^{2n-1})^2},
\nonumber\\
e_3 = e_3(q) &:= \wp(\pi \tau_q)
=\wp(\phi_{q}; \tau_q)
\nonumber\\
&= -\frac{1}{12} 
+ 2 \sum_{n=1}^{\infty} \frac{q^{2n}}{(1-q^{2n})^2}
- 2 \sum_{n=1}^{\infty} \frac{q^{2n-1}}{(1-q^{2n-1})^2}.
\label{eqn:e123}
\end{align}
We see that
\begin{equation}
e_1+e_2+e_3=0,
\label{eqn:sum0}
\end{equation}
and define
\begin{align}
g_2 &=g_2(q)
:=2 (e_1^2+e_2^2+e_3^2)
= -4 (e_2 e_3 +e_3 e_1 +e_1 e_2) >0,
\nonumber\\
g_3 &=g_3(q)
:=4 e_1 e_2 e_3 = \frac{4}{3}(e_1^3+e_2^3+e_3^3). 
\label{eqn:g2g3}
\end{align}

The \textit{imaginary transformation} of $\wp$ 
is given by \cite[p.31]{Ven12},
$\wp(\phi; \tau_q)
=\tau_q^{-2} \wp (\phi/\tau_q; - 1/\tau_q )$.
Hence (\ref{eqn:wp_expansion2}) is written as
\begin{align}
\wp(\phi)=\wp(\phi; \tau_q)
&= \frac{1}{|\tau_q|^2} \Big[
 \frac{1}{12} 
+ \frac{1}{4} \frac{1}{\sinh^2(\phi/(2 |\tau_q|))}
\nonumber\\
& \qquad
- 2 \sum_{n=1}^{\infty} 
\frac{e^{-2n \pi/|\tau_q|}}{(1-e^{-2n \pi/|\tau_q|})^2}
+2 \sum_{n=1}^{\infty} 
\frac{n e^{-2n \pi /|\tau_q|}}{1-e^{-2n \pi /|\tau_q|}}
\cosh(n \phi/|\tau_q|)
\Big],
\label{eqn:imaginary2}
\end{align}
where we used the relation $\tau_q=\sqrt{-1} |\tau_q|$
which is valid 
in the present setting (\ref{eqn:omega1}). 

It can be verified that 
$\wp$ satisfies the following differential equations
\cite[Section 23]{NIST10}),
\begin{align}
\wp'(\phi)^2
&= 4 \wp(\phi)^2- g_2 \wp(\phi) - g_3
\nonumber\\
&= 4 (\wp(\phi)-e_1)(\wp(\phi)-e_2)(\wp(\phi)-e_3),
\label{eqn:wp_diff0}
\\
\wp''(\phi) &= 6 \wp(\phi)^2 - \frac{g_2}{2}.
\label{eqn:wp_diff}
\end{align}

When $q \in (0, 1)$,
$e_1, e_2, e_3 \in \R$ 
and the following inequalities hold
(\cite[Section 2.8]{MM97}),
\begin{equation}
e_3 < e_2 < e_1. 
\label{eqn:inequality}
\end{equation}
From (\ref{eqn:wp_diff0}), we see that 
$\wp$ inverts the incomplete elliptic integral \cite{Law89,MM97}.
Under the setting (\ref{eqn:omega1}), 
we will use the following special result 
\cite[(23.6.31)]{NIST10} (see Section 6.12 of \cite{Law89});
if $e_2 \leq x \leq e_1$, then
$\wp^{-1}(x) \in [\omega_1, \omega_1+\omega_3]
:=\{\pi + \sqrt{-1} y : 0 \leq y \leq \pi |\tau_q|\}$
and
\begin{equation}
y=\frac{1}{2} \int_x^{e_1} 
\frac{ds}{\sqrt{(e_1-s)(s-e_2)(s-e_3)}}.
\label{eqn:wp_inverse}
\end{equation}

We introduce the Euler operator
\begin{equation}
\cD_z=z \frac{\partial}{\partial z}.
\label{eqn:Euler}
\end{equation}
If we use the notation (\ref{eqn:x_phi}),  then
$\cD_z = - \sqrt{-1} \partial/\partial \phi_z$.  

\begin{lem}
\label{thm:fundamental_eqs}
Under the notation (\ref{eqn:x_phi}), 
the following equalities hold,
\begin{align}
f^{\rm JK}(z, a) f^{\rm JK}(z, b) 
&= \cD_z f^{\rm JK}(z, ab)
+(\rho_1(a)+\rho_1(b)) f^{\rm JK}(z, ab),
\label{eqn:fundamental1}
\\
f^{\rm JK}(z, a) f^{\rm JK}(z, a^{-1})
&= \cD_z \rho_1(z) -\cD_a \rho_1(a), 
\label{eqn:fundamental2}
\\
\cD_z \rho_1(z)
&=
-\sqrt{-1} \frac{d}{d \phi_z} \rho_1(z)
=-\wp(\phi_z) + \frac{P}{12},
\label{eqn:fundamental3}
\\
f^{\rm JK}(z, a) f^{\rm JK}(z, a^{-1})
&
=\wp(\phi_a)-\wp(\phi_z),
\label{eqn:addition_f}
\\
f^{\rm JK}(z, -1)^2
&=e_1-\wp(\phi_z),
\label{eqn:fundamental4}
\end{align}
where
\[
P =P(q) = 1-24 \sum_{n=1}^{\infty} \frac{q^{2n}}{(1-q^{2n})^2}
=\frac{12}{\pi} \eta_1(q)
=1-24 \sum_{n=1}^{\infty} \frac{n q^{2n}}{1-q^{2n}}.
\]
\end{lem}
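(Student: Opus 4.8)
The linchpin is a bridge relation between the Ramanujan function and the Euler logarithmic derivative of the theta function. The plan is to establish first that
\[
\cD_z\log\theta(z)=\tfrac12-\rho_1(z),\qquad q^2<|z|<1,
\]
which I would get by differentiating the product (\ref{eqn:theta}): one has $\cD_z\log\theta(z)=-\sum_{i\ge0}zq^{2i}/(1-zq^{2i})+\sum_{i\ge0}(q^{2i+2}/z)/(1-q^{2i+2}/z)$, and expanding each geometric factor in the annulus reproduces term by term the Laurent series (\ref{eqn:rho1}) of $\tfrac12-\rho_1(z)$. Two byproducts of this relation, obtained through (\ref{eqn:theta_inversion}) and (\ref{eqn:theta_qp}), are $\rho_1(1/z)=-\rho_1(z)$ and $\rho_1(q^2z)=\rho_1(z)+1$; both will be used repeatedly. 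Every other identity then follows from this bridge together with the theta form (\ref{eqn:f2}) of $f^{\rm JK}$.

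For (\ref{eqn:fundamental3}) I would apply $\cD_z$ once more, so $\cD_z\rho_1(z)=-\cD_z^2\log\theta(z)$, and differentiate the series above termwise. Comparing Laurent coefficients with the expansion (\ref{eqn:wp_expansion4}) of $\wp(\phi_z)$, the coefficients of $z^{\pm m}$ with $m\ge1$ match those of $-\wp(\phi_z)$ exactly, and the sole remaining constant is pinned by the $z^0$-coefficient; by the definition of $P$ through the Eisenstein sum $\sum_{n\ge1}q^{2n}/(1-q^{2n})^2$ this constant is $P/12$, which gives (\ref{eqn:fundamental3}). The first equality there is just the substitution $\cD_z=-\sqrt{-1}\,\partial/\partial\phi_z$.

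For the multiplicative identity (\ref{eqn:fundamental1}), using (\ref{eqn:f2}) and the bridge relation I would first compute $\cD_z f^{\rm JK}(z,ab)=(\rho_1(z)-\rho_1(zab))\,f^{\rm JK}(z,ab)$, since $\cD_z$ acts on $\theta(zab)$ as the Euler operator in the variable $zab$. This recasts (\ref{eqn:fundamental1}) as the single master identity $f^{\rm JK}(z,a)f^{\rm JK}(z,b)=(\rho_1(z)+\rho_1(a)+\rho_1(b)-\rho_1(zab))f^{\rm JK}(z,ab)$, which I would prove by the Liouville method. Setting $\Phi(z):=f^{\rm JK}(z,a)f^{\rm JK}(z,b)/f^{\rm JK}(z,ab)-(\rho_1(z)+\rho_1(a)+\rho_1(b)-\rho_1(zab))$, the scaling (\ref{eqn:JK2c}) makes the ratio invariant under $z\mapsto q^2z$, while $\rho_1(q^2z)=\rho_1(z)+1$ makes the bracket invariant, so $\Phi$ is elliptic; it has at most simple poles at the two points $z=1$ and $z=1/(ab)$, and matching residues there (using (\ref{eqn:theta_prime})) shows they cancel, so $\Phi$ is a pole-free elliptic function, i.e. a constant, which one evaluates to $0$. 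Equivalently, the Cauchy product of the two bilateral series reduces (\ref{eqn:fundamental1}) to the partial-fraction identity $\sum_{m\in\Z}[(1-aq^{2m})(1-bq^{2(N-m)})]^{-1}=(N+\rho_1(a)+\rho_1(b))/(1-abq^{2N})$ for every $N\in\Z$, as treated in \cite{Coo00,Ven12}.

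The remaining identities are then cheap. For (\ref{eqn:addition_f}) I would argue directly on $F(z):=f^{\rm JK}(z,a)f^{\rm JK}(z,a^{-1})$: by (\ref{eqn:JK2c}) it is invariant under $z\mapsto q^2z$ and by (\ref{eqn:JK2b}) even in $\phi_z$, hence elliptic with a single double pole at $z=1$ whose principal part, computed from (\ref{eqn:theta_prime}), is $-1/\phi_z^2$, matching that of $-\wp(\phi_z)$; since $f^{\rm JK}(a,a^{-1})\propto\theta(1)=0$ by (\ref{eqn:theta_zero}) it vanishes at $z=a$, so $F(z)+\wp(\phi_z)$ is a holomorphic elliptic function, i.e. constant, equal to $\wp(\phi_a)$, giving (\ref{eqn:addition_f}). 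Identity (\ref{eqn:fundamental2}) then follows by rewriting its right-hand side through (\ref{eqn:fundamental3}), the additive $P/12$ cancelling against $\wp(\phi_a)-\wp(\phi_z)$; and (\ref{eqn:fundamental4}) is (\ref{eqn:addition_f}) at $a=-1$ (note $(-1)^{-1}=-1$) with $e_1=\wp(\phi_{-1})$. The hard part is the master identity (\ref{eqn:fundamental1}): it carries the genuine elliptic-function content, and the delicate step is the residue bookkeeping at the two simple poles so that exactly the combination $\rho_1(a)+\rho_1(b)$ emerges.
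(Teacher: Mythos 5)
Your architecture is correct and genuinely different from the paper's. The paper disposes of this lemma essentially by citation: (\ref{eqn:fundamental1}) is quoted as the fundamental multiplicative identity of the Jordan--Kronecker function from \cite{Coo00,Ven12}, (\ref{eqn:fundamental2}) is obtained from it by letting $b \to 1/a$, (\ref{eqn:fundamental3}) is again taken from \cite{Coo00}, (\ref{eqn:addition_f}) is the combination of (\ref{eqn:fundamental2}) and (\ref{eqn:fundamental3}), and (\ref{eqn:fundamental4}) is the case $a=-1$. You instead give a self-contained treatment: your reduction of (\ref{eqn:fundamental1}) to the master identity via $\cD_z f^{\rm JK}(z,ab)=(\rho_1(z)-\rho_1(zab))f^{\rm JK}(z,ab)$ is correct, and the Liouville argument goes through (the ratio and the bracket are both invariant under $z\mapsto q^2 z$, and the residues of both at $z=1$ and $z=1/(ab)$ are $-1$ and $1/(ab)$ respectively, so they cancel; to pin down the constant you should name an evaluation point, e.g.\ $z=1/a$, where the ratio vanishes since $f^{\rm JK}(1/a,a)\propto\theta(1)=0$ and the bracket vanishes by $\rho_1(1/a)=-\rho_1(a)$). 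You also reverse the paper's logical order: you prove (\ref{eqn:addition_f}) independently (the evenness in $\phi_z$ correctly rules out a residue at the double pole) and then deduce (\ref{eqn:fundamental2}), whereas the paper derives (\ref{eqn:fundamental2}) first and gets (\ref{eqn:addition_f}) from it; since your (\ref{eqn:addition_f}) is established on its own, this is not circular. What your route buys is independence from \cite{Coo00,Ven12}; what it costs is the elliptic-function machinery that the paper simply cites away.

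There is, however, one concrete failure: the constant in (\ref{eqn:fundamental3}). Carrying out the termwise comparison you describe actually gives
\begin{equation*}
\cD_z\rho_1(z)=\frac{z}{(1-z)^2}+\sum_{n=1}^{\infty}\frac{nq^{2n}}{1-q^{2n}}\bigl(z^n+z^{-n}\bigr),
\end{equation*}
and adding (\ref{eqn:wp_expansion4}) yields $\cD_z\rho_1(z)+\wp(\phi_z)=-\frac{1}{12}+2\sum_{n\geq 1}q^{2n}/(1-q^{2n})^2=-P/12$, i.e.\ $\cD_z\rho_1(z)=-\wp(\phi_z)-P/12$, with a \emph{minus} sign on $P/12$. (A quick check at $q\to 0$: the left-hand side tends to $z/(1-z)^2$, while $-\wp(\phi_z)+P/12$ tends to $z/(1-z)^2+1/6$ by (\ref{eqn:q0_limit1}).) So your claim that the $z^0$-coefficient ``is $P/12$'' is not what your own computation produces; you asserted agreement with the printed statement, which --- together with $a_2(z)=\wp(\phi_z)-P/12$ in Lemma \ref{thm:an} --- carries a sign slip in this constant (indeed $a_2(-1)=e_1+P/12$ follows directly from Lemma \ref{thm:gammas} (iv) and (\ref{eqn:e123})). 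The slip is immaterial downstream because only differences such as $\cD_z\rho_1(z)-\cD_a\rho_1(a)$ or $a_2(z)-a_2(w)$ are ever used, so your deductions of (\ref{eqn:fundamental2}), (\ref{eqn:addition_f}) and (\ref{eqn:fundamental4}) survive unchanged; but as written, your verification of (\ref{eqn:fundamental3}) fails at the constant term, and a correct write-up must record the minus sign (or flag the typo) rather than declare a match.
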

\noindent
The equality (\ref{eqn:fundamental1}) is
called the 
\textit{fundamental multiplicative identity
of the Jordan--Kronecker function} in \cite{Coo00,Ven12}.
The equality (\ref{eqn:fundamental2}) is obtained 
by taking the limit $b \to 1/a$ in
(\ref{eqn:fundamental1}) \cite{Coo00}.
The derivation of (\ref{eqn:fundamental3})
is also found in \cite{Coo00}.
Combination of (\ref{eqn:fundamental2})
and (\ref{eqn:fundamental3}) gives
(\ref{eqn:addition_f}). 
The equality (\ref{eqn:fundamental4}) is 
a special case of (\ref{eqn:addition_f}) with
$a=-1$ where
the definition of 
$e_1$ is used.

We set 
\begin{equation}
 a_n(z) := \cD_z^n \log \theta(z), \quad n \in \N.
\label{eqn:an_x}
\end{equation}
\begin{lem}
\label{thm:an} 
The following equalities hold,
\begin{align*}
a_1(z) &= \frac{1}{2} - \rho_1(z), \quad
a_2(z) = \wp(\phi_z) - \frac{P}{12}, 
\nonumber\\
a_3(z) &= -\sqrt{-1} \wp'(\phi_z), \quad
a_4(z) = - \wp''(\phi_z).
\end{align*}
\end{lem}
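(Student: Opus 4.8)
The plan is to compute $a_1$ directly and then obtain $a_2,a_3,a_4$ by iterating the Euler operator, using the differentiation rule $\cD_z=-\sqrt{-1}\,\partial/\partial\phi_z$ recorded just below (\ref{eqn:Euler}) together with the identity (\ref{eqn:fundamental3}) from Lemma \ref{thm:fundamental_eqs}. The only genuinely new computation is the closed form for $a_1$; once that is in hand, each higher $a_n$ is a single application of $\cD_z$, so the work is essentially front-loaded into the base case.

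First I would establish $a_1(z)=\tfrac12-\rho_1(z)$ by expanding the logarithm of the product form (\ref{eqn:theta}), namely $\theta(z)=\prod_{n\ge 0}(1-zq^{2n})(1-q^{2n+2}/z)$, so that applying $\cD_z=z\,\partial/\partial z$ term by term gives
\begin{equation*}
a_1(z)=\cD_z\log\theta(z)
=-\sum_{n\ge 0}\frac{zq^{2n}}{1-zq^{2n}}
+\sum_{n\ge 0}\frac{q^{2n+2}/z}{1-q^{2n+2}/z}.
\end{equation*}
On the annulus $q^2<|z|<1$ each summand expands as a convergent geometric series (the \emph{outer} factors on the side $|z|<1$, the \emph{inner} factors on the side $|z|>q^2$), and reorganizing the double sums by powers of $z$ yields
\begin{equation*}
a_1(z)=-\sum_{k\ge 1}\frac{z^k}{1-q^{2k}}
+\sum_{k\ge 1}\frac{q^{2k}z^{-k}}{1-q^{2k}}.
\end{equation*}
Comparing with (\ref{eqn:rho1}) after rewriting its negative-power terms via $1/(1-q^{-2k})=-q^{2k}/(1-q^{2k})$ shows precisely $a_1(z)=\tfrac12-\rho_1(z)$.

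Next I would differentiate. Applying $\cD_z$ to $a_1$ and invoking (\ref{eqn:fundamental3}) gives $a_2=\cD_z a_1=-\cD_z\rho_1(z)=\wp(\phi_z)-P/12$. Since $\cD_z$ annihilates the constant $P/12$ and $\cD_z\phi_z=-\sqrt{-1}$ (from $\phi_z=-\sqrt{-1}\log z$), the remaining two cases follow by the chain rule: $a_3=\cD_z a_2=-\sqrt{-1}\,\wp'(\phi_z)$ and $a_4=\cD_z a_3=(-\sqrt{-1})^2\wp''(\phi_z)=-\wp''(\phi_z)$.

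The main obstacle lies only in the first step: the bookkeeping needed to identify $a_1$ with $\rho_1$, where one must split $\log\theta$ correctly into its inner and outer factors, expand each on the appropriate side of the annulus $q^2<|z|<1$, and reconcile the sign arising from $1/(1-q^{-2k})=-q^{2k}/(1-q^{2k})$. After that, all of $a_2,a_3,a_4$ are mechanical, reducing to one invocation of (\ref{eqn:fundamental3}) followed by repeated use of $\cD_z=-\sqrt{-1}\,\partial/\partial\phi_z$.
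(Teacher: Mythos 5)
Your proposal is correct and follows essentially the same route as the paper: compute $a_1$ as the logarithmic derivative of the product form of $\theta$, identify it with $\tfrac12-\rho_1(z)$, then obtain $a_2$ from (\ref{eqn:fundamental3}) and $a_3,a_4$ by the chain rule via $\cD_z=-\sqrt{-1}\,\partial/\partial\phi_z$. The only cosmetic difference is that you verify the $a_1$--$\rho_1$ identification by re-expanding both sides as Laurent series, whereas the paper matches them through the partial-fraction form $\rho_1(z)-\frac{1+z}{2(1-z)}=\sum_{n\ge1}\bigl(\frac{zq^{2n}}{1-zq^{2n}}-\frac{z^{-1}q^{2n}}{1-z^{-1}q^{2n}}\bigr)$; both are sound.
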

\begin{proof} 
For $a_1(z)$ we have
\begin{align}
a_1(z) 
&= z \frac{\theta'(z)}{\theta(z)} 
= -\frac{z}{1-z} - \sum_{n=1}^{\infty} \Big(
\frac{z q^{2n}}{1-z q^{2n}} - \frac{z^{-1} q^{2n}}{1-z^{-1} q^{2n}} 
\Big) 
\label{eqn:a1minus}
\\
&= -\frac{z}{1-z} - \Big\{ \rho_1(z) - \frac{1+z}{2(1-z)} \Big\}
= \frac{1}{2} - \rho_1(z). 
\nonumber
\end{align} 
For $a_2(z)$ use 
(\ref{eqn:fundamental3}) in Lemma \ref{thm:fundamental_eqs}.
Use $\cD_z=-\sqrt{-1} \partial/\partial \phi_z$
for $a_3(z)$ and $a_4(z)$.
\end{proof}
\begin{lem}
\label{thm:gammas}
The following equalities holds,
\begin{align*}
&{\rm (i)} \quad \lim_{z \to 1} \Big(a_1(z) + \frac{z}{1-z} \Big) 
= 0, \\
&{\rm (ii)} \quad \gamma_2 
:= \lim_{z \to 1} \Big\{ a_2(z) + \frac{z}{(1-z)^2} \Big\} 
= -2\sum_{n=1}^{\infty} 
\frac{q^{2n}}{(1-q^{2n})^2} = \frac{P-1}{12}, \\
&{\rm (iii)} \quad a_1(-1) = \frac{1}{2}, \\
&{\rm (iv)} \quad a_2(-1) 
= \frac{1}{4} + 2\sum_{n=1}^{\infty} \frac{q^{2n}}{(1+q^{2n})^2}. 
\end{align*}
\end{lem}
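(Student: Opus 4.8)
The plan is to reduce all four identities to evaluations or one-sided limits of two explicit partial-fraction series, one for $a_1$ and one for $a_2$, each handled term by term. The series for $a_1$ is already available in (\ref{eqn:a1minus}),
\[
a_1(z) = -\frac{z}{1-z} - \sum_{n=1}^{\infty} \left( \frac{z q^{2n}}{1-z q^{2n}} - \frac{z^{-1} q^{2n}}{1-z^{-1} q^{2n}} \right).
\]
Applying the Euler operator $\cD_z$ termwise, using $\cD_z \frac{z}{1-z} = \frac{z}{(1-z)^2}$, $\cD_z \frac{z q^{2n}}{1-z q^{2n}} = \frac{z q^{2n}}{(1-z q^{2n})^2}$, and $\cD_z \frac{z^{-1} q^{2n}}{1-z^{-1} q^{2n}} = -\frac{z^{-1} q^{2n}}{(1-z^{-1} q^{2n})^2}$ (the last because $\cD_z = -\cD_{1/z}$), I would record the companion series
\[
a_2(z) = \cD_z a_1(z) = -\frac{z}{(1-z)^2} - \sum_{n=1}^{\infty} \left( \frac{z q^{2n}}{(1-z q^{2n})^2} + \frac{z^{-1} q^{2n}}{(1-z^{-1} q^{2n})^2} \right).
\]
Every claim of the lemma is then an immediate consequence of one of these two displays.

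For (i) and (iii) I would use the $a_1$ series. Adding $\frac{z}{1-z}$ removes exactly the singular term, so that $a_1(z) + \frac{z}{1-z}$ equals the displayed sum, each summand of which tends to $\frac{q^{2n}}{1-q^{2n}} - \frac{q^{2n}}{1-q^{2n}} = 0$ as $z \to 1$; this gives (i). For (iii) I would simply set $z = -1$: then $z = z^{-1} = -1$, so the two fractions inside the sum coincide and cancel pairwise, leaving only $-\frac{-1}{1-(-1)} = \frac{1}{2}$.

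For (ii) and (iv) I would use the $a_2$ series. Adding $\frac{z}{(1-z)^2}$ cancels its singular term, and letting $z \to 1$ in the remaining sum termwise yields $\gamma_2 = -2 \sum_{n=1}^{\infty} \frac{q^{2n}}{(1-q^{2n})^2}$; I would then identify this with $\frac{P-1}{12}$ directly from the definition $P = 1 - 24 \sum_{n=1}^{\infty} \frac{q^{2n}}{(1-q^{2n})^2}$ recorded in Lemma \ref{thm:fundamental_eqs}. For (iv) I would set $z = -1$: the leading term gives $-\frac{-1}{(1-(-1))^2} = \frac{1}{4}$, while (as in (iii)) the two sums coincide, each summand equalling $\frac{-q^{2n}}{(1+q^{2n})^2}$, so together they contribute $2 \sum_{n=1}^{\infty} \frac{q^{2n}}{(1+q^{2n})^2}$, which is exactly (iv).

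The only step that is not pure algebra is the justification of the termwise limits $z \to 1$ in (i) and (ii), and this is where the (mild) care is needed. Since $0 < q < 1$, on a fixed small disk around $z = 1$ the denominators $1 - z q^{2n}$ and $1 - z^{-1} q^{2n}$ stay bounded away from $0$ uniformly in $n \ge 1$, so the $a_1$-sum is dominated by a convergent series of the form $C \sum_{n} q^{2n}$ and the $a_2$-sum by $C \sum_{n} n q^{2n}$; dominated convergence then legitimizes interchanging limit and summation. The evaluations (iii) and (iv) require no limiting argument at all, only the pairwise cancellation at $z = -1$, and are therefore immediate. As an independent sanity check on the manipulations, setting $z = -1$ in the $\wp$-expansion (\ref{eqn:wp_expansion4}) reproduces the value $e_1$ of (\ref{eqn:e123}).
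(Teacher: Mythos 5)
Your proposal is correct and takes essentially the same route as the paper: the paper's proof consists precisely of noting the partial-fraction series (\ref{eqn:a1minus}) for $a_1$ together with the companion series for $a_2$ (the same one you obtain by applying $\cD_z$ termwise), from which (i)--(iv) follow by the same substitutions at $z=\pm 1$ and the definition of $P$. Your explicit justification of the termwise limits via uniform bounds on the denominators only spells out what the paper leaves implicit.
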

\begin{proof} 
We notice (\ref{eqn:a1minus}) and 
\[
a_2(z) = -\frac{z}{(1-z)^2} 
- \sum_{n=1}^{\infty} 
\Big\{
\frac{z q^{2n}}{(1-z q^{2n})^2}
+ \frac{z^{-1} q^{2n}}{(1-z^{-1} q^{2n})^2}
\Big\}. 
\]
The formulas (i)--(iv) are all obtained from
these equalities.
\end{proof}

We note that the following is the case,
\[
\theta'(-1)= -\theta(-1)/2 \iff
a_1(-1) = 1/2 \iff
\rho_1(-1) = 0.
\]

\subsection{$q \to 0$ limits
and asymptotics in $q \to 1$}
\label{sec:q_0}
By the definition (\ref{eqn:SAqr1}), 
the following are readily confirmed; 
\begin{align*}
\lim_{q \to 0} S_{\A_q}(z, w)
&= S_{\D}(z,w),
\nonumber\\
\lim_{q \to 0} S_{\A_q}(z, w; r)
&= \frac{1+r z \wbar}{(1+r)(1-z \wbar)}
=\frac{1}{1-z \wbar} - \frac{r}{1+r}
=: S_{\D}(z, w; r),
\nonumber\\
\lim_{r \to 0} S_{\D}(z, w; r)
&= S_{\D}(z, w).
\end{align*}
Notice that 
if we use the expressions 
(\ref{eqn:S_qt_theta}) and (\ref{eqn:S_q_theta})
in Proposition \ref{thm:S_Aq_theta}, 
their $q \to 0$ limits are immediately obtained by 
(\ref{eqn:theta_p0}) with $p=q^2$.

By (\ref{eqn:wp_expansion2}) and (\ref{eqn:wp_expansion4}), 
we see the following,
\begin{align}
\lim_{q \to 0} \wp(\phi; \tau_q)
&= - \frac{1}{12} + \frac{1}{4 \sin^2(\phi/2)},
\nonumber\\
\lim_{q \to 0} 
\wp(\phi_{z}; \tau_q)
&=-\frac{1}{12} - \frac{z}{(1-z)^2}
=- \frac{1+10z+z^2}{12(1-z)^2}.
\label{eqn:q0_limit1}
\end{align}
Similarly, 
(\ref{eqn:e123}) and (\ref{eqn:g2g3}) give
\begin{equation}
e_1(0) = 1/6,
\quad
e_2(0)= e_3(0)= - 1/12,
\quad
g_2(0) = 1/12, 
\quad
g_3(0) = 1/216.
\label{eqn:q0_limit2}
\end{equation}

In the present setting (\ref{eqn:omega1}),
$q \to 1 \iff 
|\tau_q| \to 0$.
For $\Re \, \phi \in (0, 2\pi)$,
(\ref{eqn:imaginary2}) gives the following
asymptotics in $|\tau_q| \to 0$,
\begin{align}
\wp(\phi, \tau_q)
&\sim 
( 1/12 + e^{-\phi/|\tau_q|} + e ^{-(2\pi-\phi)/|\tau_q|})/|\tau_q|^2, 
\nonumber\\
e_1
&\sim ( 1/12 + 2 e^{-\pi/|\tau_q|})/|\tau_q|^2,
\quad
e_2
\sim (1/12 - 2 e^{-\pi/|\tau_q|})/|\tau_q|^2.
\label{eqn:q1asym}
\end{align}
By (\ref{eqn:sum0}), the above implies
\begin{equation}
e_3 =-(e_1+e_2) 
\sim -1/(6 |\tau_q|^2),
\quad
g_2 
\sim 
( 1+ 4 e^{-\pi/|\tau_q|})/|\tau_q|^2.
\label{eqn:q1asym2}
\end{equation}

\subsection{Conditional weighted Szeg\H{o} kernels}
\label{sec:conditional_S}
For $r >0$, define
\begin{equation}
S_{\A_q}^{\alpha}(z, w; r)
:= S_{\A_q}(z, w; r) 
- \frac{S_{\A_q}(z, \alpha; r) S_{\A_q}(\alpha, w; r)}
{S_{\A_q}(\alpha, \alpha; r)},
\quad z, w, \alpha \in \A_q.
\label{eqn:conditionSat}
\end{equation}
We put
$S_{\A_q}^{\alpha}(z, w; r)
=f(z, w; r, \alpha)
h_{\alpha}^q(z) \overline{h_{\alpha}^q(w)}$
assuming $\overline{f(w, z; r, \alpha)}
=f(z, w; r, \alpha)$
and here we intend to determine $f$. 
By the definition of the 
conditional kernel (\ref{eqn:conditionS}),
we can verify that 
$S_{\A_q}^{\alpha}$ satisfies the same functional equations
with (\ref{eqn:symmetries1}) (i) and (iii);
$S_{\A_q}^{\alpha}(q^2 z, w; r)$ 
$= - (1/r) S_{\A_q}^{\alpha}(z, w; r)$,
$S_{\A_q}^{\alpha}(z, w; q^2 r)$ 
$= (1/z \wbar) S_{\A_q}^{\alpha}(z, w; r)$,
but in the equation corresponding to 
(\ref{eqn:symmetries1}) (ii) 
the conditioning parameter $\alpha$ should be also inverted as
$S_{\A_q}^{\alpha}(1/z, w; r)$ 
$= - S_{\A_q}^{1/\alpha}(z, 1/w; 1/r)$.
Moreover 
(\ref{eqn:symmetries1}) (i) implies
$S_{\A_q}^{q^2 \alpha}(z, w; r)=S_{\A_q}^{\alpha}(z, w; r)$.
On the other hand, (\ref{eqn:haq}) gives
$h^q_{\alpha}(q^2 z)=|\alpha|^2 h^q_{\alpha}(z)$, 
$h^q_{q^2 \alpha}(z)=z^2 (\overline{\alpha}/\alpha) h^q_{\alpha}(z)$, 
and
$h^q_{\alpha}(1/z)=(\alpha/\overline{\alpha}) h^q_{1/\alpha}(z)$. 
Hence $f$ should satisfy the functional equations
\begin{align*}
&{\rm (i)} \quad f(q^2 z, w; r, \alpha) =-\frac{1}{r |\alpha|^2} 
f(z, w; r, \alpha),
\nonumber\\
&{\rm (ii)} \quad  f(1/z, w; r, \alpha) = 
-f(z, 1/w; 1/r, 1/\alpha),
\nonumber\\
&{\rm (iii)} \quad  f(z, w; q^2 r, \alpha) = 
\frac{1}{z \wbar} f(z, w; r, \alpha),
\nonumber\\
&{\rm (iv)} \quad f(z, w; r, q^2 \alpha) = \frac{1}{(z \wbar)^2}
f(z, w; r, \alpha).
\end{align*}
Comparing them with (\ref{eqn:symmetries1}), 
it is easy to verify that if
$f(z,w; r, \alpha)=S_{\A_q}(z,w; r|\alpha|^2)$,
these functional equations are satisfied.
The above observation implies the 
equality (\ref{eqn:SaAq}).
Actually, Mccullough and Shen proved the following.
\begin{prop}[Mccullough and Shen \cite{MS94}]
\label{thm:MS}
The equality (\ref{eqn:SaAq}) holds
with (\ref{eqn:haq}). 
\end{prop}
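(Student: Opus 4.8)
The plan is to prove (\ref{eqn:SaAq}) by reducing it to a single theta-function identity and then recognizing that identity as an instance of Weierstrass' addition formula (\ref{eqn:Weierstrass_add1}). First I would insert the closed form (\ref{eqn:S_qt_theta}) for every weighted Szeg\H{o} kernel appearing in the conditional kernel (\ref{eqn:conditionSat}) and in the claimed right-hand side. The common prefactors $q_0^2$ cancel, and using $\overline{\theta(u)}=\theta(\overline{u})$ from (\ref{eqn:theta_real}) together with (\ref{eqn:haq}) one gets $\overline{h_{\alpha}^q(w)}=\wbar\,\theta(\alphabar/\wbar)/\theta(\alpha\wbar)$. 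Applying the inversion formula (\ref{eqn:theta_inversion}) to $\theta(\alpha/z)$ and $\theta(\alphabar/\wbar)$ turns the factor $z\wbar\,\theta(\alpha/z)\theta(\alphabar/\wbar)$ into $|\alpha|^2\,\theta(z/\alpha)\theta(\wbar/\alphabar)$, so that after clearing denominators the whole of (\ref{eqn:SaAq}) becomes equivalent to
\[
\theta(-rz\wbar)\theta(z\alphabar)\theta(\alpha\wbar)\theta(-r|\alpha|^2)
-\theta(-rz\alphabar)\theta(-r\alpha\wbar)\theta(|\alpha|^2)\theta(z\wbar)
=|\alpha|^2\,\theta(-r)\,\theta(-r|\alpha|^2 z\wbar)\,\theta(z/\alpha)\,\theta(\wbar/\alphabar).
\]

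The second step is to match this with (\ref{eqn:Weierstrass_add1}). Writing $Z=z\wbar$, $A=z\alphabar$, $B=\alpha\wbar$, $N=|\alpha|^2$, one has the single relation $ZN=AB$, which lets me choose branches so that $\sqrt{Z}\sqrt{N}=\sqrt{A}\sqrt{B}$. I would then set $x=-r\sqrt{Z}\sqrt{N}$, $y=\sqrt{Z}/\sqrt{N}$, $u=\sqrt{Z}\sqrt{N}$, $v=\sqrt{A}/\sqrt{B}$ in (\ref{eqn:Weierstrass_add1}). A short check then gives, for the first theta product, $xy=-rZ$, $x/y=-rN$, $uv=A$, $u/v=B$; for the second, $xv=-rA$, $x/v=-rB$, $uy=Z$, $u/y=N$; for the product on the right, $yv=A/N=z/\alpha$, $y/v=B/N=\wbar/\alphabar$, $xu=-rZN$, $x/u=-r$; and the prefactor is $u/y=N=|\alpha|^2$. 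These reproduce exactly the three products and the constant in the displayed identity, so the claim follows. Note that every one of the twelve arguments is a genuine monomial in $r,Z,A,B,N$, so the auxiliary square roots cancel and the identity is branch-independent.

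The step I expect to be delicate is the sign bookkeeping, since $\theta$ is not even and each argument in (\ref{eqn:Weierstrass_add1}) is determined by $x,y,u,v$ themselves rather than by their squares. What makes it manageable is that imposing the single consistency condition $\sqrt{Z}\sqrt{N}=\sqrt{A}\sqrt{B}$ — available precisely because $ZN=AB$ — simultaneously fixes all the signs correctly, in particular producing $-rz\alphabar$, $-r\alpha\wbar$ and $-r|\alpha|^2 z\wbar$ with the right minus signs and the clean prefactor $|\alpha|^2$. I would close by remarking that this direct derivation is equivalent to, but sharper than, verifying the functional equations (i)--(iv) for $f$ stated just above the proposition: those equations pin down $f(z,w;r,\alpha)=S_{\A_q}(z,w;r|\alpha|^2)$ only up to a factor invariant under the indicated operations, whereas the Weierstrass computation also fixes that factor to be $1$.
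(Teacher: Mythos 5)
Your proposal is correct and follows essentially the same route as the paper: both clear denominators in (\ref{eqn:SaAq}) to arrive at the same four-term theta identity and then recognize it as an instance of Weierstrass' addition formula (\ref{eqn:Weierstrass_add1}); your explicit square-root parametrization $x=-r\sqrt{Z}\sqrt{N}$, $y=\sqrt{Z}/\sqrt{N}$, $u=\sqrt{Z}\sqrt{N}$, $v=\sqrt{A}/\sqrt{B}$ with the branch condition $\sqrt{Z}\sqrt{N}=\sqrt{A}\sqrt{B}$ is just a different (and equally valid) presentation of the paper's ratio-based substitution $\alphabar z=x/y$, $\alpha\wbar=u/v$, $z\wbar=x/v$, $|\alpha|^2=u/y$, $r=-yv$, and in both versions every theta argument is a monomial in the original variables so the branch choice is immaterial. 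The only cosmetic difference is that you apply the inversion formula (\ref{eqn:theta_inversion}) to $\theta(\alpha/z)$ and $\theta(\alphabar/\wbar)$ before the substitution while the paper applies it afterward, and your closing observation---that the functional-equation check preceding the proposition pins down $f$ only up to an invariant factor, whereas the Weierstrass computation fixes it to be $1$---accurately reflects why the paper also gives the direct proof.
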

Mccullough and Shen proved the above 
by preparing an auxiliary lemma. 
Here we give a direct proof from
Weierstrass' addition formula
(\ref{eqn:Weierstrass_add1}).
\begin{proof}
We put (\ref{eqn:conditionSat}) with (\ref{eqn:S_qt_theta})
and (\ref{eqn:haq}) to (\ref{eqn:SaAq}), then
the equality is expressed by
theta functions. After multiplying both sides by
the common denominator, we see that the
equality (\ref{eqn:SaAq}) is equivalent to the following,
\begin{align}
&
\theta(-r z \wbar, -r |\alpha|^2, \alphabar z, \alpha \wbar)
- \theta(-r \alphabar z, -r \alpha \wbar, z \wbar, |\alpha|^2)
\nonumber\\
& \qquad 
=z \wbar \theta(-r z \wbar |\alpha|^2, \alpha z^{-1}, 
\alphabar \, \wbar^{-1}, -r).
\label{eqn:Weierstrass_add2}
\end{align}
Now we change the variables from
$\{z, \wbar, \alpha, r\}$ to
$\{x, y, u, v \}$ as
$\alphabar z =x/y$, 
$\alpha \wbar=u/v$, 
$z \wbar = x/v$, 
$|\alpha|^2= u/y$, and
$r=-yv$.
Then the left-hand side 
of (\ref{eqn:Weierstrass_add2}) becomes 
$\theta(xy, x/y, uv, u/v)
-\theta(xv, x/v, uy, u/y)$, 
and the right-hand side becomes 
$(x/v) \theta(yv, (y/v)^{-1}, xu, (x/u)^{-1})$
which is equal to 
$(u/y) \theta(yv, y/v, xu, x/u)$
by (\ref{eqn:theta_inversion}).
Hence Weierstrass' addition formula
(\ref{eqn:Weierstrass_add1}) proves the equality
(\ref{eqn:Weierstrass_add2}). 
The proof is complete.
\end{proof}

We can prove the following.
\begin{lem}
\label{thm:Blaschke}
For $\alpha \in \A_q$,
\begin{description}
\item{\rm (i)} \quad
$h_{\alpha}^q(\alpha) =0$,

\item{\rm (ii)} \quad
$0< |h_{\alpha}^q(z)| < 1
\quad \forall z \in \A_q \setminus \{\alpha\}$,

\item{\rm (iii)} \quad
$|h_{\alpha}^q(z)| =
\begin{cases}
1, & \quad \mbox{if $z \in \gamma_1 :=\{z \in \C: |z|=1\}$},
\cr
|\alpha|, & \quad \mbox{if $z \in \gamma_q := \{z \in \C: |z|=q \}$},
\end{cases}
$

\item{\rm (iv)} \quad
$
\displaystyle{
{h_{\alpha}^q}'(\alpha)
= - \frac{\theta'(1)}{\theta(|\alpha|^2)}
=\frac{q_0^2}{\theta(|\alpha|^2)}>0
}$,

\item{\rm (v)} \quad
$
\displaystyle{
\lim_{q \to 0} h_{\alpha}^q(z)
= \frac{z-\alpha}{1-z \alphabar}
}$.
\end{description}
\end{lem}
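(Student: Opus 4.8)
The plan is to handle the elementary identities (i), (iv), (v) by direct substitution into (\ref{eqn:haq}) and to obtain the structural bounds (ii), (iii) by combining a pole/zero count inside $\A_q$ with the maximum modulus principle. As a preliminary I would record that the two formulas in (\ref{eqn:haq}) genuinely agree: the inversion formula (\ref{eqn:theta_inversion}) applied to $\theta(\alpha/z)=\theta(1/(z/\alpha))=-(\alpha/z)\theta(z/\alpha)$ converts $z\,\theta(\alpha/z)/\theta(\alphabar z)$ into $-\alpha\,\theta(z/\alpha)/\theta(\alphabar z)$, so I may substitute whichever form is convenient. Statement (i) is then immediate from the first form: at $z=\alpha$ the numerator contains $\theta(1)$, which vanishes by (\ref{eqn:theta_zero}), whereas $\theta(|\alpha|^2)\neq0$ since $|\alpha|^2\in(q^2,1)$ avoids the zero set $\{q^{2i}:i\in\Z\}$.

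The key preparatory step, and the one most prone to bookkeeping errors, is locating all poles and zeros of $h_\alpha^q$ in $\A_q$. By (\ref{eqn:theta_zero}) the poles arise from $\theta(\alphabar z)=0$, i.e.\ $z=q^{2i}/\alphabar$, and the zeros from $\theta(\alpha/z)=0$, i.e.\ $z=\alpha/q^{2i}$. Imposing $q<|z|<1$ and using $|\alpha|\in(q,1)$, a short inspection of the admissible exponents $2i$ shows that \emph{no} pole lies in $\A_q$ and that the \emph{only} zero in $\A_q$ is $z=\alpha$ (the case $i=0$), consistent with (i). Hence $h_\alpha^q$ is holomorphic and nonvanishing on $\A_q\setminus\{\alpha\}$, indeed holomorphic on a neighbourhood of $\overline{\A_q}$, which already secures the lower bound $|h_\alpha^q(z)|>0$ in (ii) away from $\alpha$.

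For the boundary moduli (iii) I would compute $|h_\alpha^q|^2=h_\alpha^q\,\overline{h_\alpha^q}$ via the conjugation rule (\ref{eqn:theta_real}). On $\gamma_1$, where $\bar z=1/z$, the conjugated factors $\theta(\alpha/z)$ and $\theta(\alphabar z)$ simply interchange, yielding $|h_\alpha^q(z)|^2=1$. On $\gamma_q$, where $\bar z=q^2/z$, the same reflection produces the arguments $q^{-2}\alphabar z$ and $q^2\alpha/z$; restoring them by the quasi-periodicity (\ref{eqn:theta_qp}) introduces a factor $|\alpha|^2/q^2$, and since $|z|^2=q^2$ one finds $|h_\alpha^q(z)|^2=|\alpha|^2$. \textbf{This $\gamma_q$ computation, tracking the quasi-periodicity prefactors correctly, is the main obstacle of the proof.} With (iii) established, (ii) follows from the maximum modulus principle: $h_\alpha^q$ is holomorphic on $\A_q$ and continuous up to $\gamma_1\cup\gamma_q$, where $|h_\alpha^q|$ equals $1$ on $\gamma_1$ and $|\alpha|<1$ on $\gamma_q$, so the boundary maximum is $1$; as $h_\alpha^q$ is non-constant, the strict maximum principle forces $|h_\alpha^q(z)|<1$ throughout $\A_q$, and combined with the zero count this gives $0<|h_\alpha^q(z)|<1$ on $\A_q\setminus\{\alpha\}$.

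It remains to treat (iv) and (v). For (iv) I would evaluate ${h_\alpha^q}'(\alpha)=\lim_{z\to\alpha}h_\alpha^q(z)/(z-\alpha)$ at the simple zero $z=\alpha$; substituting $u=\alpha/z\to1$ reduces the limit of the vanishing factor to $\theta(u)/(1-u)\to-\theta'(1)$, while the remaining factors tend to $\alpha^{-1}$ times $\alpha/\theta(|\alpha|^2)$, giving ${h_\alpha^q}'(\alpha)=-\theta'(1)/\theta(|\alpha|^2)$. Using $\theta'(1)=-q_0^2$ from (\ref{eqn:theta_prime}) (with $p=q^2$) this equals $q_0^2/\theta(|\alpha|^2)$, which is positive because $\theta(|\alpha|^2)>0$ by (\ref{eqn:theta_signs}). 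Finally (v) is immediate from $\lim_{q\to0}\theta(\,\cdot\,;q^2)=1-z$ in (\ref{eqn:theta_p0}) applied to the second form in (\ref{eqn:haq}), which yields $-\alpha(1-z/\alpha)/(1-z\alphabar)=(z-\alpha)/(1-z\alphabar)$, recovering the Möbius map (\ref{eqn:Mob1}).
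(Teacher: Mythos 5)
Your proof is correct, and for the key part (ii) it takes a genuinely different route from the paper. Parts (i), (iii), (iv), (v) essentially coincide with the paper's own argument: the paper also obtains (i) and the location of zeros from basic theta properties, computes $h^q_{\alpha}(q e^{\sqrt{-1}\phi})$ on $\gamma_q$ by exactly your conjugation plus quasi-periodicity manipulation using (\ref{eqn:theta_qp}) and (\ref{eqn:theta_real}), and gets (iv) and (v) from (\ref{eqn:theta_prime}) and (\ref{eqn:theta_p0}); your bookkeeping there (the combined prefactor $|\alpha|^2/q^2$, the limit $\theta(u)/(1-u) \to -\theta'(1)$) checks out. The divergence is the strict bound $|h^q_{\alpha}(z)|<1$ on $\A_q$: you locate all zeros and poles of the theta quotient via (\ref{eqn:theta_zero}) (correctly: the only zero in $\A_q$ is $z=\alpha$, while the nearest poles sit at $|z|=q^2/|\alpha|<q$ and $|z|=1/|\alpha|>1$, so $h^q_{\alpha}$ is holomorphic on a neighbourhood of $\overline{\A_q}$) and then apply the strict maximum modulus principle with the boundary values from (iii). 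The paper instead derives the bound from the reproducing-kernel structure: positivity of the conditional kernel gives $0 \le S_{\A_q}^{\alpha}(z,z;r)/S_{\A_q}(z,z;r) \le 1$, and since $S_{\A_q}(z,z;r)$ is strictly decreasing in the weight parameter $r$, the Mccullough--Shen identity (\ref{eqn:SaAq}) yields $S_{\A_q}^{\alpha}(z,z;r)=S_{\A_q}(z,z;r|\alpha|^2)\,|h^q_{\alpha}(z)|^2 > S_{\A_q}(z,z;r)\,|h^q_{\alpha}(z)|^2$ for $|\alpha|<1$, forcing $|h^q_{\alpha}(z)|^2<1$ with no boundary analysis at all; in the paper's proof, (ii) is thus logically prior to (iii), whereas in yours (ii) is a consequence of (iii). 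What each buys: your argument is more elementary and logically lighter, since it makes the lemma independent of Proposition \ref{thm:MS} (which the paper proves just beforehand via Weierstrass' addition formula (\ref{eqn:Weierstrass_add1})), while the paper's argument avoids any use of continuity up to $\overline{\A_q}$ or the maximum principle and exhibits the inequality as an instance of the same RKHS positivity and monotonicity-in-$r$ mechanism (conditioning on a zero shrinks the variance) that organizes the rest of the paper.
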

\begin{proof}
When $w=z$, (\ref{eqn:conditionSat}) gives
$S_{\A_q}^{\alpha}(z,z; r)
=S_{\A_q}(z,z; r)-|S_{\A_q}(z, \alpha; r)|^2/S_{\A_q}(\alpha, \alpha; r)
\geq 0$, $z \in \A_q$,
which implies 
$0 \leq S_{\A_q}^{\alpha}(z, z; r)/S_{\A_q}(z, z; r) \leq 1$,
$z \in \A_q$.
As noted just after (\ref{eqn:SAqr1}), 
$S_{\A_q}(z,z;r)$ is monotonically decreasing in $r>0$.
Then, by (\ref{eqn:SaAq}), 
$S_{\A_q}^{\alpha}(z,z;r)=S_{\A_q}(z,z; r |\alpha|^2) |h^q_{\alpha}(z)|^2
> S_{\A_q}(z,z; r) |h^q_{\alpha}(z)|^2$,
if $|\alpha| <1$.
Hence it is proved that
$|h^q_{\alpha}(z)| < S_{\A_q}^{\alpha}(z, z; r)/S_{\A_q}(z, z; r)  
\leq 1$, $z \in \A_q$.
By the explicit expression (\ref{eqn:haq})
and by basic properties of the theta function
given in Section \ref{sec:S_qt_theta},
provided 
$z \in \overline{\A_q} :=\A_q \cup \gamma_1 \cup \gamma_q$, 
it is verified that 
$h^q_{\alpha}(z)=0$ if and only if 
$z=\alpha$, 
and $|h^q_{\alpha}(z)|=1$
if and only if
$z \in \gamma_1$ .
Using (\ref{eqn:theta_qp}) and (\ref{eqn:theta_real}),
we can show that
\[
h^q_{\alpha}(q e^{\sqrt{-1} \phi})
=q e^{\sqrt{-1} \phi}
\frac{\theta(\alpha q^{-1} e^{-\sqrt{-1} \phi})}
{\theta( q^2 \alphabar q^{-1} e^{\sqrt{-1} \phi})}
=-\alphabar e^{2 \sqrt{-1} \phi}
\frac{\overline{\theta(\alphabar q^{-1} e^{\sqrt{-1} \phi})}}
{\theta( \alphabar q^{-1} e^{\sqrt{-1} \phi})},
\quad \phi \in [0, 2 \pi).
\]
Then (i)--(iii) are proved.
If we apply (i) and (\ref{eqn:theta_prime}) to
the derivative of (\ref{eqn:haq}) with respect to $z$,
then (iv) is obtained.
Applying (\ref{eqn:theta_p0}) to (\ref{eqn:haq}) 
proves (v). 
The proof is complete.
\end{proof}

Since $h^q_{\alpha}(\cdot), \alpha \in \A_q$ is 
holomorphic in $\A_q$, (i)--(iii) of Lemma \ref{thm:Blaschke}
implies that $h^q_{\alpha}$ gives a conformal map
from $\A_{q}$ to $\D \setminus \{\mbox{a circular slit}\}$ 
as shown by Figure \ref{fig:h_alpha}.
In addition (v) of Lemma \ref{thm:Blaschke} means
$\lim_{q \to 0} h_{\alpha}^q(z)=h_{\alpha}(z)$,
where $h_{\alpha}(z)$ is the Riemann mapping function
associated to $\alpha$ in $\D$
given by a M\"{o}bius transformation (\ref{eqn:Mob1}).

\begin{rem}
\label{rem:Blaschke}
The present function $h^q_{\alpha}(z)$ is closely related with
the \textit{Blaschke factor} 
defined on page 17 in \cite{Sar65} 
for an annulus
$\A_{q^{1/2}, q^{-1/2}}
:=\{z \in \C: q^{1/2} < |z| < q^{-1/2} \}$, 
whose explicit expression
using the theta functions was given on pp.~386--388 in \cite{CH04}.
These two functions are, however, different from each other.
Let $\widehat{h}^q_{\alpha}(z)$
denote the Blaschke factor for the annulus $\A_q$, 
which is appropriately transformed from the function
given in \cite{CH04} for $\A_{q^{1/2}, q^{-1/2}}$. 
We found that
\[
\widehat{h}^q_{\alpha}(z)
=z^{-\log \alpha/\log q} h^q_{\alpha}(z).
\]
Also for the Blaschke factor $\widehat{h}^q_{\alpha}(z)$,
(i), (ii), and (v) in Lemma \ref{thm:Blaschke} are satisfied,
but instead of (iii), we have
$|\widehat{h}^q_{\alpha}(z)|=1$ 
if and only if
$z \in \gamma_1 \cup \gamma_q$ for $z \in \overline{\A_q}$.
Moreover, $\widehat{h}^q_{\alpha}$ is 
not univalent in $\A_q$ and is branched. 
\end{rem}

\subsection{Correlation functions of point processes
and the DPP of Peres and Vir\'ag on $\D$ }
\label{sec:Peres_Virag}

A point process is formulated as follows.
Let $S$ be a base space, which is locally compact Hausdorff space
with a countable base, and $\lambda$ be a Radon measure on $S$. 
The configuration space of a point process on $S$ is given by
the set of nonnegative-integer-valued Radon measures; 
\[
\Conf(S)
=\Big\{ \xi = \sum_i \delta_{x_i} : \mbox{$x_i \in S$,
$\xi(\Lambda) < \infty$ for all bounded set $\Lambda \subset S$} 
\Big\}.
\]
$\Conf(S)$ is equipped with the topological Borel $\sigma$-fields
with respect to the vague topology. 
A point process on $S$ is
a $\Conf(S)$-valued random variable $\Xi=\Xi(\cdot)$.
If $\Xi(\{ x \}) \in \{0, 1\}$ for any point $z \in S$,
then the point process is said to be {\it simple}.
Assume that $\Lambda_i, i=1, \dots, m$, 
$m \in \N$ are
disjoint bounded sets in $S$ and
$k_i \in \N_0, i=1, \dots, m$ satisfy
$\sum_{i=1}^m k_i = n \in \N_0$.
A symmetric measure $\lambda^n$ on $S^n$
is called the $n$-th {\it correlation measure},
if it satisfies
\[
\bE \Bigg[
\prod_{i=1}^m \frac{\Xi(\Lambda_i)!}
{(\Xi(\Lambda_i)-k_i)!} \Bigg]
=\lambda^n(\Lambda_1^{k_1} \times \cdots
\times \Lambda_m^{k_m}),
\]
where when $\Xi(\Lambda_i)-k_i < 0$,
we interpret $\Xi(\Lambda_i)!/(\Xi(\Lambda_i)-k_i)!=0$.
If $\lambda^n$ is absolutely continuous 
with respect to the $n$-product measure $\lambda^{\otimes n}$,
the Radon--Nikodym derivative
$\rho^n(x_1, \dots, x_n)$ is called
the {\it $n$-point correlation function}
with respect to the reference measure $\lambda$;
that is, 
$\lambda^n(dx_1 \cdots dx_n)
=\rho^n(x_1, \dots, x_n) 
\lambda^{\otimes n}(dx_1 \cdots dx_n)$.

Consider the case in which $S$ is given by a domain 
$\widetilde{D} \subset \C$ and 
$\Xi=\sum_{i} \delta_{Z_i}$ is a point process on $\widetilde{D}$
associated with the correlation functions 
$\{\rho_{\widetilde{D}}^n \}_{n \in \N}$. 
Here we assume that the reference measure $\lambda$
is given by the Lebesgue measure $m$ on $\C$
multiplied by a constant for simplicity
(e.g., $\lambda=m/\pi$).
For a one-to-one measurable transformation
$F:D \to  \widetilde{D}$, $D \subset \C$, 
we write the pull-back of the point process 
from $\widetilde{D}$ to $D$ as
$F^* \Xi :=\sum_i \delta_{F^{-1}(Z_i)}$.
We assume that $F$ is analytic and
$F'(z)=dF(z)/dz, z \in D$ is well-defined.
By definition the following is derived.
\begin{lem}
\label{thm:transformation_rho}
The point process $F^*\Xi$ on $D$ has 
correlation functions $\{\rho^n_D\}_{n \in \N}$ 
with respect to $\lambda$ given by
\[
\rho^n_D(z_1, \dots, z_n)
=\rho_{\widetilde{D}}^n(F(z_1), \dots, F(z_n)) 
\prod_{i=1}^n |F'(z_i)|^2,
\quad n \in \N, \quad
z_1, \dots, z_n \in D.
\]
The unfolded 2-correlation function
(\ref{eqn:unfolded}) is hence invariant under transformation,
\[
g_D(z_1, z_2)=
g_{\widetilde{D}}(F(z_1), F(z_2)),
\quad z_1, z_2 \in D.
\]
\end{lem}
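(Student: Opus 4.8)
The plan is to work directly from the definition of the $n$-th correlation measure through factorial moments and to transport it along $F$; the only analytic input needed is that the real Jacobian of a holomorphic map equals the modulus squared of its derivative.

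First I would record the elementary set identity $(F^*\Xi)(\Lambda)=\Xi(F(\Lambda))$, valid for every Borel $\Lambda\subset D$, which is immediate from $F^{-1}(Z_i)\in\Lambda \iff Z_i\in F(\Lambda)$. Since $F$ is a continuous bijection it carries relatively compact subsets of $D$ to relatively compact subsets of $\widetilde{D}$ and preserves disjointness; hence for disjoint bounded $\Lambda_1,\dots,\Lambda_m\subset D$ and $k_1,\dots,k_m\in\N_0$ with $\sum_i k_i=n$, the sets $F(\Lambda_1),\dots,F(\Lambda_m)$ are disjoint and bounded in $\widetilde{D}$, and
\[
\bE\Big[\prod_{i=1}^m \frac{(F^*\Xi)(\Lambda_i)!}{((F^*\Xi)(\Lambda_i)-k_i)!}\Big]
=\bE\Big[\prod_{i=1}^m \frac{\Xi(F(\Lambda_i))!}{(\Xi(F(\Lambda_i))-k_i)!}\Big]
=\lambda^n_{\widetilde{D}}\big(F(\Lambda_1)^{k_1}\times\cdots\times F(\Lambda_m)^{k_m}\big).
\]
Reading the left-hand side against the definition of the $n$-th correlation measure $\lambda^n_D$ of $F^*\Xi$, this says precisely that $\lambda^n_D$ is the pushforward of $\lambda^n_{\widetilde{D}}$ under $(F^{-1})^{\otimes n}$ on all measurable rectangles, and therefore, by uniqueness of measures agreeing on a generating $\pi$-system, on the whole product Borel $\sigma$-field.

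Next I would convert this pushforward statement into the stated density identity. Writing $\lambda=m/\pi$ and substituting $w_i=F(z_i)$, the change-of-variables formula gives $m(dw_i)=|J_F(z_i)|\,m(dz_i)$, where $J_F$ is the real Jacobian determinant of $F$ viewed as a map $\R^2\to\R^2$; since $F$ is holomorphic the Cauchy–Riemann equations yield $J_F(z)=|F'(z)|^2$, and injectivity forces $F'\neq0$ so the substitution is nondegenerate. Inserting $\lambda^n_{\widetilde{D}}(dw_1\cdots dw_n)=\rho^n_{\widetilde{D}}(w_1,\dots,w_n)\,\lambda^{\otimes n}(dw_1\cdots dw_n)$ and pulling back gives
\[
\lambda^n_D(dz_1\cdots dz_n)=\rho^n_{\widetilde{D}}(F(z_1),\dots,F(z_n))\prod_{i=1}^n|F'(z_i)|^2\;\lambda^{\otimes n}(dz_1\cdots dz_n),
\]
whence the Radon–Nikodym derivative is exactly $\rho^n_D(z_1,\dots,z_n)=\rho^n_{\widetilde{D}}(F(z_1),\dots,F(z_n))\prod_{i=1}^n|F'(z_i)|^2$. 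The statement on the unfolded $2$-correlation function is then a one-line consequence: in $g_D=\rho^2_D/(\rho^1_D\rho^1_D)$ the factors $|F'(z_1)|^2|F'(z_2)|^2$ from the numerator cancel against those carried by $\rho^1_D(z_1)\rho^1_D(z_2)$, leaving $g_D(z_1,z_2)=g_{\widetilde{D}}(F(z_1),F(z_2))$.

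The only genuinely nonroutine points are the conformal-Jacobian identification $J_F=|F'|^2$ and the measure-theoretic upgrade from the rectangle identity to the full density identity; both are standard, the latter being a uniqueness/monotone-class argument. Some care is also warranted to confirm that $F$ sends bounded sets to bounded sets and preserves disjointness — which rests only on $F$ being a continuous bijection — so that the factorial-moment computation legitimately compares two correlation measures of the same order $n$.
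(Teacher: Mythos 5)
Your proof is correct and follows exactly the route the paper has in mind: the paper offers no written proof (it says only ``By definition the following is derived''), and your argument is precisely that definitional computation — transporting factorial moments via $(F^*\Xi)(\Lambda)=\Xi(F(\Lambda))$, upgrading from rectangles by a monotone-class argument, and applying the change of variables with the conformal Jacobian $J_F=|F'|^2$ (with injectivity of the holomorphic $F$ guaranteeing $F'\neq 0$). The cancellation of the factors $|F'(z_i)|^2$ in the unfolded $2$-correlation function is likewise the intended one-line conclusion.
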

\noindent
For a point process $\Xi=\sum_{i} \delta_{Z_i}$ on 
$D \subset \C$,
assume that there is a measurable function
$K^{\det}_D: D \times D \to \C$ such that 
the correlation functions are given by
the determinants of $K^{\det}_D$; that is,
\[
\rho_D^n(z_1, \dots, z_n)
=\det_{1 \leq i, j \leq n} 
[K^{\det}_D(z_i, z_j)]
\quad \mbox{
for every $n \in \N$ and
$z_1, \dots, z_n \in D$}
\]
with respect to $\lambda$. 
Then $\Xi$ is said to be a 
\textit{determinantal point process} (DPP) on $D$
with the \textit{correlation kernel} $K^{\det}_D$. 
For a one-to-one measurable transformation
$F:D \to \widetilde{D}$, $D \subset \C$
with a bounded derivative $F'$, 
$F^* \Xi$ is also a DPP on $D$
such that the correlation kernel 
with respect to $\lambda$ is given by
\begin{equation}
K^{\det}_{D}(z, w) :=|F'(z)| |F'(w)|
K^{\det}_{\widetilde{D}}(F(z), F(w)), \quad z, w \in D.
\label{eqn:transform_C}
\end{equation}
See \cite{ST00,Sos00,ST03a,ST03b,HKPV09,KS20,KS21}
for general construction and 
basic properties of determinantal point processes.

The zero point process $\cZ_{X_{\D}}$ of the GAF $X_{\D}$ 
defined by (\ref{eqn:GAF_D}) has the unit circle $\partial \D$ as a natural
boundary and $\cZ_{X_{\D}}(\D)=\infty$ a.s. 
For this zero process, Peres and Vir\'ag \cite{PV05} showed the following
remarkable result. 
\begin{thm}[Peres and Vir\'ag \cite{PV05}]
\label{thm:Peres_Virag}
$\cZ_{X_{\D}}$ is a DPP on $\D$
such that the correlation kernel 
with respect to $m/\pi$ 
is given by
the Bergman kernel $K_{\D}$ of $\D$ given by
(\ref{eqn:K_D}).
\end{thm}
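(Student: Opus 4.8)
The plan is to use the Kac--Rice (Hammersley) formula for the correlation functions of the zero set of a GAF, in combination with the conditioning structure recorded in Remark~\ref{rem:Remark_condition1} and Borchardt's identity \eqref{eqn:Borchardt}. First I would invoke the standard formula \cite{HKPV09} expressing the $n$-point intensity of the zeros of $X_{\D}$ with respect to the Lebesgue measure $m^{\otimes n}$ as
\[
\rho_n(z_1,\dots,z_n) = \frac{1}{\pi^n}\,\frac{\per(B_n)}{\det(A_n)},
\]
where $A_n=(S_{\D}(z_i,z_j))_{1\le i,j\le n}$ is the covariance matrix of the values $(X_{\D}(z_i))_i$ and $B_n$ is the conditional covariance matrix of the derivatives $(X_{\D}'(z_i))_i$ given $\{X_{\D}(z_1)=\cdots=X_{\D}(z_n)=0\}$. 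Since $A_n$ is positive definite for distinct $z_i$ (positive definiteness of $S_{\D}$) and the zeros of a GAF are a.s.\ simple, this formula applies; passing to the reference measure $m/\pi$ multiplies it by $\pi^n$, giving $\rho_n^{(m/\pi)} = \per(B_n)/\det(A_n)$.

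The decisive simplification comes from conditioning. On $\D$ no new GAF is produced by prescribing zeros: iterating the single-zero statement of Remark~\ref{rem:Remark_condition1}, the law of $\{X_{\D}(z)\}$ given $\{X_{\D}(z_1)=\cdots=X_{\D}(z_n)=0\}$ equals that of $\{\prod_{j=1}^n h_{z_j}(z)\,X_{\D}(z)\}$, where $h_{z_j}$ is the M\"{o}bius map \eqref{eqn:Mob1}. Differentiating this field identity and evaluating at $z_i$, every term vanishes except the one differentiating the single factor $h_{z_i}$, because the surviving product still contains $h_{z_i}(z_i)=0$; hence the conditional derivative at $z_i$ is $\beta_i X_{\D}(z_i)$ with $\beta_i:=h_{z_i}'(z_i)\prod_{k\neq i}h_{z_k}(z_i)$. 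Therefore $B_n=(\beta_i\overline{\beta_j}\,S_{\D}(z_i,z_j))_{i,j}$, and since a diagonal conjugation factors out of the permanent, $\per(B_n)=\prod_{i=1}^n|\beta_i|^2\,\per[S_{\D}(z_i,z_j)]$.

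It then remains to identify $\prod_i|\beta_i|^2$. Using $h_{z_i}'(z_i)=S_{\D}(z_i,z_i)=(1-|z_i|^2)^{-1}$ and $h_{z_k}(z_i)=(z_i-z_k)/(1-\overline{z_k}z_i)$, the product telescopes to
\[
\prod_{i=1}^n\beta_i=(-1)^{\binom{n}{2}}\frac{\prod_{i<k}(z_i-z_k)^2}{\prod_{i,k}(1-z_i\overline{z_k})},
\]
and comparison with the Cauchy determinant $\det[S_{\D}(z_i,z_j)]=\det[(1-z_i\overline{z_j})^{-1}]=\prod_{i<j}|z_i-z_j|^2/\prod_{i,j}(1-z_i\overline{z_j})$ yields $\prod_i|\beta_i|^2=(\det[S_{\D}(z_i,z_j)])^2$. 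Substituting back gives $\rho_n^{(m/\pi)}=\det[S_{\D}(z_i,z_j)]\,\per[S_{\D}(z_i,z_j)]$, and Borchardt's identity \eqref{eqn:Borchardt} together with the relation $S_{\D}^2=K_{\D}$ in \eqref{eqn:S_K_D} collapses this to $\det_{1\le i,j\le n}[K_{\D}(z_i,z_j)]$, which is exactly the asserted DPP with the Bergman kernel.

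The step I expect to be the main obstacle is the rigorous justification of the intensity formula and, specifically, the claim that the field-level conditioning identity of Remark~\ref{rem:Remark_condition1} may be differentiated to compute the conditional derivative covariance $B_n$: this requires that differentiation commutes with conditioning (legitimate here because the two Gaussian fields agree in law and $z\mapsto X_{\D}(z)$ is a.s.\ analytic) and that the Gaussian vector $(X_{\D}(z_i))_i$ is nondegenerate. The remaining ingredients---the permanent factorization, the Cauchy-determinant evaluation of $\prod_i|\beta_i|^2$, and the final use of Borchardt's identity---are routine algebra. As an alternative route, one could also obtain the statement as the iterated limit $q\to0$, $r\to0$ of Theorem~\ref{thm:mainA1}, since the prefactor $\theta(-r)/\theta(-r\prod_k|z_k|^4)\to1$ and the weight $r\prod_\ell|z_\ell|^2\to0$, after which \eqref{eqn:Borchardt} again reduces the $\perdet$ to $\det[K_{\D}(z_i,z_j)]$.
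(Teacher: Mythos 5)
Your argument is correct, and it is essentially the argument of Peres and Vir\'ag themselves: note that the paper does not reprove Theorem \ref{thm:Peres_Virag} at all (it is quoted from \cite{PV05}), but your proof coincides step for step with the scheme the paper uses for its annulus generalization, Theorem \ref{thm:mainA1}. Your Kac--Rice ratio $\per(B_n)/\det(A_n)$ is exactly Proposition \ref{thm:correlation_function}; your differentiation of the conditioned field $\gamma(z)X_{\D}(z)$ at the $z_i$ (where only the $\gamma'(z_i)$ term survives because $\gamma(z_i)=0$) is the disc case of the computation following that proposition, where it is done at the level of the kernel identity $S^{z_1,\dots,z_n}(z,w)=\gamma(z)\overline{\gamma(w)}S(z,w)$ rather than the field, sidestepping any discussion of commuting differentiation with conditioning; your Cauchy-determinant evaluation of $\prod_i|\beta_i|^2=\big(\det[S_{\D}(z_i,z_j)]\big)^2$ is the flat ($q\to 0$) degeneration of the Frobenius elliptic determinant \eqref{eqn:Frobenius_formula}; and Borchardt's identity \eqref{eqn:Borchardt} plus $S_{\D}^2=K_{\D}$ is precisely the step that is available on $\D$ but fails on $\A_q$, which is why the paper obtains only a PDPP there. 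All the algebra checks out: $h_{z_i}'(z_i)=(1-|z_i|^2)^{-1}$, the sign $(-1)^{\binom{n}{2}}$ cancels upon taking moduli, and $\prod_{i,k}(1-z_i\overline{z_k})>0$ so the comparison with the Cauchy determinant is legitimate. One caution about your alternative route via the iterated limit of Theorem \ref{thm:mainA1}: by Remark \ref{rem:q0limit} the $q\to 0$ limit is naturally a statement on $\D^{\times}$ (zeros pile up at the inner boundary and escape in the vague topology), and convergence of correlation functions alone does not identify the law of the limiting point process without a uniqueness argument; the paper accordingly presents that limit as an identification/consistency check rather than a proof, so your direct argument should be regarded as the proof and the limit only as corroboration.
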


The distribution of $\cZ_{X_{\D}}$ is invariant under
M\"{o}bius transformations that preserve $\D$ \cite{ST04,PV05}.
This invariance is a special case of the following,
which can be proved using
the conformal transformations 
of the Szeg\H{o} kernel and the Bergman kernel 
given by (\ref{eqn:SD_KD_conformal}) 
\cite{PV05,HKPV09}. 

\begin{prop}[Peres and Vir\'ag \cite{PV05}]
\label{thm:Peres_Virag2}
Let $\widetilde{D} \subsetneq \C$ be a 
simply connected domain with
$\cC^{\infty}$ boundary. 
Then there is a GAF $X_{\widetilde{D}}$ with 
covariance kernel
$\bE[X_{\widetilde{D}}(z) \overline{X_{\widetilde{D}}(w)}] 
= S_{\widetilde{D}}(z,w)$, $z, w \in \widetilde{D}$,
where $S_{\widetilde{D}}$ denotes 
the Szeg\H{o} kernel of $\widetilde{D}$.
The zero point process $\cZ_{X_{\widetilde{D}}}$ 
is the DPP such 
that the correlation kernel is given by
the Bergman kernel $K_{\widetilde{D}}$ of $\widetilde{D}$. 
This DPP is conformally invariant in the following sense.  
If $D \subsetneq \C$ is another 
simply connected domain with $\cC^{\infty}$ 
boundary, and $f: D \to \widetilde{D}$ is 
a conformal transformation, then 
$f^{*}\cZ_{X_{\widetilde{D}}}$ has the same distribution as 
$\cZ_{X_{D}}$.
In other words, $f^{*}\cZ_{X_{\widetilde{D}}}$
is a DPP such that the correlation kernel (\ref{eqn:transform_C}) 
with $K^{\det}_{\widetilde{D}}=K_{\widetilde{D}}$ 
is equal to the Bergman kernel $K_{D}$ of $D$.
\end{prop}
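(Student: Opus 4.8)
The plan is to reduce everything to the disk, transporting the canonical GAF $X_\D$ of (\ref{eqn:GAF_D}) through a Riemann map and then letting the conformal transformation laws (\ref{eqn:SD_KD_conformal}) do the bookkeeping. First I would fix a Riemann map $\psi : \widetilde{D} \to \D$ and use the fact, recalled just before (\ref{eqn:SD_KD_conformal}), that on the simply connected domain $\widetilde{D}$ the derivative $\psi'$ has a single-valued, non-vanishing holomorphic square root $\sqrt{\psi'}$. I would then define
\[
X_{\widetilde{D}}(z) := \sqrt{\psi'(z)}\, X_\D(\psi(z)), \qquad z \in \widetilde{D},
\]
which is a GAF on $\widetilde{D}$, being the product of the nonvanishing holomorphic factor $\sqrt{\psi'}$ with the analytic composition $X_\D \circ \psi$. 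Its covariance is
\[
\bE[X_{\widetilde{D}}(z)\overline{X_{\widetilde{D}}(w)}] = \sqrt{\psi'(z)}\,\overline{\sqrt{\psi'(w)}}\, S_\D(\psi(z),\psi(w)) = S_{\widetilde{D}}(z,w),
\]
the last equality being precisely the first line of (\ref{eqn:SD_KD_conformal}) applied to $f=\psi : \widetilde{D}\to\D$. This settles existence of $X_{\widetilde{D}}$ with the claimed Szeg\H{o}-kernel covariance.

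Since $\sqrt{\psi'}$ has no zeros, the zero set of $X_{\widetilde{D}}$ is the $\psi$-preimage of that of $X_\D$; that is, $\cZ_{X_{\widetilde{D}}} = \psi^{*}\cZ_{X_\D}$ in the pull-back notation of the excerpt. I would then combine Theorem \ref{thm:Peres_Virag}, which gives the correlation functions of $\cZ_{X_\D}$ as $\det[K_\D(\cdot,\cdot)]$, with Lemma \ref{thm:transformation_rho} applied to $F=\psi$, obtaining
\[
\rho^n_{\widetilde{D}}(z_1,\dots,z_n) = \det_{1\leq i,j\leq n}[K_\D(\psi(z_i),\psi(z_j))] \prod_{i=1}^n |\psi'(z_i)|^2.
\]
Substituting the Bergman law (second line of (\ref{eqn:SD_KD_conformal})) in the form $K_\D(\psi(z_i),\psi(z_j)) = K_{\widetilde{D}}(z_i,z_j)/(|\psi'(z_i)|\,|\psi'(z_j)|)$ and pulling $1/|\psi'(z_i)|$ out of the $i$-th row and $1/|\psi'(z_j)|$ out of the $j$-th column yields a factor $\prod_i |\psi'(z_i)|^{-2}$ that cancels the Jacobian exactly, leaving
\[
\rho^n_{\widetilde{D}}(z_1,\dots,z_n) = \det_{1\leq i,j\leq n}[K_{\widetilde{D}}(z_i,z_j)].
\]
Hence $\cZ_{X_{\widetilde{D}}}$ is a DPP with correlation kernel $K_{\widetilde{D}}$, which is the first assertion.

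For the conformal invariance I would run the same pattern once more. Given conformal $f : D \to \widetilde{D}$, the DPP transformation rule (\ref{eqn:transform_C}) shows that $f^{*}\cZ_{X_{\widetilde{D}}}$ is a DPP on $D$ with kernel $|f'(z)||f'(w)|\,K_{\widetilde{D}}(f(z),f(w))$, and by (\ref{eqn:SD_KD_conformal}) this equals the Bergman kernel $K_D$ of $D$. By the first part applied to $D$ itself, $\cZ_{X_D}$ is also a DPP with kernel $K_D$, so the two processes have the same correlation functions and hence the same law. I do not expect any single computation to be the obstacle, since each step is immediate from (\ref{eqn:SD_KD_conformal}); the care is needed at two structural points that supply input beyond computation. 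One is the existence of the single-valued holomorphic square root $\sqrt{\psi'}$ on a simply connected domain, which is exactly what makes $X_{\widetilde{D}}$ a well-defined GAF (Bell's argument, cited before (\ref{eqn:SD_KD_conformal})). The other is the determinacy fact that a DPP is specified by its correlation kernel, so that coincidence of correlation functions forces coincidence in law; this is standard for determinantal processes and is what legitimises the final passage from equal kernels to equal distributions.
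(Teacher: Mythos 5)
Your proof is correct and follows essentially the route the paper indicates for this cited result: reduce to the disk through a Riemann map $\psi$, transport the GAF $X_{\D}$ using the single-valued square root of $\psi'$, and let the two transformation laws in (\ref{eqn:SD_KD_conformal}), together with Theorem \ref{thm:Peres_Virag} and Lemma \ref{thm:transformation_rho}, produce the Bergman-kernel correlations and the invariance. The only cosmetic difference is that for the final equality in law you go through correlation functions plus determinacy of the DPP by its kernel (legitimate here, since the Bergman kernel is the kernel of a projection, hence locally trace class with spectrum in $[0,1]$), whereas one may argue even more directly at the GAF level: $\sqrt{f'(z)}\,X_{\widetilde{D}}(f(z))$ has covariance $S_D$ by the first line of (\ref{eqn:SD_KD_conformal}), hence equals $X_D$ in law, and its zero set is exactly $f^{*}\cZ_{X_{\widetilde{D}}}$.
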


\SSC{Proofs}
\label{sec:proofs}
\subsection{Proof of Proposition \ref{thm:conformal_Aq}}
\label{sec:proof_conformal_Aq}
Use the expression (\ref{eqn:S_qt_JK}) 
of $S_{\A_q}(z, w; r)$ in Proposition \ref{thm:S_qt_JK}. 
Using (\ref{eqn:JK2b}) and (\ref{eqn:JK2c}), we can show that
\begin{align*}
&\sqrt{T_q'(z)} \overline{\sqrt{T_q'(w)}}
S_{\A_q}(T_q(z), T_q(w) ; r) 
= \sqrt{(-q)/z^2}
\overline{\sqrt{(-q)/w^2}}
f^{\rm JK} (q^2/z \wbar, -r) 
\nonumber\\
& \qquad = \frac{q}{z \wbar}
f^{\rm JK}(q^2/z \wbar, -r)
=-\frac{q}{z \wbar} f^{\rm JK}
(z \wbar/q^2, -1/r)
\nonumber\\
&\qquad = \frac{q}{r z \wbar} 
f^{\rm JK} (z \wbar, -1/r) 
= \frac{q}{r} f^{\rm JK} (z \wbar, -q^2/r) 
= \frac{q}{r} S_{\A_q} (z, w; q^2/r ). 
\end{align*}
In particular, when $r=q$, 
\[
\sqrt{T_q'(z)} \overline{\sqrt{T_q'(w)}} S_{\A_q}(T_q(z), T_q(w); q) 
=\sqrt{T_q'(z)} \overline{\sqrt{T_q'(w)}} S_{\A_q}(T_q(z), T_q(w)) 
=  S_{\A_q}(z, w), 
\]
which implies the invariance of the GAF $X_{\A_q}$ under conformal 
transformations preserving $\A_q$
by Schottkey's theorem \cite{AIMO08}. 

\subsection{Proof of Theorem \ref{thm:mainA1}}
\label{sec:proof_mainA1}

We recall a general formula for correlation functions
of zero point process of a GAF, which
is found in \cite{PV05}, but here we use 
a slightly different expression given 
by Proposition 6.1 of \cite{Shi12}. 
Let $\partial_z \partial_{\wbar} 
:= \frac{\partial^2}{\partial z \partial \wbar}$. 

\begin{prop}
\label{thm:correlation_function}
The correlation functions of $\cZ_{X_D}$ of the GAF $X_D$ on 
$D \subsetneq \C$ with
covariance kernel $S_D(z,w)$ are given by 
\[
 \rho_D^n(z_1,\dots, z_n) 
= \frac{ \per_{1 \leq i, j \leq n} \big[(\partial_z \partial_{\wbar}
S_D^{z_1,\dots,z_n})(z_i, z_j) \big]}
{\det_{1 \leq i, j \leq n} \Big[
 S_D(z_i, z_j) \Big]}, 
\quad n \in \N, \quad
z_1, \dots, z_n \in D, 
\]
with respect to a reference measure $\lambda$, 
whenever $\det_{1 \leq i, j \leq n}[S_D(z_i, z_j)]> 0$. 
Here the conditional kernels are defined by
(\ref{eqn:conditionS}) and (\ref{eqn:conditionS2}). 
\end{prop}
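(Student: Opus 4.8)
The plan is to establish the formula by the Kac--Rice (Edelman--Kostlan) method, realizing the zero set of $X_D$ as the fibre over $0$ of the random holomorphic map $z \mapsto X_D(z)$ and computing the expected local density of $n$-tuples of preimages. Throughout I take the reference measure to be $\lambda = m/\pi$, consistent with the rest of the paper, and I assume the points $z_1,\dots,z_n$ are distinct (the $n$-point function vanishes otherwise, since the zeros of $X_D$ are a.s.\ simple). Write $Y_i := X_D(z_i)$ and $W_i := X_D'(z_i)$. Because $X_D$ is a mean-zero complex (hence circular) Gaussian field and is holomorphic, the vector $(Y_1,\dots,Y_n,W_1,\dots,W_n)$ is a centered circular complex Gaussian vector whose covariances arise by differentiating $S_D$: one has $\bE[Y_i \overline{Y_j}] = S_D(z_i,z_j)$, $\bE[W_i \overline{W_j}] = (\partial_z \partial_{\wbar} S_D)(z_i,z_j)$, and the mixed blocks are single derivatives of $S_D$, while the pseudo-covariances $\bE[Y_iY_j]$, $\bE[W_iW_j]$ vanish.

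First I would record the Kac--Rice identity for the $n$-point intensity,
\[
\rho_D^n(z_1,\dots,z_n) = \bE\Big[ \prod_{i=1}^n |W_i|^2 \;\Big|\; Y_1=\cdots=Y_n=0 \Big]\, p(0,\dots,0),
\]
where $|W_i|^2$ is the Jacobian of the holomorphic map at a simple zero (equal to $|X_D'(z_i)|^2$ by the Cauchy--Riemann equations) and $p$ is the joint density of $(Y_1,\dots,Y_n)$. The density factor is the standard circular complex Gaussian density at the origin, $p(0,\dots,0) = \pi^{-n}/\det_{1\le i,j\le n}[S_D(z_i,z_j)]$; this is exactly where the hypothesis $\det_{1\le i,j\le n}[S_D(z_i,z_j)] > 0$ enters, and it supplies the determinant in the denominator, the factor $\pi^{-n}$ being absorbed by the choice $\lambda = m/\pi$.

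Next I would identify the conditional law of the derivatives. Conditioning the field $X_D$ on $\{X_D(z_1)=\cdots=X_D(z_n)=0\}$ produces, as established in the passage built around \eqref{eqn:conditionS}--\eqref{eqn:conditionS2}, a centered Gaussian field with covariance kernel $S_D^{z_1,\dots,z_n}$; equivalently, at the Gaussian-vector level, conditioning $(Y,W)$ on $Y=0$ leaves $(W_1,\dots,W_n)$ centered circular Gaussian with covariance the Schur complement of the $Y$-block, which is precisely the mixed derivative of that conditional kernel:
\[
\bE\big[ W_i \overline{W_j} \mid Y_1=\cdots=Y_n=0 \big] = (\partial_z \partial_{\wbar} S_D^{z_1,\dots,z_n})(z_i,z_j).
\]
Applying the complex Wick (Isserlis) formula to the centered circular Gaussian vector $(W_1,\dots,W_n)$, whose only nonvanishing second moments are the $\bE[W_i\overline{W_j}]$, gives $\bE\big[\prod_{i=1}^n W_i\overline{W_i} \mid Y=0\big] = \per_{1\le i,j\le n}\big[(\partial_z \partial_{\wbar} S_D^{z_1,\dots,z_n})(z_i,z_j)\big]$, since the permanent enumerates exactly the pairings of the $W_i$ with the $\overline{W_j}$. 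Substituting this numerator and the density factor into the Kac--Rice identity yields the stated formula.

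The main obstacle I anticipate is the rigorous justification of the Kac--Rice identity itself rather than the linear algebra: one must verify that the zeros of $X_D$ are a.s.\ simple and non-degenerate, that the regularity and integrability conditions permitting the co-area localization and the exchange of expectation hold near each $z_i$, and that the conditional expectation of $\prod_i|W_i|^2$ is legitimately evaluated against the genuine conditional Gaussian law. These are precisely the analytic inputs supplied by Proposition~6.1 of \cite{Shi12} and the original treatment of \cite{PV05}; the remaining content is the matching of the conditional gradient covariance with $\partial_z \partial_{\wbar} S_D^{z_1,\dots,z_n}$ and the Wick expansion into a permanent, both of which are routine once the framework is in place.
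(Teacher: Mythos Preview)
Your approach is correct and is precisely the standard Kac--Rice argument underlying this formula. Note, however, that the paper does not supply its own proof of this proposition: it is stated as a recalled result, with the sentence ``We recall a general formula for correlation functions of zero point process of a GAF, which is found in \cite{PV05}, but here we use a slightly different expression given by Proposition~6.1 of \cite{Shi12}.'' Your sketch---Kac--Rice localization giving the factor $p(0,\dots,0)=\pi^{-n}/\det[S_D(z_i,z_j)]$, identification of the conditional derivative covariance with $(\partial_z\partial_{\wbar}S_D^{z_1,\dots,z_n})(z_i,z_j)$ via the Schur complement, and the complex Wick formula producing the permanent---is exactly the content of those cited proofs, so there is nothing to compare beyond observing that you have reconstructed the argument the paper defers to its references.
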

Here we abbreviate 
$\gamma^q_{\{z_{\ell}\}_{\ell=1}^n}$ given by (\ref{eqn:gamma})
to $\gamma^q_n$.
Then (\ref{eqn:MS_general}) gives
$S_{\A_q}^{z_1, \dots, z_n}(z, w; r)
=S_{\A_q}(
z, w; r \prod_{\ell=1}^n |z_{\ell}|^2)
\gamma^q_{n}(z)
\overline{ \gamma^q_{n}(w) }$
for $z, w, z_1, \dots, z_n \in \A_q$. 
By Lemma \ref{thm:Blaschke} (i), 
this formula gives
\[
(\partial_z \partial_{\wbar} 
S_{\A_q}^{z_1,\dots,z_n})(z_i, z_j; r) 
= S_{\A_q} \Big(z_i, z_j;  r \prod_{\ell=1}^n |z_{\ell}|^2 \Big) 
{\gamma^q_n}'(z_i)
\overline{{\gamma^q_n}'(z_j)}. 
\]
Therefore, 
Proposition \ref{thm:correlation_function} gives now
\begin{equation}
 \rho^n_{\A_q}(z_1, \dots, z_n; r) 
= \frac{\per_{1 \leq i, j \leq n}
 \left[S_{\A_q} \left(z_i, z_j;  r \prod_{\ell=1}^n |z_\ell|^2 \right) \right]
\prod_{k=1}^n |{\gamma^q_n}'(z_k)|^2 }
{\det_{1 \leq i, j \leq n} 
\Big[
S_{\A_q}(z_i, z_j; r) \Big]}. 
\label{eqn:rho_Z1}
\end{equation}
By (\ref{eqn:haq}) and Lemma \ref{thm:Blaschke} (i) and (iv), 
we see that 
\begin{align*}
\prod_{i=1}^n |{\gamma_n^q}'(z_i)|^2 
&= \prod_{i=1}^n \Big| \Big(
\prod_{1 \leq j \leq n, j \not=i}
h^q_{z_j}(z_i) \Big)
{h^q_{z_i}}'(z_i) \Big|^2
= \prod_{i=1}^n 
\Big| \Big( \prod_{1 \leq j \leq n,  j \not=i}
z_i \frac{\theta(z_j/z_i)}{\theta(\overline{z_j} z_i)} \Big)
\frac{ q_0^2 }{ \theta(|z_i|^2)} \Big|^2 
\nonumber\\
&= \Bigg|
\frac{ q_0^{2n} \prod_{1 \le i < j \le n} z_i \theta(z_j/z_i)
\cdot \prod_{1 \le i' < j' \le n} z_{j'} \theta(z_{i'}/z_{j'}) }
{\prod_{i=1}^n \prod_{j=1}^n \theta(z_i \overline{z_j}) } \Bigg|^2. 
\end{align*}
By (\ref{eqn:theta_inversion}),
$z_i \theta(z_j/z_i)
=z_i ( - z_j/z_i ) \theta(z_i/z_j)
=-z_j \theta(z_i/z_j)$.
Hence this is written as 
\begin{align}
\prod_{i=1}^n |{\gamma_n^q}'(z_{i})|^2 
&= q_0^{4n} \left|
\frac{(-1)^{n(n-1)/2} \big(\prod_{1 \leq i  < j \leq n} z_j \theta(z_i/z_j) \big)^2}
{\prod_{i=1}^n \prod_{j=1}^n \theta(z_i \overline{z_j})} \right|^2
\nonumber\\
&= q_0^{4n}
\Bigg(
\frac{\prod_{1 \leq i < j \leq n} |z_j|^2 
\theta(z_i/z_j, \overline{z_i}/\overline{z_j})}
{\prod_{i=1}^n \prod_{j=1}^n \theta(z_i \overline{z_j})}
\Bigg)^2.
\label{eqn:gamma2}
\end{align}

The following identity is known as an 
elliptic extension of Cauchy's evaluation of determinant 
due to Frobenius (see 
Theorem 1.1 in \cite{KN03}, 
Theorem 66 in \cite{Kra05}, 
Corollary 4.7 in \cite{RS06}, 
and references therein),
\begin{align*}
\det_{1 \leq i, j \leq n}
\left[ \frac{\theta(t x_i a_j)}{\theta(t, x_i a_j)} \right]
&=\frac{\theta(t \prod_{k=1}^n x_k a_k)}{\theta(t)}
\frac{\prod_{1 \leq i < j \leq n}
x_j a_j
\theta(x_i/x_j, a_i/a_j)}
{\prod_{i=1}^n \prod_{j=1}^n \theta(x_i a_j)}.
\end{align*}
By (\ref{eqn:S_qt_theta}) in Proposition \ref{thm:S_Aq_theta}, 
we have
\begin{equation}
q_0^{2n}
\frac{\prod_{1 \leq i < j \leq n} |z_j|^2
\theta(z_i/z_j, \overline{z_i}/\overline{z_j})}
{\prod_{i=1}^n \prod_{j=1}^n \theta(z_i \overline{z_j})}
=\frac{\theta(-s)}{\theta( -s \prod_{\ell=1}^n |z_{\ell}|^2)}
\det_{1 \leq i, j \leq n}
\left[S_{\A_q}(z_i, z_j; s) \right], 
\quad \forall s >0.
\label{eqn:Frobenius_formula}
\end{equation}
Then (\ref{eqn:gamma2}) is written as
\begin{align*}
\prod_{i=1}^n |{\gamma_n^q}'(z_i)|^2 
&= \frac{\theta(-r)}{\theta(-r \prod_{\ell=1}^n |z_{\ell}|^2)}
\det_{1 \leq i, j \leq n} [S_{\A_q}(z_i, z_j; r) ]
\nonumber\\
& \quad \times
\frac{\theta(-r \prod_{\ell=1}^n |z_{\ell}|^2)}
{\theta(-r \prod_{\ell=1}^n |z_{\ell}|^4)}
\det_{1 \leq i, j \leq n} 
\Big[S_{\A_q} \Big(z_{i}, z_{j}; 
r \prod_{\ell=1}^n |z_{\ell}|^2 \Big) \Big]
\nonumber\\
&= \frac{\theta(-r)}{\theta(-r\prod_{\ell=1}^n |z_{\ell}|^4)}
\det_{1 \leq i, j \leq n} [S_{\A_q}(z_i, z_j; r) ]
\det_{1 \leq i, j \leq n} 
\Big[S_{\A_q} \Big(z_{i}, z_{j}; r \prod_{\ell=1}^n |z_{\ell}|^2 
\Big) \Big]. 
\end{align*}
Applying the above to (\ref{eqn:rho_Z1}),
the correlation functions
in Theorem \ref{thm:mainA1} are obtained.

\subsection{Direct proof of
the $(q, r)$-inversion symmetry of correlation functions}
\label{sec:proof_inversion_symmetry}

The following is a corollary of Proposition
\ref{thm:conformal_Aq} (ii) and (iii). 
Here we give a direct proof using the explicit formulas
for correlation functions given in Theorem \ref{thm:mainA1}.

\begin{cor}
\label{thm:rho_inversion}
For every $n \in \N$ and
$z_1, \dots, z_n \in \A_q$, 
\begin{equation}
\rho^{n}_{\A_q}(T_q(z_1),\dots, T_q(z_n); r) 
\prod_{\ell=1}^n |T_q'(z_{\ell})|^2
= \rho^{n}_{\A_q}(z_1, \dots, z_n; q^2/r).
\label{eqn:rho_inversion1}
\end{equation}
In particular,
$\rho^{n}_{\A_q}(T_q(z_1),\dots, T_q(z_n); q) 
\prod_{\ell=1}^n |T_q'(z_{\ell})|^2
= \rho^{n}_{\A_q}(z_1, \dots, z_n; q)$, 
for $n \in \N$ and
$z_1, \dots, z_n \in \A_q$.
\end{cor}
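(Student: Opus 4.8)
The plan is to prove (\ref{eqn:rho_inversion1}) directly from the explicit formula (\ref{eqn:rho_mainA1}) of Theorem \ref{thm:mainA1}, tracking how the $q$-inversion $T_q(z)=q/z$ acts on each factor. Write $w_\ell := T_q(z_\ell)=q/z_\ell$ and abbreviate $P := \prod_{\ell=1}^n |z_\ell|^2$, so that $\prod_\ell |w_\ell|^2 = q^{2n}/P$ and $\prod_k |w_k|^4 = q^{4n}/P^2$. The weight parameter appearing inside the $\perdet$ on the left-hand side is then $s := r\prod_\ell |w_\ell|^2 = r q^{2n}/P$. The first step is to transform each matrix entry $S_{\A_q}(w_i,w_j;s)$. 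Using the conformal transformation law for the weighted Szeg\H{o} kernel derived in Section \ref{sec:proof_conformal_Aq} (which is valid for any weight),
\[
\sqrt{T_q'(z_i)}\,\overline{\sqrt{T_q'(z_j)}}\, S_{\A_q}(T_q(z_i),T_q(z_j);s) = \frac{q}{s}\, S_{\A_q}(z_i,z_j;q^2/s),
\]
together with $\sqrt{T_q'(z)} = \sqrt{-1}\,q^{1/2}/z$ from Remark \ref{rem:parameter_L}, which gives $\sqrt{T_q'(z_i)}\,\overline{\sqrt{T_q'(z_j)}} = q/(z_i\overline{z_j})$, I would obtain $S_{\A_q}(w_i,w_j;s) = (z_i\overline{z_j}/s)\,S_{\A_q}(z_i,z_j;q^2/s)$.

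Next I would align the weight $q^2/s$ with the target weight $(q^2/r)P$. Since $q^2/s = q^{-2n}\,(q^2/r)P$, iterating the functional equation (\ref{eqn:symmetries1}) (iii) $n$ times in the reverse direction yields $S_{\A_q}(z_i,z_j;q^{-2n}t) = (z_i\overline{z_j})^n S_{\A_q}(z_i,z_j;t)$ with $t=(q^2/r)P$, so that each entry factorizes as
\[
S_{\A_q}(w_i,w_j;s) = \frac{P}{r q^{2n}}\,(z_i\overline{z_j})^{n+1}\, S_{\A_q}\!\Big(z_i,z_j;\tfrac{q^2}{r}P\Big).
\]
The factor $(z_i\overline{z_j})^{n+1}=z_i^{n+1}\,\overline{z_j}^{n+1}$ is a rank-one row/column scaling, so it pulls out of \emph{both} the permanent and the determinant; because $\perdet=\per\cdot\det$, the resulting scalar enters $\perdet$ squared, and the $n$-fold product of the $z_i$-factors collapses to a power of $P$. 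Collecting this with the scalar prefactor $\prod_\ell |T_q'(z_\ell)|^2 = q^{2n}/P^2$ and the theta prefactor $\theta(-r)/\theta(-r\prod_k |w_k|^4) = \theta(-r)/\theta(-rq^{4n}/P^2)$ from (\ref{eqn:rho_mainA1}), the entire left-hand side reduces to $\theta(-r)/\theta(-rq^{4n}/P^2)$ times the monomial $r^{-2n}q^{2n-4n^2}P^{4n}$ times $\perdet_{1\le i,j\le n}[\,S_{\A_q}(z_i,z_j;(q^2/r)P)\,]$.

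It then remains to match this with the right-hand side, which is $\theta(-q^2/r)/\theta(-(q^2/r)P^2)$ times the same $\perdet$. First I would use the periodicity (\ref{eqn:theta_period}) to get $\theta(-q^2/r)=\theta(-r)$, aligning the numerators, and then establish the elliptic identity
\[
\theta\!\Big(-\tfrac{q^2}{r}P^2\Big) = r^{2n}q^{4n^2-2n}P^{-4n}\,\theta\!\Big(-\tfrac{rq^{4n}}{P^2}\Big).
\]
This follows because the two theta arguments multiply to $q^{2(2n+1)}$, so one is $q^{2(2n+1)}$ over the other, and repeated application of the quasi-periodicity (\ref{eqn:theta_qp}) and the inversion (\ref{eqn:theta_inversion}) produces precisely the stated monomial factor; the parity of $2n+1$ makes the sign work out to $+1$. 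The special case $r=q$ is then immediate since $q^2/r=q$. The main obstacle is purely the bookkeeping: one must track the accumulated powers of $z_i$, $q$, $r$, and $P$ through the rank-one factorization of the permanent and determinant and confirm that the monomial generated there cancels \emph{exactly} against the monomial produced by the theta quasi-periodicity. No individual step is deep, but the exponents must be matched precisely for the two sides to coincide.
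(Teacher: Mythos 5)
Your proposal is correct and follows essentially the same route as the paper's direct proof (Section \ref{sec:proof_inversion_symmetry}): your entry transformation, obtained by citing the conformal law from Section \ref{sec:proof_conformal_Aq} together with $n$ reverse applications of (\ref{eqn:symmetries1}) (iii), reproduces exactly the paper's (\ref{eqn:factor3}) (which is derived there inline via the Jordan--Kronecker identities (\ref{eqn:JK2b})--(\ref{eqn:JK2c})), and your subsequent steps match the paper's: pulling the scalar $r^{-2n}q^{-4n^2}\prod_\ell|z_\ell|^{4(2n+1)}$ out of $\perdet$ by multilinearity as in (\ref{eqn:factor4}), and the theta-prefactor identity, which your combination of quasi-periodicity and inversion (with the sign $(-1)^{2n+2}=+1$) yields in a form exactly equivalent to (\ref{eqn:factor2}). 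All exponents in your bookkeeping check out, so the two sides agree as claimed.
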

\begin{proof}
We calculate
$\rho_{\A_q}^n(T_q(z_1), \dots, T_q(z_n); r)$
for $\rho_{\A_q}^n$ given by
(\ref{eqn:rho_mainA1}) 
in Theorem \ref{thm:mainA1}.
By (\ref{eqn:S_qt_JK}) in Proposition \ref{thm:S_qt_JK}, 
\begin{align*}
&S_{\A_q} \Big(T_q(z), T_q(w);  
r \prod_{\ell=1}^n |T_q(z_{\ell})|^2 \Big)
= S_{\A_q} \Big( q z^{-1}, q \wbar^{-1};  
q^{2n}r \prod_{\ell=1}^n |z_{\ell}|^{-2} \Big)
\nonumber\\
&\quad =f^{\rm JK} \Big(q^2 (z \wbar)^{-1}, 
- q^{2n}r \prod_{\ell=1}^n |z_{\ell}|^{-2}  \Big)
= - f^{\rm JK}
\Big(q^{-2} z \wbar, -q^{-2n} r^{-1} \prod_{\ell=1}^n|z_{\ell}|^2 \Big),
\end{align*}
where we used (\ref{eqn:JK2b}) at the last equation.
If we apply the 
equality between the leftmost side and the rightmost side
in (\ref{eqn:JK2c}), we see that
the above is equal to 
$q^{-2n}r^{-1} \prod_{\ell=1}^n |z_{\ell}|^2$
$f^{\rm JK} ( z \wbar, -q^{-2n} r^{-1} \prod_{\ell=1}^n |z_{\ell}|^2)$.
Then we apply the first equality in (\ref{eqn:JK2c}) 
$n+1$ times and obtain 
\begin{align}
S_{\A_q} \Big(T_q(z), T_q(w);  r \prod_{\ell=1}^n |T_q(z_{\ell})|^2 \Big)
&=q^{-2n} r^{-1} \prod_{\ell=1}^n |z_{\ell}|^2
(z \wbar)^{n+1} 
f^{\rm JK} \Big(z \wbar, - q^2 r^{-1} \prod_{\ell=1}^n
|z_{\ell}|^2 \Big)
\nonumber\\
&=q^{-2n} r^{-1} \prod_{\ell=1}^n |z_{\ell}|^2
(z \wbar)^{n+1}
S_{\A_q} \Big( z, w; q^2 r^{-1} \prod_{\ell=1}^n
|z_{\ell}|^2 \Big).
\label{eqn:factor3}
\end{align}
Here we note that by definition (\ref{eqn:perdet}) of $\perdet$,
the multilinearity of permanent and determinant
implies the equality
\[
\perdet_{1 \leq i, j \leq n} 
\Big[a b_i c_j m_{ij} \Big]
= a^{2n} \prod_{k=1}^n b_k^2 c_k^2 \cdot
\perdet_{1 \leq i, j \leq n} [m_{ij}].
\]
Then by (\ref{eqn:factor3}), we have
\begin{align}
&\perdet_{1 \leq i, j \leq n} \Big[
S_{\A_q} \Big(T_q(z_i), T_q(z_j);  
r \prod_{\ell=1}^n |T_q(z_{\ell})|^2 \Big) \Big]
\nonumber\\
& \quad
= \perdet_{1 \leq i, j \leq n}
\Big[
q^{-2n} r^{-1} \prod_{\ell=1}^n |z_{\ell}|^2
(z_i \zbar_j)^{n+1}
S_{\A_q} \Big( z_i, z_j; q^2 r^{-1} \prod_{\ell=1}^n
|z_{\ell}|^2 \Big) \Big]
\nonumber\\
& \quad
= q^{-4n^2} r^{-2n} 
\prod_{\ell=1}^n |z_{\ell}|^{4(2n+1)}
\perdet_{1 \leq i, j \leq n}
\Big[
S_{\A_q} \Big( z_i, z_j; q^2 r^{-1} \prod_{\ell=1}^n
|z_{\ell}|^2 \Big) \Big]. 
\label{eqn:factor4}
\end{align}

Now we consider the prefactor 
of $\perdet$ in
(\ref{eqn:rho_mainA1}).
By (\ref{eqn:theta_period}), 
$\theta(-r)=\theta(-q^2/r)$.
On the other hand,
\[
\theta \Big(-r \prod_{\ell=1}^n |T_q(z_{\ell})|^4 \Big)
= \theta \Big(-r q^{4n} \prod_{\ell=1}^n |z_{\ell}|^{-4} \Big)
= \theta \Big(
- q^{-2(2n-1)} r^{-1} \prod_{\ell=1}^n |z_{\ell}|^4 \Big). 
\]
If we apply (\ref{eqn:theta_qp}) once,
then we find that the above is equal to
$q^{-2(2n-1)} r^{-1}$$\prod_{\ell=1}^n |z_{\ell}|^4$
$\theta (
- q^{-2(2n-2)} r^{-1} \prod_{\ell=1}^n |z_{\ell}|^4 )$.
We apply (\ref{eqn:theta_qp}) $2n-1$ more times.
Then the above turns to be equal to
$q^{-2 \sum_{i=1}^{2n-1} i}$$r^{-2n}\prod_{\ell=1}^n |z_{\ell}|^{8n}$
$\theta (- q^2 r^{-1} \prod_{\ell=1}^n |z_{\ell}|^4)$. 
Then we have the equality
\begin{equation}
\frac{\theta(-r)}
{\theta \Big(-r \prod_{\ell=1}^n |T_q(z_{\ell})|^4 \Big)}
= q^{2n(2n-1)}  r^{2n}
\prod_{\ell=1}^n |z_{\ell}|^{-8n}
\frac{\theta(-q^2/r)}
{\theta \Big(
- q^2 r^{-1} \prod_{\ell=1}^n |z_{\ell}|^4 \Big)}.
\label{eqn:factor2}
\end{equation}

Combining the results (\ref{eqn:factor4})
and (\ref{eqn:factor2}), we have
\[
\rho^{n}_{\A_q}(T_q(z_1),\dots, T_q(z_n); r) 
= \rho^{n}_{\A_q}(z_1,\dots, z_n; q^2/r)
q^{-2n} \prod_{\ell=1}^n |z_{\ell}|^4.
\]
Since $|T_q'(z)|^2=q^2/|z|^4$,
(\ref{eqn:rho_inversion1}) is proved.
\end{proof}

\subsection{Proofs of Proposition
\ref{thm:short_distance} and 
Theorem \ref{thm:critical_line}}
\label{sec:proof_critical_line}
\subsubsection{Upper and lower bounds of 
unfolded 2-correlation function}
\label{sec:upper_lower}

By (\ref{eqn:density_Aqt}) and (\ref{eqn:rho2_Aqt}),
the unfolded 2-correlation function (\ref{eqn:unfolded})
is explicitly written as follows,
\begin{align}
g_{\A_q}(z,w; r) 
&= \frac{\theta(-r|z|^2, -r|w|^2, -r|z|^4 |w|^2, -r|z|^2|w|^4)^2}
{\theta(-r, -r|z|^4, -r|w|^4, -r|z|^4|w|^4) \theta(-r|z|^2 |w|^2)^4}
\nonumber\\
& \quad \times 
\left[ 1-
\left\{
\frac{ \theta(|z|^2, |w|^2)}
{\theta(-r|z|^4 |w|^2, -r|z|^2 |w|^4)}
\right\}^2
\frac{
| \theta(-r z \wbar |z|^2 |w|^2)|^4 
}{
|\theta(z \wbar)|^4}
\right],
\label{eqn:unfolded2}
\end{align}
with $z, w \in \A_q$. 
Using (\ref{eqn:theta_zero}) and (\ref{eqn:theta_qp}),
it is readily verified that
\[
g_{\A_q}(1,z;r)
=g_{\A_q}(z,1;r)
=g_{\A_q}(q,z;r)
=g_{\A_q}(z,q;r)=1,
\quad z \in \A_q.
\]
\begin{lem}
\label{thm:bounds}
If we set $a=|z|, b=|w|$, $a, b \in (q, 1)$, then
\[
g_{\A_q}(a, b; r)
\leq
g_{\A_q}(z, w; r) 
\leq g_{\A_q}(-a, b; r),
\quad z, w \in \A_q,
\]
where
\begin{align}
g_{\A_q}(\pm a, b; r) 
&= \frac{b^2 \theta( \pm a/b, -ra^2, -rb^2)^2}
{\theta(-r, -ra^4, -rb^4) \theta(\pm ab)^4 \theta(-r a^2 b^2)^3}
\nonumber\\
& \quad \times 
\left[ 
\theta(-r a^4 b^2, -r a^2 b^4)
\theta(\pm ab)^2
+
\theta(a^2, b^2)
\theta(\mp r a^3 b^3)^2
\right].
\label{eqn:upper_lower1}
\end{align}
\end{lem}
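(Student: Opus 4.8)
The plan is to exploit the fact that, once the moduli $a=|z|$ and $b=|w|$ are fixed, the unfolded $2$-correlation function (\ref{eqn:unfolded2}) depends on $z,w$ only through the relative phase $\phi := \arg z - \arg w$, and then only through the single ratio
\[
F(\phi) := \frac{|\theta(-r a^3 b^3 e^{\sqrt{-1}\phi})|^4}{|\theta(ab\, e^{\sqrt{-1}\phi})|^4},
\]
since $z\wbar = ab\, e^{\sqrt{-1}\phi}$ while all remaining factors in (\ref{eqn:unfolded2}) involve only $|z|,|w|$. Writing $g_{\A_q}(z,w;r) = P(a,b;r)\bigl(1 - C(a,b;r)\,F(\phi)\bigr)$, I would first record that the prefactor $P$ and the constant $C=\{\theta(a^2,b^2)/\theta(-ra^4b^2,-ra^2b^4)\}^2$ are positive: every theta argument in $P$ is negative and the arguments $a^2,b^2$ lie in $(q^2,1)$, so positivity follows from (\ref{eqn:theta_signs}). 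Thus $g_{\A_q}$ is a strictly decreasing function of $F$, and the whole problem reduces to locating the extrema of $F(\phi)$ over the circle.

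The key lemma I would establish is that for any real $s$ the function $\phi \mapsto |\theta(s\,e^{\sqrt{-1}\phi})|^2$ depends on $\phi$ only through $\cos\phi$ and is monotone in $\cos\phi$: decreasing when $s>0$ and increasing when $s<0$. Using (\ref{eqn:theta_real}) together with the infinite-product form of $\theta$,
\[
|\theta(s e^{\sqrt{-1}\phi})|^2 = \theta(s e^{\sqrt{-1}\phi})\theta(s e^{-\sqrt{-1}\phi})
= \prod_{n\ge 0}\bigl(1 - 2 s q^{2n}\cos\phi + s^2 q^{4n}\bigr)\prod_{m\ge 1}\bigl(1 - 2 (q^{2m}/s)\cos\phi + q^{4m}/s^2\bigr),
\]
where each factor equals $|1 - t e^{\sqrt{-1}\phi}|^2 = 1 - 2t\cos\phi + t^2 \ge 0$ with $t$ of sign $\sgn(s)$. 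Every factor is a nonnegative affine function of $\cos\phi$ with slope $-2t$, and all slopes share the sign $-\sgn(s)$; hence the convergent product is monotone in $\cos\phi$ in the asserted direction. Applying this with $s=ab>0$ to the denominator and $s=-ra^3b^3<0$ to the numerator shows $F$ is increasing in $\cos\phi$, so $g_{\A_q}$ is strictly decreasing in $\cos\phi$. Its minimum is therefore attained at $\cos\phi=1$ and its maximum at $\cos\phi=-1$, i.e.\ when $z\wbar$ is real positive, respectively real negative; by rotational invariance these extremal configurations may be taken to be $(z,w)=(a,b)$ and $(z,w)=(-a,b)$, which gives $g_{\A_q}(a,b;r)\le g_{\A_q}(z,w;r)\le g_{\A_q}(-a,b;r)$.

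It remains to derive the closed form (\ref{eqn:upper_lower1}). I would substitute $z=\pm a,\ w=b$ into (\ref{eqn:unfolded2}) and bring the bracket $1 - CF$ to a common denominator; the resulting numerator is a difference of squares
\[
\theta(-ra^4b^2,-ra^2b^4)^2\theta(\pm ab)^4 - \theta(a^2,b^2)^2\theta(\mp ra^3b^3)^4,
\]
which factors into two terms. One of these is exactly the bracket appearing in (\ref{eqn:upper_lower1}), while the other is handled by Weierstrass' addition formula (\ref{eqn:Weierstrass_add1}): choosing $(x,y,u,v)=(-ra^3b^3,\,a/b,\,ab,\,1)$ in the plus case, and $(ra^3b^3,\,-a/b,\,-ab,\,1)$ in the minus case, and invoking the inversion relation (\ref{eqn:theta_inversion}), yields the identity
\[
\theta(-ra^4b^2,-ra^2b^4)\theta(\pm ab)^2 - \theta(a^2,b^2)\theta(\mp ra^3b^3)^2 = b^2\,\theta(\pm a/b)^2\,\theta(-ra^4b^4)\theta(-ra^2b^2).
\]
Cancelling the common factor then produces (\ref{eqn:upper_lower1}).

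I expect the monotonicity lemma to be the conceptual heart of the argument. The one point that looks like an obstacle is that the numerator base $-ra^3b^3$ can have modulus outside $(q^2,1)$ for extreme $r$, where na\"ive arguments about zeros of $\theta$ would fail; the factorization above sidesteps this entirely, since each factor is manifestly nonnegative and monotone regardless of the size of $s$, so no appeal to quasi-periodicity is needed. The only remaining difficulty is bookkeeping: correctly matching the four variables in (\ref{eqn:Weierstrass_add1}) for the two sign cases, which the substitutions displayed above resolve.
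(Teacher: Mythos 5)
Your proposal is correct and takes essentially the same route as the paper: the paper's auxiliary Lemma \ref{lem:theta_ratio} establishes monotonicity in $\cos\varphi$ of the ratio $\left|\theta(-\beta e^{\sqrt{-1}\varphi})/\theta(\alpha e^{\sqrt{-1}\varphi})\right|^2$ by the very factor-by-factor product argument you apply (only you treat numerator and denominator separately via the sign of $s$ in $|\theta(s e^{\sqrt{-1}\phi})|^2$, while the paper differentiates each paired ratio of factors), and the closed form (\ref{eqn:upper_lower1}) is obtained exactly as you propose, by writing the bracket as a difference of squares and evaluating the factor $\theta(-ra^4b^2,-ra^2b^4)\theta(\pm ab)^2-\theta(a^2,b^2)\theta(\mp ra^3b^3)^2$ with Weierstrass' addition formula (\ref{eqn:Weierstrass_add1}). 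The paper's substitution $x=r^{1/2}a^{5/2}b^{3/2}$, $y=-r^{1/2}a^{3/2}b^{1/2}$, $u=-r^{1/2}a^{3/2}b^{5/2}$, $v=r^{1/2}a^{1/2}b^{3/2}$ (with $a\mapsto -a$ for the other sign) is equivalent to your integer-power choices, which check out directly.
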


First we show the following lemma.
\begin{lem}\label{lem:theta_ratio} 
Let $\alpha, \beta >0$ with $\alpha \not\in \{q^{2i}: i \in \Z\}$. 
Then the function 
$|\theta(-\beta e^{i\varphi}) / \theta(\alpha e^{i\varphi})|^2$ 
on $\varphi \in [0,2\pi)$ 
attains its maximum at $\varphi=0$ and 
its minimum at $\varphi=\pi$. 
\end{lem}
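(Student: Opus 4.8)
The plan is to reduce everything to an explicit, manifestly monotone product representation of $|\theta(\rho e^{\sqrt{-1}\varphi})|^2$ as a function of $\cos\varphi$, and then to compare numerator and denominator. First I would fix a modulus $\rho>0$ and use the reality property (\ref{eqn:theta_real}), $\overline{\theta(z)}=\theta(\bar z)$, which is valid since the nome $p=q^2\in(0,1)$, to write
\[
|\theta(\rho e^{\sqrt{-1}\varphi})|^2
=\theta(\rho e^{\sqrt{-1}\varphi})\,\theta(\rho e^{-\sqrt{-1}\varphi}).
\]
Expanding the defining product (\ref{eqn:theta}) together with (\ref{eqn:Pochhammer}) and pairing each factor with its conjugate, I would obtain
\[
|\theta(\rho e^{\sqrt{-1}\varphi})|^2
=\prod_{n\ge 0}\bigl(1-2\rho q^{2n}\cos\varphi+\rho^2 q^{4n}\bigr)
\prod_{n\ge 0}\bigl(1-2 q^{2n+2}\rho^{-1}\cos\varphi+q^{4n+4}\rho^{-2}\bigr),
\]
so that every factor has the form $1-2c\cos\varphi+c^2$ with $c>0$ ranging over $\{\rho q^{2n}\}_{n\ge 0}\cup\{q^{2n+2}/\rho\}_{n\ge 0}$.

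The key observation is that each factor $1-2ct+c^2=(1-c)^2+2c(1-t)$, viewed as a function of $t=\cos\varphi\in[-1,1]$, is nonnegative and (strictly, unless $c=1$) decreasing in $t$. Since a product of nonnegative nonincreasing functions is nonincreasing, it follows that $|\theta(\rho e^{\sqrt{-1}\varphi})|^2=F_\rho(\cos\varphi)$ for some function $F_\rho$ that is nonincreasing on $[-1,1]$. I would justify this on partial products, each of which is visibly nonincreasing in $t$, and then pass to the limit using the absolute convergence of the theta product.

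With this representation the conclusion is immediate. Writing $-\beta e^{\sqrt{-1}\varphi}=\beta e^{\sqrt{-1}(\varphi+\pi)}$ gives $|\theta(-\beta e^{\sqrt{-1}\varphi})|^2=F_\beta(-\cos\varphi)$, which is nondecreasing in $t=\cos\varphi$, while $|\theta(\alpha e^{\sqrt{-1}\varphi})|^2=F_\alpha(\cos\varphi)$ is nonincreasing in $t$ and, by the hypothesis $\alpha\notin\{q^{2i}:i\in\Z\}$ together with (\ref{eqn:theta_zero}), strictly positive for every $\varphi$ (no factor can vanish, since $c=1$ with $t=1$ would force $\alpha\in\{q^{2i}\}$). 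Hence the ratio $G(\varphi)=F_\beta(-\cos\varphi)/F_\alpha(\cos\varphi)$ is nondecreasing in $t=\cos\varphi$, and therefore attains its maximum at $t=1$, i.e. $\varphi=0$, and its minimum at $t=-1$, i.e. $\varphi=\pi$, which is exactly the claim.

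The only real subtlety I anticipate is the monotonicity of the infinite product: one must keep all factors nonnegative so that the decrease in $t$ is preserved under multiplication and under the limit of partial products, and one must confirm that the denominator never vanishes. Both points are handled by the exclusion $\alpha\notin\{q^{2i}\}$; everything else is routine bookkeeping with the product expansion.
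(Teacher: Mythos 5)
Your proof is correct and follows essentially the same route as the paper: both expand $|\theta(\rho e^{\sqrt{-1}\varphi})|^2$ via the defining product into quadratic factors $1 \pm 2c\cos\varphi + c^2$ and exploit their monotonicity in $t=\cos\varphi$, with the hypothesis $\alpha \notin \{q^{2i} : i \in \Z\}$ keeping the denominator nonvanishing. The only cosmetic difference is that the paper pairs each numerator factor with the corresponding denominator factor into a single ratio $f(x;\alpha q^{2n},\beta q^{2n})$ and checks $\partial f/\partial x \geq 0$ directly, whereas you prove monotonicity of the numerator and denominator products separately and then take the quotient.
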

\begin{proof} Set $f(x; \alpha, \beta) 
= (1+2\beta x + x^2)/(1-2\alpha x + x^2)$ for $x \in [-1,1]$. Then, 
\begin{align*}
\left|
\frac{\theta(-\beta e^{i\varphi})}{\theta(\alpha e^{i\varphi})}
\right|^2
&=\prod_{n=0}^{\infty}
f(\cos \varphi; \alpha q^{2n}, \beta q^{2n})
\prod_{m=0}^{\infty}
f(\cos \varphi; \alpha^{-1} q^{2(m+1)}, \beta^{-1} q^{2(m+1)})
\end{align*}
Since $\partial f(x;\alpha, \beta)/\partial x
=2(1+\alpha \beta) (\alpha+\beta)
/(1-2 \alpha x + \alpha^2)^2
\geq 0$, $f$ attains its maximum (resp. minimum) at 
$x=1$ (resp. $x=-1$). Hence the assertion follows. 
\end{proof}
Now we proceed to the proof of Lemma~\ref{thm:bounds}. 
\begin{proof}
We set $z=a e^{\sqrt{-1} \varphi_z}$, $w=b e^{\sqrt{-1} \varphi_w}$,
$a,b \in (q, 1)$, $\varphi_z, \varphi_w \in [0, 2 \pi)$.
Then we can see that (\ref{eqn:unfolded2}) depends on 
the angles $\varphi_z, \varphi_w$ only through the factor
$|\theta(-r z \wbar |z|^2 |w|^2) /
\theta(z \wbar)|^4$, and we have 
$|\theta(-r z \wbar |z|^2 |w|^2)/
\theta(z \wbar)|^2 
= |\theta(-r a^3 b^3 e^{\sqrt{-1}(\varphi_z-\varphi_w)})
/\theta(ab e^{\sqrt{-1}(\varphi_z-\varphi_w)})|^2$.
We can conclude that
$\theta(-r a^3 b^3)^2/\theta(ab)^2 \geq
\left| \theta(-r z \wbar |z|^2 |w|^2)/\theta(z \wbar)
\right|^2 \geq
\theta(r a^3 b^3)^2/
\theta(-ab)^2$
from Lemma~\ref{lem:theta_ratio}, 
and the inequalities are proved.
If we use Weierstrass' addition formula 
(\ref{eqn:Weierstrass_add1}) 
by setting
$x=r^{1/2} a^{5/2} b^{3/2}$, 
$y=-r^{1/2} a^{3/2} b^{1/2}$,
$u=-r^{1/2} a^{3/2} b^{5/2}$, and
$v=r^{1/2} a^{1/2} b^{3/2}$,
then we obtain 
$\theta(-r a^4 b^2, -r a^2 b^4)
\theta(-ab)^2-\theta(a^2, b^2)\theta(r a^3 b^3)^2
=b^2 \theta(-a/b)^2 \theta(-r a^2 b^2, -r a^4 b^4)$. 
Using this equality and the one obtained
by replacing $a$ by $-a$, it is easy to obtain 
(\ref{eqn:upper_lower1}).
The proof is complete. 
\end{proof}

\subsubsection{Proof of Proposition \ref{thm:short_distance}}
\label{sec:short_distance}

By the definition (\ref{eqn:g1_short}),
if we use (\ref{eqn:theta_inversion})--(\ref{eqn:theta_period}),
we can derive the following from (\ref{eqn:upper_lower1}),
\[
G_{\A_q}^{\wedge}(x; r) =
\frac{r^2 \theta(qx^2)^2 \theta(-r x^2, -r^{-1} x^2)^3}
{x^2 \theta(q)^2 \theta(-r)^4 \theta(-r x^4, -r^{-1} x^4)}
\left[ 1 +
\frac{\theta(-r q, x^2)^2}
{\theta(q)^2 \theta(-r x^2, -r^{-1} x^2)}
\right],
\quad x \in (\sqrt{q}, 1).
\]
Since 
$x^2 \sim q\{1+2q^{-1/2}(x-\sqrt{q})\}$
when $x \sim \sqrt{q}$, 
$\theta(qx^2)
\sim \theta(q^2\{1+2 q^{-1/2}(x-\sqrt{q})\})
\sim - \theta (1+2 q^{-1/2}(x-\sqrt{q}))$,
where (\ref{eqn:theta_qp}) was used.
Then
\[
\theta(qx^2)
\sim - \theta'(1) \cdot 2 q^{-1/2}(x-\sqrt{q})
= 2 q_0^2 q^{-1/2}(x-\sqrt{q})
\quad
\mbox{as $x \to \sqrt{q}$}.
\]
where (\ref{eqn:theta_prime}) was used.
Hence
$\theta(qx^2)^2 
\sim (4 q_0^4/q) (x-\sqrt{q})^2$
as $x \to \sqrt{q}$,
and $G_{\A_q}^{\wedge}(x; r) \asymp (x-\sqrt{q})^2$
as $x \to \sqrt{q}$.
Using (\ref{eqn:theta_inversion})--(\ref{eqn:theta_period}),
we can show that
$\theta(-r^{-1}q)=\theta(-r q)$,
$\theta(-r^{-1}q^2)=\theta(-r)$,
$\theta(-rq^2)=r^{-1} \theta(-r)$.
Then the coefficient is determined as 
given by $c(r)$.

\subsubsection{Proof of Theorem \ref{thm:critical_line} (i)}
\label{sec:i}

Replacing $x$ by $\sqrt{c}$ in (\ref{eqn:g1_long}),
here we consider 
$\widetilde{G}(c)=
\widetilde{G}(c; r, q):= G_{\A_q}^{\vee}(\sqrt{c}; r)$, 
$c \in (q^2, 1)$.
From (\ref{eqn:upper_lower1}) in Lemma \ref{thm:bounds},
we have 
\begin{equation}
\widetilde{G}(c)
= 
\frac{c \theta(-rc)^4 \theta(-1, -rc^3)^2}
{\theta(-r) \theta(-c)^2 \theta(-rc^2)^5}
\left[
1 + \frac{\theta(c, rc^3)^2}{\theta(-c, -rc^3)^2}
\right].
\label{eqn:gc1}
\end{equation}
It is easy to see that 
$\widetilde{G}(1)=1$.
Here we will prove the following.
\begin{prop}
\label{thm:differentials}
\begin{align}
\widetilde{G}'(1) &=\widetilde{G}''(1)=\widetilde{G}'''(1)=0,
\label{eqn:1_3rd0}
\\
\widetilde{G}^{(4)}(1)
&=\widetilde{G}^{(4)}(1; r, q)
=12(10 \wp(\phi_{-r})^2+4 e_1 \wp(\phi_{-r})-2 e_1^2-g_2).
\label{eqn:4th0}
\end{align}
\end{prop}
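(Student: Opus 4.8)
The plan is to pass to the logarithmic variable $c=e^{s}$, so that the Euler operator $\cD_{c}=c\,\partial/\partial c$ of (\ref{eqn:Euler}) becomes $d/ds$ and differentiation at $c=1$ becomes differentiation at $s=0$. Writing $g(s):=\widetilde{G}(e^{s})$, the chain rule expresses each $g^{(k)}(0)=\cD_{c}^{k}\widetilde{G}\big|_{c=1}$ as a fixed triangular combination (with unit diagonal, the coefficients being Stirling numbers of the second kind) of $\widetilde{G}'(1),\dots,\widetilde{G}^{(k)}(1)$. Hence (\ref{eqn:1_3rd0}) is equivalent to $g'(0)=g''(0)=g'''(0)=0$, and once these hold, (\ref{eqn:4th0}) reduces to $\widetilde{G}^{(4)}(1)=g^{(4)}(0)$. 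The main computational device is that for a theta factor $\theta(ac^{k})$ one has $\cD_{c}^{n}\log\theta(ac^{k})\big|_{c=1}=k^{n}a_{n}(a)$, with $a_{n}$ as in (\ref{eqn:an_x}), so that Lemma \ref{thm:an} turns every such logarithmic derivative into values of $\wp,\wp',\wp''$.

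Next I would factor $\widetilde{G}=\alpha\cdot(1+d)$, where $\alpha(e^{s})$ is the pure theta quotient in front of the bracket of (\ref{eqn:gc1}) and $d(e^{s})=\theta(c)^{2}\theta(rc^{3})^{2}/\big(\theta(-c)^{2}\theta(-rc^{3})^{2}\big)$ is its second term. For $\alpha$ the recipe applies verbatim: the $c$-dependent factors $\theta(-rc),\theta(-rc^{3}),\theta(-rc^{2}),\theta(-c)$ enter with signed multiplicities $4,2,-5,-2$ and $k=1,3,2,1$. At first order the coefficient $4+2\cdot3-5\cdot2$ of $a_{1}(-r)$ vanishes and the contribution of the factor $c$ cancels $-2a_{1}(-1)=-1$ (using $a_{1}(-1)=\tfrac12$ from Lemma \ref{thm:gammas}), so $(\log\alpha)'(0)=0$; the additive $P$-constant in $a_{2}$ cancels in the difference $a_{2}(-r)-a_{2}(-1)$, and $a_{3}(-1)=-\sqrt{-1}\,\wp'(\pi)=0$. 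Exponentiating yields $\alpha(0)=1$, $\alpha'(0)=0$, and explicit $\alpha''(0),\alpha'''(0),\alpha^{(4)}(0)$ in terms of $\wp(\phi_{-r}),\wp'(\phi_{-r}),\wp''(\phi_{-r}),e_{1},g_{2}$.

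For the bracket term I would use that $\theta$ has a simple zero at $c=1$, so $d=\theta(c)^{2}R(c)$ with $R$ smooth and nonvanishing there; thus $d$ vanishes to order $s^{2}$, giving $g'(0)=0$ at once. The Taylor data of $\theta$ at $c=1$ are $\theta'(1)=-q_{0}^{2}$ from (\ref{eqn:theta_prime}), $\theta''(1)=0$ (which follows from Lemma \ref{thm:gammas}(i)), and $\theta'''(1)$ fixed by $\gamma_{2}$ in Lemma \ref{thm:gammas}(ii); the remaining input is $q_{0}^{4}R(1)=f^{\rm JK}(-r,-1)^{2}=e_{1}-\wp(\phi_{-r})$, from (\ref{eqn:f2}) and (\ref{eqn:fundamental4}). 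This already gives $d''(0)=2\big(e_{1}-\wp(\phi_{-r})\big)=-\alpha''(0)$, hence $g''(0)=0$. For $g'''(0)$ I would differentiate $\log R$ once, producing the Ramanujan factor $a_{1}(r)-a_{1}(-r)$, and invoke
\[
2\big(e_{1}-\wp(\phi_{-r})\big)\big(a_{1}(r)-a_{1}(-r)\big)=\sqrt{-1}\,\wp'(\phi_{-r}),
\]
which I would obtain by applying $\cD_{r}$ to (\ref{eqn:fundamental4}) together with $\cD_{r}\log\theta(\pm r)=a_{1}(\pm r)$; this makes $d'''(0)=18\sqrt{-1}\,\wp'(\phi_{-r})$ cancel $\alpha'''(0)$ exactly.

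Finally, assembling $g^{(4)}(0)=\alpha^{(4)}(0)+6\,\alpha''(0)\,d''(0)+d^{(4)}(0)$ by Leibniz (every term carrying $d(0),d'(0)$ or $\alpha'(0)$ drops out), the new ingredient in $d^{(4)}(0)$ is the second logarithmic derivative of $R$, which supplies $a_{2}(r)-a_{2}(-r)=\wp(\phi_{r})-\wp(\phi_{-r})$. I would eliminate $\wp(\phi_{r})$ through the half-period relation $\phi_{-r}=\phi_{r}+\pi$, namely $\wp(\phi_{r})=e_{1}-(e_{1}-e_{2})(e_{1}-e_{3})/\big(e_{1}-\wp(\phi_{-r})\big)$ (a consequence of (\ref{eqn:wp_diff0})), reduce $\wp'(\phi_{-r})^{2}$ by (\ref{eqn:wp_diff0}) and $\wp''(\phi_{-r})$ by (\ref{eqn:wp_diff}), use $e_{1}+e_{2}+e_{3}=0$ and the definition of $g_{2}$, and confirm that the $P$-terms entering through $\theta'''(1)$ and $a_{2}(-1)$ cancel, landing on $12\big(10\wp(\phi_{-r})^{2}+4e_{1}\wp(\phi_{-r})-2e_{1}^{2}-g_{2}\big)$. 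The hard part will be precisely this last stage: keeping the double-zero bookkeeping of $d$ straight and supplying the two elliptic identities — the $\cD_{r}$-derivative of (\ref{eqn:fundamental4}) for order three and the half-period addition law for order four — while verifying that the quasimodular constant $P$ drops out of the final answer.
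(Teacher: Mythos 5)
Your proposal is correct, and its skeleton coincides with the paper's own proof: your multiplicative split $\widetilde G=\alpha\,(1+d)$ with $d=\theta(c)^2R(c)$ and $q_0^4R(1)=f^{\rm JK}(-r,-1)^2=e_1-\wp(\phi_{-r})$ is exactly the paper's decomposition $\widetilde G=I+J$ with $J=\beta_r^2(c-1)^2I(c)K(c)$ and Lemma \ref{thm:betasquare}; your Stirling-triangularity remark in the variable $s=\log c$ plays the role of the paper's Lemma \ref{thm:euler}; and your third-order identity $2\bigl(e_1-\wp(\phi_{-r})\bigr)\bigl(a_1(r)-a_1(-r)\bigr)=\sqrt{-1}\,\wp'(\phi_{-r})$, obtained by applying $\cD_r$ to (\ref{eqn:fundamental4}), is precisely the chain of Lemmas \ref{thm:dbeta} and \ref{thm:a3minust} packaged as one. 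The single genuine divergence is at fourth order: the paper differentiates in $r$ a second time (Lemma \ref{thm:a4minust}) to get $a_4(-r)=-2\beta_r^2\bigl(2A_1(r)^2+A_2(r)\bigr)$ with $A_n(r):=a_n(r)-a_n(-r)$, folding both $A_1^2$ and $A_2$ into $\wp''(\phi_{-r})$ so that only (\ref{eqn:wp_diff}) is needed at the end, whereas you keep $A_2(r)=\wp(\phi_r)-\wp(\phi_{-r})$, eliminate it by the half-period law $\wp(\phi_r)=e_1-(e_1-e_2)(e_1-e_3)/\bigl(e_1-\wp(\phi_{-r})\bigr)$, and reduce $A_1(r)^2$ through (\ref{eqn:wp_diff0}). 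I checked the two endings agree: writing $\wp=\wp(\phi_{-r})$, your route gives $216\beta_r^2A_2(r)+432\beta_r^2A_1(r)^2=216\bigl[(e_1-\wp)^2-(e_1-e_2)(e_1-e_3)\bigr]+432(\wp-e_2)(\wp-e_3)=648\wp^2-54g_2=-108\,a_4(-r)$, after which both computations land on $-22a_4(-r)-2a_4(-1)-12\beta_r^4-24e_1\beta_r^2$ and then on (\ref{eqn:4th0}); the paper's second $r$-differentiation keeps the whole endgame inside its $a_n$-calculus, while your addition-law route trades that for one standard elliptic identity the paper never needs to state. One bookkeeping point you should make explicit in a write-up: $\cD_c\log R\big|_{c=1}=6\bigl(a_1(r)-a_1(-r)\bigr)-1$, not $6\bigl(a_1(r)-a_1(-r)\bigr)$; the extra $-1$ coming from $-2a_1(-1)$ is cancelled by the cubic data of the double zero (since $(e^s-1)^2=s^2(1+s+\cdots)$, equivalently Lemma \ref{thm:gammas}(i)), which is how your stated value $d'''(0)=18\sqrt{-1}\,\wp'(\phi_{-r})$ comes out — the same cancellation the paper tracks through $K'(1)=-1+6A_1(r)$.
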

This proposition implies 
(\ref{eqn:Gvee_up}) 
with (\ref{eqn:kappa}), since
$\kappa(r) =\widetilde{G}^{(4)}(1)/4!$.
By (\ref{eqn:rho_symmetry}), we have the equality
$G_{\A_q}^{\vee}(x; r)
=G_{\A_q}^{\vee} (q/x; q^2/r)$.
Since $x \to q$ is equivalent with
$q/x \to 1$, (\ref{eqn:Gvee_up}) implies
\[
G_{\A_q}(q/x; q^2/r)
\sim 1+\kappa(q^2/r) (1-(q/x)^2)^4
\quad \mbox{as $x \downarrow q$}.
\]
Therefore, once Proposition \ref{thm:differentials} 
is proved and hence (\ref{eqn:kappa}) is verified, then
the equalities (\ref{eqn:kappa_symmetry})
are immediately concluded from (\ref{eqn:wp_symmetry}).
With the third equality in (\ref{eqn:kappa_symmetry}),
the above proves (\ref{eqn:Gvee_down}).
If $r>1$, then $1/r<1$, 
on the other hand if $0 < r < q$, given $q \in (0,1)$,
then $q^2/r >q$. Hence by the first and the third equalities
in (\ref{eqn:kappa_symmetry}) the values of $\kappa(r)$ 
in the parameter space outside of $\Omega$ can be determined
by those in $\Omega$. 
By the three equalities in (\ref{eqn:kappa_symmetry}),
the structure in $\Omega$ described by 
Proposition \ref{thm:critical_line} (ii) and (iii) is
repeatedly mapped into the parameter space
outside of $\Omega$.

Now we proceed to the proof
of Proposition \ref{thm:differentials}.
First we decompose $\widetilde{G}(c)$ given by (\ref{eqn:gc1})
as 
\[
\widetilde{G}(c) = I(c)+J(c)
= I(c)+\beta_r^2 (c-1)^2 I(c) K(c),
\]
with
\begin{align}
I(c) &=
\frac{c \theta(-rc)^4 \theta(-1, -rc^3)^2}
{\theta(-r) \theta(-c)^2 \theta(-rc^2)^5},
\qquad
\beta_r = 
\frac{\theta'(1) \theta(r)}{\theta(-1, -r)},
\nonumber\\
K(c) &= \Big( \frac{\theta(c)}{(c-1) \theta'(1)} \Big)^2
\frac{\theta(-1, -r, rc^3)^2}{\theta(-c, -rc^3, r)^2},
\label{eqn:I_beta_K}
\end{align}
The following is easily verified, 
where $\cD_z$ denotes the Euler operator (\ref{eqn:Euler}). 
\begin{lem}
\label{thm:euler}
Suppose that $f$ is a $\cC^{\infty}$-function
and $f(1)=1$, then
\begin{align*}
\cD_z \log f(z)|_{z=1} &=f'(1),
\nonumber\\
\cD_z^2 \log f(z)|_{z=1}
&=f''(1)+f'(1)-f'(1)^2.
\end{align*}
If, in addition, $f'(1)=0$, then
\begin{align*}
\cD_z^2 \log f(z)|_{z=1} &= f''(1), \\
\cD_z^3 \log f(z)|_{z=1} &= f'''(1) + 3 f''(1), \\
\cD_z^4 \log f(z)|_{z=1} &= f^{(4)}(1) + 6 f'''(1) + 7 f''(1) 
- 3 f''(1)^2. 
\end{align*}
\end{lem}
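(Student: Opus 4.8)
The plan is to linearize the Euler operator by a single change of variable and then read off everything from the moment-to-cumulant dictionary. First I would substitute $z=e^t$, under which the Euler operator $\cD_z$ of (\ref{eqn:Euler}) becomes an ordinary derivative, $\cD_z = z\,\partial/\partial z = d/dt$, and the base point $z=1$ corresponds to $t=0$. Writing $F(t):=f(e^t)$, the hypothesis $f(1)=1$ reads $F(0)=1$, and the four quantities to be evaluated are precisely the Taylor derivatives of $\log F$ at the origin:
\[
\cD_z^n \log f(z)|_{z=1} = \left.\frac{d^n}{dt^n}\log F(t)\right|_{t=0}, \quad n=1,2,3,4.
\]
Thus the lemma reduces to a purely formal identity: how the derivatives of $\log F$ at $0$ are built from the derivatives of $F$ at $0$, together with a dictionary translating $F^{(k)}(0)$ back into the $f^{(j)}(1)$.

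Second, I would assemble that dictionary. Repeated application of the chain rule to $F(t)=f(e^t)$ is encoded by the classical expansion of the Euler operator through Stirling numbers of the second kind $S(n,k)$,
\[
\cD_z^n = \sum_{k=1}^{n} S(n,k)\, z^k \frac{\partial^k}{\partial z^k},
\]
so that, evaluating at $z=1$,
\[
F'(0)=f'(1),\quad F''(0)=f''(1)+f'(1),\quad F'''(0)=f'''(1)+3f''(1)+f'(1),
\]
\[
F^{(4)}(0)=f^{(4)}(1)+6f'''(1)+7f''(1)+f'(1),
\]
where the coefficients $1,3,1$ and $1,6,7,1$ are exactly $S(n,k)$.

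Third, I would expand the logarithm. Setting $m_k:=F^{(k)}(0)$ (so $m_0=1$) and $\kappa_n:=(d^n/dt^n)\log F|_{t=0}$, taking the logarithm of the series $F(t)=1+\sum_{k\ge1} m_k t^k/k!$ yields the standard moment-to-cumulant relations
\[
\kappa_1=m_1,\qquad \kappa_2=m_2-m_1^2,\qquad \kappa_3=m_3-3m_1 m_2+2m_1^3,
\]
\[
\kappa_4=m_4-4m_1 m_3-3m_2^2+12m_1^2 m_2-6m_1^4.
\]
Feeding in the dictionary gives the first two asserted identities immediately ($\kappa_1=f'(1)$ and $\kappa_2=f''(1)+f'(1)-f'(1)^2$). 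Imposing the extra hypothesis $f'(1)=0$ kills $m_1=F'(0)$, collapsing the relations to $\kappa_2=m_2=f''(1)$, $\kappa_3=m_3=f'''(1)+3f''(1)$, and $\kappa_4=m_4-3m_2^2=f^{(4)}(1)+6f'''(1)+7f''(1)-3f''(1)^2$, which are the remaining three formulas.

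There is no conceptual obstacle here; the only delicate point is fourth-order bookkeeping. The hard part will be making sure the cross term $-3m_2^2$ in $\kappa_4$ matches the stated $-3f''(1)^2$ (note $m_2=f''(1)$ only after $f'(1)=0$ is imposed, so one must be careful about the order in which the hypothesis is used) and confirming that every first-order contribution genuinely drops out once $f'(1)=0$. To guard against a slip in the fourth-order coefficients, I would independently recompute $\kappa_4$ by directly differentiating $F''/F-(F'/F)^2$ twice and setting $F(0)=1$, $F'(0)=0$, as a cross-check against the cumulant table.
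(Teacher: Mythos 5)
Your proof is correct, and all the bookkeeping checks out. The Stirling-number coefficients for $\cD_z^n=\sum_{k=1}^n S(n,k)\,z^k\partial_z^k$ at $z=1$ (namely $1;\,1,1;\,1,3,1;\,1,7,6,1$) and the moment--cumulant relations up to order four are both stated accurately, and feeding one into the other reproduces exactly the five identities of the lemma; in particular you handled the one genuinely delicate point correctly, namely that $m_2=f''(1)+f'(1)$ collapses to $f''(1)$ only after the hypothesis $f'(1)=0$ is imposed, so that $-3m_2^2=-3f''(1)^2$ as claimed. The paper itself offers no argument here --- it declares the lemma ``easily verified,'' the implicit route being direct iterated differentiation of $\cD_z\log f=zf'(z)/f(z)$ and evaluation at $z=1$, which at fourth order is an error-prone Leibniz computation. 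Your route buys a cleaner structure: the substitution $z=e^t$ decouples the problem into two independent, standard tables (Euler operator via Stirling numbers of the second kind; $\log$ of a Taylor series via cumulants), each of which can be checked or cited separately, and your proposed cross-check by differentiating $F''/F-(F'/F)^2$ twice is a sound safeguard. One cosmetic point: since $f$ is only assumed $\cC^\infty$, you should not literally ``take the logarithm of the series''; but as the identities involve only the $4$-jet of $F$ at $0$ (and $f(1)=1>0$ makes $\log f$ smooth near $z=1$), the cumulant formulas follow from Fa\`a di Bruno applied to $\log\circ F$, so no analyticity is needed and the argument stands as written.
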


Recall that $a_n(z), n \in \N$ are defined by
(\ref{eqn:an_x}) in Section \ref{sec:Weierstrass_elliptic}.
\begin{prop}
\label{thm:Euler_calculation}
\upshape{(i)} $\cD_z^n \big(\log \theta(\alpha z^k) \big) 
= k^n a_n(\alpha z^k)$, 
\ \upshape{(ii)} $\cD_z \big(a_n(\alpha z^k) \big) 
= k a_{n+1}(\alpha z^k)$. 
\end{prop}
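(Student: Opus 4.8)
The plan is to reduce both identities to a single chain rule for the Euler operator $\cD_z = z\,\partial/\partial z$ under the substitution $z \mapsto \alpha z^k$. First I would record the elementary computation $\cD_z(\alpha z^k) = z\,\partial_z(\alpha z^k) = k\alpha z^k$, so that writing $w = \alpha z^k$ gives $\cD_z w = kw$. Applying the ordinary chain rule to any smooth function $g$,
\[
\cD_z\big(g(\alpha z^k)\big) = z\, g'(\alpha z^k)\,\partial_z(\alpha z^k) = k\alpha z^k\, g'(\alpha z^k) = k\,(\cD_w g)(\alpha z^k).
\]
This single relation---that $\cD_z$ acting through the substitution $w=\alpha z^k$ produces a factor $k$ together with $\cD_w$ evaluated at $w$---is the engine behind both parts.

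With this in hand, part (ii) is immediate: taking $g = a_n$ gives $\cD_z(a_n(\alpha z^k)) = k\,(\cD_w a_n)(\alpha z^k)$, and since $a_{n+1}(w) = \cD_w^{n+1}\log\theta(w) = \cD_w\big(\cD_w^n \log\theta(w)\big) = \cD_w a_n(w)$ by the definition (\ref{eqn:an_x}), the right-hand side equals $k\,a_{n+1}(\alpha z^k)$, as claimed.

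For part (i) I would argue by induction on $n$. The base case $n=1$ follows from the chain rule with $g = \log\theta$, giving $\cD_z(\log\theta(\alpha z^k)) = k\,(\cD_w\log\theta)(\alpha z^k) = k\,a_1(\alpha z^k)$. For the inductive step, assuming $\cD_z^n(\log\theta(\alpha z^k)) = k^n a_n(\alpha z^k)$, I apply $\cD_z$ once more, pull the constant $k^n$ through, and invoke part (ii):
\[
\cD_z^{n+1}\big(\log\theta(\alpha z^k)\big) = k^n\,\cD_z\big(a_n(\alpha z^k)\big) = k^n\cdot k\,a_{n+1}(\alpha z^k) = k^{n+1} a_{n+1}(\alpha z^k),
\]
which closes the induction.

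There is no serious obstacle here; the proof is essentially bookkeeping once the scaling identity $\cD_z w = kw$ is isolated. The only point requiring care is to keep the chain rule honest, namely to distinguish $\cD_z$ applied to the composite $g(\alpha z^k)$ from $(\cD_w g)$ evaluated afterwards at $w = \alpha z^k$; conflating these would drop the factor $k$ and spoil the power $k^n$ in part (i).
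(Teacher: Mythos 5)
Your proof is correct and follows essentially the same route as the paper: the paper's Lemma 3.10 isolates exactly the same chain-rule computation $\cD_z\big(F_n(\alpha z^k)\big) = k\,F_{n+1}(\alpha z^k)$ for $F_n(w)=\cD_w^n\log f(w)$, from which both parts of the proposition follow by the induction you spell out explicitly. The only cosmetic difference is that you state the scaling step for an arbitrary smooth $g$ before specializing, whereas the paper phrases it directly for the sequence $F_n$.
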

\noindent
This proposition is a corollary of the following lemma. 
\begin{lem}
\label{thm:der=substitute} 
Suppose that $f$ is a $\cC^{\infty}$-function. 
Let $F_n(w) := \cD_{w}^n \log f(w), n \in \N$.
Then for $k, n \in \N$ and a constant $\alpha$, 
$\cD_z^n \big( \log f(\alpha z^k) \big)= k^n F_n(\alpha z^k)$. 
\end{lem}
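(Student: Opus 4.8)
The plan is to reduce the whole statement to a single commutation identity between the Euler operator $\cD_z = z\,\partial/\partial z$ and the substitution $w = \alpha z^k$, and then to conclude by an elementary induction on $n$.

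First I would record the basic chain-rule fact: for any $\cC^{\infty}$-function $g$, any constant $\alpha$, and any $k \in \N$,
\[
\cD_z\bigl[\,g(\alpha z^k)\,\bigr]
= z\,g'(\alpha z^k)\,\alpha k z^{k-1}
= k\,\alpha z^k\, g'(\alpha z^k)
= k\,(\cD_w g)(\alpha z^k),
\]
where $\cD_w = w\,\partial/\partial w$ acts in the auxiliary variable $w$ and the outcome is evaluated at $w = \alpha z^k$, using that $(\cD_w g)(w) = w\,g'(w)$. In words, precomposition with $w \mapsto \alpha z^k$ intertwines $\cD_z$ and $\cD_w$ up to the scalar factor $k$. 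This single identity is the heart of the lemma; everything else is formal.

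Next I would prove $\cD_z^n\bigl(\log f(\alpha z^k)\bigr) = k^n F_n(\alpha z^k)$ by induction on $n$, applying the commutation identity with $g = \log f$ for the base case $n=1$, which gives $\cD_z\bigl(\log f(\alpha z^k)\bigr) = k\,(\cD_w \log f)(\alpha z^k) = k\,F_1(\alpha z^k)$. For the inductive step, assuming the claim for $n$, I would apply $\cD_z$ to $k^n F_n(\alpha z^k)$ and invoke the commutation identity once more, now with $g = F_n$, to obtain
\[
\cD_z^{n+1}\bigl(\log f(\alpha z^k)\bigr)
= k^n\,\cD_z\bigl[F_n(\alpha z^k)\bigr]
= k^{n+1}\,(\cD_w F_n)(\alpha z^k)
= k^{n+1}\,F_{n+1}(\alpha z^k),
\]
where the last equality uses $\cD_w F_n = \cD_w\,\cD_w^n \log f = \cD_w^{n+1}\log f = F_{n+1}$ directly from the definition $F_n(w) := \cD_w^n \log f(w)$.

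The argument has no genuine obstacle; the only point requiring care is notational bookkeeping, namely consistently distinguishing the operator $\cD_z$ in the variable $z$ from the operator $\cD_w$ in the substituted variable $w = \alpha z^k$, and remembering that each application of $\cD_z$ contributes one factor of $k$. Proposition \ref{thm:Euler_calculation} then follows at once: part (i) is the lemma specialized to $f = \theta$, so that $F_n = a_n$ by the definition (\ref{eqn:an_x}), giving $\cD_z^n\bigl(\log\theta(\alpha z^k)\bigr) = k^n a_n(\alpha z^k)$; and part (ii), $\cD_z\bigl(a_n(\alpha z^k)\bigr) = k\,a_{n+1}(\alpha z^k)$, is the commutation identity applied to $g = a_n$ together with $\cD_w a_n = a_{n+1}$.
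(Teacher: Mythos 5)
Your proof is correct and matches the paper's own argument: the paper likewise reduces everything to the chain-rule commutation identity $\cD_z\big(F_n(\alpha z^k)\big) = k\,F_{n+1}(\alpha z^k)$ and concludes by (implicit) induction. Your write-up merely makes the induction and the general intertwining identity for arbitrary smooth $g$ explicit, which is fine but not a different route.
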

\begin{proof} 
It suffices to show 
the equality
$\cD_z \big( F_{n}(\alpha z^k) \big) = k F_{n+1}(\alpha z^k)$.
Indeed, 
\[
 \cD_z \big(F_{n}(\alpha z^k) \big)
= z \cdot \frac{d}{dw} F_n(w)\Big|_{w=\alpha z^k} \cdot \alpha kz^{k-1} 
= k \cdot \left(w \frac{d}{dw} F_n(w)\right)\Big|_{w=\alpha z^k} 
= k F_{n+1}(\alpha z^k). 
\]
Then the proof is complete.
\end{proof}

\begin{lem}
\label{thm:betasquare} \,
$\beta_r^2=a_2(-1)-a_2(-r)$.
\end{lem}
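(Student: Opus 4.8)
The plan is to reduce the claimed identity to the special addition formula (\ref{eqn:fundamental4}) for the Jordan--Kronecker function, after recognizing $\beta_r$ as a value of $f^{\rm JK}$.

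First I would rewrite the square of $\beta_r$. By its definition in (\ref{eqn:I_beta_K}), $\beta_r = \theta'(1)\theta(r)/\theta(-1,-r)$, so using the shorthand $\theta(-1,-r) = \theta(-1)\theta(-r)$ together with the value $\theta'(1) = -q_0^2$ from (\ref{eqn:theta_prime}), I obtain
\[
\beta_r^2 = \frac{q_0^4\,\theta(r)^2}{\theta(-1)^2\,\theta(-r)^2}.
\]
On the other hand, the product expression (\ref{eqn:f2}) of the Jordan--Kronecker function, evaluated at $z=-r$ and $a=-1$, gives $f^{\rm JK}(-r,-1) = q_0^2\,\theta(r)/\big(\theta(-r)\theta(-1)\big)$, whence $f^{\rm JK}(-r,-1)^2$ equals exactly the right-hand side above. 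Thus $\beta_r^2 = f^{\rm JK}(-r,-1)^2$.

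Next I would invoke (\ref{eqn:fundamental4}) with $z=-r$, which reads $f^{\rm JK}(-r,-1)^2 = e_1 - \wp(\phi_{-r})$; equivalently one may use (\ref{eqn:addition_f}) with $a=-1$, since then $a^{-1}=-1$. This yields $\beta_r^2 = e_1 - \wp(\phi_{-r})$. Finally I would match this against $a_2(-1)-a_2(-r)$: by Lemma \ref{thm:an} one has $a_2(z) = \wp(\phi_z) - P/12$, and by the definition (\ref{eqn:e123}) of $e_1$ together with $\phi_{-1}=\pi$ one has $e_1 = \wp(\phi_{-1})$. Hence the additive constant $P/12$ cancels in the difference, giving $a_2(-1)-a_2(-r) = \wp(\phi_{-1}) - \wp(\phi_{-r}) = e_1 - \wp(\phi_{-r})$, which coincides with $\beta_r^2$.

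The computation is short and is essentially a matter of careful sign bookkeeping between the theta-function representation and the $\wp$-representation. The only point that genuinely requires attention is the cancellation of the constant $P/12$ when forming $a_2(-1)-a_2(-r)$, which is precisely what makes the clean identification with the difference of $\wp$-values possible; no substantive obstacle is expected beyond this.
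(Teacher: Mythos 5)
Your proof is correct and follows essentially the same route as the paper: both identify $\beta_r^2$ with $f^{\rm JK}(-1,-r)^2=f^{\rm JK}(-r,-1)^2$, apply (\ref{eqn:fundamental4}) to get $e_1-\wp(\phi_{-r})$, and conclude via the formula $a_2(z)=\wp(\phi_z)-P/12$ from Lemma \ref{thm:an}, with the constant $P/12$ cancelling in the difference. The only cosmetic difference is that you verify $\beta_r^2=f^{\rm JK}(-r,-1)^2$ by direct computation from the theta-product formula (\ref{eqn:f2}), whereas the paper recognizes $\beta_r=-S_{\A_q}(-1,1;r)=-f^{\rm JK}(-1,-r)$ via the Szeg\H{o}-kernel representation.
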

\begin{proof}
First we note that
by (\ref{eqn:S_qt_JK}) in Proposition \ref{thm:S_qt_JK},
(\ref{eqn:S_qt_theta}) in Proposition \ref{thm:S_Aq_theta}
and (\ref{eqn:theta_prime}),
$\beta_r=-S_{\A_q}(-1,1;r)=-f^{\rm JK}(-1, -r)$.
Then by (\ref{eqn:fundamental4}) in Lemma
\ref{thm:fundamental_eqs} in Section 
\ref{sec:Weierstrass_elliptic}
\begin{equation}
\beta_r^2=e_1-\wp(\phi_{-r})= \wp(\pi) - \wp(\phi_{-r}) 
= \wp(\phi_{-1}) - \wp(\phi_{-r}).  
\label{eqn:beta_t_2_wp}
\end{equation}
Hence the formula for $a_2(z)$ in Lemma \ref{thm:an}
in Section \ref{sec:Weierstrass_elliptic} 
proves the statement.
\end{proof}
By the definition (\ref{eqn:I_beta_K}), 
it is easy to see that 
\begin{equation}
I(1)=K(1)=1, \quad J(1) = J'(1) = 0, \quad J''(1) = 2\beta_r^2.  
\label{eqn:IJK} 
\end{equation}

In what follows, we will use Proposition \ref{thm:Euler_calculation}
(i) repeatedly. 
Using Lemma~\ref{thm:euler} with $I(1)=1$ 
and Proposition \ref{thm:Euler_calculation} (i),
\begin{align*}
I'(1) 
&= \cD_c \log I(c) \Big|_{c=1} \\
&= 1 + 4 a_1(-rc) +  2 \cdot 3 a_1(-rc^3) 
- 2 a_1(-c) - 5 \cdot 2 a_1(-rc^2) \Big|_{c=1} \\
&= 1 - 2 a_1(-1)=0,
\end{align*}
where Lemma \ref{thm:gammas} (iii)
in Section \ref{sec:Weierstrass_elliptic} was used
at the last equality.
Therefore, $\widetilde{G}'(1) = I'(1) + J'(1) = 0$. 

From now on, we use the notation 
$A_n(r) := a_n(r) - a_n(-r), n \in \N$.
Using Lemma~\ref{thm:euler} with $K(1)=1$, 
Proposition \ref{thm:Euler_calculation} (i), and
Lemma \ref{thm:gammas} (i), (iii) 
in Section \ref{sec:Weierstrass_elliptic}, 
we see that 
\begin{align}
K'(1) 
&= \cD_c \log K(c) \Big|_{c=1} 
\nonumber\\
 &= 2 \{ a_1(c) + c/(1-c) \}
+ 2 \cdot 3 a_1(rc^3) -2 a_1(-c) - 2 \cdot 3 a_1(-rc^3) \Big|_{c=1} 
\nonumber\\
&=6a_1(r) - 1 - 6a_1(-r)
= -1 + 6A_1(r).
\label{eqn:K'1}
\end{align}
Using Lemma~\ref{thm:euler} with $I(1)=1, I'(1)=0$ 
and Proposition \ref{thm:Euler_calculation} (i),
\begin{align}
I''(1) 
&= \cD_c^2 \log I(c) \Big|_{c=1} 
\nonumber\\
&= 4 a_2(-rc) +  2 \cdot 3^2 a_2(-rc^3) 
- 2 a_2(-c) - 5 \cdot 2^2 a_2(-rc^2) \Big|_{c=1} 
\nonumber\\
&= 2 (a_2(-r) - a_2(-1)) =-2\beta_r^2, 
\label{eqn:I''1}
\end{align}
where we used Lemma~\ref{thm:betasquare}
at the last equality. 
Therefore, by (\ref{eqn:IJK}), we obtain
$\widetilde{G}''(1) = I''(1) + J''(1) = 0$. 

\begin{lem}
\label{thm:dbeta} 
\quad
$\cD_r \beta_r = \beta_r A_1(r)$. 
Moreover, 
$\lim_{r \to 1} \beta_r A_1(r) 
= (\theta'(1)/\theta(-1))^2$. 
\end{lem}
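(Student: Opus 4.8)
The plan is to recognize the claimed differential identity as nothing more than the logarithmic $\cD_r$-derivative of $\beta_r$. First I would write $\beta_r = C\,\theta(r)/\theta(-r)$ with $C = \theta'(1)/\theta(-1)$ independent of $r$, so that $\cD_r\log\beta_r = \cD_r\log\theta(r) - \cD_r\log\theta(-r)$. Applying Proposition \ref{thm:Euler_calculation}(i) with $(\alpha,k)=(1,1)$ and $(\alpha,k)=(-1,1)$ gives $\cD_r\log\theta(r)=a_1(r)$ and $\cD_r\log\theta(-r)=a_1(-r)$; hence $\cD_r\log\beta_r = a_1(r)-a_1(-r)=A_1(r)$, and multiplying through by $\beta_r$ yields $\cD_r\beta_r=\beta_r A_1(r)$. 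The only point worth noting is that the sign $-1$ produced by differentiating $-r$ is exactly absorbed by the $z=-r$ prefactor contained in $\cD_z=z\,\partial_z$, which is precisely why Proposition \ref{thm:Euler_calculation}(i) applies with $\alpha=-1$ without an extra sign.

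For the limit, the product $\beta_r A_1(r)$ is an indeterminate form $0\times\infty$ as $r\to 1$: one has $\beta_r\to 0$ because $\theta(1)=0$ by \eqref{eqn:theta_zero}, while $a_1(r)=r\,\theta'(r)/\theta(r)\to\infty$ for the same reason. The key step is to split $A_1(r)=a_1(r)-a_1(-r)$ and cancel the vanishing factor $\theta(r)$ explicitly before passing to the limit. Using the identity $\theta(r)\,a_1(r)=r\,\theta'(r)$, I would rewrite $\beta_r a_1(r) = \theta'(1)\,r\,\theta'(r)/\bigl(\theta(-1)\,\theta(-r)\bigr)$, an expression in which nothing blows up; letting $r\to1$ and using $\theta'(r)\to\theta'(1)$ together with $\theta(-r)\to\theta(-1)$ gives $\bigl(\theta'(1)/\theta(-1)\bigr)^2$. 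The remaining term $\beta_r a_1(-r)$ tends to $0$, since $\beta_r\to 0$ while $a_1(-1)=1/2$ is finite by Lemma \ref{thm:gammas}(iii). Subtracting the two contributions yields $\lim_{r\to1}\beta_r A_1(r)=\bigl(\theta'(1)/\theta(-1)\bigr)^2$.

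The only genuine obstacle is the indeterminacy at $r=1$; the differential identity itself is a one-line computation. I would resolve the indeterminacy by the explicit cancellation of $\theta(r)$ described above rather than by an l'Hôpital argument, which keeps the estimate transparent and avoids having to control the behaviour of $\theta''$ near the zero of $\theta$ at $r=1$.
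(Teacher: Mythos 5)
Your proof is correct. The first assertion is established exactly as in the paper: both you and the authors take the logarithmic derivative $\cD_r \log \beta_r = \cD_r\log\theta(r) - \cD_r\log\theta(-r) = a_1(r) - a_1(-r) = A_1(r)$, and your side remark about the sign from differentiating $-r$ being absorbed into the prefactor of $\cD_z = z\,\partial_z$ is accurate. For the limit, your route differs mildly from the paper's in the decomposition used to resolve the $0\times\infty$ indeterminacy. The paper factorizes $\beta_r A_1(r) = \bigl[\beta_r/(r-1)\bigr]\cdot\bigl[(r-1)A_1(r)\bigr]$, computes $\lim_{r\to1}\beta_r/(r-1) = (\theta'(1)/\theta(-1))^2$ from the definition of $\beta_r$, and invokes Lemma \ref{thm:gammas} (i) and (iii) to get $(r-1)A_1(r) = 1 + {\rm O}(r-1)$; you instead split $A_1(r) = a_1(r) - a_1(-r)$, cancel the vanishing factor via $\theta(r)\,a_1(r) = r\,\theta'(r)$ so that $\beta_r a_1(r) = \theta'(1)\,r\,\theta'(r)/\bigl(\theta(-1)\theta(-r)\bigr) \to (\theta'(1)/\theta(-1))^2$ by continuity, and kill $\beta_r a_1(-r)$ using only $a_1(-1)=1/2$ from Lemma \ref{thm:gammas} (iii). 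Both arguments encode the same cancellation of the simple zero of $\theta$ at $r=1$ against the simple pole of $a_1$ there; yours trades the asymptotic expansion of $a_1$ near $z=1$ (Lemma \ref{thm:gammas} (i)) for the elementary identity plus continuity of $\theta'$, which is arguably more transparent, whereas the paper's factorization produces the intermediate limit \eqref{eqn:betaprime2}, which it reuses later in the proof of Lemma \ref{thm:a4minust}, so your version would need that limit to be extracted separately at that point.
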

\begin{proof} We observe that 
$\cD_r \log \beta_r 
= \cD_r \log \theta(r)  - \cD_r \log \theta(-r) 
= a_1(r) -a_1(-r) = A_1(r)$. 
On the other hand, $\cD_r \log \beta_r = \cD_r \beta_r/\beta_r$. 
Hence we obtain the first assertion. Note that 
\begin{equation}
\lim_{r \to 1} \frac{\beta_r}{r-1} 
= \lim_{r \to 1} 
\frac{\theta'(1)}{\theta(-1)\theta(-r)} \frac{\theta(r)}{r-1}
= \Big(\frac{\theta'(1)}{\theta(-1)} \Big)^2. 
\label{eqn:betaprime2}
\end{equation}
From Lemma \ref{thm:gammas} (i) and (iii) 
in Section \ref{sec:Weierstrass_elliptic}, 
we see that $(r-1) A_1(r) = 1 +{\rm O}(r-1)$ as $r \to 1$
and the second assertion is also proved.
\end{proof}
\begin{lem}
\label{thm:a3minust}
\,
$a_3(-r) = -2\beta_r^2 A_1(r)$. 
In particular, $a_3(-1) = 0$. 
\end{lem}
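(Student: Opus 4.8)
The plan is to differentiate the expression for $\beta_r^2$ with the Euler operator $\cD_r$ and read off $a_3(-r)$. By Lemma~\ref{thm:betasquare} we have the clean identity $\beta_r^2 = a_2(-1)-a_2(-r)$, in which only the second term depends on $r$. First I would record the elementary chain-rule fact that for any smooth $f$ one has $\cD_r\big[f(-r)\big]=(\cD_u f)(u)\big|_{u=-r}$, since $r\frac{d}{dr}f(-r)=-rf'(-r)=u f'(u)\big|_{u=-r}$. Applying this with $f=a_2$ and using $\cD_u a_2(u)=a_3(u)$ --- which is immediate from the definition $a_n=\cD^n\log\theta$, or from Proposition~\ref{thm:Euler_calculation}(ii) with $k=1$ --- gives $\cD_r\big[a_2(-r)\big]=a_3(-r)$, hence $\cD_r(\beta_r^2)=-a_3(-r)$.

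Next I would compute the same quantity from the other side. Writing $\cD_r(\beta_r^2)=2\beta_r\,\cD_r\beta_r$ and invoking the first assertion of Lemma~\ref{thm:dbeta}, namely $\cD_r\beta_r=\beta_r A_1(r)$, I obtain $\cD_r(\beta_r^2)=2\beta_r^2 A_1(r)$. Equating the two evaluations of $\cD_r(\beta_r^2)$ yields $-a_3(-r)=2\beta_r^2 A_1(r)$, which is exactly the claimed identity $a_3(-r)=-2\beta_r^2 A_1(r)$.

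For the special value $a_3(-1)=0$ I would argue by letting $r\to 1$. Lemma~\ref{thm:betasquare} gives $\beta_1^2=a_2(-1)-a_2(-1)=0$, so $\beta_r\to0$, while the second assertion of Lemma~\ref{thm:dbeta} shows that the product $\beta_r A_1(r)$ stays bounded, tending to $(\theta'(1)/\theta(-1))^2$. Therefore $\beta_r^2 A_1(r)=\beta_r\cdot\big(\beta_r A_1(r)\big)\to0$, and the identity just proved forces $a_3(-1)=0$. Alternatively this is immediate from Lemma~\ref{thm:an}, since $a_3(-1)=-\sqrt{-1}\,\wp'(\pi)=0$ because $\wp'$ vanishes at the half-period $\omega_1=\pi$.

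The computation is short and I do not anticipate a genuine obstacle; the only points requiring care are the bookkeeping of the chain rule for $\cD_r$ acting through the argument $-r$, and --- for the special value --- the fact that $A_1(r)$ itself diverges as $r\to1$, so that $a_3(-1)=0$ must be extracted from the bounded product $\beta_r A_1(r)$ rather than from $A_1$ alone.
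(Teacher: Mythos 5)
Your proof is correct and follows essentially the same route as the paper: applying $\cD_r$ to the identity $\beta_r^2=a_2(-1)-a_2(-r)$ of Lemma \ref{thm:betasquare}, using Lemma \ref{thm:dbeta} for $\cD_r\beta_r=\beta_r A_1(r)$, and extracting $a_3(-1)=0$ from the bounded product $\beta_r A_1(r)$ together with $\beta_1=0$. Your extra care with the chain rule for $\cD_r$ through the argument $-r$, and the alternative check $a_3(-1)=-\sqrt{-1}\,\wp'(\pi)=0$ via Lemma \ref{thm:an}, are both sound but do not change the argument.
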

\begin{proof}
We apply $\cD_r$ to both sides of the identity
of Lemma \ref{thm:betasquare}.
From Lemma~\ref{thm:dbeta}, we have
the left-hand side
$\cD_r \beta_r^2 = 2\beta_r \cdot \cD_r \beta_r
=2\beta_r^2 A_1(r)$,
which is equal to the right-hand side
$-\cD_r a_2(-r) = - a_3(-r)$. 
The second assertion is obtained 
using the second assertion of Lemma \ref{thm:dbeta}
and the fact that $\beta_1=0$. 
\end{proof}
Using Lemma~\ref{thm:euler} with $I(1)=1, I'(1)=0$, 
Proposition \ref{thm:Euler_calculation} (i),
and Lemma~\ref{thm:a3minust},
we see that
\begin{align*}
I'''(1) + 3 I''(1)
&= \cD_c^3 \log I(c) \Big|_{c=1} \\
&= 4 a_3(-rc) + 2 \cdot 3^3 a_3(-rc^3) 
- 2 a_3(-c) - 5 \cdot 2^3 a_3(-rc^2) \Big|_{c=1} 
= 18 a_3(-r). 
\end{align*}
With (\ref{eqn:I''1}) we have 
$I'''(1) = 18 a_3(-r) + 6\beta_r^2$. 
By the Leibnitz rule, we see that 
\begin{align*}
J'''(1) 
&= 3 \frac{d}{dc} (\beta_r^2 I(c) K(c))\Big|_{c=1} \cdot 2 
= 6\beta_r^2 (I'(1) K(1) + I(1) K'(1)) = 6\beta_r^2 K'(1) \\
&= - 6\beta_r^2 + 36 \beta_r^2 A_1(r)= - 6\beta_r^2 -18 a_3(-r). 
\end{align*}
Here we used the fact $I'(1)=0$, 
(\ref{eqn:K'1}) and Lemma~\ref{thm:a3minust}. 
Therefore, we have  
$\widetilde{G}'''(1) = I'''(1) + J'''(1) = 0$. 
The proof of (\ref{eqn:1_3rd0}) is complete now.

Then we begin to prove (\ref{eqn:4th0}).
Using Lemma~\ref{thm:euler} with $I(1)=1, I'(1)=0$, 
Proposition \ref{thm:Euler_calculation} (i), 
and Lemma~\ref{thm:a3minust}, 
\begin{align*}
&I^{(4)}(1) + 6 I'''(1) + 7 I''(1) - 3 I''(1)^2
= \cD_c^4 \log I(c) \Big|_{c=1} \\
&\quad = 4 a_4(-rc) + 2 \cdot 3^4 a_4(-rc^3) 
- 2 a_4(-c) - 5 \cdot 2^4 a_4(-rc^2) \Big|_{c=1} \\
&\quad = 86 a_4(-r) - 2 a_4(-1). 
\end{align*}
Therefore, 
$ I^{(4)}(1) = 86a_4(-r) - 2a_4(-1) -108a_3(-r) -22\beta_r^2
+ 12\beta_r^4$. 

\begin{lem}\label{thm:a4minust}
\,
$a_4(-r) = -2 \beta_r^2 (2 A_1(r)^2 + A_2(r) )$.
In particular, $a_4(-1) = -2 (\theta'(1)/\theta(-1) )^4$. 
\end{lem}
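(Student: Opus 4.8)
The plan is to obtain the general formula by applying the Euler operator $\cD_r$ to the identity $a_3(-r) = -2\beta_r^2 A_1(r)$ of Lemma~\ref{thm:a3minust}, mimicking exactly the proof of that lemma (which differentiated the identity of Lemma~\ref{thm:betasquare}). On the left-hand side, Proposition~\ref{thm:Euler_calculation}~(ii) with $\alpha=-1$, $k=1$ gives $\cD_r a_3(-r) = a_4(-r)$, so the desired quantity $a_4(-r)$ is produced automatically.

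On the right-hand side I would apply the product rule to $-2\beta_r^2 A_1(r)$. For the first factor, Lemma~\ref{thm:dbeta} gives $\cD_r \beta_r = \beta_r A_1(r)$, hence $\cD_r(\beta_r^2) = 2\beta_r\,\cD_r\beta_r = 2\beta_r^2 A_1(r)$. For the second factor, Proposition~\ref{thm:Euler_calculation}~(ii) applied to each summand of $A_1(r) = a_1(r) - a_1(-r)$ yields $\cD_r A_1(r) = a_2(r) - a_2(-r) = A_2(r)$. Collecting the two contributions,
\[
a_4(-r) = -2\big[2\beta_r^2 A_1(r)\cdot A_1(r) + \beta_r^2 A_2(r)\big]
= -2\beta_r^2\big(2A_1(r)^2 + A_2(r)\big),
\]
which is the claimed identity.

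For the special value I would send $r \to 1$ in the rewritten form $a_4(-r) = -4\big(\beta_r A_1(r)\big)^2 - 2\beta_r^2 A_2(r)$; this is where the main care is needed, because $\beta_r^2 \to 0$ while both $A_1(r)^2$ and $A_2(r)$ diverge, so each term contributes a finite nonzero amount and one cannot simply set $\beta_1=0$. The first term is handled directly by Lemma~\ref{thm:dbeta}, which gives $\lim_{r\to 1}\beta_r A_1(r) = (\theta'(1)/\theta(-1))^2$ and hence a limit $-4(\theta'(1)/\theta(-1))^4$. For the second term I would use \eqref{eqn:betaprime2} to get $\beta_r^2 \sim (\theta'(1)/\theta(-1))^4 (r-1)^2$, together with Lemma~\ref{thm:gammas}~(ii), which yields $a_2(z) \sim -z/(1-z)^2$ as $z\to 1$ and therefore $A_2(r) = a_2(r)-a_2(-r) \sim -(r-1)^{-2}$, the summand $a_2(-r)\to a_2(-1)$ being finite. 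Thus $\lim_{r\to 1}\beta_r^2 A_2(r) = -(\theta'(1)/\theta(-1))^4$, and combining the two contributions gives $(-4+2)(\theta'(1)/\theta(-1))^4 = -2(\theta'(1)/\theta(-1))^4$. The only delicate point is this cancellation in the limit: the two apparently singular terms must be expanded to matching order in $(r-1)$ before being added, so that the finite parts combine correctly.
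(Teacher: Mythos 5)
Your proposal is correct and takes essentially the same route as the paper: the identity follows by applying $\cD_r$ to Lemma \ref{thm:a3minust} via Proposition \ref{thm:Euler_calculation}~(ii) and Lemma \ref{thm:dbeta}, and the value $a_4(-1)$ from \eqref{eqn:betaprime2} together with the asymptotics $(r-1)A_1(r) \to 1$ and $(r-1)^2 A_2(r) \to -1$ supplied by Lemma \ref{thm:gammas}, exactly as the paper does. One small remark: the limit is less delicate than your closing caveat suggests, since each of the two products $\beta_r^2 A_1(r)^2$ and $\beta_r^2 A_2(r)$ converges on its own (the first by the second assertion of Lemma \ref{thm:dbeta}), so no matched-order expansion or cancellation of singular parts is needed before adding the finite limits.
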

\begin{proof}
Applying $\cD_r$ to both sides of 
the first assertion of Lemma \ref{thm:a3minust} 
together with Proposition \ref{thm:Euler_calculation} (ii) 
yields the first assertion. 
The second assertion follows from (\ref{eqn:betaprime2}) and 
the facts that
$(r-1) A_1(r)=1 +{\rm O}(r-1)$ and
$(r-1)^2A_2(r) = -1 + {\rm O}(r-1)^2$ as $r \to 1$,
which are verified by 
Lemma \ref{thm:gammas} (i)--(iv) 
in Section \ref{sec:Weierstrass_elliptic}.
\end{proof}
By the Leibnitz rule, 
\begin{align*}
 J^{(4)}(1)
&= \beta_r^2 \Big(\frac{4!}{2! 2! 0!} I''(1) K(1) 
+\frac{4!}{2! 1! 1!} I'(1) K'(1) 
+\frac{4!}{2! 0! 2!} I(1) K''(1) 
\Big) \cdot 2 \\
&= 2\beta_r^2 \left(-12 \beta_r^2 
+6 K''(1) \right),
\end{align*}
where we used the fact $I'(1)=0$, (\ref{eqn:K'1})
and (\ref{eqn:I''1}). 
From (\ref{eqn:K'1}), we have
\begin{align*}
\cD_c^2 \log K(c) \Big|_{c=1}
 &= 2 \big\{ a_2(c) + c/(c-1)^2 \big\}
+ 2 \cdot 3^2 a_2(rc^3) -2 a_2(-c) - 2 \cdot 3^2a_2(-rc^3) \Big|_{c=1} \\
&= 2 (\gamma_2 - a_2(-1)) + 18A_2(r),  
\end{align*}
where Lemma~\ref{thm:gammas} (ii)
in Section \ref{sec:Weierstrass_elliptic} was used.
Using Lemma~\ref{thm:euler} with $K(1)=1$
and nonzero $K'(1)$ given by (\ref{eqn:K'1}), we obtain
\begin{align*}
K''(1) 
&= K'(1)^2 - K'(1) + \cD_c^2 \log K(c) \Big|_{c=1} \\
&= (-1 + 6A_1(r)) (-2 + 6A_1(r) )
+ 2 (\gamma_2 -a_2(-1)) + 18A_2(r) \\ 
&= 
2 -18 A_1(r) + 36 A_1(r)^2
+ 2 (\gamma_2 -a_2(-1)) + 18A_2(r). 
\end{align*}
Hence we have
\begin{align*}
J^{(4)}(1) &= -24 \beta_r^4 + 24 \beta_r^2
+108 \cdot 2 \beta_r^2(2 A_1(r)^2+A_2(r))
-108 \cdot 2 \beta_r^2 A_1(r)
+24 \beta_r^2(\gamma_2-a_2(-1))
\nonumber\\
&= -24 \beta_r^4+24 \beta_r^2 -108 a_4(-r)
+108 a_3(-r) + 24 \beta_r^2(\gamma_2-a_2(-1)),
\end{align*}
where Lemmas \ref{thm:a3minust} and 
\ref{thm:a4minust} were used.
Therefore, 
\begin{align*}
\widetilde{G}^{(4)}(1) 
&= I^{(4)}(1) + J^{(4)}(1) 
\nonumber\\
&= -22 a_4(-r)-12 \beta_r^4 +
24 \beta_r^2 (\gamma_2-a_2(-1)+1/12)
-2 a_4(-1).
\end{align*}
Now we use the equality
$a_4(-r)=-\wp''(\phi_{-r})$ given by
Lemma \ref{thm:an} in Section \ref{sec:Weierstrass_elliptic}
and (\ref{eqn:beta_t_2_wp}).
We also note that
we can verify the equality
$\gamma_2-a_2(-1)+1/12=-e_1$
from Lemma \ref{thm:gammas} (ii), (iv) 
and (\ref{eqn:e123}) in Section \ref{sec:Weierstrass_elliptic}.
Then the above is written as
\begin{align*}
\widetilde{G}^{(4)}(1)
&= 22 \wp''(\phi_{-r})-12(\wp(\phi_{-r})-e_1)^2
+ 24 (\wp(\phi_{-r})-e_1)e_1+ 2 \wp''(\pi)
\nonumber\\
&= 22 \wp''(\phi_{-r})-12 \wp(\phi_{-r})^2
+48 e_1 \wp(\phi_{-r}) -36 e_1^2
+ 2 \wp''(\pi).
\end{align*}
Finally we use the differential equation 
(\ref{eqn:wp_diff}) of $\wp$.
Then (\ref{eqn:4th0}) is obtained. 
Proposition \ref{thm:differentials} is hence proved
and the proof of Theorem
\ref{thm:critical_line} (i) is complete. 

\subsubsection{Proof of Theorem \ref{thm:critical_line} (ii)}
\label{sec:ii}

By the definition and the properties of $\wp$
explained in Section \ref{sec:Weierstrass_elliptic},
the following is proved for $q \in (0, 1)$.

\begin{lem}
\label{thm:wp_increasing}
For $r \in (q, 1)$,
$\wp(\phi_{-r})$ is a monotonically increasing function
of $r$.
\end{lem}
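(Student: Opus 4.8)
The plan is to reparametrize the problem so that the map $r \mapsto \phi_{-r}$ traces out one edge of the fundamental period rectangle of $\wp$, on which the behaviour of $\wp$ is completely controlled by the inversion formula (\ref{eqn:wp_inverse}). First I would make this identification explicit. Applying the convention (\ref{eqn:x_phi}) to $z=-r$ and writing $-r = r e^{\sqrt{-1}\pi}$, one gets $\phi_{-r} = \pi - \sqrt{-1}\log r = \omega_1 + \sqrt{-1}t$, where $t := -\log r$ and $\omega_1=\pi$. Since $\omega_3 = \pi\tau_q = \sqrt{-1}(-\log q)$ by (\ref{eqn:omega1}), as $r$ ranges over $(q,1)$ the quantity $t$ ranges over $(0,-\log q)$ and $\phi_{-r}$ moves along the open segment joining $\omega_1$ (at $r=1$) and $\omega_1+\omega_3$ (at $r=q$). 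This is consistent with the endpoint values $e_1 = \wp(\phi_{-1})$ and $e_2 = \wp(\phi_{-q})$ recorded in (\ref{eqn:e123}).

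Next I would invoke the inversion formula (\ref{eqn:wp_inverse}). On the segment $[\omega_1,\omega_1+\omega_3]$ the function $\wp$ is real and its range is $[e_2,e_1]$, and writing $x := \wp(\phi_{-r}) \in (e_2,e_1)$ the formula reads $t = \frac{1}{2}\int_x^{e_1} ds/\sqrt{(e_1-s)(s-e_2)(s-e_3)}$, with $t$ playing the role of $y$ since $\phi_{-r}=\pi+\sqrt{-1}t$. By the ordering (\ref{eqn:inequality}) the integrand is strictly positive for $s \in (e_2,e_1)$, so the right-hand side is a strictly decreasing function of $x$; equivalently $x = \wp(\phi_{-r})$ is a strictly decreasing function of $t$. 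Since $t = -\log r$ is itself strictly decreasing in $r$, the composition $r \mapsto \wp(\phi_{-r})$ is strictly increasing on $(q,1)$, which is the assertion.

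The main point to get right is the geometric identification in the first step, namely pinning down the correct branch of $\log(-r)$ and verifying that the image is exactly the edge $[\omega_1,\omega_1+\omega_3]$ rather than a neighbouring edge; once that is secured, monotonicity is immediate from the structure of $\wp$ along period-rectangle edges. As an independent cross-check, and an alternative route, I would differentiate directly: the substitution $z=-r$ gives $\cD_r = \cD_z$, and $\cD_z\wp(\phi_z) = -\sqrt{-1}\wp'(\phi_z) = a_3(z)$ by (\ref{eqn:Euler}) and Lemma \ref{thm:an}, so that $\cD_r\wp(\phi_{-r}) = a_3(-r) = -2\beta_r^2 A_1(r)$ by Lemma \ref{thm:a3minust}; here $\beta_r^2 = e_1 - \wp(\phi_{-r}) > 0$ by (\ref{eqn:beta_t_2_wp}), so monotonicity reduces to the sign statement $A_1(r) < 0$ on $(q,1)$. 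I expect the integral-formula argument to be the cleaner of the two, since the derivative route merely transfers the difficulty to separately establishing the sign of $A_1(r) = a_1(r) - a_1(-r)$, whereas (\ref{eqn:wp_inverse}) delivers strict monotonicity at once.
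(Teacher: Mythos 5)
Your proof is correct and is essentially the argument the paper intends: the paper states this lemma without an explicit proof, appealing to the properties of $\wp$ in Section \ref{sec:Weierstrass_elliptic}, and your combination of the branch identification $\phi_{-r}=\pi-\sqrt{-1}\log r$, which traces the edge $[\omega_1,\omega_1+\omega_3]$ as $r$ runs over $(q,1)$, with the inversion formula (\ref{eqn:wp_inverse}) and the ordering (\ref{eqn:inequality}) is precisely that justification — indeed it matches how the paper itself applies (\ref{eqn:wp_inverse}) to locate $r_0$ in Section \ref{sec:ii}. Your closing assessment is also sound: the derivative route via $a_3(-r)=-2\beta_r^2A_1(r)$ only shifts the burden to the sign of $A_1(r)$, whereas the integral formula gives strict monotonicity directly.
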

By (\ref{eqn:sum0}) and (\ref{eqn:g2g3}), 
we see that $\kappa(r)$ given by (\ref{eqn:kappa})
is written as follows,
\[
\kappa(r) =2(\wp(\phi_{-r})-e_2)(\wp(\phi_{-r})-e_3)
+ 6(\wp(\phi_{-r})+e_1)(\wp(\phi_{-r})-e_1). 
\]
Hence
$\kappa(1) =2(e_1-e_2)(e_1-e_3)$ and
$\kappa(q)=6(e_3+e_1)(e_3-e_1)$.
Then by the inequalities (\ref{eqn:inequality}), 
we can conclude that
$\kappa(1)>0$ and $\kappa(q)<0$.
By (\ref{eqn:kappa}), 
we have
$\kappa(r)= 5 (\wp(\phi_{-r})-\wp_+)(\wp(\phi_{-r})-\wp_-)$
with the roots
$\wp_{\pm}=\wp_{\pm}(q)= (
-2 e_1 \pm \sqrt{24 e_1^2+10 g_2} )/10$
satisfying $\wp_{-} < 0 < \wp_+$.
Since monotonicity is guaranteed by 
Lemma \ref{thm:wp_increasing} for $r \in (q, 1)$,
$r_0$ is the unique zero
of $\kappa$ in the interval $(q, 1)$.
This is determined by
\begin{equation}
\wp(\phi_{-r_0}) = \wp_+,
\label{eqn:r_critical}
\end{equation}
which is equivalent to 
\begin{equation}
\phi_{-r_0}=\wp^{-1}(\wp_+)
\quad \iff \quad
r_0=- e^{\sqrt{-1} \wp^{-1}(\wp_+)}
=e^{\sqrt{-1}(-\pi+\wp^{-1}(\wp_+))}.
\label{eqn:r0}
\end{equation}
Using (\ref{eqn:sum0}), (\ref{eqn:g2g3}), and
(\ref{eqn:inequality}), we can verify by (\ref{eqn:wp+}) that
$e_3<e_2<\wp_+<e_1$. 
Hence (\ref{eqn:wp_inverse}) implies 
\[
-\pi+\wp^{-1}(\wp_+)
=\frac{\sqrt{-1}}{2} \int_{\wp_+}^{e_1}
\frac{ds}{\sqrt{(e_1-s)(s-e_2)(s-e_3)}}
\]
and (\ref{eqn:r0}) gives (\ref{eqn:tcq}).
The proofs of 
(\ref{eqn:ineq_a}) and
the assertion mentioned below it
are complete. 

\subsubsection{Proof of Theorem \ref{thm:critical_line} (iii)}
\label{sec:iii}
In the limit $q \to 0$, 
we have (\ref{eqn:q0_limit2}) and
(\ref{eqn:wp+}) gives
$\wp_+(0)=(-2+3 \sqrt{6})/60$.
The integral appearing in 
(\ref{eqn:tcq}) is then reduced to
\begin{align*}
\frac{1}{2} \int_{\wp_+(0)}^{1/6}
\frac{ds}{(s+1/12)\sqrt{1/6-s}}
&=-\log 
\frac{1-2\sqrt{1/6-\wp_+(0)}}
{1+2 \sqrt{1/6-\wp_+(0)}}
=-\log 
\frac{1-\frac{\sqrt{4-\sqrt{6}}}{\sqrt{5}}}
{1+\frac{\sqrt{4-\sqrt{6}}}{\sqrt{5}}}.
\end{align*}
Hence the first expression for 
$r_{\rm c}$ in (a) is obtained.
\begin{rem}
\label{rem:kappa_q0}
If we apply (\ref{eqn:q0_limit1}) and (\ref{eqn:q0_limit2})
in Section \ref{sec:q_0} to (\ref{eqn:kappa}),
then we have
\begin{align}
\kappa_0(r) := 
\lim_{q \to 0} 
\kappa(r; q)
&= - \frac{r^4+12 r^3-58r^2+12r+1}{16 (1+r)^4}
\nonumber\\
&= - \frac{(r+r^{-1})^2+12(r+r^{-1})-60}
{16(r^{1/2}+r^{-1/2})^4}.
\label{eqn:kappa_q0}
\end{align}
Since we have assumed $r_{\rm c} \in (0, 1)$,
$r_{\rm c}+r_{\rm c}^{-1} \in (2, \infty)$.
Then we see that $r=r_{\rm c}$ satisfies the 
equation
\[
r+r^{-1}
=2(2 \sqrt{6}-3) \iff
r^2-2(2\sqrt{6}-3) r+1 =0.
\]
The above quadratic equation has
two positive solutions which are reciprocal to each other.
The second expression for $r_{\rm c}$ 
in (a) is the smaller one of them. 
\end{rem}

From (\ref{eqn:wp_expansion2})
and (\ref{eqn:e123}), we have
\begin{align*}
\wp(\phi_{-r})
&=- 1/12+ r/(1+r)^2
+ \big\{ 2+ (r+r^{-1}) \big\} q^2
\nonumber\\
& \quad + \big\{
6+ (r+r^{-1}) 
-2 (r^2+ r^{-2} ) \big\} q^4 +{\rm O}(q^6),
\nonumber\\
e_1 &= 1/6+ 4 q^2 + 4 q^4 +{\rm O}(q^6),
\quad
e_2 = - 1/12+2q -2 q^2+8 q^3 -2 q^4
+{\rm O}(q^5),
\nonumber\\
e_3 &= -1/12 -2q -2 q^2-8 q^3 - 2 q^4
+{\rm O}(q^5),
\quad
g_2 = 1/12 
+20 q^2 + 180 q^4
+{\rm O}(q^6).
\end{align*}
Then the equation (\ref{eqn:r_critical}) 
is expanded in the variable $q$ as
\begin{align*}
& - 1/12 +(r_0^{1/2}+r_0^{-1/2})^{-2}
+ \big\{ 2+ (r_0+r_0^{-1} ) \big\} q^2
+ \big\{
6+ (r_0+ r_0^{-1}) 
-2 (r_0^2+ r_0^{-2}) \big\} q^4
\nonumber\\
& \quad
= -(2 - 3 \sqrt{6})/60 
-2(6 - 29 \sqrt{6}) q^2/15
-2(18+2533 \sqrt{6})q^4/45
+{\rm O}(q^6).
\end{align*}
Put
$r_0=r_{\rm c}+c_1 q + c_2 q^2 +{\rm O}(q^3)$
and use the value of $r_{\rm c}$ given by (a).
Then we have $c_1=0$ and the assertion (b) is proved. 

For (c) we consider the asymptotics of
the equation (\ref{eqn:r_critical}).
By (\ref{eqn:q1asym}) and (\ref{eqn:q1asym2}) we have
$( 1/12 + e^{-\phi_{-r_0(q)}
/|\tau_q|} + e ^{-(2\pi-\phi_{-r_0(q)})/|\tau_q|})/|\tau_q|^2 
\sim 1/(12 |\tau_q|^2)$
in $|\tau_q| \to 0$.
This is satisfied if and only if
$e^{-\phi_{-r_0(q)}/|\tau_q|} 
+ e ^{-(2\pi-\phi_{-r_0(q)})/|\tau_q|} = 0$,
that is, 
$\cos( (\pi-\phi_{-r_0}(q))/\tau_q ) =0$.
Under the setting (\ref{eqn:omega1})
with $r \in (0,1) $, this is realized by
\[
\pi- \phi_{-r_0(q)}= - \pi \tau_q/2
\iff
r_0(q)=-e^{\sqrt{-1} \phi_{-r_0}(q)}
=e^{\sqrt{-1} \pi \tau_q/2}= q^{1/2}.
\]
Since $q^{1/2}=(1-(1-q))^{1/2} 
\sim 1 - (1-q)/2$ as $q \to 1$, (c) is proved.

Hence the proof of Theorem \ref{thm:critical_line} (iii) 
is complete.

\subsection{Proof of Proposition \ref{thm:negative_corr_q0}}
\label{sec:proof_negative}

By taking the $q \to 0$ limit 
in Lemma \ref{thm:bounds}, the following is obtained.

\begin{lem}
\label{thm:upper_bound_q0}
Assume that $r >0$. 
If we set $a=|z|, b=|w|$, $a, b \in (0, 1]$, then
$g_{\D}(z, w; r) \leq g_{\D}(-a, b; r)$,
where
\begin{align*}
g_{\D}(-a, b; r)
&= \frac{(a+b)^2 (1+r a^2)^2 (1+r b^2)^2}
{(1+ab)^4 (1+r) (1+r a^4) (1+ r b^4) (1+r a^2 b^2)^3}
\nonumber\\
& \quad
\times
\Big\{ a^6 b^6 (2-a^2+2ab-b^2+2a^2b^2) r^2
\nonumber\\
& \qquad 
+ a^2 b^2 (a^2-2ab+4a^3b+b^2+a^4 b^2+4a b^3-2a^3b^3+a^2b^4)r
\nonumber\\
& \qquad 
+ (2-a^2+2ab-b^2+2a^2b^2) \Big\}.
\end{align*}
\end{lem}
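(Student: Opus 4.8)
The plan is to obtain both the inequality and the closed form simply by passing to the limit $q \to 0$ in Lemma~\ref{thm:bounds}, using the elementary theta degeneration $\lim_{q \to 0}\theta(x; q^2) = 1 - x$ recorded in (\ref{eqn:theta_p0}). No new estimate is needed on the disk: the analytic heart of the matter, namely the identification of the angular extremizers of the unfolded $2$-correlation function, has already been carried out on the annulus in Lemma~\ref{thm:bounds}, and the present statement is merely its $q \to 0$ specialization.

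First I would fix $r > 0$ and $a, b \in (0,1)$, and choose $q$ small enough that $a, b \in (q,1)$. For every such $q$, the upper bound in Lemma~\ref{thm:bounds} gives $g_{\A_q}(z, w; r) \le g_{\A_q}(-a, b; r)$ for all $z, w \in \A_q$ with $|z| = a$ and $|w| = b$. Sending $q \to 0$, the left-hand side converges to $g_{\D}(z, w; r)$ by the definition $g_{\D}(z,w;r) := \lim_{q \to 0} g_{\A_q}(z,w;r)$, while the right-hand side converges to a rational function of $a, b, r$ coming from (\ref{eqn:upper_lower1}); the denominators there tend to strictly positive limits, so the convergence is unproblematic. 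Since a weak inequality is preserved under pointwise limits, this yields $g_{\D}(z, w; r) \le g_{\D}(-a, b; r)$, and the boundary case $a = 1$ or $b = 1$ follows by continuity of the limiting rational expression in $a, b$.

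It then remains to evaluate $\lim_{q \to 0} g_{\A_q}(-a, b; r)$ explicitly. Here I would substitute (\ref{eqn:theta_p0}) into the lower-sign instance of (\ref{eqn:upper_lower1}), so that each $\theta(-x; q^2) \to 1 + x$ and each $\theta(x; q^2) \to 1 - x$. The prefactor collapses once the $b^2$ is cancelled against $\theta(-a/b)^2 \to (a+b)^2/b^2$, producing $(a+b)^2 (1+ra^2)^2 (1+rb^2)^2$ divided by $(1+r)(1+ra^4)(1+rb^4)(1+ab)^4(1+ra^2b^2)^3$, exactly the displayed prefactor. The bracketed factor becomes $(1+ra^4b^2)(1+ra^2b^4)(1+ab)^2 + (1-a^2)(1-b^2)(1-ra^3b^3)^2$, and I would expand this as a polynomial in $r$ and match coefficients against the target. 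The constant and $r^2$ terms then share the common factor $2 - a^2 + 2ab - b^2 + 2a^2 b^2$, with the $r^2$ term carrying an extra $a^6 b^6$.

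The only genuinely laborious step is this final polynomial expansion, and it is where I would expect an arithmetic slip: in the coefficient of $r$ the two pieces each contribute $2a^5 b^3$ and $2a^3 b^5$, which must be combined into $4a^5 b^3$ and $4a^3 b^5$, while the $-2a^3 b^3$ and $-2a^5 b^5$ terms come only from the second piece. Everything else is bookkeeping, so I would organize the check by isolating the $r^0$, $r^1$, and $r^2$ coefficients separately and verifying each matches the expression in the statement.
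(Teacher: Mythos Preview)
Your proposal is correct and matches the paper's approach exactly: the paper's proof is the single sentence ``By taking the $q \to 0$ limit in Lemma~\ref{thm:bounds}, the following is obtained,'' and you have simply filled in the details of that limit and the polynomial bookkeeping, all of which check out.
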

From now on we will assume $r \in (0, 1]$. 
It is easy to see that
$g_{\D}(-a, b; r)=g_{\D}(a, -b; r)$,
and 
$g_{\D}(-a, 1; r)=g_{\D}(-1, b; r) =1$,
 $a, b \in (0, 1]$.
We define a function $D(a,b;r)$ by 
\[
\frac{\partial g_{\D}(-a, b; r)}{\partial a}
=
\frac{4 a^7 b^4 r^{5/2} D(a,b;r)
(1-a)(1+a)(1-b)^2 (1+b)^2 (a+b) (1+ra^2) (1+rb^2)^2}
{(1+ab)^5 (1+r) (1+ra^4)^2 (1+ra^2 b^2)^4 (1+rb^4)}.
\]
The above implies that 
if $D(a,b;r) \geq 0$ for $r \in (0, r_{\rm c})$,
$\forall (a, b) \in (0, 1]^2$, then
Proposition \ref{thm:negative_corr_q0} is proved.

We can prove the following.
\begin{lem}
\label{thm:bound_v}
Let $p(x) := x+1/x$ and
$\widetilde{D}(a, b; s) = p(a^7 b^4 s^5)
+ 13 p(a^3 b^2 s^3) - 46 p(a^4 b^2 s)$.
Then
$D(a, b; r) \geq \widetilde{D}(a, b; r^{1/2})$, 
$\forall (a, b, r) \in (0, 1]^3$.
\end{lem}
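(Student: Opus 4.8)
The plan is first to produce $D(a,b;r)$ in closed form and then to reorganize it so that the three terms of $\widetilde D$ are isolated and the remainder is manifestly nonnegative. Starting from the explicit rational expression for $g_{\D}(-a,b;r)$ in Lemma \ref{thm:upper_bound_q0}, I would compute $\partial g_{\D}(-a,b;r)/\partial a$ by logarithmic differentiation of the product of factors and then divide by the prefactor displayed just above the statement. Because that prefactor carries the factor $r^{5/2}=s^{5}$, the quotient $D(a,b;r)$ is naturally a Laurent polynomial in the three independent variables $a$, $b$ and $s:=r^{1/2}$, with purely numerical coefficients. This step is entirely mechanical (and best confirmed with a computer algebra system), but it is the bulk of the labor.

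The structural fact I would exploit is that both $g_{\D}(-a,b;r)$ and $D(a,b;r)$ are invariant under the involution $\iota\colon(a,b,s)\mapsto(a^{-1},b^{-1},s^{-1})$. For $g_{\D}$ this can be checked directly: each factor in the numerator and denominator of Lemma \ref{thm:upper_bound_q0} is reciprocal up to an explicit monomial, and these monomials cancel between numerator and denominator (this reflects the $r\mapsto1/r$ reciprocity already visible in \eqref{eqn:kappa_q0}). Differentiating the identity $g_{\D}(-a,b;r)=g_{\D}(-a^{-1},b^{-1};r^{-1})$ in $a$ shows that $\partial_a g_{\D}$ and the prefactor both pick up the same factor $-a^{2}$ under $\iota$, so their quotient $D$ is $\iota$-invariant as well. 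Since $p(x)=x+x^{-1}$ satisfies $p(x^{-1})=p(x)$, the target $\widetilde D(a,b;s)$ is itself $\iota$-invariant, and any $\iota$-invariant Laurent polynomial has a unique expansion
\begin{equation*}
D(a,b;r)=c_{0}+\sum_{k}c_{k}\,p(M_{k}),
\end{equation*}
where each $M_{k}$ is a chosen representative of a reciprocal pair $\{M_{k},M_{k}^{-1}\}$ of monomials and $c_{0}$ is the genuine constant term. I would then read off that the monomials $a^{7}b^{4}s^{5}$, $a^{3}b^{2}s^{3}$ and $a^{4}b^{2}s$ occur with coefficients $1$, $13$ and $-46$, so that these three pairs reassemble into exactly $\widetilde D(a,b;r^{1/2})$.

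It then remains to prove that the leftover $D-\widetilde D=c_{0}+\sum_{k'}c_{k'}\,p(M_{k'})$, the sum running over the remaining reciprocal pairs, is nonnegative on $(0,1]^{3}$. The cleanest situation is that $c_{0}\ge0$ and every residual coefficient $c_{k'}$ is nonnegative, in which case the claim is immediate because $p(M_{k'})=M_{k'}+M_{k'}^{-1}>0$; for any residual pair whose coefficient is negative I would instead combine it with nearby positive pairs and invoke $0<a,b,s\le1$ (so that a monomial of higher total degree is dominated by one of lower degree) to absorb it, which is where the restriction to the cube enters. Verifying the signs of this finite list of residual coefficients, and arranging the few awkward ones into dominated groups, is the main obstacle: it is the only place where real content rather than bookkeeping appears. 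Once the inequality is established, the lemma reduces the sign analysis of $\partial_a g_{\D}(-a,b;r)$, and hence the monotonicity of $g_{\D}(-a,b;r)$ in $a$ needed for Proposition \ref{thm:negative_corr_q0}, to the far simpler quantity $\widetilde D$.
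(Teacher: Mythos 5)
Your setup is sound as far as it goes: the explicit computation of $D$ by logarithmic differentiation matches the paper's ``tedious but direct computation'' yielding (\ref{eqn:D_abr}), and your observation that $g_{\D}(-a,b;r)$ and the prefactor are compatible under the involution $(a,b,s)\mapsto(a^{-1},b^{-1},s^{-1})$ (both picking up $-a^{2}$), so that $D$ has a canonical expansion $c_{0}+\sum_k c_k\,p(M_k)$, is correct and is a nice structural point the paper leaves implicit. But the pivotal step of your plan is false: the monomials $a^{3}b^{2}s^{3}$ and $a^{4}b^{2}s$ do \emph{not} occur in $D$ with coefficients $13$ and $-46$. In the actual expansion (\ref{eqn:D_abr}), $p(a^{3}b^{2}r^{3/2})$ appears with coefficient $1$, and $p(a^{4}b^{2}r^{1/2})$ does not appear at all; instead, the $s^{3}$-level consists of six distinct $p$-terms (one with a negative coefficient, $-p(a^{2}b^{3}r^{3/2})$) and the $s$-level of ten distinct negative $p$-terms, three of which, $p(ab^{-2}r^{1/2})$, $p(b^{-1}r^{1/2})$, $p(a^{2}b^{-1}r^{1/2})$, involve negative exponents of $b$. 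So $\widetilde{D}$ cannot be ``read off'' as a sub-sum of $D$, and $D-\widetilde{D}$ is not settled by inspecting residual coefficients.

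What the lemma actually requires is exactly the cross-comparison machinery you relegate to a fallback: using that $p$ is decreasing on $(0,1]$ with $p(x)=p(1/x)$, the paper proves the three inequalities (\ref{eqn:ineq1})--(\ref{eqn:ineq3}), which (i) trade $3p(a^{4}b^{3}r^{3/2})$ for $p(a^{2}b^{3}r^{3/2})+2p(a^{3}b^{2}r^{3/2})$ so that the six positive terms collapse, via domination, to $13\,p(a^{3}b^{2}r^{3/2})$ (the $13=3+5+2+3$ is produced by these comparisons, not present in $D$), and (ii) bound all ten negative terms above by multiples of $p(a^{4}b^{2}r^{1/2})$, whose coefficients total $46$. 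Note also that for the three terms with negative $b$-exponents your heuristic ``higher total degree is dominated by lower degree'' does not apply directly; one needs the two-sided comparison through $p(x)=p(1/x)$, checking $\max(x,1/x)\le 1/(a^{4}b^{2}r^{1/2})$, as in (\ref{eqn:ineq3}). In short, the ``absorption'' step you describe as handling a few awkward pairs is the entire content of the proof, and your stated coefficient bookkeeping, as the route to $\widetilde{D}$, would fail.
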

\begin{proof} 
A tedious but direct computation shows that 
\begin{align}
&D(a,b; r) = p(a^7 b^4 r^{5/2}) 
\nonumber\\
& \quad + \big\{p(a^3 b^2 r^{3/2}) + 5 p(a^5 b^2 r^{3/2}) 
-  p(a^2 b^3 r^{3/2})+ 3 p(a^4 b^3 r^{3/2}) 
+ 2 p(a^6 b^3 r^{3/2}) + 3 p(a^5 b^4 r^{3/2}) \big\} 
\nonumber\\
&\quad - \big\{10 p(a r^{1/2}) + 5 p(a^3 r^{1/2}) 
+ 2 p(a b^{-2} r^{1/2}) + 3 p( b^{-1} r^{1/2}) + 9 p(a^2 b^{-1} r^{1/2}) 
\nonumber\\
&\qquad + 5 p( b r^{1/2}) + 
10 p(a^2 b r^{1/2}) + p(a^4 b r^{1/2}) + 2 p(a b^2 r^{1/2}) - 
   p(a b^4 r^{1/2})
\big\}.  
\label{eqn:D_abr}
\end{align}
We note that $p(x)$ is decreasing on $(0,1]$ and $p(x) = p(x^{-1})$. 
By the monotonicity of $p(x)$, the following inequalities 
are guaranteed,
\begin{align}
&3p(a^4 b^3 r^{3/2}) \ge p(a^2 b^3 r^{3/2}) + 2 p(a^3 b^2 r^{3/2}),
\label{eqn:ineq1}
\\
& 2p(a b^2 r^{1/2}) \le  p(a b^4 r^{1/2}) + p(a^4 b^2 r^{1/2}),
\label{eqn:ineq2}
\\
& \max\{p(a b^{-2} r^{1/2}), \ p( b^{-1} r^{1/2}), \ p(a^2 b^{-1} r^{1/2})
\} \le p(a^4 b^2 r^{1/2}).
\label{eqn:ineq3}
\end{align}
For (\ref{eqn:D_abr}) 
we apply (\ref{eqn:ineq1}) in the first braces
and do (\ref{eqn:ineq2}) and (\ref{eqn:ineq3})
in the second braces. 
Then the desired inequality readily follows.
\end{proof}

Now we prove the following.
\begin{lem}
\label{thm:ms}
Let 
\[
m(s) := \inf_{(a,b,u) \in (0,1] \times (0,1] \times (0,s]} 
\widetilde{D}(a,b; u),
\]
and $s_{\rm c} := r_{\rm c}^{1/2}$.
Then, $m(s)$ attains its minimum at $(1,1,s)$ and 
$m(s) \ge 0$ if and only if $0 < s \le s_{\rm c}$. 
\end{lem}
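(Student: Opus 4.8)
The plan is to minimise in stages, peeling off the two shape variables $a,b$ first, then the scale variable $u$, and finally reducing $\widetilde D(1,1;\cdot)$ to a one-variable polynomial whose positive root turns out to be $p(s_{\rm c})$. \emph{Stage 1 (reduction to $a=b=1$).} Fixing $u\in(0,s_{\rm c}]$, I would show that $(a,b)\mapsto\widetilde D(a,b;u)$ is non-increasing in each of $a,b$ on $(0,1]^2$, so that its minimum sits at the corner $a=b=1$. Writing $X=a^7b^4u^5$, $Y=a^3b^2u^3$, $Z=a^4b^2u$ and using the Euler operator \eqref{eqn:Euler} (which multiplies $x-x^{-1}$ by the degree in the differentiation variable), one finds
\[
\cD_a\widetilde D=7(X-X^{-1})+39(Y-Y^{-1})-184(Z-Z^{-1}),\qquad
\cD_b\widetilde D=4(X-X^{-1})+26(Y-Y^{-1})-92(Z-Z^{-1}).
\]
Grouping $\cD_a\widetilde D=(7X+39Y-184Z)-(7X^{-1}+39Y^{-1}-184Z^{-1})$, the polynomial part is bounded by $7u^5+39u^3$ (since $X\le u^5$, $Y\le u^3$), which is small for $u\le s_{\rm c}$, while the reciprocal part equals $Z^{-1}\bigl(7a^{-3}b^{-2}u^{-4}+39au^{-2}-184\bigr)$; minimising the inner bracket over $(a,b)\in(0,1]^2$ places it at $a=b=1$, giving $7u^{-4}+39u^{-2}-184$, which stays comfortably positive for $u\le s_{\rm c}$. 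Hence $\cD_a\widetilde D<0$, and the same argument with $(4,26,92)$ gives $\cD_b\widetilde D<0$, so $\widetilde D(a,b;u)\ge\widetilde D(1,1;u)$.

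\emph{Stage 2 (reduction to $u=s$).} Substituting $w=p(u)=u+u^{-1}$ and using $u^n+u^{-n}=p_n(w)$ with $p_1=w$, $p_3=w^3-3w$, $p_5=w^5-5w^3+5w$, I obtain
\[
\widetilde D(1,1;u)=p_5+13p_3-46p_1=w^5+8w^3-80w=w(w^4+8w^2-80)=:\phi(w).
\]
On $(0,1]$ the map $u\mapsto w$ is a decreasing bijection onto $[2,\infty)$, and $\phi'(w)=5w^4+24w^2-80>0$ for $w\ge2$, so $\phi$ is strictly increasing there; thus $u\mapsto\widetilde D(1,1;u)$ is strictly decreasing, and over $u\in(0,s]$ its infimum is attained at $u=s$. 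Combined with Stage 1, this shows that for $s\le s_{\rm c}$ the minimum over the box is attained at $(1,1,s)$ and $m(s)=\widetilde D(1,1;s)=\phi(p(s))$.

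\emph{Stage 3 (sign and identification of $s_{\rm c}$).} The unique zero of $\phi$ on $[2,\infty)$ is $w_{\rm c}=2\sqrt{\sqrt6-1}$, from $w^2=-4+\sqrt{96}=4(\sqrt6-1)$, and monotonicity gives $\phi(w)\ge0\iff w\ge w_{\rm c}$, i.e. $\widetilde D(1,1;s)\ge0\iff s\le s_{\rm c}$ with $p(s_{\rm c})=w_{\rm c}$. To verify $s_{\rm c}=r_{\rm c}^{1/2}$ I would square $p(r_{\rm c}^{1/2})=r_{\rm c}^{1/2}+r_{\rm c}^{-1/2}$ to get $r_{\rm c}+r_{\rm c}^{-1}+2=w_{\rm c}^2=4\sqrt6-4$, that is $r_{\rm c}+r_{\rm c}^{-1}=2(2\sqrt6-3)$, which is exactly the relation characterising $r_{\rm c}$ in Remark~\ref{rem:kappa_q0} (the root $<1$ corresponding to $s_{\rm c}<1$). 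Finally, for $s>s_{\rm c}$ one has $p(s)<w_{\rm c}$, whence $m(s)\le\widetilde D(1,1;s)=\phi(p(s))<0$ simply by testing the admissible point $(1,1,s)$; together with $m(s)\ge0$ for $s\le s_{\rm c}$ this yields $m(s)\ge0\iff0<s\le s_{\rm c}$.

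\emph{Main obstacle.} The crux is Stage 1: the three summands of $\widetilde D$ pull in opposite directions, and the term $-46\,p(a^4b^2u)$ alone would drive the minimiser off the corner. The corner is optimal only because $u\le s_{\rm c}$ is small enough that the high-degree positive terms dominate the gradient; indeed the coordinatewise monotonicity fails for $u$ near $1$ (there $\widetilde D(a,1;1)<\widetilde D(1,1;1)$ for $a$ slightly below $1$), so the estimates must genuinely exploit the restriction $u\le s_{\rm c}$, and it is precisely this restriction that makes $(1,1,s)$ the minimiser throughout the regime where $m(s)\ge0$.
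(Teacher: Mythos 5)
Your proposal is correct, but it takes a genuinely different route from the paper's at the crucial multi-variable reduction. The paper foliates the domain by level curves: first $a^2b=x$ (on which $a^7b^4s^5=a^{-1}x^4s^5$, $a^3b^2s^3=a^{-1}x^2s^3$, while the dangerous argument $a^4b^2s=x^2s$ is \emph{constant}), then $a^4u=y$; along each curve monotonicity follows from $p'\le 0$ alone, with no restriction on $s$, so the paper obtains the stronger intermediate fact $m(s)=\inf_{a\in(0,1]}\widetilde D(a,1;s)$ for \emph{all} $s\in(0,1]$, and smallness enters only once, in the final one-variable step, where $a\partial_a\widetilde D(a,1;s)=\delta(a,s)/(a^7s^5)$ is shown negative for $s\le 11/20$ by completing a square, $\delta(a,s)\le 7s^{10}+39s^8+\tfrac{92^2}{39}s^6-7$. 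You instead prove coordinatewise monotonicity in $a$ and $b$ directly, which—as you rightly diagnose in your closing paragraph—forces the restriction $u\le s_{\rm c}$ from the outset: your inner-bracket minimization at $a=b=1$ requires $u^2\le 21/39$ for $\cD_a$ (derivative $-21a^{-4}b^{-2}u^{-4}+39u^{-2}\le u^{-4}(39u^2-21)$) and $u^2\le 12/26$ for $\cD_b$, both valid since $s_{\rm c}^2=r_{\rm c}\approx 0.285$; and the final comparison is roughly $7u^{-4}+39u^{-2}-184\ge 39.4$ against $7u^5+39u^3\le 6.3$ at $u=s_{\rm c}$, with the two sides monotone in opposite directions in $u$, so it holds on all of $(0,s_{\rm c}]$. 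These routine verifications should be written out, but they all go through. Your Stage 2--3 via $w=u+u^{-1}$, giving $\widetilde D(1,1;u)=w^5+8w^3-80w=w(w^4+8w^2-80)$, is exactly the paper's factorization $\frac{1+s^2}{s^5}(s^8+12s^6-58s^4+12s^2+1)$ after substituting $w=(1+s^2)/s$, and it dovetails with Remark \ref{rem:kappa_q0} since $w^2-2=r+r^{-1}$ turns $w^4+8w^2-80$ into $(r+r^{-1})^2+12(r+r^{-1})-60$; your explicit check $\phi'(w)=5w^4+24w^2-80>0$ on $[2,\infty)$ is a cleaner handle on the $u$-monotonicity and the root $w_{\rm c}=2\sqrt{\sqrt6-1}$ than the paper's appeal to the factorized polynomial. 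What each approach buys: the foliation argument localizes the smallness assumption and proves the corner-minimizer claim on the larger interval $s\le 11/20$; your argument is more elementary (pure coordinatewise calculus plus the Chebyshev substitution) but is confined to $u\le s_{\rm c}$—unavoidably so, since the minimizer genuinely leaves the corner for $s$ near $1$ (e.g. $\widetilde D(0.8,1;1)<\widetilde D(1,1;1)=-64$). Like the paper, you therefore establish the ``minimum at $(1,1,s)$'' clause only on the range where it is true, which suffices for the lemma because for $s>s_{\rm c}$ testing the single admissible point $(1,1,s)$ already yields $m(s)\le \phi(p(s))<0$.
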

\begin{proof}
We fix $s \in (0, 1]$. For $x \in (0,1]$, we consider the curve 
$C_x$ defined by $a^2 b = x$, or equivalently 
by $b = x/a^2$. We note that 
\[
 (0,1]^2 = \bigcup_{x \in (0,1]} \{(a,b) \in (0,1]^2 : a^2b = x, \ 
x^{1/2} \le a \le 1\}. 
\]
On the curve $C_x$, we can write 
$\widetilde{D}(a, a^{-2}x; s) = p(a^{-1} x^4 s^5) 
+ 13 p(a^{-1} x^2 s^3) - 46p(x^2 s)$, $x^{1/2} \le a \le 1$.
Since $p'(x) =1 - x^{-2} \le 0$ for $x \in (0,1]$, we have 
\[
\frac{\partial}{\partial a} 
\widetilde{D}(a, a^{-2}x; s) = p'(a^{-1} x^4 s^5) (-a^{-2} x^4 s^5) 
+ 13 p'(a^{-1} x^2 s^3) (-a^{-2} x^2 s^3) \ge 0, 
\]
and hence $\widetilde{D}(a, a^{-2}x; s)$ 
attains its minimum at $a = x^{1/2}$ and $b=1$. 
Therefore, for $s \in (0, 1]$, 
\begin{equation}
\inf_{(a,b) \in (0,1] \times (0,1]} \widetilde{D}(a, b;s) 
= \inf_{x \in (0,1]} \widetilde{D}(x^{1/2}, 1; s)
= \inf_{a \in (0,1]} \widetilde{D}(a, 1; s). 
\label{eqn:inf1} 
\end{equation}
For $(a, u) \in (0,1] \times (0,s]$, we consider 
$\widetilde{D}(a,1; u) = p(a^7 u^5) + 13 p(a^3 u^3) - 46 p(a^4 u)$. 
For $y \in (0,s]$, we consider the curve $C'_y$ defined by $a^4 u = y$ 
or equivalently, by $a = (y/u)^{1/4}$. Note that 
\[
 (0,1] \times (0,s] 
= \bigcup_{y \in (0,s]} \{(a,u) \in (0,1] \times (0,s] : a^4 u = y, \ 
y \le u \le s\}. 
\]
Then, on the curve $C'_y$, we can write 
$\widetilde{D}((y/u)^{1/4}, 1; u) 
 =  p(y^{7/4} u^{13/4}) + 13 p(y^{3/4} u^{9/4}) 
- 46 p(y)$, $y \le u \le s$.  
Since $(\partial/\partial u) \widetilde{D}((y/u)^{1/4}, 1; u) \le 0$, 
we conclude that 
$\widetilde{D}((y/u)^{1/4}, 1; u)$ attains its minimum at $u = s$, 
and hence,  
from (\ref{eqn:inf1}), we have 
\begin{equation}
m(s)
= \inf_{(a, u) \in (0,1] \times (0,s]} \widetilde{D}(a, 1; u)
= \inf_{y \in (0,s]} \widetilde{D}((y/s)^{1/4}, 1; s)
= \inf_{a \in (0,1]} \widetilde{D}(a, 1; s). 
\label{eqn:inf2} 
\end{equation}
It suffices to show $m(s) \ge 0$ if $s \le s_{\rm c}$.
Since $x p'(x) = q(x) := x-1/x$, we can verify easily that 
\begin{align*}
a \frac{\partial}{\partial a} \widetilde{D}(a,1; s) 
&= 7 q(a^7 s^5) + 13 \cdot 3q(a^3 s^3) - 46 \cdot 4q(a^4 s) \\
&=\frac{1}{a^7 s^5} (7 a^{14} s^{10} - 7 + 39 a^{10} s^8-39 a^4 s^2 
-184 a^{11} s^6 + 184 a^3 s^4) 
=: \frac{1}{a^7 s^5} \delta (a,s). 
\end{align*}
For $a, s \in (0,1]$, we see that 
\begin{align*}
\delta(a,s) 
&\le 7 s^{10} + 39 s^8+184 a^2 s^4-39 a^4 s^2-7 \\
&= 7s^{10} + 39 s^8 - 39s^2 \Big(a^2 -\frac{92}{39} s^2 \Big)^2 
+ \frac{92^2}{39}s^6 -7 
\leq 7s^{10} + 39 s^8 + \frac{92^2}{39}s^6 -7. 
\end{align*}
Since the last function of $s$ is increasing in $(0,1]$ 
and it takes a negative value at $s=11/20$, 
we have $\delta(a,s) < 0$ for $(a,s) \in (0,1] \times (0, 11/20]$. 
Therefore, 
$\widetilde{D}(a,1,s)$ is decreasing in $a$ for $s \in (0, 11/20]$, 
which together with (\ref{eqn:inf2}) implies 
\[
 m(s) = \widetilde{D}(1,1,s) 
 = \frac{1+s^2}{s^5} (s^8 + 12s^6 - 58s^4 + 12s^2 +1). 
\]
Here we note Remark \ref{rem:kappa_q0}
given in Section \ref{sec:iii}. 
Consequently, $m(s) \ge m(s_{\rm c}) = 0$ 
for $s \in (0,s_{\rm c}]$ as $s_{\rm c} 
=r_{\rm c}^{1/2} = 0.533 \dots \le 11/20$. 
\end{proof}
\begin{rem}
\label{rem:positive_corr}
We see that 
\[
g_{\D}(-r^{-1/4}, r^{-1/4}; r)
= \frac{6+r+r^{-1}}{4 (r^{1/2}+r^{-1/2})} =: \widetilde{g}(r).
\]
It is readily verified that 
$\widetilde{g}(1) = 1$
and $d \widetilde{g}(r)/dr=
(r-1)^3/\{8r^{3/2}(r+1)^2\} \geq 0$, $r \ge 1$. 
Then, $\widetilde{g}(r) > 1$ for any $r > 1$. 
Since $1/r_{\rm c}=3.51 \cdots > 1$, 
the PDPP $\cZ_{X_{\D}^r}$ is still in the partially attractive phase 
although $\kappa_0(r)$ becomes negative when $r \in (1/r_{\rm c}, \infty)$ 
due to the symmetry $r \leftrightarrow 1/r$ built
in \eqref{eqn:kappa_q0}.
\end{rem}

\SSC{Concluding Remarks}
\label{sec:concluding}

Peres and Vir\'ag proved a relationship
between the Szeg\H{o} Kernel $S_{\D}$ and 
the Bergman kernel $K_{\D}$ 
in the context of probability theory: 
A GAF 
is defined so that its covariance kernel is given
by $S_{\D}$. Then the zero point process $\cZ_{X_{\D}}$
is proved 
to be a DPP for which the correlation kernel is
given by $K_{\D}$.
The background of their work is explained in
the monograph \cite{HKPV09}, in which 
we find that the \textit{Edelman--Kostlan formula}
\cite{EK95} gives the density of $\cZ_{X_{\D}}$ 
with respect to $m/\pi$ as
\[
\rho^1_{\D, \mathrm{PV}}(z)
=\frac{1}{4} \Delta \log S_{\D}(z,z),
\quad z \in \D,
\]
where $\Delta := 4 \partial_z \partial_{\zbar}$.
Moreover, we have the equality
\begin{equation}
K_{\D}(z, w)= 
\partial_z \partial_{\wbar} 
\log S_{\D}(z, w)=S_{\D}(z, w)^2,
\quad z, w \in \D.
\label{eqn:EK_like}
\end{equation}
On the other hand, 
as explained above (\ref{eqn:S_K_D}), 
for the kernels on simply connected domain
$D \subsetneq \C$, the equality 
\begin{equation}
S_{D}(z, w)^2= K_{D}(z, w), \quad
z, w \in D,
\label{eqn:S_K_D_special}
\end{equation}
is established.

In the present paper, we have reported our work to
generalize the above to 
a family of GAFs and their zero point processes
on the annulus $\A_q$.
By comparing the expression (\ref{eqn:density_Aqt}) 
for the density obtained from Theorem \ref{thm:mainA1} 
with (\ref{eqn:log_deriv_Aq2})
in Proposition \ref{thm:log_derivative} in
Appendix \ref{sec:log_derivative} given below, 
we can recover the Edelman--Kostlan formula as follows,
\[
\rho^1_{\A_q}(z; r)
=\frac{\theta(-r)}{\theta(-r|z|^4)}
S_{\A_q}(z, z; r|z|^2)^2
=\frac{1}{4} \Delta \log S_{\A_q}(z, z; r),
\quad z \in \A_q.
\]
However, (\ref{eqn:EK_like}) does not
hold for the weighted Szeg\H{o} kernel for $H^2_r(\A_q)$.
As shown by (\ref{eqn:log_deriv_Aq1}), 
the second log-derivative of $S_{\A_q}(z, w; r)$
cannot be expressed by $S_{\A_q}(z, w; r)$ itself
but a new function $S_{\A_q}(z, w; r z \wbar)$
should be introduced. 
In addition the proportionality between the square of 
the Szeg\H{o} kernel
and the Bergman kernel (\ref{eqn:S_K_D_special})
is no longer valid for the point processes on $\A_q$
as shown in Proposition \ref{thm:relation} in 
Appendix \ref{sec:relation}.

The Borchardt identity
plays an essential role in the proof of Peres and Vir\'ag,
which is written as
\[
\perdet_{1 \leq i, j \leq n}
\Big[S_{\D}(z_i, z_j) \Big]
= \det_{1 \leq i, j \leq n} 
\Big[ S_{\D}(z_i, z_j)^2 \Big],
\quad n \in \N, \quad
z_1, \dots, z_n \in \D.
\]
Since the $n$-point correlation function 
$\rho^n_{\D, \mathrm{PV}}(z_1, \dots, z_n)$ of $\cZ_{X_{\D}}$
is given by the left-hand side, $\forall n \in \N$, 
this equality proves that $\cZ_{X_{\D}}$ is a DPP.
For $S_{\A_q}$ the corresponding equality does not hold.
We have proved, however, that all correlation functions
of our two-parameter family of zero point processes 
$\{\cZ_{X_{\A_q}^r} : q \in (0,1), r > 0 \}$ 
on $\A_q$ can be expressed using
$\perdet$ defined by (\ref{eqn:perdet})
and we stated that
they are permanental-determinantal point processes
(PDPPs). 

We would like to place an emphasis on the fact that
the present paper is not an incomplete work nor
just replacing determinants by $\perdet$'s. 
The essentially new points,
which are not found in the previous works \cite{PV05,HKPV09}, 
are the following:
\begin{description}
\item{(i)} \,
Even if we start from the GAF whose covariance kernel
is given by the original Szeg\H{o} kernel 
$S_{\A_q}(\cdot, \cdot)=S_{\A_q}(\cdot, \cdot; q)$ 
on $\A_q$, 
the full description of conditioning with zeros
needs a series of new covariance kernels.

\item{(ii)} \,
The covariance kernels of the induced GAFs
generated by conditioning of zeros are identified with
the \textit{weighted} Szeg\H{o} kernel
$S_{\A_q}(\cdot, \cdot; \alpha)$ studied by
Mccullough and Shen \cite{MS94}. 
In the present study, the weight parameter $\alpha$
plays an essential role, since it is determined by
$\alpha= r \prod_{\ell=1}^n |z_{\ell}|^2$ 
and represents a geometrical information of the zeros in $\A_q$ 
$\{z_1, \dots, z_n\}, n \in \N$ 
put in the conditioning.

\item{(iii)} \,
Corresponding to such an inductive structure of
conditional GAFs, the correlation kernel 
of our PDPP of $\cZ_{\A_q^r}, r > 0$
is given by 
$S_{\A_q}(\cdot, \cdot; \alpha)$
with $\alpha=r \prod_{\ell=1}^n |z_{\ell}|^2$
in order to give the correlation function
for the points $\{z_1, \dots, z_n\}$;
$\rho^n_{\A_q}(z_1, \dots, z_n; r)$.
In addition, the $n$-product measure 
of the Lebesgue measure on $\C$ divided by $\pi$,
$(m/\pi)^{\otimes n}$, 
should be weighted by
$\theta(-r)/\theta(-r \prod_{k=1}^n |z_k|^4)$
to properly provide $\rho^n_{\A_q}(\cdot; r)$.

\item{(iv)} \,
The parameter $r$ also 
plays an important role to describe the
symmetry of the GAF and its zero point process
under the transformation
which we call the $(q, r)$-inversion,
\begin{equation}
(z, r) \, \longleftrightarrow \,
\left( \frac{q}{z}, \frac{q^2}{r} \right) 
\, \in \A_q \times (0, \infty).
\label{eqn:inversion_relation}
\end{equation}
And if we adjust $r=q$ the 
GAF and its zero point process become
invariant under conformal transformations 
which preserve $\A_q$. 
\end{description}

Inapplicability of the Borchardt identity 
to our zero poin processes
$\cZ_{X_{\A_q}^r}$ causes interesting behaviors of them as 
\textit{interacting particle systems}.
We have proved that the short-range interaction
between points is
repulsive with index $\beta=2$ 
in a similar way to the usual DPP,
but attractive interaction is also observed in $\cZ_{X_{\A_q}^r}$.
The index for decay of correlations is given by
$\eta=4$. 
We found that there is a special value $r=r_0(q) \in (q, 1)$
for each $q \in (0, 1)$ at which
the coefficient of the power-law decay of 
correlations changes its sign.
We have studied the zero point process obtained
in the limit $q \to 0$, which has
a parameter $r \in [0, \infty)$.
In this PDPP $\cZ_{X_{\D}^r}$,
$r_{\rm c}:=r_0(0)$ can be regarded as the critical value
separating two phases in the sense that if $r \in [0, r_{\rm c})$
the zero point process is completely repulsive,
while if $r \in (r_{\rm c}, \infty)$
attractive interaction emerges depending on
distances between points. 

There are many open problems, since such 
PDPPs have not been studied so far.
Here we list out some of them.
\begin{description}
\item{(1)} \,
We prove that the GAF $X_{\A_q}^r$ and its 
zero point process $\cZ_{X_{\A_q}^r}$ have 
the rotational invariance and
the $(q, r)$-inversion symmetry,
and when $r=q$, they are invariant under conformal transformations
which preserve $\A_q$ (Proposition \ref{thm:conformal_Aq}).
We claimed in Remark \ref{rem:parameter_L} that
$X_{\A_q}^r$ can be extended to a one-parameter family of GAFs
$\{X_{\A_q}^{r, (L)} : L \in \N \}$ having the rotational invariance and
the $(q, r)$-inversion symmetry
and this family is an extension of $\{X_{\D}^{(L)}: L \in \N\}$
studied in 
\cite[Sections 2.3 and 5.4]{HKPV09}
in the sense that
$\lim_{q \to 0} X_{\A_q}^{r, (L)}|_{r=q} \dis= X_{\D}^{(L)}$, $L \in \N$.
In \cite[Section 2.5]{HKPV09}, it is argued that 
$\{X_{\D}^{(L)}:  L > 0\}$ is the only GAFs,
up to multiplication by deterministic non-vanishing analytic functions,
whose zeros are isometry-invariant under the conformal transformations
preserving $\D$. 
This assertion is proved by the fact that the zero point process 
of the GAF is completely determined
by its first correlation function.
Therefore, the ``canonicality'' of the GAFs
$\{X_{\D}^{(L)} : L > 0\}$ is guaranteed by the
uniqueness, up to multiplicative constant, 
of the density function with respect to $m(dz)/\pi$,
$\rho_{\D, \mathrm{PV}}^1(z)=1/(1-|z|^2)^2$, 
which is invariant 
under the M\"{o}bius transformations preserving $\D$.
We have found, however, that the density function
with parameter $r>0$, 
$\varrho(z; r), z \in \A_q$ is not 
uniquely determined to be
$\rho_{\A_q}^1(z; r)$ as (\ref{eqn:density_Aqt}) 
by the requirement that it is rotationally invariant
and having the $(q,r)$-inversion symmetry.
For example, we have the three-parameter 
($\alpha_1 > 1-\alpha_2$, $\alpha_2>0$, $\alpha_3 \in \R$)
family of density functions,
\[
\varrho(z; r; \alpha_1, \alpha_2, \alpha_3)
=\frac{\theta(-r)^{\alpha_3}}
{\theta(-|z|^{2 \alpha_1} r^{\alpha_2})}
f^{\rm JK}(|z|^2, -|z|^{2 \beta_1} r^{\beta_2})^2
\]
with $\beta_1=(\alpha_1-\alpha_2)(\alpha_1+\alpha_2-1)/4+1/2$,
$\beta_2=\alpha_2(\alpha_1+\alpha_2-1)/2$, 
which satisfy the above requirement of symmetry.
We see that
$\varrho(z; r; 2, 1, 1)=\rho_{\A_q}^1(z; r)$ and
$\lim_{q \to 0} \varrho(z; q; \alpha_1, \alpha_2, \alpha_3)
=\rho_{\D, \mathrm{PV}}^1(z)$.
The present study of the GAFs on $\A_q$
and their zero point processes will be 
generalized in the future.

\item{(2)} \,
As shown by (\ref{eqn:density_edges}), 
the asymptotics of the density of zeros 
$\rho_{\A_q}^1(z) \sim (1-|z|^2)^{-2}$
with respect to $m(dz)/\pi$ 
in the vicinity of the outer boundary of $\A_q$ 
can be identified with the metric 
in the hyperbolic plane called 
the \textit{Poincar\'e disk model}
(see, for instance, \cite{Hel84,Cha06}). 
The zero point process 
$\cZ_{X_{\D}}$ of Peres and Vir\'ag
can be regarded as a uniform DPP on 
the Poincar\'e disk model 
\cite{PV05,DL19,BQ18+}.
Is there any meaningful geometrical space 
in which the present zero point process
$\cZ_{X_{\A_q}^r}$ seems to be uniform?
As mentioned in Remark \ref{rem:Remark_condition1},
conditioning with zeros does 
not induce any new GAF on $\D$
\cite{PV05}, but it does on $\A_q$.
Is it possible to give some geometrical explanation
for such a new phenomenon appearing 
in replacing $\D$ by $\A_q$ 
reported in the present paper?

\item{(3)} \,
As mentioned above and in 
Theorem \ref{thm:critical_line} (i), 
we have found power-law decays of 
unfolded 2-correlation functions to the unity 
with an index $\eta=4$. 
Although the coefficient of this power-law 
changes depending on $q$ and $r$, 
the index $\eta=4$ seems to be
universal in the PDPPs
$\cZ_{X_{\A_q}^r}$,
$\cZ_{X_{\D}^r}$ and 
the DPP of Peres and Vir\'ag $\cZ_{X_{\D}}$ 
(except the PDPPs at $r=r_0(q) \in (q, 1), q \in [0, 1)$).
The present proof of Theorem \ref{thm:critical_line} (i)
relied on brute force calculations
showing vanishing of derivatives up to the third order.
Simpler proof is required. 
In the metric of a proper hyperbolic space,
the decay of correlation will be exponential.
In such a representation, what is the meaning of
the `universal value' of $\eta$?

\item{(4)} \,
As mentioned in Remark \ref{rem:Remark_r_infinity}, 
the simplified 
PDPPs $\{\cZ_{X_{\D}^r} : r \in (0, \infty)\}$ 
can be regarded as an interpolation between 
the DPP of Peres and Vir\'ag 
$\cZ_{X_{\D}}$ and its deterministic perturbation at the origin
$\cZ_{X_{\D}} +\delta_0$. 
The first approximation of the perturbation of the deterministic zero near $r=\infty$ 
is given by $\frac{-1}{\sqrt{1+r}}\zeta_0/\zeta_1$ by solving 
the approximated linear equation $\frac{\zeta_0}{\sqrt{1+r}} + \zeta_1 z=0$. 
Here the ratio $\zeta_0/\zeta_1$ is distributed according to 
the push-forward of the uniform distribution on the unit sphere 
by the stereographic projection
(see Krishnapur \cite{Kri09} for the matrix generalization).
Can we trace such a flow of zeros in $\{\cZ_{X_{\D}^r} : r \in (0,\infty)\}$ 
more precisely?

\item{(5)} \,
The simplified 
PDPP $\cZ_{X_{\D}^r}$ 
was introduced as a $q \to 0$
limit of the PDPP $\cZ_{\X_{\A_q}^r}$ in this paper. 
On the other hand, as shown by (\ref{eqn:GAF_A0t})
the GAF $X_{\D}^r$
can be obtained from the 
GAF $X_{\D}$ of Peres and Vir\'ag 
by adding a one-parameter ($r >0 $)
perturbation on a single term.
Can we explain the hierarchical structures of
these GAF and PDPP on $\D$ and the 
existence of the critical value $r_{\rm c}$
for correlations of $\cZ_{X_{\D}^r}$ 
apart from all gadgets related to elliptic functions?
Can we expect any interesting phenomenon 
at $r=r_{\rm c}$?

\item{(6)} \,
As mentioned at the end of Section \ref{sec:correlations},
the zero point process 
of the GAF $X_{\D}$ studied by
Peres and Vir\'ag \cite{PV05} is the DPP $\cZ_{X_{\D}}$, 
whose correlation kernel is given by
$K_{\D}(z, w)=S_{\D}(z,w)^2=1/(1-z \wbar)^2$,
$z, w \in \D$ with respect to the
reference measure $m/\pi$.
Krishnapur \cite{Kri09} introduced a one-parameter ($\ell \in \N$)
extension of DPPs $\{\cZ_{X_{\D}}^{(\ell)} : \ell \in \N\}$,
whose correlation kernels are given by
$K_{\D}^{(\ell)}(z, w)=1/(1-z \wbar)^{\ell+1}$ with respect to
the reference measure $\ell (1-|z|^2)^{\ell-1} m/\pi$ on $\D$.
He proved that $\cZ_{\D}^{(\ell)}$ is realized as the zeros of
$\det [ \sum_{n \in \N} \ssG_n z^n]$,
where $\{\ssG_n \}_{n \in \N}$ are i.i.d.~complex Ginibre random matrices
of size $\ell \in \N$.
A similar extension of the present PDPP
$\cZ_{\A_q^r}$ on $\A_q$ will be challenging.

\item{(7)} \,
For $0 < t \leq 1$, let
$\D_t:=\{z \in \C: |z| < t\}$. 
The CLT for the number of points $\cZ_{X_{\D}}(\D_t)$ 
as $t \to 1$ can be easily shown, 
since $\cZ_{X_{\D}}$ is a DPP and then $\cZ_{X_{\D}}(\D_t)$ 
can be expressed as a sum of independent Bernoulli random variables
\cite[Corollary 3 (iii)]{PV05} \cite{Shi06}. 
For $0 < q \leq s < t \leq 1$, let
$\A_{s, t}:=\{z \in \C:  s < |z| < t\}$. 
It would also be expected that the CLT holds for
$\big( \cZ_{\A_q}(\A_{s, \sqrt{q}}), 
\cZ_{\A_q}(\A_{\sqrt{q}, t})\big)$ as $s \to q$ and 
$t \to 1$ simultaneously in some sense. 
Is there a useful expression for those random variables as above and 
can we prove the CLTs for them? 

\item{(8)} \,
In the present paper, we have tried to
characterize the density functions
and the unfolded 2-correlation functions
of the PDPPs.
As demonstrated by Fig.\ref{fig:G_vee_plots},
change of global structure 
is observed at $r=r_{\rm c}$ for
the unfolded 2-correlation function.
Precise description of such a topological change is required. 
More detailed quantitative study 
would be also interesting.
For example, we can show that 
$G_{\A_q}^{\vee}(x; r)$ plotted in 
Fig.\ref{fig:G_vee_plots} attains its maximum
at $x=\sqrt{q}$ when $r=1$ and the value is
given by 
$G_{\A_q}^{\vee}(\sqrt{q}; 1)
=(q_2(q)^8+q_3(q)^8)/(16q q_1(q)^8) >1$, where
$q_1(q):=\prod_{n=1}^{\infty}(1+q^{2n})$,
$q_2(q):=\prod_{n=1}^{\infty}(1+q^{2n-1})$, and
$q_3(q):=\prod_{n=1}^{\infty}(1-q^{2n-1})$.
How about the local minima?
Moreover, 
systematic study on three-point and
higher-order correlations will be needed
to obtain a better understanding of differences 
between PDPPs and DPPs.

\item{(9)} \,
Matsumoto and one of the present authors \cite{MS13} 
studied the \textit{real} GAF on a plane
and proved that the zeros of the real GAF
provide a Pfaffian point process (PfPP).
There a
Pfaffian--Hafnian analogue of Borchardt's identity 
was used \cite{IKO05}. 
Is it meaningful to consider 
the Pfaffian--Hafnian analogues of
PDPPs?
Systematic study on the comparison
among DPPs, PfPPs, permanental PPs,
Hafnian PPs, 
PDPPs, and Hafnian--Pfaffian PPs 
will be a challenging future problem.

\item{(10)} \,
The symmetry of the present 
GAF and its zero point process under the
$(q, r)$-inversion (\ref{eqn:inversion_relation}) 
mentioned above and
the pairing of uncorrelated points
in the GAF $X_{\A_q}$ shown by 
Proposition \ref{thm:pair_structure} suggest that
the inner boundary $\gamma_q$ plays essentially the
same role as the outer boundary $\gamma_1$.
As an extension of the Riemann mapping function for
a simply connected domain $D \subsetneq \C$,
a function mapping
a multiply connected domain to
the unit disk is called the 
\textit{Ahlfors map} \cite{TT99,Bel16}.
(See also Remark \ref{rem:pair_zero} again.)
Could we use such Ahlfors maps to construct
and analyze GAFs and their zero point processes on 
general multiply connected domains?

\end{description}

\vskip 1cm
\noindent{\bf Acknowledgements} \,
The present study was stimulated by
the useful discussion with Nizar Demni
when the authors attended the workshop 
`Interactions between commutative and non-commutative probability'
held in Kyoto University, Aug 19--23, 2019.
The present authors would like to thank
Michael Schlosser for useful discussion on
the elliptic extensions of determinantal and permanental formulas.
The present work has been supported by
the Grant-in-Aid for Scientific Research (C) (No.19K03674),
(B) (No.18H01124), and (S) (No.16H06338) 
of Japan Society for the Promotion of Science (JSPS).

\appendix

\SSC{Hyperdeterminantal Point Processes}
\label{sec:hDPP}
Recall that determinant and permanent are
defined for an $n \times n$ matrix 
(a 2nd order tensor on an $n$-dimensional space) 
$M=(m_{i_1 i_2})_{1 \leq i_1, i_2 \leq n}$ as
\begin{align}
\det M &= \det_{1 \leq i_1, i_2 \leq n} [m_{i_1 i_2}]
:=\sum_{\sigma \in \mS_n} \sgn(\sigma) 
\prod_{\ell=1}^n m_{\ell \sigma(\ell)}
= \frac{1}{n !} \sum_{(\sigma_1, \sigma_2) \in \mS_n^2} 
\sgn(\sigma_1) \sgn(\sigma_2)
\prod_{\ell=1}^n m_{\sigma_1(\ell) \sigma_2(\ell)}, 
\nonumber\\
\per M &= \per_{1 \leq i_1, i_2 \leq n} [m_{i_1 i_2}]
:=\sum_{\sigma \in \mS_n} 
\prod_{\ell=1}^n m_{\ell \sigma(\ell)}
= \frac{1}{n !} \sum_{(\sigma_1, \sigma_2) \in \mS_n^2} 
\prod_{\ell=1}^n m_{\sigma_1(\ell) \sigma_2(\ell)}, 
\label{eqn:det_and_per}
\end{align}
where
$\mS_n$ denotes the symmetric group of order $n$.
The notion of determinant has been extended as follows.
Cayley's first hyperdeterminant is defined for a $k$-th order
tensor (hypermatrix) on an $n$-dimensional space
$M=( m_{i_1 \dots i_k})_{1 \leq i_1, \dots, i_k \leq n}$ as
\begin{equation}
\Det M =\Det_{1 \leq i_1, \dots, i_k \leq n} [m_{i_1 \dots i_k}]
:=\frac{1}{n!} \sum_{(\sigma_1, \dots, \sigma_k) \in \mS_n^k}
\prod_{i=1}^k \sgn(\sigma_i) \prod_{\ell=1}^n 
m_{\sigma_1(\ell) \dots \sigma_k(\ell)}.
\label{eqn:Cayley}
\end{equation}
It is straightforward to see that $\Det M=0$ if $k$ is odd.
Gegenbauer generalized (\ref{eqn:Cayley}) to the case where
some of the indices are non-alternated.
If $\cI$ denotes a subset of $\{1, \dots, k\}$, one has
\begin{equation}
{\DetI} M ={\DetI_{1 \leq i_1, \dots, i_k \leq n}} [m_{i_1 \dots i_k}]
:=\frac{1}{n!} \sum_{(\sigma_1, \dots, \sigma_k) \in \mS_n^k}
\prod_{i \in \cI} \sgn(\sigma_i) \prod_{\ell=1}^n 
m_{\sigma_1(\ell) \dots \sigma_k(\ell)}.
\label{eqn:Gegenbauer}
\end{equation}
These extensions of the determinant are called
\textit{hyperdeterminants}. 
See \cite{Mat08,EG09,LV10} and references therein.

\begin{lem}
\label{thm:hyperdet1}
Let $A=(a_{i_1 i_2})$ and $B=(b_{i_1 i_2})$ be $n \times n$ matrices.
Then $\per A \det B = \DetJ C$, 
where $C=(c_{i_1 i_2 i_3})$ is 
the $n \times n \times n$ hypermatrix with the entries
\begin{equation}
c_{i_1 i_2 i_3}=a_{i_2 i_1} b_{i_2 i_3},
\quad i_1, i_2, i_3 \in \{1, \dots, n\}. 
\label{eqn:hyperdet2}
\end{equation}
In particular, 
$\perdet M =\DetJ[ m_{i_2 i_1} m_{i_2 i_3}]$,
where $\perdet M$ is defined by (\ref{eqn:perdet}). 
\end{lem}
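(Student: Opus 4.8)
The plan is to expand both sides of the claimed identity in the permutation-sum form of the definitions and then match them by a single change of summation variable. Starting from the Gegenbauer definition \eqref{eqn:Gegenbauer} with $\cI=\{2,3\}$ and substituting the entries \eqref{eqn:hyperdet2}, I would write
\[
\DetJ C
=\frac{1}{n!}\sum_{(\sigma_1,\sigma_2,\sigma_3) \in \mS_n^3}
\sgn(\sigma_2)\sgn(\sigma_3)
\prod_{\ell=1}^n a_{\sigma_2(\ell)\sigma_1(\ell)}\,
b_{\sigma_2(\ell)\sigma_3(\ell)}.
\]
The goal is to show the triple sum factorizes into $\per A$ times $\det B$ after the redundant middle permutation $\sigma_2$ is summed away.

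The key step is to fix $\sigma_2$ and reindex each product by $m=\sigma_2(\ell)$. Since $\sigma_2$ is a bijection of $\{1,\dots,n\}$, this turns $\prod_\ell a_{\sigma_2(\ell)\sigma_1(\ell)}$ into $\prod_m a_{m\,\tau_1(m)}$ and $\prod_\ell b_{\sigma_2(\ell)\sigma_3(\ell)}$ into $\prod_m b_{m\,\tau_3(m)}$, where $\tau_1:=\sigma_1\circ\sigma_2^{-1}$ and $\tau_3:=\sigma_3\circ\sigma_2^{-1}$. The crucial sign computation is $\sgn(\sigma_3)=\sgn(\tau_3)\sgn(\sigma_2)$, so that $\sgn(\sigma_2)\sgn(\sigma_3)=\sgn(\sigma_2)^2\sgn(\tau_3)=\sgn(\tau_3)$, which removes all dependence on $\sigma_2$ from the summand.

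For fixed $\sigma_2$ the maps $\sigma_1\mapsto\tau_1$ and $\sigma_3\mapsto\tau_3$ are bijections of $\mS_n$, so summing over $(\sigma_1,\sigma_3)$ is the same as summing independently over $(\tau_1,\tau_3)$, and the inner double sum factorizes as $\big(\sum_{\tau_1}\prod_m a_{m\,\tau_1(m)}\big)\big(\sum_{\tau_3}\sgn(\tau_3)\prod_m b_{m\,\tau_3(m)}\big)=\per A\cdot\det B$ by the single-sum expressions in \eqref{eqn:det_and_per}. Since this value is independent of $\sigma_2$, the remaining sum over the $n!$ choices of $\sigma_2$ cancels the prefactor $1/n!$, giving $\DetJ C=\per A\,\det B$. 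The displayed special case then follows at once by setting $A=B=M$ in \eqref{eqn:perdet}, so that $c_{i_1 i_2 i_3}=m_{i_2 i_1}m_{i_2 i_3}$ and $\perdet M=\per M\,\det M=\DetJ[m_{i_2 i_1}m_{i_2 i_3}]$.

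The main obstacle — indeed essentially the only point requiring care — is the bookkeeping in the reindexing step: one must check that the substitution $m=\sigma_2(\ell)$ simultaneously brings both matrix products into standard row-indexed form, that $\tau_1$ and $\tau_3$ range over all of $\mS_n$ exactly as $\sigma_1,\sigma_3$ do, and that the two factors $\sgn(\sigma_2)$ recombine so that precisely the sign needed for $\det B$ survives while $\per A$ acquires none. Everything else is routine manipulation of finite sums.
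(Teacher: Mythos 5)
Your proof is correct and is essentially the paper's argument run in reverse: the paper expands $\per A \det B$ and inflates it to a triple sum over $\mS_n^3$ via a dummy permutation, whereas you start from $\DetJ C$ and collapse the redundant middle permutation $\sigma_2$ by the substitution $m=\sigma_2(\ell)$ with $\tau_1=\sigma_1\circ\sigma_2^{-1}$, $\tau_3=\sigma_3\circ\sigma_2^{-1}$ --- the same change of variables and the same sign bookkeeping $\sgn(\sigma_2)\sgn(\sigma_3)=\sgn(\tau_3)$. All steps check out, including the factorization into $\per A\cdot\det B$ and the cancellation of the $1/n!$ prefactor, so nothing further is needed.
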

\begin{proof}
By the definition (\ref{eqn:det_and_per}), 
\begin{align*}
\per A \det B
&= \sum_{\tau_1 \in \mS_n} \prod_{i=1}^n a_{i \tau_1(i)}
\sum_{\tau_2 \in \mS_n} \sgn(\tau_2) \prod_{j=1}^n b_{j \tau_2(j)}
= \sum_{\tau_1 \in \mS_n} \sum_{\tau_2 \in \mS_n} \sgn(\tau_2)
\prod_{i=1}^n a_{i \tau_1(i)} b_{i \tau_2(i)}
\nonumber\\
&= \frac{1}{n!}
\sum_{\sigma_1 \in \mS_n} \sum_{\sigma_2 \in \mS_n} 
\sum_{\sigma_3 \in \mS_n} 
\sgn(\sigma_1^{-1} \circ \sigma_3)
\prod_{i=1}^n a_{i \, \sigma_1^{-1} \circ \sigma_2(i)} 
b_{i \, \sigma_1^{-1} \circ \sigma_3 (i)} \\
&= \frac{1}{n!}
\sum_{\sigma_1 \in \mS_n} \sum_{\sigma_2 \in \mS_n} 
\sum_{\sigma_3 \in \mS_n} 
\sgn(\sigma_1) \sgn(\sigma_3)
\prod_{i=1}^n a_{\sigma_1(i) \sigma_2(i)} 
b_{\sigma_1(i) \sigma_3 (i)}.
\end{align*}
We change the symbols of permutations
as $\sigma_1 \to \rho_2, \sigma_2 \to \rho_1, \sigma_3 \to \rho_3$.
Then the above is written as
$(1/n!) \sum_{\rho_1 \in \mS_n} \sum_{\rho_2 \in \mS_n} 
\sum_{\rho_3 \in \mS_n} \sgn(\rho_2) \sgn(\rho_3)
\prod_{i=1}^n a_{\rho_2(i) \rho_1(i)} b_{\rho_2(i) \rho_3(i)}$.
Hence if we assume (\ref{eqn:hyperdet2}), then this is written as
$(1/n!) \sum_{(\sigma_1, \sigma_2, \sigma_3) \in \mS_n^3}
\prod_{i \in \{2, 3\}} \sgn(\sigma_i) \prod_{j=1}^n 
c_{\sigma_1(j) \sigma_2(j) \sigma_3(j)}.$
By the definition (\ref{eqn:Gegenbauer}), 
the proof is complete.
\end{proof}

Theorem \ref{thm:mainA1} of the present paper 
can be written in the following way.
\begin{thm}
\label{thm:main_hDPP}
$\cZ_{X_{\A_q}^r}$ is a hyperdeterminantal point process (hDPP)
in the sense that 
it has correlation functions expressed by
hyperdeterminants as 
\begin{align*}
&\rho^n_{\A_q}(z_1,\dots,z_n; r) 
= \frac{\theta(-r)}{\theta( -r \prod_{k=1}^n |z_{k}|^4)}
\nonumber\\
& \qquad \qquad
 \times
\DetJ_{1 \leq i_1, i_2, i_3 \leq n}
\Big[
S_{\A_q} \Big(z_{i_2}, z_{i_1}; r \prod_{\ell=1}^n |z_{\ell}|^2 \Big)
S_{\A_q} \Big(z_{i_2}, z_{i_3}; r \prod_{\ell=1}^n |z_{\ell}|^2 \Big)
\Big],
\end{align*}
for every $n \in \N$ and
$z_1, \dots, z_n \in \A_q$
with respect to $m/\pi$.
\end{thm}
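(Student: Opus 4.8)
The plan is to obtain Theorem \ref{thm:main_hDPP} as an immediate reformulation of Theorem \ref{thm:mainA1} by means of the hyperdeterminantal identity recorded in Lemma \ref{thm:hyperdet1}. Theorem \ref{thm:mainA1} already delivers the correlation functions in the $\perdet$-form $\rho^n_{\A_q}(z_1,\dots,z_n;r) = \frac{\theta(-r)}{\theta(-r\prod_{k=1}^n |z_k|^4)}\,\perdet_{1\leq i,j\leq n}[S_{\A_q}(z_i,z_j;r\prod_{\ell=1}^n|z_\ell|^2)]$, so the entire task reduces to rewriting this $\perdet$ of the weighted-Szeg\H{o}-kernel matrix as a Gegenbauer hyperdeterminant $\DetJ=\Det_{\{2,3\}}$. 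That rewriting is precisely the content of the final (``in particular'') assertion of Lemma \ref{thm:hyperdet1}.

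Concretely, I would fix $n\in\N$ and $z_1,\dots,z_n\in\A_q$, abbreviate the common weight parameter as $s:=r\prod_{\ell=1}^n|z_\ell|^2$, and set $M=(m_{ij})_{1\leq i,j\leq n}$ with $m_{ij}:=S_{\A_q}(z_i,z_j;s)$. Applying the last assertion of Lemma \ref{thm:hyperdet1} gives $\perdet M=\DetJ_{1\leq i_1,i_2,i_3\leq n}[m_{i_2 i_1}m_{i_2 i_3}]$, and substituting $m_{i_2 i_1}=S_{\A_q}(z_{i_2},z_{i_1};s)$ and $m_{i_2 i_3}=S_{\A_q}(z_{i_2},z_{i_3};s)$ into the hypermatrix entry reproduces exactly the bracketed expression in the statement. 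Carrying the prefactor $\theta(-r)/\theta(-r\prod_{k=1}^n|z_k|^4)$ over unchanged from Theorem \ref{thm:mainA1} then yields the claim for every $n\in\N$ and $z_1,\dots,z_n\in\A_q$, with respect to $m/\pi$.

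The substantive work thus resides entirely in Lemma \ref{thm:hyperdet1}, whose proof is already supplied: the factorization $\per A\,\det B=\DetJ C$ with $c_{i_1 i_2 i_3}=a_{i_2 i_1}b_{i_2 i_3}$ follows by expanding $\per A$ and $\det B$ as single sums over $\mS_n$, merging them into a double sum, symmetrizing over a third copy of $\mS_n$ through the insertion of $1/n!$ together with the relabeling $\sigma_1\mapsto\rho_2$, $\sigma_2\mapsto\rho_1$, $\sigma_3\mapsto\rho_3$, and matching against the definition (\ref{eqn:Gegenbauer}). I expect no genuine obstacle. The only point demanding care is keeping the index roles straight---namely that it is the \emph{second} hypermatrix index $i_2$ which is shared between the two kernel factors, so that the permanent acts on the pair $(i_2,i_1)$ and the determinant on the pair $(i_2,i_3)$---and verifying that the sign factors $\sgn(\sigma_2)\sgn(\sigma_3)$ attach to exactly the index set $\{2,3\}$, as required by $\Det_{\{2,3\}}$.
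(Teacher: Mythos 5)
Your proposal is correct and takes essentially the same route as the paper, which likewise obtains Theorem \ref{thm:main_hDPP} as an immediate corollary of Theorem \ref{thm:mainA1} by substituting $M=\bigl(S_{\A_q}(z_i,z_j;r\prod_{\ell=1}^n|z_\ell|^2)\bigr)_{1\leq i,j\leq n}$ into the ``in particular'' clause of Lemma \ref{thm:hyperdet1} and carrying the prefactor $\theta(-r)/\theta(-r\prod_{k=1}^n|z_k|^4)$ along unchanged. Your cautionary remarks about the shared second hypermatrix index $i_2$ and the sign set $\{2,3\}$ are exactly the index bookkeeping done in the paper's proof of Lemma \ref{thm:hyperdet1} via the relabeling $\sigma_1\to\rho_2$, $\sigma_2\to\rho_1$, $\sigma_3\to\rho_3$.
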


\SSC{Conformal Map from $\A_q$ to $D(s)$}
\label{sec:D_Ds}
A general Schwarz--Christoffel formula 
for conformal maps from $\A_q$ to
a doubly connected domain
is given as Eq.(1) in \cite{DEP01} and
on page 68 in \cite{DT02}. 
We can read that a conformal map from $\A_q$
to a chordal standard domain
$D(s)$, $s >0$ is given in the form
\[
f(z)=C \int_{-1}^z 
\frac{\theta(-\sqrt{-1} q u, \sqrt{-1} q u)}
{\theta(u)} d u,
\]
where $C$ is a parameter. 
We can show that the integral is transformed into
an integral of the Weierstrass $\wp$-function
and hence the map is expressed by
the $\zeta$-function. 
A result is given by (\ref{eqn:Hqz}) in
Remark \ref{rem:slit_domain}.
We note that the obtained function $H_q$
is related to the Villat kernel $\cK$
(see, for instance, \cite{FK14}),
\[
\cK(z)=\cK(z; q) := 
\sum_{n \in \Z} \frac{1+q^{2n} z}{1-q^{2n}z}
= \frac{1+z}{1-z}
+2 \sum_{n=1}^{\infty} \left(
\frac{q^{2n}}{q^{2n}-z} + \frac{q^{2n}z}{1-q^{2n}z} \right),
\quad z \in \A_q,
\]
by a simple relation 
$H_q(z)=\sqrt{-1} \cK(z), z \in \A_q$.
Moreover, we can verify the equality
$\cK(z)=2 \rho_1(z)$, $z \in \A_q$. 

\SSC{Bergman Kernel and Szeg\H{o} Kernel of an Annulus}
\label{sec:KAq_SAq}
\subsection
{$K_{\A_q}$ expressed by Weierstrass $\wp$-function}
\label{sec:Weierstrass}

A CONS for the Bergman space on $\A_q$
is given by 
$\{ \widetilde{e}^{(q)}_n(z)\}_{n \in \Z}$ where we set
\[
\widetilde{e}^{(q)}_n(z)
=\begin{cases}
\displaystyle{
\sqrt{\frac{n+1}{1-q^{2(n+1)}} } z^n,
}
& n \in \Z \setminus \{ -1 \},
\cr
\displaystyle{
\sqrt{\frac{1}{-2 \log q}} z^{-1}},
& n=-1.
\end{cases}
\]
The Bergman kernel of $\A_q$ is then given by
\begin{align}
K_{\A_q}(z, w)
&:= k_{L^2_{\rB}(\A_q)}(z, w)
= \sum_{n \in \Z} \widetilde{e}^{(q)}_n(z)
\overline{ \widetilde{e}^{(q)}_n(w) }
\nonumber\\
&= - \frac{1}{2 \log q} \frac{1}{z \overline{w}}
+\frac{1}{z \overline{w}} 
\sum_{n \in \Z \setminus \{0\}} 
\frac{n}{1-q^{2n}}(z \overline{w})^n,
\quad z, w \in \A_q.
\label{eqn:K_Aq}
\end{align}
Using (\ref{eqn:wp_expansion2}) and the
notation (\ref{eqn:x_phi}), 
we can verify that 
this kernel is expressed
using the Weierstrass $\wp$-function 
(\ref{eqn:wp_expansion2}) as \cite{Ber70}
\begin{equation}
K_{\A_q}(z, w)
= - \frac{1}{2 \log q} \frac{1}{z \wbar}
- \frac{1}{z \wbar}
\Big(
\wp(\phi_{z \wbar})
+ \frac{P}{12}
\Big), \quad z, w \in \A_q.
\label{eqn:BergmanK3}
\end{equation}

\subsection
{Second log-derivatives of $S_{\A_q}$}
\label{sec:log_derivative}
Here we prove the following.
\begin{prop}
\label{thm:log_derivative}
For $r > 0$, 
the following equality holds,
\begin{equation}
\partial_z \partial_{\wbar} \log S_{\A_q}(z, w; r)
=\frac{\theta(-r)}{\theta(-r (z \wbar)^2)}
S_{\A_q}(z, w; r z \wbar)^2,
\quad z, w \in \A_q.
\label{eqn:log_deriv_Aq1}
\end{equation}
In particular, 
\begin{equation}
\Delta \log S_{\A_q}(z, z; r)
=4 \frac{\theta(-r)}{\theta(-r|z|^4)}
S_{\A_q}(z, z; r|z|^2)^2,
\quad z \in \A_q.
\label{eqn:log_deriv_Aq2}
\end{equation}
\end{prop}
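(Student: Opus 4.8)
The plan is to differentiate the theta-function representation (\ref{eqn:S_qt_theta}) of the weighted Szeg\H{o} kernel and then reorganize the outcome using the Weierstrass $\wp$-function identities collected in Section \ref{sec:Weierstrass_elliptic}. First I would note that $S_{\A_q}(z,w;r)$ depends on $(z,w)$ only through the single product $u:=z\wbar$, and that (\ref{eqn:S_qt_theta}) lets us write $\log S_{\A_q}(z,w;r)=\log q_0^2+\log\theta(-ru)-\log\theta(-r)-\log\theta(u)$. Since $\log q_0^2$ and $\log\theta(-r)$ are constant in $(z,w)$, they are annihilated by the mixed derivative $\partial_z\partial_{\wbar}$, so only the two $u$-dependent terms survive.

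The key elementary observation is a chain-rule identity: for any smooth $F$ of the single variable $u=z\wbar$ one has $\partial_z\partial_{\wbar}F(z\wbar)=F'(u)+uF''(u)=\frac{1}{z\wbar}\cD_u^2F$, where $\cD_u=u\,\partial_u$ is the Euler operator (\ref{eqn:Euler}). Applying this with $F=\log S_{\A_q}$ and invoking Proposition \ref{thm:Euler_calculation}(i) (with $k=1$), which gives $\cD_u^2\log\theta(\alpha u)=a_2(\alpha u)$, I obtain $\partial_z\partial_{\wbar}\log S_{\A_q}(z,w;r)=\frac{1}{z\wbar}\bigl(a_2(-ru)-a_2(u)\bigr)$. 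By Lemma \ref{thm:an}, $a_2(\cdot)=\wp(\phi_{\cdot})-P/12$, so the constant $P/12$ cancels in the difference and the expression equals $\frac{1}{z\wbar}\bigl(\wp(\phi_{-ru})-\wp(\phi_u)\bigr)$.

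It then remains to match $\wp(\phi_{-ru})-\wp(\phi_u)$ with the asserted right-hand side. Here I would use the addition-type identity (\ref{eqn:addition_f}) in the form $f^{\rm JK}(u,-ru)\,f^{\rm JK}(u,-1/(ru))=\wp(\phi_{-ru})-\wp(\phi_u)$, together with the product formula (\ref{eqn:f2}) for $f^{\rm JK}$ and the inversion rule (\ref{eqn:theta_inversion}) applied to $\theta(-1/r)=\frac{1}{r}\theta(-r)$ and $\theta(-1/(ru))=\frac{1}{ru}\theta(-ru)$. A short computation collapses the second factor to $f^{\rm JK}(u,-1/(ru))=u\,\frac{\theta(-r)}{\theta(-ru^2)}\,f^{\rm JK}(u,-ru)$, whence $\wp(\phi_{-ru})-\wp(\phi_u)=u\,\frac{\theta(-r)}{\theta(-ru^2)}\,f^{\rm JK}(u,-ru)^2$. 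Since $f^{\rm JK}(u,-ru)=S_{\A_q}(z,w;ru)$ by Proposition \ref{thm:S_qt_JK}, the explicit factor $u=z\wbar$ cancels precisely against the prefactor $1/(z\wbar)$, producing (\ref{eqn:log_deriv_Aq1}).

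The only genuinely delicate point — and the step I expect to be the main obstacle — is the bookkeeping of the two compensating powers of $u$: the factor $1/(z\wbar)$ generated by the chain rule and the extra factor $u$ emerging from the theta-inversion in the second $f^{\rm JK}$ factor must cancel exactly, and getting the weights of $\theta(-ru^2)$ versus $\theta(-r)$ right in (\ref{eqn:f2}) is where one could easily slip. Everything else is routine theta-function algebra. Finally, (\ref{eqn:log_deriv_Aq2}) follows immediately by setting $w=z$, so that $u=|z|^2$, and using $\Delta=4\partial_z\partial_{\zbar}$.
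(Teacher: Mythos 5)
Your proposal is correct and takes essentially the same route as the paper's proof: both reduce the mixed log-derivative to $\big(\wp(\phi_{-rz\wbar})-\wp(\phi_{z\wbar})\big)/(z\wbar)$, factor this difference via (\ref{eqn:addition_f}) into $f^{\rm JK}(z\wbar,-rz\wbar)\,f^{\rm JK}(z\wbar,-(rz\wbar)^{-1})/(z\wbar)$, and convert the reciprocal-parameter factor into $z\wbar\,\{\theta(-r)/\theta(-r(z\wbar)^2)\}\,S_{\A_q}(z,w;rz\wbar)$ by theta inversion, with the powers of $z\wbar$ cancelling exactly as you anticipated. The only (harmless) variation is in how the intermediate $\wp$-difference is obtained — you use the Euler-operator calculus of Proposition \ref{thm:Euler_calculation} together with $a_2(z)=\wp(\phi_z)-P/12$ from Lemma \ref{thm:an}, which keeps the argument self-contained within Section \ref{sec:Weierstrass_elliptic}, whereas the paper passes through the Jacobi $\vartheta_1$ representation of $S_{\A_q}$ and the classical identity relating $\wp$ to $\partial_\xi^2\log\vartheta_1$, and likewise you expand $f^{\rm JK}(u,-1/(ru))$ directly from (\ref{eqn:f2}) rather than routing through (\ref{eqn:JK2b}); both substitutions check out.
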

\begin{proof}
Let
$\vartheta_1(\xi) 
:= \sqrt{-1} q^{1/4} q_0 e^{-\sqrt{-1} \xi} 
\theta(e^{2 \sqrt{-1} \xi})$ \cite[(11.2.2)]{GR04}.
This is one of the well-known four kinds
of \textit{Jacobi theta functions}
$\vartheta_i(\xi), i=0,1,2,3$.
(See \cite[Section 1.6]{GR04} and \cite[Section 20.5]{NIST10}.)
Using $\vartheta_1$, (\ref{eqn:S_qt_theta}) 
in Proposition \ref{thm:S_Aq_theta} is written as
\[
S_{\A_q}(z, w; r)
= \frac{\sqrt{-1}
\vartheta_1'(0) \vartheta_1(\phi_{-r z \wbar}/2)}
{2\vartheta_1(\phi_{-r}/2) \vartheta_1(\phi_{z \wbar}/2)},
\]
where the notation (\ref{eqn:x_phi}) has been used.
This gives
\[
 \partial_z \partial_{\wbar} \log S_{\A_q}(z, w; r)
= - 
\Big(
\partial_{\xi}^2 \log \vartheta_1(\xi) 
\Big|_{\xi=\phi_{-r z \wbar}/2}
- 
\partial_{\xi}^2 \log \vartheta_1(\xi)
\Big|_{\xi=\phi_{z \wbar}/2}
\Big)/(4 z \wbar).
\]
We use the equality 
$\wp(2 \omega_1 z/\pi )
=(\pi/(2 \omega_1))
\{\vartheta_1'''(0)/(3 \vartheta_1'(0))
-\partial_z^2 \log \vartheta_1(z)\}$ 
(see Eq.~(23.6.14) in \cite{NIST10}).
In the setting (\ref{eqn:omega1}) we have
\begin{equation}
\partial_z \partial_{\wbar} \log S_{\A_q}(z, w; r) 
=\big( \wp(\phi_{-r z \wbar})
- \wp(\phi_{z \wbar})
\big)/(z \wbar).
\label{eqn:ddlogS1}
\end{equation}

Now we use (\ref{eqn:addition_f}) in
Lemma \ref{thm:fundamental_eqs} given 
in Section \ref{sec:Weierstrass_elliptic}
\cite{Coo00}.
Combining with (\ref{eqn:S_qt_JK})
in Proposition \ref{thm:S_qt_JK}, (\ref{eqn:ddlogS1}) gives
\begin{align}
\partial_z \partial_{\wbar} \log S_{\A_q}(z, w; r)
&= 
f^{\rm JK}(z \wbar, - r z \wbar) 
f^{\rm JK}(z \wbar, - (r z \wbar)^{-1})/(z \wbar)
\nonumber\\
&= 
S_{\A_q}(z, w; r z \wbar)
S_{\A_q}(z, w; (r z \wbar)^{-1})/(z \wbar).
\label{eqn:ddlogS4}
\end{align}
The expression (\ref{eqn:S_qt_JK}) 
of $S_{\A_q}(\cdot, \cdot; r)$ 
in Proposition \ref{thm:S_qt_JK} gives 
\begin{align}
S_{\A_q}(z, w; (r z \wbar)^{-1})
&= f^{\rm JK}(z \wbar, -(r z \wbar)^{-1})
= - f^{\rm JK}((z \wbar)^{-1}, - r z \wbar)
\nonumber\\
&= - S_{\A_q}(z^{-1}, w^{-1}; r z \wbar),
\label{eqn:variation1}
\end{align}
where (\ref{eqn:JK2b}) is used.
On the other hand, 
the expression (\ref{eqn:S_qt_theta}) 
of $S_{\A_q}(\cdot, \cdot; r)$
in Proposition \ref{thm:S_Aq_theta} gives
$S_{\A_q}(z, w; r z \wbar)
= q_0^2 \theta(-r (z \wbar)^2)/
\theta(-r z \wbar, z \wbar)$, 
and
\[
S_{\A_q}(z^{-1}, w^{-1}; r z \wbar)
= 
\frac{q_0^2 \theta(-r z \wbar (z \wbar)^{-1})}
{\theta(-r z \wbar, (z \wbar)^{-1})}
= 
\frac{q_0^2 \theta(-r)}
{\theta(-r z \wbar, (z \wbar)^{-1})}
= -z \wbar 
\frac{q_0^2 \theta(-r)}
{\theta(-z \wbar r, z \wbar)},
\]
where (\ref{eqn:theta_inversion}) was used.
Hence, 
$S_{\A_q}(z^{-1}, w^{-1}; r z \wbar)
= - z \wbar \{\theta(-r)/\theta( -r (z \wbar)^2)\}
S_{\A_q}(z, w; r z \wbar)$
and (\ref{eqn:variation1}) gives
$S_{\A_q}(z, w; (r z \wbar)^{-1})
= z \wbar \{\theta(-r)/\theta( -r (z \wbar)^2)\}
S_{\A_q}(z, w; r z \wbar)$.
Then (\ref{eqn:ddlogS4}) proves the proposition.
\end{proof}

\subsection
{Relation between $K_{\A_q}$ and $S_{\A_q}$}
\label{sec:relation}
We prove the following relation
between the Bergman kernel $K_{\A_q}$
and the Szeg\H{o} kernel $S_{\A_q}$ of an annulus.
\begin{prop}
\label{thm:relation}
The equality 
\begin{equation}
S_{\A_q}(z, w)^2
= K_{\A_q}(z, w) + \frac{a}{z \wbar}, 
\quad z, w \in \A_q, 
\label{eqn:SK1}
\end{equation}
holds, where
\begin{equation}
a=a(q) =
e_2 + \frac{P}{12} + \frac{1}{2 \log q} 
= -2 \sum_{n \in \N}
\frac{(-1)^n n q^n}{1-q^{2n}}
+\frac{1}{2 \log q}.
\label{eqn:aq}
\end{equation}
\end{prop}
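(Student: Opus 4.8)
The plan is to reduce the claimed identity to a one-variable statement in $u := z\wbar$, since $S_{\A_q}(z,w)$, $K_{\A_q}(z,w)$ and $a/(z\wbar)$ all depend on $z,w$ only through $u$. First I would rewrite the right-hand side. Substituting the expression (\ref{eqn:BergmanK3}) for $K_{\A_q}$ gives
\[
K_{\A_q}(z,w) + \frac{a}{z\wbar}
= \frac{1}{u}\Big( a - \frac{1}{2\log q} - \frac{P}{12} - \wp(\phi_u) \Big),
\]
and inserting $a = e_2 + P/12 + 1/(2\log q)$ collapses the bracket to $e_2 - \wp(\phi_u)$. Recalling from (\ref{eqn:e123}) that $e_2 = \wp(\phi_{-q})$, and using Proposition \ref{thm:S_qt_JK} to write $S_{\A_q}(z,w) = f^{\rm JK}(u,-q)$, the target identity (\ref{eqn:SK1}) becomes
\[
f^{\rm JK}(u,-q)^2 = \frac{\wp(\phi_{-q}) - \wp(\phi_u)}{u}.
\]

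Next I would establish this last equation directly from the elliptic identities of Lemma \ref{thm:fundamental_eqs}. Applying the addition formula (\ref{eqn:addition_f}) with $a=-q$ yields $f^{\rm JK}(u,-q)\, f^{\rm JK}(u,-q^{-1}) = \wp(\phi_{-q}) - \wp(\phi_u)$, while the quasi-periodicity relation (\ref{eqn:JK2c}), namely $f^{\rm JK}(u,a) = u\, f^{\rm JK}(u, aq^2)$ taken at $a=-q^{-1}$, gives $f^{\rm JK}(u,-q^{-1}) = u\, f^{\rm JK}(u,-q)$. Substituting the latter into the former produces $u\, f^{\rm JK}(u,-q)^2 = \wp(\phi_{-q}) - \wp(\phi_u)$, which is exactly the required equation, so (\ref{eqn:SK1}) follows.

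It remains to verify the second expression for $a$ in (\ref{eqn:aq}). Using the series (\ref{eqn:e123}) for $e_2$ together with the definition of $P$, the contributions proportional to $q^{2n}/(1-q^{2n})^2$ cancel between $e_2$ and $P/12$, leaving $e_2 + P/12 = 2\sum_{n\geq 1} q^{2n-1}/(1+q^{2n-1})^2$. Expanding $x/(1+x)^2 = \sum_{m\geq 1}(-1)^{m-1} m x^m$, and independently expanding the claimed Lambert series $-2\sum_{n\geq 1}(-1)^n n q^n/(1-q^{2n})$ via $1/(1-q^{2n})=\sum_{k\geq0}q^{2nk}$, both reduce to $2\sum (-1)^{\ell-1}\ell\, q^{j\ell}$ summed over odd $j\geq 1$ and $\ell\geq 1$, hence coincide. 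The only delicate points are this series bookkeeping and the identification $e_2 = \wp(\phi_{-q})$; the functional core of the proposition is immediate from the addition theorem, so I expect no genuine obstacle beyond routine verification.
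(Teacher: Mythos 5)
Your proof is correct, and its functional core is the same as the paper's: both arguments rest on the identification $S_{\A_q}(z,w)=f^{\rm JK}(u,-q)$ with $u=z\wbar$ (Proposition \ref{thm:S_qt_JK}), the quasi-periodicity (\ref{eqn:JK2c}) to convert $f^{\rm JK}(u,-q)^2$ into $u^{-1}f^{\rm JK}(u,-q)f^{\rm JK}(u,-q^{-1})$, the addition formula (\ref{eqn:addition_f}) to evaluate that product as $e_2-\wp(\phi_u)$ (using $e_2=\wp(\phi_{-q})$, which (\ref{eqn:e123}) records explicitly), and the $\wp$-expression (\ref{eqn:BergmanK3}) for $K_{\A_q}$; the paper simply runs these steps forward from $S_{\A_q}^2$ while you run them backward from $K_{\A_q}+a/u$. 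Where you genuinely diverge is in deriving the second expression for $a$ in (\ref{eqn:aq}): the paper exploits Lemma \ref{thm:zero_S}, setting $z=-q/\wbar$ so that the left-hand side of (\ref{eqn:SK1}) vanishes, which forces $a=qK_{\A_q}(-q/\wbar,w)$ and yields the Lambert series immediately from (\ref{eqn:K_Aq}); you instead verify the identity $e_2+P/12=2\sum_{n\ge1}q^{2n-1}/(1+q^{2n-1})^2$ and match double series term by term via $x/(1+x)^2=\sum_{m\ge1}(-1)^{m-1}mx^m$ and $1/(1-q^{2n})=\sum_{k\ge0}q^{2nk}$ (legitimate, since everything converges absolutely for $q\in(0,1)$). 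The paper's evaluation-at-the-zero trick is slicker and reuses structural information about the Szeg\H{o} kernel, while your direct bookkeeping is more self-contained and makes the cancellation between $e_2$ and $P/12$ explicit; both are complete proofs.
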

\begin{proof}
By Proposition \ref{thm:S_qt_JK}, 
$S_{\A_q}(z, w)^2
= f^{\rm JK}_q(z \wbar, -q)^2$.
Since
$f^{\rm JK}(z, a)=f^{\rm JK}(z, a/q^2)/z$
is given by (\ref{eqn:JK2c}),
we have
$f^{\rm JK}(z \wbar, -q) 
=f^{\rm JK}(z \wbar, -q^{-1})/(z \wbar)$, 
and hence
\begin{equation}
S_{\A_q}(z, w)^2
=\frac{1}{z \wbar} 
f^{\rm JK}(z \wbar, -q) f^{\rm JK}(z \wbar, -q^{-1}).
\label{eqn:SK3}
\end{equation}
Here we use (\ref{eqn:addition_f}) in
Lemma \ref{thm:fundamental_eqs} given 
in Section \ref{sec:Weierstrass_elliptic}
\cite{Coo00}.
Then
\[
f^{\rm JK}(z \wbar, -q) f^{\rm JK}(z \wbar, -q^{-1})
=\wp(\pi+\pi \tau_q)-\wp(\phi_{z \wbar})
=e_2-\wp(\phi_{z \wbar}),
\]
where we have used the setting (\ref{eqn:omega1}),
the notation (\ref{eqn:x_phi}) and 
the evenness of $\wp(z)$.
The equality (\ref{eqn:SK3}) is thus written as
$S_{\A_q}(z, w)^2
= - \wp(\phi_{z \wbar})/(z \wbar)
+e/(z \wbar)$.
Now we use (\ref{eqn:BergmanK3}).
Then (\ref{eqn:SK1}) is obtained with $a$
given by the first expression in (\ref{eqn:aq}).
If we set $z=-q/\wbar$ in (\ref{eqn:SK1}), 
then Lemma \ref{thm:zero_S} gives an equality,
$0=K_{\A_q}(-q/\wbar, w)-a/q$.
By (\ref{eqn:K_Aq}) with a short calculation,
the second expression for $a$
in (\ref{eqn:aq}) is obtained.
\end{proof}
\vskip 0.3cm
\begin{rem}
\label{rem:K_S_q=0}
The relationship (\ref{eqn:SK1}) 
between $S_{\A_q}$ and $K_{\A_q}$
with an additional term $a$
is concluded from a general theory
(see, for instance, Exercise 3 in Section 6, Chapter VII
of \cite{Neh52}, and Section 25 of \cite{Bel16}). 
It was shown in \cite{Bol14} that $a$ is readily determined 
by Lemma \ref{thm:zero_S} as shown above,
if the equality (\ref{eqn:SK1}) is established.
Here we showed direct proof of (\ref{eqn:SK1})
using the equality (\ref{eqn:addition_f})
between $f^{\rm JK}$ and $\wp$ \cite{Coo00}. 
By the explicit formulas (\ref{eqn:aq}) for $a$,
we see that $\lim_{q \to 0} a(q)=0$.
Therefore, the relation (\ref{eqn:SK1}) is reduced 
in the limit $q \to 0$ to
$S_{\D}(z, w)^2
= K_{\D}(z, w), z, w \in \D$,
which is a special case of (\ref{eqn:S_K_D}),
as expected. 
\end{rem}

\begin{small}

\end{small}
\end{document}